\numberwithin{equation}{section}
\newtheorem{rem}{Remark}[section]
\newtheorem{thm}{Theorem}[section]
\newtheorem{cor}{Corollary}[section]
\newtheorem{lm}{Lemma}[section]
\newtheorem{prop}{Proposition}[section]
\journal{XXX}
\begin{document}

\begin{frontmatter}



\title{A Uniquely Solvable, Positivity-Preserving and Unconditionally Energy Stable Numerical Scheme for the Functionalized Cahn-Hilliard Equation with Logarithmic Potential}


\author{Wenbin Chen\fnref{label1}}
\ead{wbchen@fudan.edu.cn}
\author{Jianyu Jing\fnref{label2}}
\ead{jyjing20@fudan.edu.cn}
\author{Hao Wu\fnref{label3}}
\ead{haowufd@fudan.edu.cn}
\fntext[label1]{	School of Mathematical Sciences and Shanghai Key Laboratory for Contemporary Applied Mathematics, Fudan University, Shanghai  200433, China}
\fntext[label2]{	School of Mathematical Sciences, Fudan University, Shanghai 200433, China}
\fntext[label3]{	School of Mathematical Sciences and Shanghai Key Laboratory for Contemporary Applied Mathematics, Fudan University, Shanghai  200433, China}

\begin{abstract}
We propose and analyze a first-order finite difference scheme for the functionalized Cahn-Hilliard (FCH) equation with a logarithmic Flory-Huggins potential. The semi-implicit numerical scheme is designed based on a suitable convex-concave decomposition of the FCH free energy. We prove unique solvability of the numerical algorithm and verify its unconditional energy stability without any restriction on the time step size. Thanks to the singular nature of the logarithmic part in the Flory-Huggins potential near the pure states $\pm 1$, we establish the so-called positivity-preserving property for the phase function at a theoretic level. As a consequence, the numerical solutions will never reach the singular values $\pm 1$ in the point-wise sense and the fully discrete scheme is well defined at each time step. Next, we present a detailed optimal rate convergence analysis and derive error estimates in $l^{\infty}(0,T;L_h^2)\cap l^2(0,T;H^3_h)$ under a linear refinement requirement $\Delta t\leq C_1 h$. To achieve the goal, a higher order asymptotic expansion (up to the second order temporal and spatial accuracy) based on the Fourier projection is utilized to control the discrete maximum norm of solutions to the numerical scheme. We show that if the exact solution to the continuous problem is strictly separated from the pure states $\pm 1$, then the numerical solutions can be kept away from $\pm 1$ by a positive distance that is uniform with respect to the size of the time step and the grid. Finally, a few numerical experiments are presented. Convergence test is performed to demonstrate the accuracy and robustness of the proposed numerical scheme. Pearling bifurcation, meandering instability and spinodal decomposition are observed in the numerical simulations.
\end{abstract}



\begin{keyword}
Functionalized Cahn-Hilliard equation\sep Finite difference scheme\sep Logarithmic potential\sep Unique solvability\sep Energy stability\sep Positivity preserving\sep Optimal rate convergence analysis\sep higher order asymptotic expansion

\MSC[2020] 35K35 \sep 35K55 \sep 65M06 \sep 65M12

\end{keyword}

\end{frontmatter}


\section{Introduction}
\label{sec:introduction}
\setcounter{equation}{0}

We consider the following functionalized Cahn-Hilliard (FCH) equation:
\begin{equation}
	\begin{cases}
		&\partial_t\phi = \Delta\mu, \\
		&\mu = -\epsilon^2\Delta\omega + f'(\phi)\omega - \epsilon^p\eta\omega, \\
		&\omega = -\epsilon^2\Delta\phi + f(\phi),
	\end{cases}
	\qquad \text{in}\ \Omega\times(0,T).
	\label{phi}
\end{equation}
Here, we assume that $T>0$ is a given final time and  $\Omega=(0,1)^d$ with the spatial dimension $1\leq d\leq 3$. For the sake of simplicity, below we analyze the system \eqref{phi} in a periodic setting on $\Omega$ such that the variables $\phi$, $\mu$, $\omega$ satisfy periodic boundary conditions. With some minor modifications, our results can be extended to the case with homogeneous Neumann boundary conditions, or boundary conditions of mixed periodic-homogeneous Neumann type.

In the system \eqref{phi}, the scalar function $\phi: \Omega \times (0,T)\to \mathbb{R}$ denotes local concentrations of the two components in a binary mixture (e.g., an amphiphilic material in a solvent). Here we define $\phi$ as the difference of volume fractions. In view of its physical interpretation, only the values $\phi \in  [-1, 1]$ are admissible. The pure states correspond to the values $-1$ (the pure solvent) and $1$ (the pure amphiphile), whereas $\phi\in (-1, 1)$ indicates the presence of a mixing state. The scalar function $\mu: \Omega \times (0,T)\to \mathbb{R}$ is referred to as the chemical potential, which is the first variational derivative of the following free energy functional:
\begin{equation}
	\label{FCH energy}	
	E(\phi)=\int_{\Omega} \frac{1}{2}\left(\epsilon^2\Delta\phi-f(\phi)\right)^2 -\epsilon^p\eta\left(\frac{\epsilon^2}{2}|\nabla\phi|^2 +F(\phi)\right)\,\mathrm{d}x,
\end{equation}
where $\epsilon>0$, $\eta>0$ and $p\in\{1,2\}$ are given parameters. The energy \eqref{FCH energy} extends the classical Cahn-Hilliard energy in the literature (see \cite{cahn1958free})

\begin{equation}\label{CH energy}
	E_{\mathrm{CH}}(\phi)=\int_{\Omega} \frac{\epsilon^2}{2}|\nabla\phi|^2+F(\phi)\,\mathrm{d}x,
\end{equation}
where $F$ stands for the (Helmholtz) free energy density of mixing and its derivative is denoted by $f = F'$. Then the scalar function $\omega:\Omega\times (0,T)\to \mathbb{R}$ in \eqref{phi} corresponds to the first variational derivative of $E_{\mathrm{CH}}$, which is the chemical potential in the classical Cahn-Hilliard theory for binary mixtures.

In \eqref{FCH energy}, the positive parameter $\epsilon\ll 1$ characterizes the width of inner structures, e.g., the (diffuse) interfaces between two components. Next, let us pay some more attention on the parameter $\eta\in\mathbb{R}$, which plays an important role in the modelling and analysis. Indeed, the sign of $\eta$ can change fundamentally the structure of the problem as well as the physically motivating examples \cite{DaiP2013}.
When $\eta=0$, the energy $E$ reduces to the well-known phase-field formulation of the Willmore functional (PFW) that approximates the Canham-Helfrich bending energy of surfaces. The corresponding phase-field model has been used to study deformations of elastic vesicles subject to possible volume/surface constraints (see e.g., \cite{du2004, du2005}).
When $\eta<0$, the energy $E$ is related to the so-called Cahn-Hilliard-Willmore (CHW) energy, which was introduced in \cite{torabi2009, wise2007solve, chen2013efficient} to investigate strong anisotropy effects arising in the growth and coarsening of thin films. In this paper, our interest   will focus on the case $\eta>0$ such that $E$ is referred to as the functionalized Cahn-Hilliard (FCH) energy in the literature. It was first proposed in \cite{PhysRevLett.65.1116} to model phase separation of mixtures with an amphiphilic structure. Later on, the extended FCH model was used to describe nanoscale morphology changes in functionalized polymer chains \cite{promislow2009pem}, amphiphilic bilayer interfaces \cite{DaiP2013,dai2015competitive,Gavish2011}, and membranes undergoing pearling bifurcations \cite{Doelman2014,Promislow2015}.
With a positive parameter $\eta$, the FCH energy \eqref{FCH energy} reflects the balance between the square of the variational derivative of $E_{\mathrm{CH}}$ against itself: the first term stabilizes bilayers, pores and micelle structures, while the second term promotes the growth of interfaces and competition between these geometries (see e.g., \cite{jones2013}). Minimizers of the FCH energy are approximate critical points of $E_{\mathrm{CH}}$, which however favor more surface area. This fact dramatically changes the nature of the energy landscape and presents rich morphological structures that are quite different from the phase separation process described by the classical Cahn-Hilliard energy. The FCH energy can naturally incorporate the propensity of the amphiphilic surfactant phase to drive the creation of stable bilayers, or homoclinic interfaces with an intrinsic width \cite{DaiP2013}. In particular, the bilayer structure has the distinct feature that it separate two identical phases by a thin region of a second phase. For recent works on the minimization problem of the FCH energy, bilayer-structures, pearled patterns, nanostructure morphology changes and network bifurcations, we refer to \cite{Promislow2015, jones2013, KP2018,promislow2017existence,promislow2013critical} and the references cited therein. Finally, we note that the exponent $p$ in \eqref{FCH energy} distinguishes some natural limits of the FCH energy, when $p=1$, it represents the strong functionalization, while the choice $p=2$ corresponds to the weak functionalization (see \cite{dai2015competitive,Doelman2014,KP2018} for detailed discussions).

In this study, we work with the following thermodynamically relevant potential of logarithmic type:
\begin{equation}\label{F}
	F(r) = (1+r)\ln(1+r)+(1-r)\ln(1-r)-\frac{\lambda}{2}r^2,\qquad r\in(-1,1),
\end{equation}
where $\lambda\in\mathbb{R}$ is a given parameter. The potential function $F$ was proposed in the original work \cite{cahn1958free} on the phase separation of binary alloys and it also arises in the Flory–Huggins theory for polymer solutions \cite{Doi}.
The first order derivative of $F$ is denoted by $f$ such that
\begin{equation}\label{f}
	f(r) = \ln\frac{1+r}{1-r}-\lambda r,\qquad r\in(-1,1).
\end{equation}
When $\lambda> 2$, $F$ has a double-well structure with two minima $\pm\, r_*\in (-1,1)$, where $r_*$ is the positive root of the equation $f(r)=0$. Besides, we observe that $\lim_{r\to \pm 1}f(r)=\pm \infty$. This singular nature of $f$ near the pure states $\pm 1$ brings great challenges in the study of related phase-field equations, including the classical Cahn-Hilliard equation driven by  $E_{\mathrm{CH}}$. On the other hand, to avoid possible difficulties caused by the singularity of $f$, a widely used approach is to work with an approximate polynomial like
\begin{equation}
	F(r)=\frac14(r^2-1)^2,\quad r\in \mathbb{R}, \label{regF}
\end{equation}
which is sometimes called the ``shallow quenching" approximation of the logarithmic potential \eqref{F} (see e.g., \cite{debussche1995cahn,wu2022}).
Concerning the Cahn-Hilliard equation  $\partial_t\phi=\nabla\cdot[\mathcal{M}\nabla (-\epsilon^2\Delta\phi + f(\phi))]$ with logarithmic potential \eqref{F}, we refer to \cite{debussche1995cahn, abels2007convergence, elliott1996cahn, Li2022, miranville2004robust, giorgini2017cahn, giorgini2018cahn} for extensive theoretic analysis on well-posedness as well as long-time behavior of global solutions (see also the review articles \cite{wu2022,cherfils2011cahn}), while for contributions in the numerical studies, we recall \cite{CJ3W2022, chen2019, copetti1992numerical,  jeong2016practical, jeong2015efficient, Jia2020, li2016unconditionlly, LiTang2021, yang2019linear} and the references therein.

Under suitable boundary conditions (e.g., the periodic or Neumann type), the FCH equation \eqref{phi} can be viewed as an $H^{-1}$ gradient flow of the FCH energy \eqref{FCH energy}, which not only dissipates the energy but also preserves the mass along time evolution. As long as a solution $\phi$ exists on $(0,T)$ and is sufficiently regular, it holds
\begin{equation}
	\frac{\textrm{d}}{\textrm{d}t}E(\phi) = -\int_{\Omega}\left|\nabla\mu\right|^2\textrm{d}x\leq 0,\quad \forall\, t\in(0,T),
\end{equation}
and
\begin{align}
	\int_\Omega \phi(\cdot,t)\, \mathrm{d}x= \int_\Omega \phi(\cdot,0)\, \mathrm{d}x,\quad \forall\, t\in[0,T].
\end{align}
Concerning the theoretic analysis of equation \eqref{phi}, when $\eta>0$ and the periodic boundary conditions are imposed, Dai et al. \cite{DLP2021} proved the existence of global weak solutions in the case with a general regular potential including \eqref{regF} and a degenerate mobility $\mathcal{M}=\mathcal{M}(\phi)$ (see also \cite{DLLP2021}). For the case with the regular potential \eqref{regF} and a constant mobility, existence and uniqueness of global solutions in the Gevrey class were established in \cite{CWWY} for $\eta\in \mathbb{R}$, again in the periodic setting. We  also mention \cite{Miran2014,ZhaoXP2019} in which some theoretic results for the Cahn-Hilliard-Willmore equation (with $\eta<0$) can be found. The case associated with a logarithmic potential \eqref{F} is more difficult. In the recent work \cite{SCHIMPERNA2020},  Schimperna and Wu investigated the initial boundary value problem of \eqref{phi} subject to homogeneous Neumann boundary conditions with $\eta\in \mathbb{R}$. After overcoming difficulties from the singular potential and its interaction with higher-order spatial derivatives, they proved existence, uniqueness and regularity of a global weak solution and characterized its long-time behavior. For a higher-order parabolic equation, in general we do not have maximum principle for its solution. Nevertheless, the singular character of the nonlinearity $f$ can help to keep the weak solutions of \eqref{phi} staying away from the pure states at a point-wise level such that $-1<\phi(x,t)<1$ almost everywhere in $\Omega\times (0,T)$. This is sometimes referred to as a \textit{positivity-preserving property}, since both $1+\phi>0$ and $1-\phi>0$ hold. When the spatial dimension is less than or equal to two, an improved conclusion has been drawn in \cite{SCHIMPERNA2020}: if the initial datum $\phi_0$ is \textit{strictly separated} from $\pm 1$, then there exists a uniform distance $\delta\in (0,1)$ such that $\|\phi(t)\|_{C(\overline{\Omega})}\leq 1-\delta$ for all $t\in [0,T]$. This fact is crucial for obtaining higher order spatial regularity of solutions, since the singular potential $f$ is now confined on a compact subset of $(-1,1)$ and thus is smooth. We note that similar properties have been proven for the classical Cahn-Hilliard equation with a logarithmic potential and for some extended systems, see for instance,  \cite{miranville2004robust, giorgini2017cahn, giorgini2018cahn} and the references therein.

Based on the analytic results obtained in \cite{SCHIMPERNA2020}, we are going to provide a detailed numerical analysis of the FCH equation \eqref{phi} with  logarithmic potential \eqref{F}, subject to periodic boundary conditions. Observe that \eqref{phi} is a sixth order parabolic equation for $\phi$ (see \eqref{E:mu}--\eqref{E:u} below). Those nonlinear terms in the chemical potential $\mu$ such as $\beta(\phi)\beta'(\phi)$, $\beta''(\phi)|\nabla \phi|^2$, $\Delta \beta(\phi)$ with $\beta(\phi)\triangleq f(\phi)+\lambda \phi$ (see \eqref{E:u}) present a highly nonlinear and singular nature of the problem, which turns out to be more involved than the fourth order Cahn-Hilliard equation. This feature yields great challenges in the numerical study. One obvious difficulty is to overcome the step restriction in the time-space discretization such that an explicit numerical scheme will require a rather restrictive Courant-Friedrichs-Lewy (CFL) condition like $\Delta t\leq Ch^6$, with $\Delta t$ and $h$ being the time and spatial step sizes, respectively. On the other hand, a fully implicit scheme like backward Euler may be difficult to prove its unique solvability and only conditionally energy stable. Our goal is to present a numerical scheme that is uniquely solvable, preserving the mass conservation, energy dissipation and the positivity property, under some mild refinement requirement. Besides, we shall apply some efficient numerical solvers to solve the numerical scheme in the long-time scale simulation.

There have been some progresses on the numerical study of the FCH equation \eqref{phi} with $\eta>0$ and a polynomial potential like \eqref{regF}.
%
In \cite{jones2013}, Jones proposed a semi-implicit
numerical scheme and proved its energy stability but the unique solvability was not given. He also made comparisons among semi-implicit, fully-implicit and explicit exponential time differencing (ETD) numerical schemes. A fully implicit scheme with pseudo-spectral approximation in space was introduced in Christlieb et al. \cite{Christlieb2014}, where the authors performed numerical tests to show its accuracy and efficiency but did not prove energy stability and solvability. In \cite{guo2015local}, Guo et al. presented a local discontinuous Galerkin method to overcome the difficulty associated with higher order spatial derivatives. Energy stability was shown for the semi-discrete (time-continuous) scheme therein. Later on, a convex-concave splitting scheme  was proposed in Feng et al. \cite{feng2018}, where the unique solvability and unconditional energy stability were theoretically verified. Besides, the authors performed the convergence analysis and applied an efficient preconditioned steepest descent algorithm to solve their scheme. Recently, theoretical analysis on the energy stability of a stabilized semi-implicit scheme for the FCH equation was carried out in \cite{Zhang2021analysis}. We also refer to \cite{ZhangOu2021} for an unconditionally energy stable second-order numerical scheme based on the scalar auxiliary variable (SAV) approach, to \cite{Zhang2021highly} for a highly accurate, linear and unconditionally energy stable large time-stepping scheme, and to \cite{zhang2020numerical} for numerical comparison between those schemes based on the stable SAV approach and the classical BDF method.

It is worth mentioning that all the previous numerical studies mentioned above are associated with a regular (polynomial) potential. To the best of our knowledge, there is no result on the numerical analysis of the FCH equation \eqref{phi} with the logarithmic potential \eqref{F} in the existing literature. Here we aim to make the first attempt in this direction. The main contributions of this paper are summarized below.

(1) We propose a first-order semi-implicit finite difference numerical scheme (see (\ref{numerical phi})--(\ref{numerical mu})), based on a suitable convex-concave decomposition of the highly nonlinear FCH energy (see Proposition \ref{c-c decomposition}).

One main difficulty comes from the troublesome term $-\int_{\Omega}\beta(\phi)\Delta\phi\, \mathrm{d}x$ in the FCH energy, which is neither convex nor concave (cf. \eqref{FCH energy2}). Using integration by parts and the periodic boundary conditions, we can rewrite it as  $\int_{\Omega}\beta'(\phi)|\nabla\phi|^2 \,\mathrm{d}x$ instead. Thanks to the strict separation property of $\phi$ (see Proposition \ref{prop:sepa}), we are able to show that the new functional (and its discrete analogue) is strictly convex on a suitable closed set (cf. \cite{SCHIMPERNA2020,MR3032968} for conclusions in some weaker settings). This observation enables us to derive the desired convex splitting scheme that treats the convex part of $E$ implicitly and the concave part explicitly. Our argument is different from the case with the regular potential \eqref{regF} studied in \cite{feng2018} such that no auxiliary term is required here to guarantee the necessary  convexity.

(2) We prove that the proposed numerical scheme (\ref{numerical phi})--(\ref{numerical mu}) is uniquely solvable, without any restriction on the size of the time step and the grid (see Theorem \ref{main1}). Moreover, solutions to this fully discrete scheme preserve the mass at each time step (see Proposition \ref{prop:dmass}) and satisfy the positivity-preserving property in the point-wise sense (see \eqref{masssep}).

The positivity-preserving property at the discrete level is crucial, since it guarantees the numerical scheme to be unconditionally well-defined. Concerning the Cahn-Hilliard equation with logarithmic potential \eqref{F}, this issue was first tackled in \cite{copetti1992numerical} where Copetti and Elliott worked with a backward Euler scheme. Under certain constraint on the time step, they proved the unique solvability and positivity-preserving property of  numerical solutions by applying a regularization of the singular potential and then passing to the limit. An alternative approach was recently introduced in Chen et al. \cite{chen2019},  based on the convex-concave decomposition technique (see e.g., \cite{ES93,Eyre}). They proposed a first order semi-implicit convex splitting scheme as well as a second order accurate scheme involving the implicit backward differential formulation (BDF). Then they overcame the time step restriction and theoretically justified the unique solvability and energy stability of those schemes. In particular, the logarithmic part was treated in an implicit way and its convexity as well as singularity helped to prevent the numerical solution from approaching the singular values $\pm 1$. Later in Chen et al. \cite{CJ3W2022}, a similar idea was applied to show the positivity-preserving property of a second order accurate scheme with a modified Crank-Nicolson approximation. Further applications can be found in recent works on the Cahn-Hilliard equation with a Flory-Huggins-de Gennes energy \cite{Dong2019,Dong2020}, the Poisson-Nernst-Planck system \cite{liu2021positivity} and the reaction-diffusion system with detailed balance \cite{LiuJCP2021} etc.

In this study, we successfully generalize the argument in \cite{chen2019} to the sixth order FCH equation \eqref{phi}. Roughly speaking, for any $n\in \mathbb{N}$, we look for a discrete solution $\phi^{n+1}$ to the numerical scheme (\ref{numerical phi})--(\ref{numerical mu}) by solving a minimization problem form certain discrete energy $\mathcal{J}^n$ derived from the convex-concave energy decomposition (see \eqref{J-energy}). If the numerical solution in the previous time step (denoted by $\phi^n$) takes all its grid values in $(-1,1)$ and its discrete average also belongs to $(-1,1)$, we can use the specific form of the logarithmic term $\beta(\phi)$ and in particular, its singular nature near $\pm1$ to show that the solution $\phi^{n+1}$ that is a (global) minimum of $\mathcal{J}^n$, must exist and can only possibly occur at an interior point in the numerical solution variable domain $\mathcal{A}_h$ (see \eqref{Ah}). Besides, uniqueness of the numerical solution is a direct consequence of the strict convexity of the discrete energy function.

(3) We verify the unconditional energy stability of the proposed semi-implicit numerical scheme (\ref{numerical phi})--(\ref{numerical mu}), see Theorem \ref{e-stable}. The proof is based on the convex-concave decomposition of the discrete FCH energy given in Corollary \ref{dc-c decomposition}. Moreover, the unconditional energy stability enables us to derive a uniform in time $H^2_h$-bound for the numerical solutions (see Corollary \ref{es-HH2}), which plays an important role in the subsequent convergence analysis.

(4) We perform an optimal rate convergence analysis for the numerical scheme (\ref{numerical phi})--(\ref{numerical mu}) and establish an error estimate with $O(\Delta t+h^2)$ accuracy in $l^{\infty}(0,T;L_h^2)\cap l^2(0,T;H^3_h)$, under a linear refinement requirement on the time step size such that $\Delta t\leq C_1 h$ (see Theorem \ref{thm1}).

To obtain the expected error estimate under the above linear refinement constraint, a crucial step is to construct a higher order asymptotic expansion of the exact solution up to the second order temporal and spatial accuracy (see \eqref{pertubation expansion}). A similar idea was used in \cite{liu2021positivity} for the optimal rate convergence analysis of the Poisson-Nernst-Planck system. Here in our case, based on the lower order error estimate in the discrete space $l^{\infty}(0,T;H_h^{-1})\cap l^2(0,T;H^2_h)$ (see \eqref{convergence analysis}) we can first derive a rough estimate on the modified numerical error function in $l^{\infty}(0,T;L_h^2)$ (see \eqref{apri-es}), which combined with the inverse inequality further leads to an error estimate in $l^{\infty}(0,T;L_h^\infty)$ (see \eqref{priori assumptionA}), provided that $\Delta t$ and $h$ are suitably small and satisfy the linear constraint $\Delta t\leq C_1 h$. Furthermore, this $L_h^\infty$ estimate allows us to show that all the numerical solutions can be kept away from the pure states $\pm 1$ with a uniform positive distance (see \eqref{strict separation numerical}). Hence, the singular term $\beta$ (and its derivatives) is confined on a compact subset of $(-1,1)$, becoming regular and uniformly bounded. With this crucial observation, we can proceed to derive the required higher order error estimate in $l^{\infty}(0,T;L_h^2)\cap l^2(0,T;H^3_h)$ by using the energy method.

The remaining part of this paper is organized as follows. In Section \ref{sec:numerical scheme} we present some analytic results on the continuous problem and propose the numerical scheme. In Section \ref{sec:positivity}, we provide detailed proofs on unique solvability of the scheme and the positivity-preserving property of numerical solutions. The unconditional energy stability is then established in Section \ref{sec:energy stability}. In Section \ref{sec:convergence}, we perform an optimal rate convergence analysis and derive the error estimates. Numerical experiments are presented in Section \ref{sec:numerical results} to verify the accuracy and  efficiency of our scheme. The mass conservation, energy dissipation and positivity-preserving properties are verified numerically. In the final Section \ref{sec:remarks}, we make some concluding remarks.


\section{The Numerical Scheme}
\label{sec:numerical scheme}
\setcounter{equation}{0}

\subsection{Preliminaries: well-posedness of the continuous problem}
Let $X$ be a (real) Banach or Hilbert space with the  associated norm denoted by $\|\cdot\|_X$. The symbol $X'$ stands for the dual space of $X$ and $\langle \cdot,\cdot\rangle$ denotes the duality product between $X'$ and $X$. Throughout the paper, we consider the domain $\Omega=(0, 1)^d$ with $1\leq d\leq 3$. For $m\in \mathbb{N}$, we set
\begin{align*}
	C^m_{\mathrm{per}}(\overline{\Omega})=\big\{f\in C^m(\mathbb{R}^d)\,|\, f\ \text{is}\ \Omega\text{-periodic}\big\}\quad\text{and}\quad  \mathring{C}^m_{\mathrm{per}}(\overline{\Omega})=\Big\{f\in C^m_{\mathrm{per}}(\overline{\Omega})\,\Big|\, \int_\Omega f \textrm{d}x=0\Big\}.
\end{align*}
Next, we recall some standard notations for Lebesgue and Sobolev spaces in the periodic setting (see, e.g., \cite{CWWY}).
For $p\in [1,+\infty)$, we define
\begin{align*}
	L^p_{\mathrm{per}}(\Omega)=\big\{f\in L^p_{\text{loc}}(\mathbb{R}^d)\,|\, f\ \text{is}\ \Omega\text{-periodic}\big\}\quad \text{and}\quad \mathring{L}^p_{\mathrm{per}}(\Omega)=\Big\{f\in L^p_{\mathrm{per}}(\Omega)\,\Big|\, \int_\Omega f\,\mathrm{d}x=0\Big\}.
\end{align*}
For $m\in \mathbb{Z}^+$ and $p\in [1,+\infty)$, we define
\begin{align*}
	W^{m,p}_{\mathrm{per}}(\Omega)=\{f\in W^{m,p}_{\text{loc}}(\mathbb{R}^d)\,|\, f\ \text{is}\ \Omega\text{-periodic}\},\quad
	\mathring{W}^{m,p}_{\mathrm{per}}(\Omega)=W^{m,p}_{\mathrm{per}}(\Omega)\cap \mathring{L}^p_{\mathrm{per}}(\Omega).
\end{align*}
When $p=2$, we simply denote $W^{m,p}_{\mathrm{per}}(\Omega)$, $\mathring{W}^{m,p}_{\mathrm{per}}(\Omega)$ by $H^{m}_{\mathrm{per}}(\Omega)$, $\mathring{H}^{m}_{\mathrm{per}}(\Omega)$, respectively. The standard scalar product of~$L^2_{\mathrm{per}}(\Omega)$ will be denoted by  $(\cdot,\cdot)_\Omega$. For $m\in \mathbb{Z}^+$, we also define the dual spaces
$$
H^{-m}_{\mathrm{per}}(\Omega)=(H^{m}_{\mathrm{per}}(\Omega))' \quad\text{and}\quad  \mathring{H}^{-m}_{\mathrm{per}}(\Omega)=\{f\in H^{-m}_{\mathrm{per}}(\Omega)\,|\, \langle f, 1\rangle=0\}.
$$

For the sake of convenience, we denote by $B$ the convex part of the logarithmic potential $F$ and by $\beta$ the monotone part of its derivative $f$ (see  \cite{SCHIMPERNA2020}), that is,
\begin{align}
	&B(r) := F(r)+\frac{\lambda}{2}r^2=(1+r)\ln(1+r)+(1-r)\ln(1-r),\quad r\in (-1,1), \label{B}\\
	&\beta(r) :=f(r) + \lambda r= \displaystyle{\ln\frac{1+r}{1-r}},\quad r\in (-1,1). \label{beta}
\end{align}

In \cite{SCHIMPERNA2020}, Schimperna and Wu investigated the system \eqref{phi} in a bounded smooth domain $\Omega \subset\mathbb{R}^d$, $1\leq d\leq 3$, subject to the homogeneous Neumann boundary conditions $\partial_\mathbf{n}\phi=\partial_\mathbf{n} \omega=\partial_\mathbf{n} \mu=0$ on $\partial \Omega$ ($\mathbf{n}$ denotes the unit outer normal vector on the boundary $\partial \Omega$). They proved existence and uniqueness of a global weak solution to the resulting initial boundary value problem and established its  regularity for any positive time, see \cite[Theorems 2.3, 2.4]{SCHIMPERNA2020}. With minor modifications on the proofs therein, we are able to obtain similar results for the FCH equation \eqref{phi} in the periodic setting.
\begin{prop}[Well-posedness]\label{prop:well}
	Let $\Omega=(0,1)^d$ with $1\leq d\leq 3$. Suppose that $F$ is determined by \eqref{F}, $\lambda,\,\eta\in \mathbb{R}$ and $\epsilon>0$ are given constants.
	For any initial datum $\phi_0$ satisfying
	$$
	\phi_0\in H^{2}_{\mathrm{per}}(\Omega), \quad \beta(\phi_0)\in L^{2}_{\mathrm{per}}(\Omega),\quad \int_\Omega \phi_0 \,\mathrm{d}x\in (-1,1),
	$$
	there exists a unique global weak solution $\phi$ to the system \eqref{phi} subject to the periodic boundary conditions such that, for any $T>0$,
	\begin{align*}
		& \phi \in H^1(0,T;H^{-1}_{\mathrm{per}}(\Omega)) \cap L^\infty(0,T;H^{2}_{\mathrm{per}}(\Omega)) \cap L^4(0,T;H^{3}_{\mathrm{per}}(\Omega)),\\
		& \mu \in L^2(0,T;H^{1}_{\mathrm{per}}(\Omega)),\\
		& \omega \in L^\infty(0,T; L^2_{\mathrm{per}}(\Omega))\cap L^4(0,T; H^{1}_{\mathrm{per}}(\Omega)),\\
		& \beta(\phi) \in L^\infty(0,T;L^{2}_{\mathrm{per}}(\Omega)) \cap L^4(0,T;H^{1}_{\mathrm{per}}(\Omega)),\\
		& \beta(\phi)\beta'(\phi) \in L^2(0,T;L^1_{\mathrm{per}}(\Omega)), \quad  \beta''(\phi) |\nabla \phi|^2 \in L^2(0,T;L^1_{\mathrm{per}}(\Omega)),\\
		& -1<\phi(x,t)<1 \quad \text{a.e.~in }\,\Omega\times(0,T),
	\end{align*}
	and the initial condition is satisfied $\phi|_{t=0} = \phi_0(x)$ almost everywhere in $\Omega$. The following weak formulations hold for almost all $t\in (0,T)$:
	\begin{align}
		& \partial_t \phi =\Delta \mu \quad \text{in }\,H^{-1}_{\mathrm{per}}(\Omega), \label{E:mu}\\
		& \mu = \epsilon^4\Delta^2\phi + \beta(\phi)\beta'(\phi) + \epsilon^2\beta''(\phi)|\nabla \phi|^2 -2\epsilon^2\Delta\beta(\phi) -\lambda\phi\beta'(\phi) + \epsilon^2(2\lambda+\epsilon^p\eta)\Delta\phi \notag \\
		&\qquad - (\lambda+\epsilon^p\eta)\beta(\phi) + \lambda(\lambda+\epsilon^p\eta)\phi,\quad \text{in }\,H^{-1}_{\mathrm{per}}(\Omega) + L^1_{\mathrm{per}}(\Omega). \label{E:u}
	\end{align}
	Besides, the weak solution $\phi$ satisfies the mass conservation such that
	$$
	\int_\Omega \phi(x,t)\, \mathrm{d}x= \int_\Omega \phi_0(x) \,\mathrm{d}x,\quad \forall\, t\in [0,T],
	$$
	and for any $t_1,t_2$ satisfying $0\leq t_1 < t_2 \le T$, we have the energy equality:
	\begin{equation}\notag
		E(\phi(t_2)) + \int_{t_1}^{t_2} \| \nabla \mu(s) \|_{L^2(\Omega)}^2 \, \mathrm{d}s = E(\phi(t_1)).
	\end{equation}
\end{prop}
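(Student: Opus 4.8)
The plan is to obtain Proposition~\ref{prop:well} as a transcription of \cite[Theorems 2.3, 2.4]{SCHIMPERNA2020} to the periodic setting, so the argument is really a sketch of how the Neumann-case proof carries over rather than a new proof. First I would set up a Faedo--Galerkin approximation scheme based on the eigenfunctions of $-\Delta$ on $\Omega=(0,1)^d$ with periodic boundary conditions (i.e.\ the Fourier basis), which replaces the Neumann eigenbasis used in \cite{SCHIMPERNA2020}; the crucial point is that the key structural features needed in the energy estimates --- integration by parts without boundary contributions, compatibility of the Laplacian with the projection operators, and the fact that constants are in the kernel of $-\Delta$ --- all hold verbatim in the periodic case. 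I would regularize the singular potential $F$ (equivalently $\beta$) by a $C^2$ function $F_\sigma$ defined globally on $\mathbb{R}$ with $F_\sigma''$ bounded, as in \cite{copetti1992numerical, SCHIMPERNA2020}, solve the resulting smooth finite-dimensional ODE system, and then pass to the limit first as the Galerkin dimension $\to\infty$ and then as the regularization parameter $\sigma\to 0$.

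The core of the matter is the a priori estimates, which I would derive exactly as in \cite{SCHIMPERNA2020}. Testing the equation by $\mu$ (and using $\partial_t\phi=\Delta\mu$) gives the basic energy dissipation identity, yielding the $L^\infty(0,T;H^2_{\mathrm{per}})$ bound on $\phi$ from the $\tfrac12(\epsilon^2\Delta\phi - f(\phi))^2$ term together with a Gagliardo--Nirenberg / interpolation argument to absorb the indefinite $-\epsilon^p\eta(\tfrac{\epsilon^2}{2}|\nabla\phi|^2 + F(\phi))$ contribution, plus the $L^2(0,T;H^1_{\mathrm{per}})$ bound on $\nabla\mu$; the mass-conservation identity then upgrades this to a full $\mu$ bound via Poincar\'e. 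The higher-order estimates --- the $L^4(0,T;H^3_{\mathrm{per}})$ regularity of $\phi$, the $L^\infty(0,T;L^2_{\mathrm{per}})\cap L^4(0,T;H^1_{\mathrm{per}})$ bounds on $\omega$ and $\beta(\phi)$, and the $L^2(0,T;L^1_{\mathrm{per}})$ control of $\beta(\phi)\beta'(\phi)$ and $\beta''(\phi)|\nabla\phi|^2$ --- come from testing suitable forms of the equation by $\beta(\phi)$ (or $\Delta^2\phi$, $\Delta\beta(\phi)$, etc.), exploiting the monotonicity of $\beta$ and the convexity of $B$ so that terms like $\int_\Omega \beta'(\phi)|\nabla\phi|^2$ have a good sign. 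The positivity-preserving property $-1<\phi<1$ a.e.\ follows from the fact that $\beta(\phi)\in L^\infty(0,T;L^2_{\mathrm{per}})$ forces $\phi$ to stay in $(-1,1)$ pointwise, by the growth of $\beta$ near $\pm1$; this is precisely where the singular character of $F$ is used.

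Finally, the uniqueness and the energy equality require a separate short argument. Uniqueness follows by taking the difference of two solutions, testing by $\mathcal{N}(\phi_1-\phi_2)$ where $\mathcal{N}$ is the inverse periodic Laplacian on $\mathring{H}^{-1}_{\mathrm{per}}$ (legitimate because of mass conservation), and using the Lipschitz-on-compact-subsets character of the nonlinearities together with the strict-separation-type bounds and a Gr\"onwall argument --- again structurally identical to \cite{SCHIMPERNA2020}. The energy equality on $[t_1,t_2]$ is obtained by first establishing the energy \emph{inequality} in the limit (lower semicontinuity) and the reverse inequality by a regularization/density argument showing $t\mapsto E(\phi(t))$ is absolutely continuous, which is licit once the regularity listed in the statement is known. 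The main obstacle --- and the only place real work beyond bookkeeping is needed --- is verifying that the delicate interplay between the singular nonlinearity $\beta$ and the fourth- and higher-order spatial operators in \eqref{E:u} (the terms $\beta(\phi)\beta'(\phi)$, $\beta''(\phi)|\nabla\phi|^2$, $\Delta\beta(\phi)$) still closes the estimates in the periodic geometry; since periodic boundary conditions only \emph{simplify} the integration-by-parts identities relative to the Neumann case, no new difficulty arises, and the ``minor modifications'' claim is justified.
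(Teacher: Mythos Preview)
Your proposal is correct and matches the paper's approach: the paper does not give an independent proof of this proposition but simply states it as a transcription of \cite[Theorems 2.3, 2.4]{SCHIMPERNA2020} to the periodic setting, noting that only ``minor modifications'' are needed. Your sketch of how the Faedo--Galerkin scheme, the regularization of $\beta$, and the a~priori estimates carry over is more detailed than what the paper itself provides, but is entirely in the same spirit.
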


Next, we present the property of strict separation from pure states $\pm 1$ for global weak solutions. This property holds globally (in time) when the spatial dimension is less than or equal to two, while it can only hold locally in the three dimensional case. In analogy to \cite[Proposition 2.8]{SCHIMPERNA2020} and keeping in mind the continuous embedding $H^1(0,T;H^{-1}_{\mathrm{per}}(\Omega)) \cap L^\infty(0,T;H^{2}_{\mathrm{per}}(\Omega))\hookrightarrow C([0,T]; C_{\mathrm{per}}(\overline{\Omega}))$ for $1\leq d\leq 3$ (cf. \cite{simon}), we can prove the following result:
\begin{prop}[Strict separation from pure states]  \label{prop:sepa}
	Let the hypotheses of Proposition~\ref{prop:well} be satisfied.
	
	$(1)$ Assume that $d=1,2$ and $T>0$. For any $\tau \in (0,T)$, there exists a constant $\delta\in (0,1)$ such that the global weak solution $\phi$ obtained in Proposition~\ref{prop:well} satisfies
	\begin{equation}\notag
		\|\phi(t)\|_{C(\overline{\Omega})} \leq 1 - \delta,\quad \forall\, t\in [\tau,\,T].
	\end{equation}
	Moreover, if $\|\phi_0\|_{C(\overline{\Omega})}\leq 1-\delta_0$ for some given $\delta_0\in (0,1)$, then there exists $\delta_1\in (0,\delta_0]$ such that
	\begin{equation}\notag
		\|\phi(t)\|_{C(\overline{\Omega})} \leq 1 - \delta_1,\quad \forall\, t\in [0,\,T].
	\end{equation}
	
	$(2)$ Assume that $d=3$ and $\|\phi_0\|_{C(\overline{\Omega})}\leq 1-\delta_0$ for some given $\delta_0\in (0,1)$. Then there exists some $T_0\in (0,T]$ (depending on $\delta_0$) such that
	\begin{equation}\label{separ2}
		\|\phi(t)\|_{C(\overline{\Omega})} \leq 1 - \frac12\delta_0,\quad \forall\, t\in [0,\,T_0].
	\end{equation}
\end{prop}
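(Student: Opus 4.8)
The plan is to mimic the strategy of \cite[Propositions~2.8]{SCHIMPERNA2020} adapted to the periodic setting, exploiting the regularity already recorded in Proposition~\ref{prop:well} together with the continuous embedding
$H^1(0,T;H^{-1}_{\mathrm{per}}(\Omega)) \cap L^\infty(0,T;H^{2}_{\mathrm{per}}(\Omega)) \hookrightarrow C([0,T];C_{\mathrm{per}}(\overline\Omega))$, which is valid for $1\le d\le 3$. From this embedding the map $t\mapsto \|\phi(t)\|_{C(\overline\Omega)}$ is continuous and, since $-1<\phi(x,t)<1$ a.e., for each fixed $t$ the quantity $\|\phi(t)\|_{C(\overline\Omega)}$ is already $<1$; the issue is to obtain a bound that is uniform over a time interval. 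The natural tool is an $L^p$-type estimate for $\beta(\phi)$ with $p\to\infty$, obtained by testing the weak formulation against a suitable singular power of $\beta(\phi)$ and using the structure of the sixth-order operator. Concretely, one works with the equation \eqref{E:mu}--\eqref{E:u}, isolating the dominant good terms $\epsilon^4\Delta^2\phi$, $\beta(\phi)\beta'(\phi)$, $\epsilon^2\beta''(\phi)|\nabla\phi|^2$ and $-2\epsilon^2\Delta\beta(\phi)$, and treating the remaining lower-order contributions (which are all controlled by the $H^2_{\mathrm{per}}$-bound on $\phi$ from Proposition~\ref{prop:well}) as perturbations.

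For part~(1), $d=1,2$: first I would establish an interior-in-time statement. Fix $\tau\in(0,T)$. By the parabolic smoothing in \cite[Theorems~2.3, 2.4]{SCHIMPERNA2020} (equivalently, the analogue in the periodic case), $\phi$ enjoys improved regularity on $[\tau/2,T]$, in particular $\beta(\phi)\in L^\infty(\tau/2,T;H^1_{\mathrm{per}}(\Omega))$ and hence, by Sobolev embedding in dimension $d\le 2$, $\beta(\phi)\in L^\infty(\tau/2,T;L^q_{\mathrm{per}}(\Omega))$ for every finite $q$. A Moser–Alikakos iteration on the powers $|\beta(\phi)|^{p-2}\beta(\phi)$, using that $\beta'\ge 1$ and the dissipative sixth-order structure, upgrades this to an $L^\infty(\tau,T;L^\infty_{\mathrm{per}}(\Omega))$ bound for $\beta(\phi)$ with a constant depending only on the data and on $\tau$. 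A uniform bound $\|\beta(\phi)(t)\|_{L^\infty(\Omega)}\le M$ translates, via the strict monotonicity and the explicit form $\beta(r)=\ln\frac{1+r}{1-r}$, into $\|\phi(t)\|_{C(\overline\Omega)}\le 1-\delta$ with $\delta=\delta(M)>0$, for all $t\in[\tau,T]$. For the second assertion, when the initial datum is already separated, $\|\phi_0\|_{C(\overline\Omega)}\le 1-\delta_0$, one may repeat the iteration starting from $t=0$: the initial bound gives $\|\beta(\phi_0)\|_{L^\infty(\Omega)}\le \ln\frac{2-\delta_0}{\delta_0}<\infty$, which feeds the Moser scheme at the base step, and one obtains $\|\beta(\phi)(t)\|_{L^\infty(\Omega)}\le M_1$ on $[0,T]$, hence $\|\phi(t)\|_{C(\overline\Omega)}\le 1-\delta_1$ with $\delta_1\in(0,\delta_0]$.

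For part~(2), $d=3$: here the Sobolev embedding $H^1\hookrightarrow L^q$ fails for $q=\infty$, so the same iteration cannot run globally. Instead I would argue by a short-time continuation. Suppose $\|\phi_0\|_{C(\overline\Omega)}\le 1-\delta_0$; set $\delta=\tfrac12\delta_0$ and consider the (nonempty, by continuity in $C(\overline\Omega)$) time $T_0=\sup\{t\le T:\ \|\phi(s)\|_{C(\overline\Omega)}\le 1-\delta \text{ for all } s\in[0,t]\}$. On $[0,T_0]$ the function $\beta$ and its derivatives are confined to the compact set where $|\phi|\le 1-\delta$, so $\beta(\phi),\beta'(\phi),\beta''(\phi)$ are bounded and smooth there; this, combined with the $L^\infty(0,T;H^2_{\mathrm{per}})\cap L^4(0,T;H^3_{\mathrm{per}})$ regularity, gives a local-in-time $L^\infty$-bound for $\beta(\phi)$ and for $\partial_t\phi$ that depends only on $\delta_0$, from which one derives a modulus of continuity estimate of the form $\|\phi(t)-\phi_0\|_{C(\overline\Omega)}\le C(\delta_0)\,t^{\theta}$ for some $\theta>0$. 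Choosing $T_0$ small enough that $C(\delta_0)T_0^\theta\le \tfrac12\delta_0$ shows the set above is relatively open and closed in $[0,T_0]$, yielding \eqref{separ2}.

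The main obstacle is the Moser–Alikakos iteration for the sixth-order operator in part~(1): one must verify that testing \eqref{E:u} against $|\beta(\phi)|^{p-2}\beta(\phi)$ (or a truncated/regularized version thereof, to make the computation rigorous) produces a coercive principal contribution — coming from $\epsilon^4\Delta^2\phi$ paired against this test function after integration by parts, together with the manifestly nonnegative terms $\int_\Omega\beta(\phi)\beta'(\phi)\,|\beta(\phi)|^{p-2}\beta(\phi)\,\mathrm dx\ge 0$ and $\int_\Omega \epsilon^2\beta''(\phi)|\nabla\phi|^2\,|\beta(\phi)|^{p-2}\beta(\phi)\,\mathrm dx\ge 0$ (using that $\beta''(r)$ and $\beta(r)$ have the same sign) — and that the cross term $-2\epsilon^2\int_\Omega \Delta\beta(\phi)\,|\beta(\phi)|^{p-2}\beta(\phi)\,\mathrm dx$ is nonnegative as well, while all lower-order terms are absorbed with constants that grow only polynomially in $p$. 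Tracking this $p$-dependence carefully through the iteration is what delivers the uniform $L^\infty$-bound; this is precisely the content of \cite[Proposition~2.8]{SCHIMPERNA2020} in the Neumann case, and only the boundary terms change in adapting it to the periodic setting.
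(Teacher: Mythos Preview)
Your proposal is correct and follows the same route as the paper: both defer to \cite[Proposition~2.8]{SCHIMPERNA2020} with only the boundary conditions changed, and both highlight the embedding $H^1(0,T;H^{-1}_{\mathrm{per}}(\Omega)) \cap L^\infty(0,T;H^{2}_{\mathrm{per}}(\Omega)) \hookrightarrow C([0,T];C_{\mathrm{per}}(\overline\Omega))$ as the key ingredient. The paper in fact gives no further detail beyond this citation, so your sketch of the Moser--Alikakos iteration for part~(1) is more explicit than what appears there.

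One small remark on part~(2): the embedding just mentioned already yields $\phi\in C([0,T];C_{\mathrm{per}}(\overline\Omega))$ unconditionally, so the map $t\mapsto\|\phi(t)\|_{C(\overline\Omega)}$ is continuous and the conclusion \eqref{separ2} follows immediately from $\|\phi_0\|_{C(\overline\Omega)}\le 1-\delta_0<1-\tfrac12\delta_0$ without any bootstrap or modulus-of-continuity estimate. Your continuation argument is not wrong, but it is more machinery than the statement requires; the paper's emphasis on the embedding suggests this direct route is what is intended.
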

\begin{rem}\label{sepreg}
	Proposition \ref{prop:sepa} indicates that if the initial datum $\phi_0$ is strictly separated from $\pm 1$, then in the one or two dimensional case, for an arbitrary but fixed final time $T>0$, the solution $\phi$ is strictly separated from $\pm 1$ on the whole time interval $[0,T]$ as well. On the other hand, in the three dimensional case, the strict separation property holds only in a ``local" sense such that if $\phi_0$ is strictly separated from $\pm 1$, then at least for some short period of time, the corresponding solution $\phi$ can be strictly separated from $\pm 1$. Whether a global in time separation property holds in the three dimensional case remains an open problem. The above separation property is crucial, since it enables us to avoid the singularity of the logarithmic type function $\beta$ (which is indeed smooth on a closed subset of $(-1,1)$) and then obtain arbitrary higher order regularity of solutions to (\ref{phi}), provided that the initial datum is sufficiently smooth.
\end{rem}

\subsection{The finite difference spatial discretization}
We first recall some standard notations for discrete functions and operators, see e.g., \cite{wise2010,wise2009}. For the spatial discretization, we apply the centered finite difference approximation and adopt the periodic setting similar to that in \cite{CJ3W2022,chen2019,liu2021positivity}. Without loss of generality, below we just state the setting in the computational domain $\Omega=(0,1)^3$. It can be easily extended to more general cells like $\Omega=(0,L_x)\times (0,L_y)\times (0,L_z)$ and to the lower dimensional cases $d=1,2$.

Let $N\in\mathbb{N}$ be a given positive integer. We define the uniform spatial grid size as $h = 1/N >0$. The numerical value of a function $f$ at the cell centered mesh point $((i-\frac{1}{2})h,(j-\frac{1}{2})h,(k-\frac{1}{2})h)$ is denoted by $f_{i,j,k}$ such that $f_{i,j,k}=f((i-\frac{1}{2})h,(j-\frac{1}{2})h,(k-\frac{1}{2})h)$. Then we introduce the space of three dimensional discrete $N^3$-periodic functions:
$$
\mathcal{C}_{\mathrm{per}}:=\left\{f:\mathbb{Z}^3\rightarrow\mathbb{R}\,|\,f_{i,j,k}=f_{i+\gamma_1 N,j+\gamma_2 N,k+\gamma_3 N},\ \ \forall\, i,j,k,\gamma_1,\gamma_2,\gamma_3\in\mathbb{Z}\right\}.
$$
The discrete inner product on $\mathcal{C}_{\mathrm{per}}$ is defined as $\left<f,g\right>_{\Omega}:=h^3\sum_{i,j,k=1}^{N}f_{i,j,k}g_{i,j,k}$ for any $f, g\in \mathcal{C}_{\mathrm{per}}$.
For any $f\in\mathcal{C}_{\mathrm{per}}$, its mean value is given by $\overline{f}= |\Omega|^{-1}\left<f,1\right>_{\Omega}$ (here we simply have $|\Omega|=1$). Then we introduce the subspace of $\mathcal{C}_{\mathrm{per}}$ with zero mean such that
$$
\mathring{\mathcal{C}}_{\mathrm{per}}:=\left\{f\in\mathcal{C}_{\mathrm{per}}\,|\,\overline{f}=0\right\}.
$$
Next, we use the notation $\mathcal{E}_{\mathrm{per}}=\mathcal{E}_{\mathrm{per}}^x\times \mathcal{E}_{\mathrm{per}}^y\times \mathcal{E}_{\mathrm{per}}^z$ to denote the set of face-centered vector grid functions $\vec{f} = (f^x, f^y, f^z)^T$ that are $\Omega$-periodic, i.e., $N$-periodic in each direction. Here, we set
$$
\mathcal{E}_{\mathrm{per}}^x:=\left\{f:\mathbb{Z}^3\rightarrow\mathbb{R}\,\Big|\,f_{i+\frac12,j,k}=f_{i+\frac12+\gamma_1 N,j+\gamma_2 N,k+\gamma_3 N},\ \  \forall\, i,j,k,\gamma_1,\gamma_2,\gamma_3\in\mathbb{Z}\right\},
$$
where $f_{i+\frac12,j,k}=f(ih,(j-\frac{1}{2})h,(k-\frac{1}{2})h)$. The other two spaces $\mathcal{E}_{\mathrm{per}}^y$, $\mathcal{E}_{\mathrm{per}}^z$ are defined in an analogous manner.

The discrete average and difference operators acting on $\mathcal{C}_{\mathrm{per}}$ yield face-centered functions, evaluated at the east-west faces, $(i+\frac{1}{2},j,k)$; north-south faces, $(i, j + \frac{1}{2}, k)$; and up-down faces, $(i, j, k + \frac{1}{2})$, respectively. We define $A_x, D_x: \mathcal{C}_{\mathrm{per}} \to \mathcal{E}_{\mathrm{per}}^x$, $A_y, D_y: \mathcal{C}_{\mathrm{per}} \to \mathcal{E}_{\mathrm{per}}^y$ and $A_z, D_z: \mathcal{C}_{\mathrm{per}} \to \mathcal{E}_{\mathrm{per}}^z$ such that
\begin{align*}
	&A_xf_{i+\frac{1}{2},j,k}:=\frac{1}{2}\left(f_{i+1,j,k}+f_{i,j,k}\right),\qquad D_xf_{i+\frac{1}{2},j,k}:=\frac{1}{h}\left(f_{i+1,j,k}-f_{i,j,k}\right),\\
	&A_yf_{i,j+\frac{1}{2},k}:=\frac{1}{2}\left(f_{i,j+1,k}+f_{i,j,k}\right),\qquad D_yf_{i,j+\frac{1}{2},k}:=\frac{1}{h}\left(f_{i,j+1,k}-f_{i,j,k}\right),\\
	&A_zf_{i,j,k+\frac{1}{2}}:=\frac{1}{2}\left(f_{i,j,k+1}+f_{i,j,k}\right),\qquad D_zf_{i,j,k+\frac{1}{2}}:=\frac{1}{h}\left(f_{i,j,k+1}-f_{i,j,k}\right).
\end{align*}
On the other hand, the corresponding operators at the staggered mesh points, that is, $a_x, d_x: \mathcal{E}_{\mathrm{per}}^x\to \mathcal{C}_{\mathrm{per}}$, $a_y, d_y:  \mathcal{E}_{\mathrm{per}}^y \to \mathcal{C}_{\mathrm{per}}$, $a_z, d_z: \mathcal{E}_{\mathrm{per}}^z \to \mathcal{C}_{\mathrm{per}}$, are given by
\begin{align*}
	&a_xf_{i,j,k}:=\frac{1}{2}\left(f_{i+\frac{1}{2},j,k}+f_{i-\frac{1}{2},j,k}\right),\qquad d_xf_{i,j,k}:=\frac{1}{h}\left(f_{i+\frac{1}{2},j,k}-f_{i-\frac{1}{2},j,k}\right),\\
	&a_yf_{i,j,k}:=\frac{1}{2}\left(f_{i,j+\frac{1}{2},k}+f_{i,j-\frac{1}{2},k}\right),\qquad d_yf_{i,j,k}:=\frac{1}{h}\left(f_{i,j+\frac{1}{2},k}-f_{i,j-\frac{1}{2},k}\right),\\
	&a_zf_{i,j,k}:=\frac{1}{2}\left(f_{i,j,k+\frac{1}{2}}+f_{i,j,k-\frac{1}{2}}\right),\qquad d_zf_{i,j,k}:=\frac{1}{h}\left(f_{i,j,k+\frac{1}{2}}-f_{i,j,k-\frac{1}{2}}\right).
\end{align*}

Let us now introduce the discrete gradient and divergence operators $\nabla_h: \mathcal{C}_{\mathrm{per}} \to \mathcal{E}_{\mathrm{per}}$, $\nabla_h\cdot : \mathcal{E}_{\mathrm{per}}\to \mathcal{C}_{\mathrm{per}}$. For any $f\in \mathcal{C}_{\mathrm{per}}$, we set $\nabla_h f_{i,j,k}=(D_x f_{i+\frac12,j,k}, D_y f_{i,j+\frac12,k}, D_z f_{i,j,k+\frac12})^T$, while for any $\vec{f} = (f^x, f^y, f^z)^T\in \mathcal{E}_{\mathrm{per}}$, we define $\nabla_h\cdot \vec{f}_{i,j,k}=d_xf^x_{i,j,k}+d_yf^y_{i,j,k}+d_zf^z_{i,j,k}$.
Suppose that $g$ is an arbitrary scalar function defined at all the face center points (east-west, north-south, and up-down) and $\vec{f} = (f^x, f^y, f^z)^T \in \mathcal{E}_{\mathrm{per}}$, then we have
\begin{equation}
	\nabla_h\cdot(g\vec{f})_{i,j,k}=d_x(g_{\text{ew}}f^x)_{i,j,k}+d_y(g_{\text{ns}}f^y)_{i,j,k}+d_z(g_{\text{ud}}f^z)_{i,j,k}.
	\notag
\end{equation}
If $\vec{f} = \nabla_h\phi$ for certain scalar grid function $\phi \in \mathcal{C}_{\mathrm{per}}$, it follows that
\begin{equation}
	\nabla_h\cdot\left(g\nabla_h\phi\right)_{i,j,k}=d_x(g_{\text{ew}}D_x\phi)_{i,j,k}+d_y(g_{\text{ns}}D_y\phi)_{i,j,k}+d_z(g_{\text{ud}}D_z\phi)_{i,j,k}.
	\notag
\end{equation}
As a consequence, the standard three dimensional discrete Laplacian $\Delta_h : \mathcal{C}_{\mathrm{per}} \rightarrow \mathcal{C}_{\mathrm{per}}$ can be defined as
\begin{equation}
	\Delta_h\phi:=\nabla_h\cdot(1\nabla_h\phi)_{i,j,k}=d_x(D_x\phi)_{i,j,k}+d_y(D_y\phi)_{i,j,k}+d_z(D_z\phi)_{i,j,k}, \quad \forall\,\phi \in \mathcal{C}_{\mathrm{per}}.
	\notag
\end{equation}

For $1\leq p<\infty$, the discrete $l^p$ norms are given by $\|f\|_{L_h^p}:=\left(\left<|f|^p,1\right>_{\Omega}\right)^{\frac{1}{p}}$. When $p=2$, we simply denote $\|f\|_{L_h^2}$ by $\|f\|_2$. In addition, the discrete maximum norm in $L^\infty_h$ is defined as $\|f\|_{\infty} := \max_{1\leq i,j,k\leq N} |f_{i,j,k}|$. For any two vector grid functions $\vec{f} = (f^x, f^y, f^z)^T, \vec{g} = (g^x, g^y, g^z)^T\in\mathcal{E}_{\mathrm{per}}$, we define the discrete inner product as follows
$$
\big[\vec{f},\vec{g}\big]_{\Omega}:=\left[f^x,g^x\right]_x + \left[f^y,g^y\right]_y + \left[f^z,g^z\right]_z,
$$
where
$$
\left[f^x,g^x\right]_x:=\left<a_x(f^xg^x),1\right>_{\Omega},
\quad \left[f^y,g^y\right]_y:=\left<a_y(f^yg^y),1\right>_{\Omega},
\quad \left[f^z,g^z\right]_z:=\left<a_z(f^zg^z),1\right>_{\Omega}.
$$
With the above notations, we can further define
\begin{align}
	&\|\nabla_h f\|_{L_h^p}:=\left(\big[|D_xf|^p,1\big]_x+\big[|D_yf|^p,1\big]_y+\big[|D_zf|^p,1\big]_z\right)^{\frac{1}{p}},\quad 1\leq p<\infty,		\label{gradient p}
\end{align}
and the discrete Sobolev-type norms through
\begin{equation*}
	\|f\|_{W_h^{1,p}}^p:=\|f\|_{L_h^p}^p + \|\nabla_h f\|_{L_h^p}^p,\quad
	\|f\|_{W_h^{2,p}}^p:=\|f\|_{W_h^{1,p}}^p + \|\Delta_h f\|_{L_h^p}^p,\quad \forall\,f\in \mathcal{C}_{\mathrm{per}},
\end{equation*}
and so on.
When $p=2$, we simply denote the norm $\|\cdot\|_{W_h^{m,2}}$ by  $\|\cdot\|_{H_h^m}$, $m\in \mathbb{Z}^+$.

A discrete version of the space $\mathring{H}^{-1}_{\mathrm{per}}(\Omega)$ is also necessary in the subsequent analysis. For any $f\in\mathring{\mathcal{C}}_{\mathrm{per}}$, let $\psi[f]:=(-\Delta_h)^{-1}f\in\mathring{\mathcal{C}}_{\mathrm{per}}$ be the unique periodic solution of the discrete elliptic equation $-\Delta_h\psi=f$ (cf. \cite[Lemma 3.2]{wise2009}). Then the inner product $\left<\cdot,\cdot\right>_{-1,h}$ and the associated $\| \cdot \|_{-1,h}$ norm can be defined by
\begin{align*}
	\left<f,g\right>_{-1,h} &:= \big[\nabla_h \psi[f],\nabla_h \psi[g]\big]_{\Omega}= \left<f,\psi[g]\right>_{\Omega}=\left<\psi[f],g\right>_{\Omega},\quad \forall\, f,g\in \mathring{\mathcal{C}}_{\mathrm{per}},
\end{align*}
and $\|f\|^2_{-1,h}  := \left<f,f\right>_{-1,h}$ for any $f\in\mathring{\mathcal{C}}_{\mathrm{per}}$. The above definition is a direct consequence of the following lemma on the discrete summation-by-parts formulae, cf. \cite[Lemma 3.3]{wise2009}, see also \cite{wise2010,guo2016,wang2011}.

\begin{lm}\label{lem:sumbyparts}
	Let $\mathcal{D}$ be an arbitrary periodic scalar function defined on all of the face center points.
	For any $\psi, \phi \in \mathcal{C}_{\mathrm{per}}$ and any $\vec{f}\in\mathcal{E}_{\mathrm{per}}$, the following summation-by-parts formulae are valid:
	\begin{equation}
		\big<\psi,\nabla_h\cdot\vec{f}\,\big>_{\Omega}=-\big[\nabla_h\psi,\vec{f}\,\big]_{\Omega},\qquad \big<\psi,\nabla_h\cdot(\mathcal{D}\nabla_h\phi)\big>_{\Omega}=-\big[\nabla_h\psi,\mathcal{D}\nabla_h\phi\big]_{\Omega}.\notag
	\end{equation}
\end{lm}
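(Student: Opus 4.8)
\textbf{Proof strategy for Lemma~\ref{lem:sumbyparts}.} The plan is to prove the two summation-by-parts identities directly, using only the definitions of the discrete difference and average operators together with the $N$-periodicity of the grid functions. The key underlying fact is a one-dimensional telescoping/reindexing identity: for any $N$-periodic sequences $(u_i)$ and $(v_{i+\frac12})$, one has $\sum_{i=1}^{N} u_i\, (d_x v)_i = \sum_{i=1}^{N} u_i\, \frac1h(v_{i+\frac12}-v_{i-\frac12}) = -\sum_{i=1}^{N} (D_x u)_{i+\frac12}\, v_{i+\frac12}$, where the boundary terms from the discrete integration by parts cancel precisely because both sequences wrap around with period $N$ (so no actual boundary is ever reached). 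I would state and verify this scalar identity first, since it is the engine behind everything else.

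For the first formula, I would write $\nabla_h\cdot\vec f = d_x f^x + d_y f^y + d_z f^z$ and expand $\langle \psi, \nabla_h\cdot\vec f\rangle_\Omega = h^3\sum_{i,j,k} \psi_{i,j,k}\big((d_x f^x)_{i,j,k}+(d_y f^y)_{i,j,k}+(d_z f^z)_{i,j,k}\big)$. Applying the scalar reindexing identity above in the $x$-direction (holding $j,k$ fixed), and likewise in the $y$- and $z$-directions, converts each term into $-h^3\sum (D_x\psi)f^x$ summed over the appropriate staggered points, etc. Collecting these three pieces and recognizing them as $-[\,D_x\psi, f^x\,]_x - [\,D_y\psi, f^y\,]_y - [\,D_z\psi, f^z\,]_z = -[\nabla_h\psi, \vec f\,]_\Omega$ gives the claim. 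Here one must be slightly careful that the discrete inner product $[\cdot,\cdot]_x$ is defined with the average operator $a_x$, so I would note that $h^3\sum_{i,j,k} (D_x\psi\, f^x)_{i+\frac12,j,k}$ coincides with $\langle a_x(D_x\psi\, f^x),1\rangle_\Omega$ after the same periodic reindexing that shifts a half-integer-indexed sum back to integer nodes; this bookkeeping is the only mildly delicate point.

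The second formula is then an immediate specialization: taking $\vec f = \mathcal D\nabla_h\phi$ (meaning the face-centered vector whose components are $\mathcal D_{\mathrm{ew}}D_x\phi$, $\mathcal D_{\mathrm{ns}}D_y\phi$, $\mathcal D_{\mathrm{ud}}D_z\phi$), which indeed lies in $\mathcal E_{\mathrm{per}}$ because $\mathcal D$ and $\phi$ are both periodic, the first identity yields $\langle \psi, \nabla_h\cdot(\mathcal D\nabla_h\phi)\rangle_\Omega = -[\nabla_h\psi, \mathcal D\nabla_h\phi]_\Omega$ with no further work. I do not expect any serious obstacle here; the entire argument is elementary. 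The one place demanding attention is the index bookkeeping in the periodic telescoping sum and the matching of half-integer staggered sums with the $a_x$-weighted inner product, so that the boundary contributions genuinely vanish rather than merely being dropped; everything else is a routine expansion of definitions.
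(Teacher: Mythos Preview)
Your proposal is correct: the one-dimensional periodic telescoping identity is exactly the right engine, and your handling of the averaging operator $a_x$ in the staggered inner product (noting that by periodicity $\sum_i g_{i+\frac12}=\sum_i g_{i-\frac12}$, so $\langle a_x g,1\rangle_\Omega = h^3\sum_{i,j,k} g_{i+\frac12,j,k}$) is the only subtle bookkeeping and you have flagged it appropriately. The second identity is indeed an immediate specialization of the first.

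As for comparison with the paper: the paper does not actually prove this lemma. It is stated with a reference to \cite[Lemma~3.3]{wise2009} (and related works) and used as a known tool. Your direct argument is essentially the standard proof one would find in those references, so there is no genuine methodological difference---you are simply supplying what the paper imports.
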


Before ending this subsection, we also recall the following result (see \cite[Lemma 3.1]{chen2019}) that will be useful in the subsequent analysis:
\begin{lm}\label{lemma2}
	Suppose that $\phi_1,\phi_2\in\mathcal{C}_{\mathrm{per}}$, satisfying $\left<\phi_1-\phi_2,1\right>_{\Omega}=0$ and $\|\phi_1\|_{\infty},\,\|\phi_2\|_{\infty}<1$, the following estimate holds:
	\begin{equation}
		\|(-\Delta_h)^{-1}(\phi_1-\phi_2)\|_{\infty}\leq C_0,
	\end{equation}
	where the constant $C_0>0$ only depends on $\Omega$.
\end{lm}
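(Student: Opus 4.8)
The plan is to reduce the estimate to a discrete maximum principle (comparison principle) for the discrete Laplacian. Write $\psi := (-\Delta_h)^{-1}(\phi_1-\phi_2)\in\mathring{\mathcal C}_{\mathrm{per}}$, so that $-\Delta_h\psi = \phi_1-\phi_2$ with $\langle\psi,1\rangle_\Omega = 0$. Since $\|\phi_i\|_\infty<1$, we immediately get the pointwise bound $\|\phi_1-\phi_2\|_\infty\le \|\phi_1\|_\infty + \|\phi_2\|_\infty < 2$, so the source term of the discrete Poisson problem is bounded by a fixed constant independent of $h$. The whole point is that one \emph{cannot} conclude a bound on $\|\psi\|_\infty$ directly from a bound on $\|\Delta_h\psi\|_2$ (that would cost a factor depending on $h$ through an inverse inequality); instead one must exploit the $L^\infty$ bound on the right-hand side together with an $h$-independent bound for the discrete Green's function / solution operator.

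First I would introduce the auxiliary grid function $\chi\in\mathcal C_{\mathrm{per}}$ that solves $-\Delta_h\chi = 2$ with $\langle\chi,1\rangle_\Omega = 0$; this is exactly $\chi = 2\,(-\Delta_h)^{-1}(1)$ up to the mean-zero normalization, but more usefully it is the (periodic, mean-zero) discrete approximation of the smooth periodic function $q$ solving $-\Delta q = 2$, $\int_\Omega q = 0$, which is a fixed smooth function depending only on $\Omega$. By standard consistency and stability estimates for the centered finite difference discretization of the periodic Poisson problem (the operator $-\Delta_h$ on $\mathring{\mathcal C}_{\mathrm{per}}$ being symmetric positive definite, cf. \cite[Lemma 3.2]{wise2009}), one has $\|\chi\|_\infty \le \|q\|_{L^\infty(\Omega)} + Ch^2\|q\|_{C^4} \le C_0'$ with $C_0'$ depending only on $\Omega$. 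Then I would run a discrete comparison argument: since $-\Delta_h(\chi\pm\psi) = 2 \pm (\phi_1-\phi_2) \ge 0$ pointwise (because $|\phi_1-\phi_2|<2$), the discrete maximum principle for $-\Delta_h$ (a grid point where $\chi\pm\psi$ attains its maximum forces $\Delta_h$ of it to be $\le 0$ there, hence the max is controlled) combined with mean-zero normalization yields $\chi\pm\psi \le \max_{\mathrm{grid}}(\chi\pm\psi)$, and a symmetric argument from below gives $\|\psi\|_\infty \le \|\chi\|_\infty \le C_0'$, which is the claim with $C_0 = C_0'$.

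The main obstacle is making the discrete comparison/maximum principle step fully rigorous in the periodic setting: on a periodic grid there is no boundary, so one argues at an interior maximizing grid point $(i_0,j_0,k_0)$ of $\chi+\psi$ (one exists since the grid is finite), where $\Delta_h(\chi+\psi)_{i_0,j_0,k_0}\le 0$ because each of the six neighbor values is $\le$ the central value; combined with $-\Delta_h(\chi+\psi)\ge 0$ this gives $\Delta_h(\chi+\psi)_{i_0,j_0,k_0}=0$, which alone does not immediately bound the maximum value — one needs instead to compare with the solution of the Poisson problem directly rather than argue at the maximum. The cleaner route, which I would actually adopt, is to avoid the delicate maximum-principle propagation and instead write $\psi = (-\Delta_h)^{-1}(\phi_1-\phi_2)$ via the discrete Green's function $G_h$, i.e. $\psi_{i,j,k} = h^3\sum_{i',j',k'} G_h(i,j,k;i',j',k')(\phi_1-\phi_2)_{i',j',k'}$, and bound $\|\psi\|_\infty \le \|\phi_1-\phi_2\|_\infty\, \max_{i,j,k} h^3\sum_{i',j',k'}|G_h(i,j,k;i',j',k')|$; by translation invariance the latter sum equals $\|(-\Delta_h)^{-1}(1)\|_\infty$ up to the constant shift, which is again the discrete approximation of a fixed smooth periodic potential and hence $\le C_0$ uniformly in $h$. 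Either way, the crux is the $h$-independent $L^\infty\to L^\infty$ bound for $(-\Delta_h)^{-1}$ acting on bounded data, which follows from the explicit (Fourier-diagonalized) form of $(-\Delta_h)^{-1}$ on the periodic grid and the fact that it converges to the continuous periodic inverse Laplacian in operator norm.
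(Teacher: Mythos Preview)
The paper does not prove this lemma; it simply cites \cite[Lemma 3.1]{chen2019}. So there is no ``paper's own proof'' to compare against, and your proposal must stand on its own.

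Your first route has a genuine gap. The auxiliary barrier $\chi$ solving $-\Delta_h\chi=2$ with $\langle\chi,1\rangle_\Omega=0$ does not exist in the periodic setting: the right-hand side has nonzero mean ($\langle 2,1\rangle_\Omega=2|\Omega|\neq 0$), so the discrete periodic Poisson problem is not solvable. The same obstruction kills the continuous model problem $-\Delta q=2$ on the torus. A comparison/barrier argument on a periodic grid needs a barrier whose discrete Laplacian has zero mean, and you have not produced one; moreover, as you correctly note, even with a barrier the periodic maximum principle does not immediately localize the extremum in a useful way.

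Your second route is closer but still loose. The quantity $h^3\sum_{i',j',k'}|G_h(i,j,k;i',j',k')|$ is, by translation invariance, $\|G_h(\cdot;0)\|_{L^1_h}$, the discrete $L^1$-norm of the periodic Green's function. It is \emph{not} equal to $\|(-\Delta_h)^{-1}(1)\|_\infty$, which is undefined because $1\notin\mathring{\mathcal C}_{\mathrm{per}}$. Bounding $\|G_h(\cdot;0)\|_{L^1_h}$ uniformly in $h$ is true in dimensions $d\le 3$, but it requires a real argument (e.g.\ pointwise decay estimates for the discrete lattice Green's function), which you have not supplied.

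A much shorter and fully rigorous argument is available and is almost certainly what \cite{chen2019} does: use the discrete Sobolev embedding $H^2_h\hookrightarrow L^\infty_h$ (valid with $h$-independent constant for $d\le 3$) together with the discrete elliptic regularity estimate $\|\psi\|_{H^2_h}\le C\|\Delta_h\psi\|_2$ for mean-zero $\psi$ (this is exactly \eqref{HH2hes}, i.e.\ \cite[Proposition~2.2]{FengNMPDE2017}). Then
\[
\|\psi\|_\infty \;\le\; C\|\psi\|_{H^2_h}\;\le\; C'\|\Delta_h\psi\|_2 \;=\; C'\|\phi_1-\phi_2\|_2 \;\le\; C'|\Omega|^{1/2}\,\|\phi_1-\phi_2\|_\infty \;<\; 2C'|\Omega|^{1/2}=:C_0.
\]
This uses only tools already invoked elsewhere in the paper and avoids both the nonexistent barrier and the unproved Green's function $L^1$ bound.
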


\subsection{A convex-concave energy decomposition}
Before proceeding, let us mention that although the sign of the parameters $\lambda, \eta$ does not play an essential role in the theoretic analysis of the continuous problem \eqref{phi}, it will leads to some differences in the corresponding numerical schemes (see Remark \ref{diffeta} for detailed discussions). In the remaining part of this paper, we will focus on the physical relevant case (i.e., the FCH energy with a logarithmic, possibly double-well potential $F$) and assume that
$$\lambda,\ \eta,\ \epsilon >0.$$

In order to design a numerical scheme with unconditional energy stability, we shall perform a convex-concave decomposition for the FCH energy given by \eqref{FCH energy}. In view of \eqref{B} and \eqref{beta}, it can be rewritten in the following way:
\begin{align}
	E(\phi)
	&=\frac{\epsilon^4}{2}\|\Delta\phi\|_{L^2(\Omega)}^2 + \frac{1}{2}\|\beta(\phi)\|_{L^2(\Omega)}^2
	+ \left(\frac{\lambda^2}{2}+\frac{\lambda\epsilon^p\eta}{2}\right)\|\phi\|_{L^2(\Omega)}^2 -\epsilon^p\eta\int_{\Omega}B(\phi)\,\mathrm{d}x \notag
	\\
	&\quad- \left(\frac{\epsilon^{2+p}\eta}{2}+\lambda\epsilon^2\right)\|\nabla\phi\|_{L^2(\Omega)}^2 -\lambda\int_{\Omega}\phi\beta(\phi)\,\mathrm{d}x -\epsilon^2\int_{\Omega}\beta(\phi)\Delta\phi\, \mathrm{d}x.
	\label{FCH energy2}
\end{align}
The term $-\int_{\Omega}\beta(\phi)\Delta\phi\, \mathrm{d}x$ in \eqref{FCH energy2} turns out to be tricky since it is neither convex nor concave.
To overcome this difficulty, using the periodic boundary conditions, we perform integration by parts and obtain
\begin{align}
	E(\phi)
	&=\frac{\epsilon^4}{2}\|\Delta\phi\|_{L^2(\Omega)}^2 + \frac{1}{2}\|\beta(\phi)\|_{L^2(\Omega)}^2
	+ \left(\frac{\lambda^2}{2}+\frac{\lambda\epsilon^p\eta}{2}\right)\|\phi\|_{L^2(\Omega)}^2
	 -\epsilon^p\eta\int_{\Omega}B(\phi)\,\mathrm{d}x\notag \\
	&\quad- \left(\frac{\epsilon^{2+p}\eta}{2}+\lambda\epsilon^2\right)\|\nabla\phi\|_{L^2(\Omega)}^2 -\lambda\int_{\Omega}\phi\beta(\phi)\,\mathrm{d}x +\epsilon^2\int_{\Omega}\beta'(\phi)|\nabla\phi|^2\, \mathrm{d}x.
	\label{FCH energy3}
\end{align}
The above procedure is meaningful, if the function  $\phi$ is sufficiently regular, for example, $\phi\in H^{2}_{\mathrm{per}}(\Omega)$ and satisfies the strict separation property (keeping in mind the singular nature of $\beta$ and its derivatives).

Thanks to \eqref{FCH energy3}, we can derive the following result:
\begin{prop}\label{c-c decomposition}
	The reformulated FCH energy \eqref{FCH energy3} possesses a convex-concave decomposition such that
	\begin{equation}
		E(\phi) = E^c(\phi) - E^e(\phi), \notag
	\end{equation}
	where
	\begin{align*}
		\begin{aligned}
			&E^c(\phi) = \frac{\epsilon^4}{2}\|\Delta\phi\|_{L^2(\Omega)}^2 + \frac{1}{2}\|\beta(\phi)\|_{L^2(\Omega)}^2 + \left(\frac{\lambda^2}{2}+\frac{\lambda\epsilon^p\eta}{2}\right)\|\phi\|_{L^2(\Omega)}^2
			+ \epsilon^2\int_{\Omega}\beta'(\phi)|\nabla\phi|^2\, \mathrm{d}x, \\
			&E^e(\phi) = \left(\frac{\epsilon^{2+p}\eta}{2}+\lambda\epsilon^2\right)\|\nabla\phi\|_{L^2(\Omega)}^2 + \lambda\int_{\Omega}\phi\beta(\phi)\,\mathrm{d}x +\epsilon^p\eta\int_{\Omega}B(\phi)\,\mathrm{d}x.
		\end{aligned}	
	\end{align*}
	For any given $\delta\in (0,1)$, both $E^c(\phi)$ and $E^e(\phi)$ are strictly convex when restricted to the set of functions $\phi\in H^{2}_{\mathrm{per}}(\Omega)$ satisfying $|\phi(x)|\leq 1-\delta$ in $\Omega$.
\end{prop}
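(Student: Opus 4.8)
The plan is to fix $\delta\in(0,1)$, set $K_\delta:=\{\phi\in H^2_{\mathrm{per}}(\Omega):|\phi(x)|\le 1-\delta\ \text{in}\ \Omega\}$ (a convex subset of $H^2_{\mathrm{per}}(\Omega)$), and reduce the statement to one‑dimensional convexity along segments. Given distinct $\phi_0,\phi_1\in K_\delta$, I would put $\phi_t:=(1-t)\phi_0+t\phi_1$ and $w:=\phi_1-\phi_0$; convexity of $K_\delta$ forces $\phi_t\in K_\delta$ for all $t\in[0,1]$, so the pointwise values of $\phi_t$ stay in the compact interval $[-1+\delta,1-\delta]$ on which $\beta$ (hence $B$ and $r\mapsto r\beta(r)$) together with all their derivatives are smooth and bounded by constants depending only on $\delta$. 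Because of this, $g_c(t):=E^c(\phi_t)$ and $g_e(t):=E^e(\phi_t)$ are finite, and the integrands and their first two $t$‑derivatives are dominated by fixed $L^1(\Omega)$ functions built from $w,\nabla w,\Delta w,\nabla\phi_0,\nabla\phi_1$ (using the embedding $H^2_{\mathrm{per}}(\Omega)\hookrightarrow C_{\mathrm{per}}(\overline{\Omega})$ valid for $1\le d\le3$ together with $\nabla\phi_0,\nabla\phi_1,\nabla w\in L^2_{\mathrm{per}}(\Omega)$), so one may differentiate twice under the integral sign. It then suffices to bound $g_c''(t)$ and $g_e''(t)$ from below by a positive multiple of $\|w\|_{L^2(\Omega)}^2$.

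For $g_c$ I would proceed term by term. The term $\tfrac{\epsilon^4}{2}\|\Delta\phi_t\|_{L^2(\Omega)}^2$ contributes $\epsilon^4\|\Delta w\|_{L^2(\Omega)}^2\ge0$ and, crucially, $\bigl(\tfrac{\lambda^2}{2}+\tfrac{\lambda\epsilon^p\eta}{2}\bigr)\|\phi_t\|_{L^2(\Omega)}^2$ contributes exactly $(\lambda^2+\lambda\epsilon^p\eta)\|w\|_{L^2(\Omega)}^2>0$ since $\lambda,\eta,\epsilon>0$. For $\tfrac12\|\beta(\phi_t)\|_{L^2(\Omega)}^2$ one computes $\tfrac{d^2}{dt^2}\tfrac12\int_\Omega\beta(\phi_t)^2\,\mathrm{d}x=\int_\Omega\bigl(\beta'(\phi_t)^2+\beta(\phi_t)\beta''(\phi_t)\bigr)w^2\,\mathrm{d}x\ge0$, because $\beta(r)=\ln\tfrac{1+r}{1-r}$ and $\beta''(r)=\tfrac{4r}{(1-r^2)^2}$ always share the sign of $r$, so $\beta(r)\beta''(r)\ge0$. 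The only genuinely delicate term is $\epsilon^2\int_\Omega\beta'(\phi_t)|\nabla\phi_t|^2\,\mathrm{d}x$: its second $t$‑derivative has integrand $\beta'''(\phi_t)w^2|\nabla\phi_t|^2+4\beta''(\phi_t)\,w\,\nabla\phi_t\!\cdot\!\nabla w+2\beta'(\phi_t)|\nabla w|^2$, and completing the square in $\nabla w$ rewrites this pointwise as $2\beta'(\phi_t)\bigl|\nabla w+\tfrac{\beta''(\phi_t)}{\beta'(\phi_t)}w\nabla\phi_t\bigr|^2+\bigl(\beta'''(\phi_t)-\tfrac{2\beta''(\phi_t)^2}{\beta'(\phi_t)}\bigr)w^2|\nabla\phi_t|^2$. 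An explicit computation with $\beta'(r)=\tfrac{2}{1-r^2}$, $\beta''(r)=\tfrac{4r}{(1-r^2)^2}$, $\beta'''(r)=\tfrac{4+12r^2}{(1-r^2)^3}$ gives $\beta'''(r)\beta'(r)-2\beta''(r)^2=\tfrac{8}{(1-r^2)^3}>0$, hence $\beta'''(r)-\tfrac{2\beta''(r)^2}{\beta'(r)}=\tfrac{4}{(1-r^2)^2}>0$; thus this term contributes a nonnegative amount and $g_c''(t)\ge(\lambda^2+\lambda\epsilon^p\eta)\|w\|_{L^2(\Omega)}^2>0$, so $E^c$ is strictly convex on $K_\delta$.

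For $g_e$ the bookkeeping is easier. The term $\bigl(\tfrac{\epsilon^{2+p}\eta}{2}+\lambda\epsilon^2\bigr)\|\nabla\phi_t\|_{L^2(\Omega)}^2$ contributes $2\bigl(\tfrac{\epsilon^{2+p}\eta}{2}+\lambda\epsilon^2\bigr)\|\nabla w\|_{L^2(\Omega)}^2\ge0$; the term $\lambda\int_\Omega\phi_t\beta(\phi_t)\,\mathrm{d}x$ contributes $\lambda\int_\Omega\bigl(2\beta'(\phi_t)+\phi_t\beta''(\phi_t)\bigr)w^2\,\mathrm{d}x$ with $2\beta'(r)+r\beta''(r)=\tfrac{4}{1-r^2}+\tfrac{4r^2}{(1-r^2)^2}=\tfrac{4}{(1-r^2)^2}\ge4$; and $\epsilon^p\eta\int_\Omega B(\phi_t)\,\mathrm{d}x$ contributes $\epsilon^p\eta\int_\Omega B''(\phi_t)w^2\,\mathrm{d}x$ with $B''(r)=\beta'(r)=\tfrac{2}{1-r^2}>0$. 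Hence $g_e''(t)\ge4\lambda\|w\|_{L^2(\Omega)}^2>0$, so $E^e$ is strictly convex on $K_\delta$ as well.

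The hard part is the convexity of the gradient term $\epsilon^2\int_\Omega\beta'(\phi)|\nabla\phi|^2\,\mathrm{d}x$ in $E^c$, which ultimately rests on the pointwise inequality $\beta'\beta'''\ge2(\beta'')^2$ for $\beta(r)=\ln\tfrac{1+r}{1-r}$; all the remaining work collapses to elementary quadratic‑form estimates plus the positivity of $\lambda,\eta,\epsilon$. One might alternatively try the substitution $\psi=\gamma(\phi)$ with $\gamma'=\sqrt{\beta'}$ to turn this term into $\epsilon^2\|\nabla\psi\|_{L^2(\Omega)}^2$, but since $\phi\mapsto\gamma(\phi)$ is nonlinear this does not by itself yield convexity in $\phi$, so the direct second‑variation computation above seems the cleanest route. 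Note finally that the strict separation hypothesis $|\phi|\le1-\delta$ is used only to make all these integrals finite and to legitimize the differentiation under the integral sign; the underlying convexity inequalities hold pointwise throughout $(-1,1)$.
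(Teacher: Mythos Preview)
Your argument is correct and follows essentially the same route as the paper: compute the second variation of each functional, observe that $\beta\beta''\ge0$, $\beta'\ge2$, $r\beta''(r)\ge0$, and handle the delicate term $\epsilon^2\int_\Omega\beta'(\phi)|\nabla\phi|^2\,\mathrm{d}x$ by completing the square in the quadratic form $2\beta'|\nabla w|^2+4\beta''\,w\,\nabla\phi\!\cdot\!\nabla w+\beta'''\,w^2|\nabla\phi|^2$. The only cosmetic difference is that the paper completes the square in the variable $w\nabla\phi$ (yielding the residual $2\bigl(\beta'-\tfrac{2(\beta'')^2}{\beta'''}\bigr)|\nabla w|^2=\tfrac{4}{1+3r^2}|\nabla w|^2>0$ and hence a lower bound by $C\|w\|_{H^2}^2$) whereas you complete it in $\nabla w$ (yielding the residual $\bigl(\beta'''-\tfrac{2(\beta'')^2}{\beta'}\bigr)w^2|\nabla\phi|^2=\tfrac{4}{(1-r^2)^2}w^2|\nabla\phi|^2>0$ and a lower bound by $C\|w\|_{L^2}^2$); both are valid and suffice for strict convexity.
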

\begin{proof}
	We adapt the arguments in \cite[Lemma 5.3]{SCHIMPERNA2020}. For any $\phi\in H^{2}_{\mathrm{per}}(\Omega)$ satisfying $|\phi(x)|\leq 1-\delta$ for some $\delta\in(0,1)$, by a direct calculation we can deduce that
	\begin{align*}
		\left<(E^c)'(\phi),v\right> &= \epsilon^4\int_\Omega \Delta \phi\Delta v \,\mathrm{d}x + \int_\Omega \beta(\phi)\beta'(\phi)v \,\mathrm{d}x
		+ \left(\lambda^2+\lambda\epsilon^p\eta\right)\int_\Omega \phi v \,\mathrm{d}x \\
		&\quad + \epsilon^2 \int_\Omega \big(2\beta'(\phi)\nabla \phi \cdot \nabla v+ \beta''(\phi)|\nabla\phi|^2v\big) \,\mathrm{d}x,\\
		\left<(E^e)'(\phi),v\right> & =  \left(\epsilon^{2+p}\eta+2\lambda\epsilon^2\right)\int_\Omega \nabla \phi \cdot \nabla v \,\mathrm{d}x
		+ \lambda \int_\Omega (\beta(\phi)+\phi\beta'(\phi))v \,\mathrm{d}x + \epsilon^p\eta\int_{\Omega}\beta(\phi)v \,\mathrm{d}x,
	\end{align*}
	for any $v\in H^{2}_{\mathrm{per}}(\Omega)$. A further calculation yields
	\begin{align*}
		\left<(E^c)''(\phi)v,w\right> &= \epsilon^4\int_\Omega \Delta v\Delta w \,\mathrm{d}x
		+ \int_{\Omega}\left[(\beta'(\phi))^2+\beta(\phi)\beta''(\phi)\right] vw \,\mathrm{d}x
		+ \left(\lambda^2+\lambda\epsilon^p\eta\right)\int_\Omega vw \,\mathrm{d}x \\
		&\quad + \epsilon^2\int_\Omega \big(2\beta'(\phi)\nabla v \cdot \nabla w + 2\beta''(\phi)v\nabla \phi \cdot \nabla w\big)\,\mathrm{d}x\\
		&\quad +\epsilon^2\int_\Omega \big( 2 \beta''(\phi)w\nabla\phi\cdot\nabla v + \beta'''(\phi)|\nabla\phi|^2 vw \big)\,\mathrm{d}x,\\
		\left<(E^e)''(\phi)v,w\right> &= (\epsilon^{2+p}\eta+2\lambda\epsilon^2)\int_\Omega \nabla v\cdot\nabla w \,\mathrm{d}x
		+ \int_{\Omega}\big[\left(2\lambda+\epsilon^p\eta\right)\beta'(\phi) +\lambda\phi\beta''(\phi)\big]vw \,\mathrm{d}x,
	\end{align*}
	for any $v, w\in H^{2}_{\mathrm{per}}(\Omega)$. Then taking $w=v$, we find that (cf. \cite[(6.5)]{MR3032968})
	\begin{align*}
		\left<(E^c)''(\phi)v,v\right>
		&\geq \epsilon^4\int_\Omega |\Delta v|^2 \,\mathrm{d}x
		+ \int_{\Omega}\left[(\beta'(\phi))^2+\beta(\phi)\beta''(\phi)\right] v^2 \,\mathrm{d}x
		+ \left(\lambda^2+\lambda\epsilon^p\eta\right)\int_\Omega v^2 \,\mathrm{d}x \\
		&\quad + \epsilon^2\int_\Omega \left[2\beta'(\phi)-\frac{4(\beta''(\phi))^2}{\beta'''(\phi)}\right]|\nabla v|^2\,\mathrm{d}x,\\
		\left<(E^e)''(\phi)v,v\right> &= (\epsilon^{2+p}\eta+2\lambda\epsilon^2)\int_\Omega |\nabla v|^2 \,\mathrm{d}x
		+ \int_{\Omega}\big[\left(2\lambda+\epsilon^p\eta\right)\beta'(\phi) +\lambda\phi\beta''(\phi)\big]v^2 \,\mathrm{d}x.
	\end{align*}
	Recalling \eqref{beta}, we observe that for $r\in (-1,1)$, it holds
	\begin{align}
		&\beta'(r)=\frac{2}{1-r^2} \geq 2, \label{be1}\\
		&\beta(r)\beta''(r) = \ln\frac{1+r}{1-r}\frac{4r}{(1-r^2)^2}\geq 0, \label{be2} \\
		&\beta'(r)-\frac{2(\beta''(r))^2}{\beta'''(r)}= \frac{2}{1+3r^2}>\frac12, \label{be3}\\
		&r\beta''(r)=\frac{4r^2}{(1-r^2)^2}\geq 0. \label{be4}
	\end{align}
	As a consequence, we can conclude
	$$
	\left<(E^c)''(\phi)v,v\right>\geq C\|v\|_{H^2(\Omega)}^2\quad \text{and}\quad \left<(E^e)''(\phi)v,v\right>\geq C\|v\|_{H^1(\Omega)}^2,\quad \forall\, v\in H^{2}_{\mathrm{per}}(\Omega),
	$$
	where $C>0$ is a constant that may depend on the positive parameters $\lambda$, $\eta$, $\epsilon$ and $\delta$, but are independent of $\phi$ and $v$. The proof is complete.
\end{proof}

Corresponding to \eqref{FCH energy3}, for any $h>0$, we introduce the discrete energy
\begin{align}
	E_h(\phi)
	&= \frac{\epsilon^4}{2}\|\Delta_h\phi\|_2^2 + \frac{1}{2}\|\beta(\phi)\|_2^2 + \left(\frac{\lambda^2}{2} +\frac{\lambda\epsilon^p\eta}{2}\right)\|\phi\|_2^2 - \left(\frac{\epsilon^{2+p}\eta}{2}+\lambda\epsilon^2\right) \|\nabla_h\phi\|_2^2\notag \\
	&\quad-\lambda\left<\phi,\beta(\phi)\right>_{\Omega} +\epsilon^2\left<\beta'(\phi),\sum_{\zeta=x,y,z}a_{\zeta}\left(\left|D_{\zeta}\phi\right|^2\right)\right>_{\Omega} -\epsilon^p\eta\left<B(\phi),1\right>_{\Omega}.
	\label{discrete energy}
\end{align}
In analogy to Proposition \ref{c-c decomposition}, we can deduce that
\begin{cor}\label{dc-c decomposition}
	The discrete energy $E_h$ possesses a convex-concave decomposition:
	\begin{equation}
		E_h(\phi) = E_h^c(\phi) - E_h^e(\phi), \label{Eh}
	\end{equation}
	where
	\begin{align}
		&E_h^c(\phi) = \frac{\epsilon^4}{2}\|\Delta_h\phi\|_2^2 + \frac{1}{2}\|\beta(\phi)\|_2^2 + \left(\frac{\lambda^2}{2}+\frac{\lambda\epsilon^p\eta}{2}\right)\|\phi\|_2^2  + \epsilon^2\left<\beta'(\phi),\sum_{\zeta=x,y,z}a_{\zeta}\left(\left|D_{\zeta}\phi\right|^2\right)\right>_{\Omega}, \label{Ehc}\\
		&E_h^e(\phi) = \left(\frac{\epsilon^{2+p}\eta}{2}+\lambda\epsilon^2\right)\|\nabla_h\phi\|_2^2 + \lambda\left<\phi,\beta(\phi)\right>_{\Omega}+\epsilon^p\eta\left<B(\phi),1\right>_{\Omega}. \label{Ehe}
	\end{align}
	Both $E_h^c$ and $E_h^e$ are strictly convex when restricted to the set of functions $\phi\in \mathcal{C}_{\mathrm{per}}$ satisfying $\|\phi\|_\infty< 1$.
\end{cor}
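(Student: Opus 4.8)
The splitting \eqref{Eh}--\eqref{Ehe} is a purely algebraic regrouping of the terms in \eqref{discrete energy}: adding $E_h^c(\phi)$ to $-E_h^e(\phi)$ reproduces $E_h(\phi)$ term by term, with the troublesome discrete gradient contribution $\epsilon^2\big\langle\beta'(\phi),\sum_{\zeta=x,y,z}a_\zeta(|D_\zeta\phi|^2)\big\rangle_\Omega$ placed in the convex part, exactly as in the continuous reformulation \eqref{FCH energy3}. Thus the substance of the corollary lies in the strict convexity assertion. Since $\mathcal{C}_{\mathrm{per}}$ is finite dimensional and both $E_h^c$ and $E_h^e$ are of class $C^2$ on the open convex set $S:=\{\phi\in\mathcal{C}_{\mathrm{per}}:\|\phi\|_\infty<1\}$ (the functions $\beta$ and $B$ being smooth on $(-1,1)$), it suffices to show that the Hessians $D^2E_h^c(\phi)$ and $D^2E_h^e(\phi)$ are positive definite at every $\phi\in S$; strict convexity on $S$ then follows by restricting to segments. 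I would therefore compute the second directional derivatives $\langle D^2E_h^c(\phi)v,v\rangle$ and $\langle D^2E_h^e(\phi)v,v\rangle$ for arbitrary $v\in\mathcal{C}_{\mathrm{per}}$, following the continuous computation in the proof of Proposition \ref{c-c decomposition} but tracking carefully the discrete averaging and difference operators.

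For $E_h^e$ the argument is elementary and parallels the continuous one: the quadratic term contributes $(\epsilon^{2+p}\eta+2\lambda\epsilon^2)\|\nabla_hv\|_2^2\geq0$; differentiating $\lambda\langle\phi,\beta(\phi)\rangle_\Omega$ twice gives $\lambda\big\langle(2\beta'(\phi)+\phi\beta''(\phi))v,v\big\rangle_\Omega\geq4\lambda\|v\|_2^2$ by \eqref{be1} and \eqref{be4}; and, since $B'=\beta$, differentiating $\epsilon^p\eta\langle B(\phi),1\rangle_\Omega$ twice gives $\epsilon^p\eta\langle\beta'(\phi)v,v\rangle_\Omega\geq2\epsilon^p\eta\|v\|_2^2$ by \eqref{be1}, so that $\langle D^2E_h^e(\phi)v,v\rangle\geq(4\lambda+2\epsilon^p\eta)\|v\|_2^2>0$ for $v\neq0$. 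For $E_h^c$ the $\|\Delta_h\phi\|_2^2$ term contributes $\epsilon^4\|\Delta_hv\|_2^2\geq0$, twice differentiating $\tfrac12\|\beta(\phi)\|_2^2$ gives $\big\langle[(\beta'(\phi))^2+\beta(\phi)\beta''(\phi)]v,v\big\rangle_\Omega\geq4\|v\|_2^2$ by \eqref{be1}--\eqref{be2}, and the $\|\phi\|_2^2$ term contributes $(\lambda^2+\lambda\epsilon^p\eta)\|v\|_2^2>0$. The only genuinely nontrivial piece is the second variation of $G_h(\phi):=\epsilon^2\big\langle\beta'(\phi),\sum_\zeta a_\zeta(|D_\zeta\phi|^2)\big\rangle_\Omega$, namely
\begin{equation*}
	\langle G_h''(\phi)v,v\rangle=\epsilon^2\Big\langle\beta'''(\phi)v^2,\sum_\zeta a_\zeta(|D_\zeta\phi|^2)\Big\rangle_\Omega+4\epsilon^2\Big\langle\beta''(\phi)v,\sum_\zeta a_\zeta(D_\zeta\phi\,D_\zeta v)\Big\rangle_\Omega+2\epsilon^2\Big\langle\beta'(\phi),\sum_\zeta a_\zeta(|D_\zeta v|^2)\Big\rangle_\Omega,
\end{equation*}
the exact discrete counterpart of the bracket $\epsilon^2\int_\Omega\big(2\beta'(\phi)|\nabla v|^2+4\beta''(\phi)v\,\nabla\phi\cdot\nabla v+\beta'''(\phi)|\nabla\phi|^2v^2\big)\,\mathrm{d}x$ from the proof of Proposition \ref{c-c decomposition}.

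To bound $\langle G_h''(\phi)v,v\rangle$ from below I would, direction by direction, unfold the cell-centered averages by a summation-by-parts type reindexing (concretely $\langle g,a_\zeta(q)\rangle_\Omega=[A_\zeta g,q]_\zeta$ for $g\in\mathcal{C}_{\mathrm{per}}$ and any face-centered $q$, which is immediate from the definitions and Lemma \ref{lem:sumbyparts}), so that each of the three summands becomes a sum over faces carrying the edge-averaged coefficients $\tfrac12(\beta^{(m)}(\phi_i)+\beta^{(m)}(\phi_{i+1}))$, $m=1,2,3$, where $\phi_i,\phi_{i+1}$ denote the two cell values flanking a given face. This reduces the estimate to a face-by-face inequality for a quadratic form in $w$, the value of $D_\zeta v$ at that face. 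Using $\beta'''>0$ to absorb the cross term by Young's inequality (the face contributing $0$ harmlessly when $\beta'''(\phi_i)v_i^2+\beta'''(\phi_{i+1})v_{i+1}^2=0$, since then $w=0$ too), the per-face quantity is bounded below by
\begin{equation*}
	\Big(\beta'(\phi_i)+\beta'(\phi_{i+1})-\frac{2\big(\beta''(\phi_i)v_i+\beta''(\phi_{i+1})v_{i+1}\big)^2}{\beta'''(\phi_i)v_i^2+\beta'''(\phi_{i+1})v_{i+1}^2}\Big)w^2,
\end{equation*}
the discrete shadow of the continuous expression $\epsilon^2\big(2\beta'(\phi)-\tfrac{4(\beta''(\phi))^2}{\beta'''(\phi)}\big)|\nabla v|^2$. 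Applying the elementary Cauchy--Schwarz inequality $\big(\beta''(\phi_i)v_i+\beta''(\phi_{i+1})v_{i+1}\big)^2\leq\Big(\tfrac{(\beta''(\phi_i))^2}{\beta'''(\phi_i)}+\tfrac{(\beta''(\phi_{i+1}))^2}{\beta'''(\phi_{i+1})}\Big)\big(\beta'''(\phi_i)v_i^2+\beta'''(\phi_{i+1})v_{i+1}^2\big)$ and then the identity behind \eqref{be3}, namely $\tfrac{2(\beta''(r))^2}{\beta'''(r)}=\beta'(r)-\tfrac{2}{1+3r^2}$, the $\beta'$ terms cancel and the per-face quantity is $\geq2\big(\tfrac{1}{1+3\phi_i^2}+\tfrac{1}{1+3\phi_{i+1}^2}\big)w^2\geq w^2$, where $|\phi_i|<1$ was used. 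Summing over all faces and all three directions yields $\langle G_h''(\phi)v,v\rangle\geq\epsilon^2\|\nabla_hv\|_2^2\geq0$, and altogether $\langle D^2E_h^c(\phi)v,v\rangle\geq(4+\lambda^2+\lambda\epsilon^p\eta)\|v\|_2^2>0$ for $v\neq0$. Hence both $E_h^c$ and $E_h^e$ are strictly convex on $S$.

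I expect the genuine difficulty to be precisely this last step. In the discrete setting the factors $\beta^{(m)}(\phi)$ live at cell centers whereas $|D_\zeta v|^2$, $D_\zeta\phi\,D_\zeta v$ and $|D_\zeta\phi|^2$ live at faces, so the clean pointwise completing-the-square of the continuous proof must be reorganized into an edge-by-edge estimate, and the cross term must be dominated using edge-averaged rather than pointwise values of $(\beta'')^2/\beta'''$; the crux is to verify that \eqref{be3} survives this averaging --- equivalently, that $\tfrac{(\beta''(\phi_i))^2}{\beta'''(\phi_i)}+\tfrac{(\beta''(\phi_{i+1}))^2}{\beta'''(\phi_{i+1})}$ stays strictly below $\tfrac12(\beta'(\phi_i)+\beta'(\phi_{i+1}))$, which is immediate from the identity above. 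The bookkeeping of the averaging and difference operators inside $\langle G_h''(\phi)v,v\rangle$ is the other place requiring care, but it is routine given Lemma \ref{lem:sumbyparts}.
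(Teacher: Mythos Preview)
Your proof is correct and takes the same approach as the paper: the corollary is stated there without a separate proof, merely invoking the analogy with Proposition~\ref{c-c decomposition}, and your argument is precisely the discrete analogue of that continuous computation. The edge-by-edge Cauchy--Schwarz step you supply to handle the staggered placement of cell-centered coefficients $\beta^{(m)}(\phi)$ against face-centered differences $D_\zeta\phi,\,D_\zeta v$ is exactly the detail the paper leaves implicit.
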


\begin{rem}\label{dissep}
	Since $\phi\in \mathcal{C}_{\mathrm{per}}$ only takes its values at finite points in $\Omega$, then the condition  $\|\phi\|_\infty< 1$ indeed implies that there exists some $\delta\in (0,1)$ such that $\|\phi\|_\infty\leq 1-\delta$.
\end{rem}

\subsection{The first order numerical scheme}
Using the convex splitting method \cite{ES93,Eyre} and the convex-concave energy decomposition in Corollary \ref{dc-c decomposition}, we introduce a first order semi-implicit numerical scheme for the functionalized Cahn-Hilliard equation \eqref{phi} (cf. also \eqref{E:u} for an alternative expression of $\mu$). To this end, for a uniform time step $\Delta t>0$, given $\phi^n\in \mathcal{C}_{\mathrm{per}}$ with $ \|\phi^n\|_{\infty} < 1$ ($n\in \mathbb{N}$), we look for the discrete solution $\phi^{n+1}$ such that
\begin{align}
	&\frac{\phi^{n+1}-\phi^{n}}{\Delta t} = \Delta_h\mu^{n+1},\label{numerical phi}\\
	&\mu^{n+1} =\epsilon^4\Delta_h^2\phi^{n+1} + \beta(\phi^{n+1})\beta'(\phi^{n+1}) + \epsilon^2\sum_{\zeta=x,y,z}\beta''(\phi^{n+1})a_{\zeta}\left(|D_{\zeta}\phi^{n+1}|^2\right)\notag	\\
	&\qquad\quad - 2\epsilon^2\sum_{\zeta=x,y,z}d_{\zeta}\left(A_{\zeta}\big(\beta'(\phi^{n+1})\big)D_{\zeta}\phi^{n+1}\right)+ \lambda(\lambda+\epsilon^p\eta)\phi^{n+1}\notag 	\\
	&\qquad\quad   + \epsilon^2(2\lambda+\epsilon^p\eta)\Delta_h\phi^{n}-\lambda\phi^{n}\beta'(\phi^{n})-(\lambda+\epsilon^p\eta)\beta(\phi^n).\label{numerical mu}
\end{align}
It is straightforward to check that for every solution to the numerical scheme (\ref{numerical phi})--(\ref{numerical mu}), if it exists, then it must satisfy the property of mass conservation (cf. Proposition \ref{prop:well} for the continuous problem). More precisely, we have
\begin{prop}[Mass conservation]\label{prop:dmass}
	For every $n\in \mathbb{Z}^+$, it holds $\overline{\phi^n}=\overline{\phi^0}$.
	As a consequence, we have  $\big<\phi^n-\overline{\phi^0},1\big>_{\Omega}=0$ for all $n\in \mathbb{Z}^+$.
\end{prop}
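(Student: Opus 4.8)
The plan is to take the discrete inner product of the phase equation \eqref{numerical phi} with the constant grid function $1 \in \mathcal{C}_{\mathrm{per}}$, and use the fact that the discrete Laplacian annihilates constants in the appropriate duality sense. Concretely, for any $n \in \mathbb{N}$ I would compute
\[
\left\langle \frac{\phi^{n+1}-\phi^n}{\Delta t}, 1 \right\rangle_\Omega = \left\langle \Delta_h \mu^{n+1}, 1 \right\rangle_\Omega.
\]
The key step is to show the right-hand side vanishes. Writing $\Delta_h \mu^{n+1} = \nabla_h \cdot (1 \nabla_h \mu^{n+1})$ and applying the summation-by-parts formula of Lemma \ref{lem:sumbyparts} with $\psi = 1$ and $\vec{f} = \nabla_h \mu^{n+1}$, we get $\left\langle 1, \nabla_h \cdot \nabla_h \mu^{n+1} \right\rangle_\Omega = -\big[\nabla_h 1, \nabla_h \mu^{n+1}\big]_\Omega = 0$, since $\nabla_h 1 = 0$ (the centered difference of a constant is zero at every face). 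Alternatively one can observe directly from the definition $\Delta_h \phi = d_x(D_x\phi) + d_y(D_y\phi) + d_z(D_z\phi)$ that $\big\langle \Delta_h f, 1\big\rangle_\Omega = h^3 \sum_{i,j,k} (\Delta_h f)_{i,j,k}$ telescopes to zero by $N$-periodicity in each coordinate direction.

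Hence $\left\langle \phi^{n+1} - \phi^n, 1 \right\rangle_\Omega = 0$, which by the definition of the discrete mean $\overline{f} = |\Omega|^{-1}\langle f,1\rangle_\Omega$ (with $|\Omega| = 1$) gives $\overline{\phi^{n+1}} = \overline{\phi^n}$. Iterating this identity from $n=0$ yields $\overline{\phi^n} = \overline{\phi^0}$ for every $n \in \mathbb{Z}^+$. The final assertion then follows immediately: $\big\langle \phi^n - \overline{\phi^0}, 1 \big\rangle_\Omega = \langle \phi^n, 1\rangle_\Omega - \overline{\phi^0}\,\langle 1,1\rangle_\Omega = \overline{\phi^n} - \overline{\phi^0} = 0$, using $\langle 1,1\rangle_\Omega = |\Omega| = 1$.

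There is essentially no obstacle here; this is a routine consequence of the structure of the scheme (the time derivative of $\phi$ equals a discrete Laplacian, which is a discrete divergence of a gradient) together with the discrete summation-by-parts identity already recorded in Lemma \ref{lem:sumbyparts}. The only mild point worth stating explicitly is that the argument presupposes the existence of a solution $\phi^{n+1}$ to \eqref{numerical phi}--\eqref{numerical mu}, which is guaranteed separately by the unique solvability result (Theorem \ref{main1}); the mass conservation identity is thus a property of whatever discrete solution the scheme produces, consistent with the continuous mass conservation in Proposition \ref{prop:well}.
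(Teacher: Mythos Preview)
Your proposal is correct and is exactly the standard argument the paper has in mind; indeed, the paper does not give a proof at all but simply states that the mass conservation is ``straightforward to check'' from the structure of the scheme. Your use of Lemma~\ref{lem:sumbyparts} with $\psi=1$ (or the equivalent telescoping observation) is precisely the intended justification.
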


\begin{rem}\label{diffeta}
	For the Cahn-Hilliard-Willmore system, namely, the case with $\eta\leq 0$ and $\lambda>0$, using a similar idea for \eqref{numerical phi}--\eqref{numerical mu}, we can propose the following convex-splitting scheme:
	\begin{align}
		&\frac{\phi^{n+1}-\phi^{n}}{\Delta t} = \Delta_h\mu^{n+1},\label{CHW phi}\\
		&\mu^{n+1} =\epsilon^4\Delta_h^2\phi^{n+1} + \beta(\phi^{n+1})\beta'(\phi^{n+1}) + \epsilon^2\sum_{\zeta=x,y,z}\beta''(\phi^{n+1})a_{\zeta}\left(|D_{\zeta}\phi^{n+1}|^2\right)\notag	\\
		&\qquad\quad - 2\epsilon^2\sum_{\zeta=x,y,z}d_{\zeta}\left(A_{\zeta}\big(\beta'(\phi^{n+1})\big)D_{\zeta}\phi^{n+1}\right)+ \lambda^2\phi^{n+1} + \epsilon^{p+2}\eta\Delta_h\phi^{n+1}\notag 	\\
		&\qquad\quad -\epsilon^p\eta\beta(\phi^{n+1})  + \lambda\epsilon^p\eta\phi^n + 2\lambda\epsilon^2\Delta_h\phi^{n}-\lambda\phi^{n}\beta'(\phi^{n})-\lambda\beta(\phi^n).
		\label{CHW mu}
	\end{align}
	Furthermore, when $\eta\leq 0$ and $\lambda \leq 0$, we can replace the last four terms on the right-hand side of  \eqref{CHW mu} by
	$ \lambda\epsilon^p\eta\phi^{n+1} + 2\lambda\epsilon^2\Delta_h\phi^{n+1}-\lambda\phi^{n+1}\beta'(\phi^{n+1})-\lambda\beta(\phi^{n+1})$.
	Theoretic analysis and numerical experiments of these modified schemes will be carried out in the future study.
\end{rem}

\section{Unique Solvability and Positivity-Preserving Property}
\label{sec:positivity}
\setcounter{equation}{0}

Our aim in this section is to show that for every $n\in \mathbb{N}$, the numerical scheme \eqref{numerical phi}--\eqref{numerical mu} is uniquely solvable for any time step size $\Delta t>0$ and grid size $h>0$. In particular, the discrete solution $\phi^n$ satisfied the positivity-preserving property such that it is strictly separated from the pure states $\pm 1$, i.e., $-1<\phi^n_{i,j,k}<1$ (cf. \cite{CJ3W2022,chen2019} for the classical Cahn-Hilliard equation).

The main result of this section reads as follows:
\begin{thm}[Unique solvability and positivity-preserving property] \label{main1}
	Let $n\in \mathbb{N}$ and $\Delta t, h>0$. Given $\phi^n\in \mathcal{C}_{\mathrm{per}}$ with $ \|\phi^n\|_{\infty} < 1$, then the discrete system  \eqref{numerical phi}--\eqref{numerical mu} admits a unique solution $\phi^{n+1}\in \mathcal{C}_{\mathrm{per}}$ such that
	\begin{align}
		\phi^{n+1}-\overline{\phi^0} \in\mathring{\mathcal{C}}_{\mathrm{per}}\quad \text{and}\quad \|\phi^{n+1}\|_{\infty}<1. \label{masssep}
	\end{align}
\end{thm}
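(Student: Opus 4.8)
The plan is to recast the numerical scheme \eqref{numerical phi}--\eqref{numerical mu} as the Euler--Lagrange equation of a discrete variational problem and then exploit the singular nature of $\beta$ near $\pm 1$ to force the minimizer into the interior of the admissible set. First I would eliminate $\mu^{n+1}$ by applying $(-\Delta_h)^{-1}$ to \eqref{numerical phi}, which is legitimate since the right-hand side of \eqref{numerical phi} has zero discrete mean and, by Proposition \ref{prop:dmass}, $\phi^{n+1}-\overline{\phi^0}\in\mathring{\mathcal{C}}_{\mathrm{per}}$. This produces an equation of the form $\frac{1}{\Delta t}(-\Delta_h)^{-1}(\phi^{n+1}-\phi^n) + (E_h^c)'(\phi^{n+1}) = (E_h^e)'(\phi^n) + (\text{explicit terms})$, which one recognizes as $\nabla \mathcal{J}^n(\phi^{n+1})=0$ for the functional
\begin{equation}
\mathcal{J}^n(\phi) = \frac{1}{2\Delta t}\|\phi-\phi^n\|_{-1,h}^2 + E_h^c(\phi) - \big\langle (E_h^e)'(\phi^n) + \epsilon^2(2\lambda+\epsilon^p\eta)\Delta_h\phi^n,\ \phi\big\rangle_\Omega, \notag
\end{equation}
restricted to the affine hyperplane $\mathcal{A}_h := \{\phi\in\mathcal{C}_{\mathrm{per}} : \overline{\phi}=\overline{\phi^0},\ \|\phi\|_\infty < 1\}$ (consistent with the definition of $\mathcal{A}_h$ referenced as \eqref{Ah} in the excerpt).

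Next I would establish existence of a minimizer. The closure $\overline{\mathcal{A}_h}$ in the hyperplane $\overline{\phi}=\overline{\phi^0}$ is compact, and $\mathcal{J}^n$ extends continuously to it with values in $(-\infty,+\infty]$ because $B(r)=(1+r)\ln(1+r)+(1-r)\ln(1-r)$ remains finite (bounded) as $r\to\pm1$ while $\|\beta(\phi)\|_2^2$ and the term $\epsilon^2\langle\beta'(\phi),\sum_\zeta a_\zeta(|D_\zeta\phi|^2)\rangle_\Omega$ blow up there — note $\beta'\ge 2>0$ so this last term is nonnegative. Hence $\mathcal{J}^n$ attains its infimum on $\overline{\mathcal{A}_h}$. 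The crucial step — and I expect this to be the main obstacle — is to rule out a minimizer on the boundary $\partial\mathcal{A}_h$, i.e. at a point where some grid value $\phi_{i_0,j_0,k_0}$ equals $\pm1$. Following the technique of \cite{chen2019}, I would suppose $\phi^\star\in\overline{\mathcal{A}_h}$ is a minimizer with, say, $\phi^\star_{i_0,j_0,k_0}=1-\delta_0$ with $\delta_0\to 0$ along a minimizing configuration (more precisely, assume the max grid value is $1$) and compute the one-sided directional derivative of $\mathcal{J}^n$ at $\phi^\star$ in a feasible direction $\psi$ that decreases the offending value while compensating on another grid point to preserve the mean. The quadratic terms ($\|\Delta_h\phi\|_2^2$, $\|\phi\|_2^2$, $\|\cdot\|_{-1,h}^2$, the linear term) contribute only bounded quantities, controlled using Lemma \ref{lemma2} to bound $(-\Delta_h)^{-1}$ of a zero-mean bounded increment in $L^\infty_h$; but the $\beta$-terms contribute a term behaving like $\beta(\phi^\star_{i_0,j_0,k_0})\beta'(\phi^\star_{i_0,j_0,k_0})\to+\infty$ (from differentiating $\tfrac12\|\beta(\phi)\|_2^2$), whose sign drives $\mathcal{J}^n$ strictly downward. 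This forces the minimizer away from $\pm1$, so $\phi^{n+1}:=\phi^\star$ lies in the open set $\mathcal{A}_h$, where it is a genuine interior critical point and therefore solves \eqref{numerical phi}--\eqref{numerical mu}; tracking the directional-derivative estimate uniformly in the grid index delivers the pointwise bound $\|\phi^{n+1}\|_\infty<1$ in \eqref{masssep}.

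Finally, uniqueness follows from strict convexity: on the convex open set $\mathcal{A}_h$, $\mathcal{J}^n$ is strictly convex, since $\tfrac{1}{2\Delta t}\|\cdot\|_{-1,h}^2$ is convex, the linear term is affine, and $E_h^c$ is strictly convex on $\{\|\phi\|_\infty<1\}$ by Corollary \ref{dc-c decomposition}. A strictly convex functional on a convex set has at most one critical point, which must be its unique minimizer, so the solution $\phi^{n+1}$ is unique. The first membership in \eqref{masssep}, $\phi^{n+1}-\overline{\phi^0}\in\mathring{\mathcal{C}}_{\mathrm{per}}$, is built into the constraint set and is equivalent to mass conservation (Proposition \ref{prop:dmass}). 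One technical point I would be careful about: verifying that the discrete operators appearing in \eqref{numerical mu} — in particular $-2\epsilon^2\sum_\zeta d_\zeta(A_\zeta(\beta'(\phi^{n+1}))D_\zeta\phi^{n+1})$ and $\epsilon^2\sum_\zeta\beta''(\phi^{n+1})a_\zeta(|D_\zeta\phi^{n+1}|^2)$ — are exactly the discrete gradient of $\epsilon^2\langle\beta'(\phi),\sum_\zeta a_\zeta(|D_\zeta\phi|^2)\rangle_\Omega$, which requires the discrete summation-by-parts formula of Lemma \ref{lem:sumbyparts} and careful bookkeeping of the face-centered averages $A_\zeta,a_\zeta$.
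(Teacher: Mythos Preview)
Your overall strategy matches the paper's: reformulate the scheme as minimization of $\mathcal{J}^n$ over the mass-constrained set $\mathcal{A}_h$, use the singularity of $\beta$ near $\pm 1$ to exclude boundary minimizers, and deduce uniqueness from the strict convexity in Corollary~\ref{dc-c decomposition}. However, your directional-derivative step has a genuine gap. You claim that apart from the $\beta\beta'$ contribution ``the quadratic terms \ldots\ contribute only bounded quantities,'' but the gradient of $\epsilon^2\big\langle\beta'(\phi),\sum_\zeta a_\zeta(|D_\zeta\phi|^2)\big\rangle_\Omega$ evaluated at the extremal grid point $\vec{\alpha}_0$ produces terms containing $\beta''(\phi^\star_{\vec\alpha_0})$ and $\beta'(\phi^\star_{\vec\alpha_0\pm 1})$, and these are \emph{not} bounded as $\phi^\star_{\vec\alpha_0}\to\pm 1$; indeed $\beta''(r)\sim(1-r^2)^{-2}$ diverges faster than $\beta\beta'\sim(1-r^2)^{-1}|\ln(1-r^2)|$. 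The paper's Step~3 (see the estimate leading to \eqref{sec3est3}--\eqref{sec3est4}) handles this by a second-order Taylor expansion of $\beta'$ about $\phi^\star_{\vec\alpha_0}$, combined with the observation that at a minimum (resp.\ maximum) grid point all first differences have a definite sign; together with $\beta',\beta'''>0$ this shows the whole contribution has the favourable sign and can be dropped. Without this sign analysis the divergent $\beta''$-term could overwhelm $\beta\beta'$ with the wrong sign.

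That said, your own earlier remark actually furnishes a route \emph{simpler} than the paper's, and you should commit to it rather than switching to the directional-derivative argument. Because $\tfrac12\|\beta(\phi)\|_2^2=+\infty$ whenever any grid value equals $\pm 1$, while every other term of $\mathcal{J}^n$ is bounded below on $\overline{\mathcal{A}_h}$ (the linear term since $\|\phi\|_\infty\le 1$ and $f^n$ is fixed and finite, the $\beta'$-gradient term since it is nonnegative), the lower-semicontinuous extension of $\mathcal{J}^n$ to the compact set $\overline{\mathcal{A}_h}$ is identically $+\infty$ on $\partial\mathcal{A}_h$; since $\mathcal{J}^n(\phi^n)<\infty$, the infimum is finite and must be attained at an interior point. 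This bypasses the paper's $\delta$-shrunken sets $\mathring{\mathcal{A}}_{h,\delta}$ and the delicate computation of \eqref{sec3est3} entirely. (Two minor corrections: your linear term should simply be $+\langle f^n,\phi\rangle_\Omega$ with $f^n$ as in \eqref{fn}---your expression double-counts the $\Delta_h\phi^n$ piece; and the boundedness of $B$ is irrelevant since $B$ appears only in $E_h^e(\phi^n)$, not in $\mathcal{J}^n$.)
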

\begin{proof}
	We divide the proof of Theorem \ref{main1} into several steps.
	
	\textbf{Step 1.} Define the discrete energy:
	\begin{align}
		\mathcal{J}^n(\phi)
		&=\frac{1}{2\Delta t}\|\phi-\phi^n\|_{-1,h}^2 + E_h^c(\phi) +\left<f^n, \phi\right>_{\Omega},
		\label{J-energy}
	\end{align}
	where $E_h^c(\phi)$ is given by \eqref{Ehc} and
	\begin{align}
		f^n &:= \epsilon^2(2\lambda+\epsilon^p\eta)\Delta_h\phi^n-\lambda\phi^n\beta'(\phi^n)-(\lambda+\epsilon^p\eta)\beta(\phi^n).
		\label{fn}
	\end{align}
	In view of Proposition \ref{prop:dmass}, we introduce the admissible set
	\begin{equation}
		\mathcal{A}_h:=\left\{\phi\in\mathcal{C}_{\mathrm{per}}\,\left|\ \|\phi\|_{\infty}< 1,\ \left<\phi-\overline{\phi^0},1\right>_{\Omega}=0\right.\right\} \subset\mathbb{R}^{N^3}. \label{Ah}
	\end{equation}
	The existence of a solution to the discrete system (\ref{numerical phi})--(\ref{numerical mu}) can be proved by seeking a minimizer of the energy $\mathcal{J}^n$ over the set $\mathcal{A}_h$. For the sake of convenience and thanks to the mass conservation, we introduce the new variable with zero mean
	$$\varphi=\phi- \overline{\phi^0}$$
	and rewrite $\mathcal{J}^n$ into the following equivalent form:
	\begin{align}
		\mathcal{F}^n(\varphi)&:=\mathcal{J}^n(\varphi+\overline{\phi^0})\notag \\
		&=\frac{1}{2\Delta t}\|\varphi+\overline{\phi^0}-\phi^n\|_{-1,h}^2 +\frac{\epsilon^4}{2}\|\Delta_h\varphi\|_2^2 +\frac{1}{2}\|\beta(\varphi+\overline{\phi^0})\|_2^2 \notag \\
		&\quad +\epsilon^2\left<\beta'(\varphi+\overline{\phi^0}),\sum_{\zeta=x,y,z}a_{\zeta}\left(\left|D_{\zeta}\phi^{n+1}\right|^2\right)\right>_{\Omega} +\frac{\lambda(\lambda+\epsilon^p\eta)}{2}\|\varphi+\overline{\phi^0}\|_2^2 \notag \\
		&\quad +\left<f^n, \varphi+\overline{\phi^0}\right>_{\Omega}.\notag
	\end{align}
	Then $\mathcal{F}^n$ is defined on the shifted admissible set given by
	$$
	\mathring{\mathcal{A}}_h :=\left\{\varphi\in\mathring{\mathcal{C}}_{\mathrm{per}}\, \Big{|}\, -1-\overline{\phi^0}< \varphi_{i,j,k} < 1-\overline{\phi^0},\ \forall i,j,k\right\} \subset\mathbb{R}^{N^3}.
	$$
	It is straightforward to check that if $\varphi\in\mathring{\mathcal{A}}_h$ minimizes $\mathcal{F}^n$, then $\phi:=\varphi+\overline{\phi^0}\in\mathcal{A}_h$ minimizes $\mathcal{J}^n$, and vice versa. \smallskip
	
	\textbf{Step 2.} To show the existence of a minimizer of $\mathcal{F}^n$ in $\mathring{\mathcal{A}}_{h}$, for any given $\delta\in(0,1/2)$, let us consider the auxiliary set
	\begin{equation} \label{A_h^delta}
		\mathring{\mathcal{A}}_{h,\delta} :=\left\{\varphi\in\mathring{\mathcal{C}}_{\mathrm{per}}\, \Big{|}\, -1-\overline{\phi^0}+\delta\leq\varphi\leq 1-\overline{\phi^0}-\delta\right\}\subset \mathring{\mathcal{A}}_h,
	\end{equation}
	which is a bounded, closed and convex subset of $\mathring{\mathcal{C}}_{\mathrm{per}}$.
	Thus from the construction of $\mathcal{F}^n$, we easily infer that it admits at least one minimizer over $\mathring{\mathcal{A}}_{h,\delta}$. \smallskip
	
	\textbf{Step 3.} We now derive the key property that every minimizer of $\mathcal{F}^n$ cannot occur on the boundary of $\mathring{\mathcal{A}}_{h,\delta}$, provided that the parameter $\delta$ is sufficiently small. Here, by the boundary of $\mathring{\mathcal{A}}_{h,\delta}$, we mean the set of functions $\psi\in\mathring{\mathcal{A}}_{h,\delta}$ such that $\|\psi+\overline{\phi^0}\|_{\infty}=1-\delta$.
	
	The above claim can be proved by using a contradiction argument (cf. \cite{chen2019} for the Cahn-Hilliard equation). Suppose that for some $\delta \in (0,1/2)$, a minimizer of $\mathcal{F}^n$ in $\mathring{\mathcal{A}}_{h,\delta}$, which is denoted by  $\varphi^{*}$, occurs at a boundary point of $\mathring{\mathcal{A}}_{h,\delta}$. Then there is at least one grid point $\vec{\alpha}_0=(i_0,j_0,k_0)$ such that $|\varphi_{\vec{\alpha}_0}^{*}+\overline{\phi^0}|=1-\delta$. Without loss of generality, we assume that $\varphi_{\vec{\alpha}_0}^{*}+\overline{\phi^0}=-1+\delta$. This implies that the grid function $\varphi^*$ takes its (global) minimum at $\vec{\alpha}_0$. On the other hand, let $\vec{\alpha}_1=(i_1,j_1,k_1)$ be a grid point at which $\varphi^*$ achieves its maximum. It follows that $\varphi_{\vec{\alpha}_1}^*+\overline{\phi^0}\leq 1-\delta$, since $\varphi^{*}\in \mathring{\mathcal{A}}_{h,\delta}$.
	Besides, from the fact $\overline{\varphi^*}=0$, we infer that $\varphi_{\vec{\alpha}_1}^*\geq 0$.
	
	Since $\mathcal{F}^n$ is indeed smooth over $\mathring{\mathcal{A}}_{h,\delta}$, then its directional derivative can be calculated as
	\begin{align*}
		& \frac{\mathrm{d}\mathcal{F}^n(\varphi^*+s\psi)}{\mathrm{d}s}\Big{|}_{s=0}\\
		&\quad = \frac{1}{\Delta t}\left<(-\Delta_h)^{-1}(\varphi^*-\phi^n+\overline{\phi^0}),\psi\right>_{\Omega} +\epsilon^4\left<\Delta_h^2\varphi^*,\psi\right>+\lambda(\lambda+\epsilon^p\eta)\left<\varphi^*+\overline{\phi^0},\psi\right>_{\Omega}\\
		&\qquad +\epsilon^2\left<\sum_{\zeta=x,y,z}\left[\beta''(\varphi^*+\overline{\phi^0})a_{\zeta}\left(|D_{\zeta}\varphi^*|^2\right) -2d_{\zeta}\left(A_{\zeta}\big(\beta'(\varphi^*+\overline{\phi^0})\big)D_{\zeta}\varphi^*\right)\right],\psi\right>_{\Omega}\\
		&\qquad +\left<\beta(\varphi^*+\overline{\phi^0})\beta'(\varphi^*+\overline{\phi^0}),\psi\right>_{\Omega}+\left<f^n,\psi\right>_{\Omega},\quad \forall\, \psi\in\mathring{\mathcal{C}}_{\mathrm{per}}.	\end{align*}
	In particular, we choose the direction $\psi\in\mathring{\mathcal{C}}_{\mathrm{per}}$ with $\psi_{i,j,k} = \delta_{i,i_0}\delta_{j,j_0}\delta_{k,k_0}-\delta_{i,i_1}\delta_{j,j_1}\delta_{k,k_1}$.
	This yields that
	\begin{align}
		&	\frac{1}{h^3}\frac{\mathrm{d}\mathcal{F}^n(\varphi^*+s\psi)}{\mathrm{d}s} \Big{|}_{s=0} \notag \\
		& \quad
		=\frac{1}{\Delta t}(-\Delta_h)^{-1}(\varphi^*-\phi^n+\overline{\phi^0})_{\vec{\alpha}_0}-\frac{1}{\Delta t}(-\Delta_h)^{-1}(\varphi^*-\phi^n+\overline{\phi^0})_{\vec{\alpha}_1}\notag \\
		&\qquad +\epsilon^4\Delta_h^2(\varphi_{\vec{\alpha}_0}^*-\varphi_{\vec{\alpha}_1}^*) +\lambda(\lambda+\epsilon^p\eta)(\varphi_{\vec{\alpha}_0}^*-\varphi_{\vec{\alpha}_1}^*) \notag\\
		&\qquad +\epsilon^2\left(\sum_{\zeta=x,y,z}\left[ \beta''(\varphi_{\vec{\alpha}_0}^*+\overline{\phi^0})a_{\zeta}\left(|D_{\zeta}\varphi_{\vec{\alpha}_0}^*|^2\right) -2d_{\zeta}\left(A_{\zeta}\big(\beta'(\varphi_{\vec{\alpha}_0}^*+\overline{\phi^0})\big)D_{\zeta}\varphi_{\vec{\alpha}_0}^*\right)\right] \right)\notag \\
		&\qquad -\epsilon^2\left(\sum_{\zeta=x,y,z}\left[ \beta''(\varphi_{\vec{\alpha}_1}^*+\overline{\phi^0})a_{\zeta}\left(|D_{\zeta}\varphi_{\vec{\alpha}_1}^*|^2\right) -2d_{\zeta}\left(A_{\zeta}\big(\beta'(\varphi_{\vec{\alpha}_1}^*+\overline{\phi^0})\big)D_{\zeta}\varphi_{\vec{\alpha}_1}^*\right)\right] \right)\notag \\
		&\qquad +\beta(\varphi_{\vec{\alpha}_0}^*+\overline{\phi^0})\beta'(\varphi_{\vec{\alpha}_0}^* +\overline{\phi^0}) -\beta(\varphi_{\vec{\alpha}_1}^*+\overline{\phi^0})\beta'(\varphi_{\vec{\alpha}_1}^*+\overline{\phi^0})\notag \\
		&\qquad +(f_{\vec{\alpha}_0}^n-f_{\vec{\alpha}_1}^n).
		\label{directional derivative}
	\end{align}
	The first two terms on the right-hand side of \eqref{directional derivative} can be estimated by using Lemma \ref{lemma2} such that
	\begin{equation}\label{sec3est1}
		-2C_0\leq(-\Delta_h)^{-1}(\varphi^*-\phi^n+\overline{\phi^0})_{\vec{\alpha}_0} -(-\Delta_h)^{-1}(\varphi^*-\phi^n+\overline{\phi^0})_{\vec{\alpha}_1}\leq 2C_0.
	\end{equation}
	For the third and fourth terms, recalling the definition of $\Delta_h$ and (\ref{A_h^delta}), we find that
	\begin{align}
		&\|\Delta_h^2(\varphi_{\vec{\alpha}_0}^*-\varphi_{\vec{\alpha}_1}^*)\|_{\infty} \leq\frac{288(1-\delta)}{h^4}\leq\frac{288}{h^4}, \label{sec3est2a}\\
		&\|\varphi_{\vec{\alpha}_0}^*-\varphi_{\vec{\alpha}_1}^*\|_{\infty} \leq 2(1-\delta)\leq 2.\label{sec3est2b}
	\end{align}
	
	Estimate for the fifth term involving $\beta''(\varphi_{\vec{\alpha}_0}^*+\overline{\phi^0})a_{\zeta}\big(|D_{\zeta}\varphi_{\vec{\alpha}_0}^*|^2\big) -2d_{\zeta}\big(A_{\zeta}\big(\beta'(\varphi_{\vec{\alpha}_0}^*+\overline{\phi^0})\big)D_{\zeta}\varphi_{\vec{\alpha}_0}^*\big)$ turns out to be more tricky. For simplicity, we only consider the estimate along the $x$-direction. A direct calculation yields that
	\begin{equation*}
		\begin{aligned}
			& h^2\Big[\beta''(\varphi_{\vec{\alpha}_0}^*+\overline{\phi^0})a_x\left(|D_x\varphi_{\vec{\alpha}_0}^*|^2\right) -2d_x\left(A_x\big(\beta'(\varphi_{\vec{\alpha}_0}^*+\overline{\phi^0})\big)D_x\varphi_{\vec{\alpha}_0}^*\right)\Big]\\
			&\quad =\frac{1}{2}\beta''(\varphi_{i_0}^*+\overline{\phi^0})\left( (\varphi_{i_0+1}^*-\varphi_{i_0}^*)^2 + (\varphi_{i_0}^*-\varphi_{i_0-1}^*)^2 \right)\\
			&\qquad -\left(\beta'(\varphi_{i_0}^*+\overline{\phi^0})+\beta'(\varphi_{i_0+1}^*+\overline{\phi^0})\right)(\varphi_{i_0+1}^*-\varphi_{i_0}^*)\\
			&\qquad + \left(\beta'(\varphi_{i_0}^*+\overline{\phi^0})+\beta'(\varphi_{i_0-1}^*+\overline{\phi^0})\right) (\varphi_{i_0}^*-\varphi_{i_0-1}^*).
		\end{aligned}
	\end{equation*}
	Here, we denote $\varphi_{i,j_0,k_0}^*$ by $\varphi_{i}^*$ for simplicity (as we only consider the $x$-direction). Using Taylor's expansion for $\beta'(\varphi_{i_0+1}^*+\overline{\phi^0})$ and $\beta'(\varphi_{i_0-1}^*+\overline{\phi^0})$ at $\varphi_{i_0}^*+\overline{\phi^0}$,
	we obtain
	\begin{align}
		&h^2\Big[\beta''(\varphi_{\vec{\alpha}_0}^*+\overline{\phi^0})a_x\left(|D_x\varphi_{\vec{\alpha}_0}^*|^2\right) -2d_x\left(A_x\big(\beta'(\varphi_{\vec{\alpha}_0}^*+\overline{\phi^0})\big)D_x\varphi_{\vec{\alpha}_0}^*\right)\Big] \notag \\
		&\quad =\frac{1}{2}\beta''(\varphi_{i_0}^*+\overline{\phi^0})\left( (\varphi_{i_0+1}^*-\varphi_{i_0}^*)^2 + (\varphi_{i_0}^*-\varphi_{i_0-1}^*)^2 \right)  \notag \\
		&\qquad -\left(\beta'(\varphi_{i_0}^*+\overline{\phi^0}) +\frac{1}{2}\beta'(\varphi_{i_0+1}^*+\overline{\phi^0})\right)(\varphi_{i_0+1}^*-\varphi_{i_0}^*) \notag \\
		&\qquad + \left(\beta'(\varphi_{i_0}^*+\overline{\phi^0})+\frac{1}{2}\beta'(\varphi_{i_0-1}^*+\overline{\phi^0})\right) (\varphi_{i_0}^*-\varphi_{i_0-1}^*) \notag \\
		&\qquad - \frac{1}{2}\beta'(\varphi_{i_0}^*+\overline{\phi^0})(\varphi_{i_0+1}^*-\varphi_{i_0}^*) - \frac{1}{2}\beta''(\varphi_{i_0}^*+\overline{\phi^0})(\varphi_{i_0+1}^* -\varphi_{i_0}^*)^2 -\frac{\beta'''(\xi_1+\overline{\phi^0})}{4}(\varphi_{i_0+1}^*-\varphi_{i_0}^*)^3 \notag \\
		&\qquad +\frac{1}{2}\beta'(\varphi_{i_0}^*+\overline{\phi^0}) (\varphi_{i_0}^*-\varphi_{i_0-1}^*) - \frac{1}{2}\beta''(\varphi_{i_0}^*+\overline{\phi^0})(\varphi_{i_0}^* -\varphi_{i_0-1}^*)^2
		+\frac{\beta'''(\xi_2+\overline{\phi^0})}{4}(\varphi_{i_0}^*-\varphi_{i_0-1}^*)^3 \notag \\
		&\quad =  -\left(\frac32\beta'(\varphi_{i_0}^*+\overline{\phi^0}) +\frac{1}{2}\beta'(\varphi_{i_0+1}^*+\overline{\phi^0})\right)(\varphi_{i_0+1}^*-\varphi_{i_0}^*) \notag \\
		&\qquad + \left(\frac32\beta'(\varphi_{i_0}^*+\overline{\phi^0})+\frac{1}{2}\beta'(\varphi_{i_0-1}^*+\overline{\phi^0})\right) (\varphi_{i_0}^*-\varphi_{i_0-1}^*) \notag \\
		&\qquad -\frac{\beta'''(\xi_1+\overline{\phi^0})}{4}(\varphi_{i_0+1}^*-\varphi_{i_0}^*)^3 +\frac{\beta'''(\xi_2+\overline{\phi^0})}{4}(\varphi_{i_0}^*-\varphi_{i_0-1}^*)^3 \notag \\
		&\quad \leq 0. \label{sec3est3}
	\end{align}
	Here $\xi_1$ is between $\varphi_{i_0}^*$ and $\varphi_{i_0+1}^*$, while $\xi_2$ is between $\varphi_{i_0}^*$ and $\varphi_{i_0-1}^*$. The last inequality in \eqref{sec3est3} holds thanks to the definition of $\vec{\alpha}_0$ (i.e., the minimum point of $\varphi^*$) so that
	$\varphi_{i_0+1}^*-\varphi_{i_0}^*\geq 0$, $\varphi_{i_0}^*-\varphi_{i_0-1}^*\leq 0$,  and the fact that $\beta'(r),\ \beta'''(r)>0$ for any $r\in(-1,1)$. Similar results can be derived for the $y$, $z$ direction. Besides, for the term involving $\vec{\alpha}_1$, we can conclude
	
	\begin{equation}
		\beta''(\varphi_{\vec{\alpha}_1}^* +\overline{\phi^0})a_{\zeta}\left(|D_{\zeta}\varphi_{\vec{\alpha}_1}^*|^2\right) -2d_{\zeta}\left(A_{\zeta}\big(\beta'(\varphi_{\vec{\alpha}_1}^*+\overline{\phi^0})\big)D_{\zeta}\varphi_{\vec{\alpha}_1}^*\right) \geq0, \quad\text{for}\ \zeta=x,y,z.
		\label{sec3est4}
	\end{equation}

	Next, we infer from \eqref{be1}--\eqref{be2} that the nonlinear function $\beta\beta'$ is strictly increasing on $(-1,1)$. Since  $\varphi_{\vec{\alpha}_0}^*+\overline{\phi^0}  =-1+\delta$ and $\varphi_{\vec{\alpha}_1}^*\geq0$, then we have
	\begin{align}
		&	\beta(\varphi_{\vec{\alpha}_0}^* +\overline{\phi^0})\beta'(\varphi_{\vec{\alpha}_0}^* +\overline{\phi^0}) -\beta(\varphi_{\vec{\alpha}_1}^* +\overline{\phi^0})\beta'(\varphi_{\vec{\alpha}_1}^* +\overline{\phi^0})\notag \\
		&\quad  \leq\left(\frac{1}{\delta}+\frac{1}{2-\delta}\right) \ln\frac{\delta}{2-\delta} -\left(\frac{1}{1+\overline{\phi^0}} +\frac{1}{1-\overline{\phi^0}}\right) \ln\frac{1+\overline{\phi^0}}{1-\overline{\phi^0}}.
		\label{sec3est5}
	\end{align}

	Finally, we treat the term involving $f^n$ (see \eqref{fn}). From the assumption $\|\phi^n\|_{\infty}<1$ and the fact that for any given $h>0$, there are only finite number of grid points, we can find some $\delta_n\in (0,1)$ such that
	\begin{equation}\label{priori est}
		-1+\delta_n\leq\phi_{i,j,k}^n\leq1-\delta_n, \quad\forall\,1\leq i,j,k\leq N.
	\end{equation}
	Here, the constant $\delta_n$ may depend on the parameter $n$. From the refined bound (\ref{priori est}) and the assumption $\lambda, \eta, \epsilon>0$, we see that
	\begin{equation}
		|f^n| \leq \frac{24\epsilon^2(2\lambda+\epsilon^p\eta)(1-\delta_n)}{h^2}-2\lambda(-1+\delta_n)\left(\frac{1}{\delta_n}+\frac{1}{2-\delta_n}\right) -2(\lambda+\epsilon^p\eta)\ln\frac{\delta_n}{2-\delta_n}.
		\label{sec3est6}
	\end{equation}
	
	Collecting the estimates (\ref{sec3est1})--(\ref{sec3est6}), we can deduce from \eqref{directional derivative} that
	\begin{align}
		&\frac{1}{h^3}\frac{\mathrm{d}\mathcal{F}^n(\varphi^*+s\psi)}{\mathrm{d}s}\Big{|}_{s=0} \leq g(\delta) + \max\{\widehat{C},1\}, \notag
	\end{align}
	where
	$$
	g(\delta)=\left(\frac{1}{\delta}+\frac{1}{2-\delta}\right)\ln\frac{\delta}{2-\delta}
	$$
	and
	\begin{align}
		\widehat{C}
		&= \frac{2C_2}{\Delta t} +\frac{288\epsilon^4}{h^4}+2\lambda(\lambda+\epsilon^p\eta) -\left(\frac{1}{1+\overline{\phi^0}}+\frac{1}{1-\overline{\phi^0}}\right) \ln\frac{1+\overline{\phi^0}}{1-\overline{\phi^0}}\notag \\
		&\quad +\frac{24\epsilon^2(2\lambda+\epsilon^p\eta)(1-\delta_n)}{h^2} -2\lambda(-1+\delta_n)\left(\frac{1}{\delta_n}+\frac{1}{2-\delta_n}\right)\notag \\
		&\quad -2(\lambda+\epsilon^p\eta)\ln\frac{\delta_n}{2-\delta_n}. \notag
	\end{align}
	Observe that $\widehat{C}$ is a constant that may depend on $\Delta t,\ h,\ \delta_n$, $\overline{\phi^0}$, $\lambda$, $\eta$, $\epsilon$, but is independent of $\delta$. Since
	$$
	\lim_{\delta\searrow 0^+} g(\delta)=-\infty,
	$$
	and $g(\delta)= \beta(-1+\delta)\beta'(-1+\delta)$ is strictly increasing on $(0,1)$, we can find some $\widehat{\delta}\in (0,1/2)$ sufficiently small such that
	\begin{align}
		g(\delta)+\max\{\widehat{C},1\}\leq -1,\quad \forall\, \delta\in (0,\widehat{\delta}\,]. \label{hatdelta}
	\end{align}
	which gives
	$$
	\frac{\mathrm{d}\mathcal{F}^n(\varphi^*+s\psi)}{\mathrm{d}s}\Big{|}_{s=0}< 0.
	$$
	This contradicts the assumption that $\mathcal{F}^n$ attains its minimum at $\varphi^*$, because the directional derivative is strictly negative along a specific direction pointing into the interior of $\mathring{\mathcal{A}}_{h,\delta}$.
	
	By a similar argument, we can show that for any $\delta \in (0,\widehat{\delta}\,]$, a minimizer of $\mathcal{F}^n$ over $\mathring{\mathcal{A}}_{h,\delta}$ cannot occur at a boundary point $\varphi^*$ such that $\varphi_{\vec{\alpha}_0}^*+\overline{\phi^0}=1-\delta$, for some $\vec{\alpha}_0$.
	
	In summary, there exists a $\widehat{\delta}\in (0,1/2)$ such that for every $\delta \in (0,\widehat{\delta}\,]$, the (global) minimum of $\mathcal{F}^n$ over the set  $\mathring{\mathcal{A}}_{h,\delta}$ exists and it can be only attained at a certain interior point $\varphi^*\in\mathring{\mathcal{A}}_{h,\delta}$. \smallskip
	
	\smallskip
	
	\textbf{Step 4.} Let $\varphi^*$ be a minimizer of $\mathcal{F}^n$ over $\mathring{\mathcal{A}}_{h,\widehat{\delta}}$, where $\widehat{\delta}$ is given in the previous step (see \eqref{hatdelta}). Below we show that $\varphi^*$ must be the unique minimizer of $\mathcal{F}^n$ over $\mathring{\mathcal{A}}_{h}$. In other words, $\phi^*=\varphi^*+\overline{\phi^0}$ is the unique minimizer of $\mathcal{J}^n$ over $\mathcal{A}_{h}$.
	
	First, for every $\delta \in (0,\widehat{\delta}\,]$, if $\varphi^*$ is a minimizer of $\mathcal{F}^n$ over $\mathring{\mathcal{A}}_{h,\delta}$, then it is unique. This is equivalent to say that  $\phi^*=\varphi^*+\overline{\phi^0}$ is the unique minimizer of $\mathcal{J}^n$ over $\mathcal{A}_{h,\delta}$. Indeed, similar to the proof of Proposition \ref{c-c decomposition}, we know that the discrete energy $E_h^c$ is strictly convex on $\mathcal{A}_{h,\delta}$. Since the first term in $\mathcal{J}^n$ is quadratic and the third term is linear in $\phi$, we deduce that $\mathcal{J}^n$ is strictly convex on the set $\mathcal{A}_{h,\delta}$ as well. Thanks to the strict convexity of $\mathcal{J}^n$ over $\mathcal{A}_{h,\delta}$, the minimizer $\phi^*$ is unique.
	
	Next, for every $\delta\in (0,\widehat{\delta}\,)$, we denote the unique minimizer of $\mathcal{F}^n$ over $\mathring{\mathcal{A}}_{h,\delta}$ by $\varphi^{\delta}$. Since $\mathring{\mathcal{A}}_{h,\widehat{\delta}}\subset  \mathring{\mathcal{A}}_{h,\delta}$, then we have
	$\mathcal{F}^n(\varphi^{\delta})\leq \mathcal{F}^n(\varphi^{*})$. Let us consider two cases.
	
	\textit{Case 1}. For all $\delta\in (0,\widehat{\delta}\,)$, it holds $\mathcal{F}^n(\varphi^{\delta})= \mathcal{F}^n(\varphi^{*})$. In this case, $\varphi^*$ is a minimizer of $\mathcal{F}^n$ over $\mathring{\mathcal{A}}_{h,\delta}$ as well. Thanks to the uniqueness, it easily follows that $\varphi^{\delta}=\varphi^*$ for any $\delta\in (0,\widehat{\delta}\,)$ and thus $\varphi^*$ is indeed the unique minimizer of $\mathcal{F}^n$ over $\mathring{\mathcal{A}}_{h}$.
	
	\textit{Case 2}. There exists some $\widetilde{\delta}\in (0,\widehat{\delta}\,)$, the corresponding unique minimizer of $\mathcal{F}^n$ over $\mathring{\mathcal{A}}_{h,\widetilde{\delta}}$, denoted by $\varphi^{**}$, satisfies $\mathcal{F}^n(\varphi^{**})< \mathcal{F}^n(\varphi^{*})$. In this case, from Step 3, we can find a constant $\delta^{**}>\widetilde{\delta}$, such that $\|\varphi^{**}+\overline{\phi^0}\|_\infty=1-\delta^{**}$. Besides, since $\mathcal{F}^n(\varphi^{**})< \mathcal{F}^n(\varphi^{*})$, we must have 	$\delta^{**}< \widehat{\delta}$. Now let us denote the unique minimizer of $\mathcal{F}^n$ over $\mathring{\mathcal{A}}_{h,\delta^{**}}$ by $\varphi^{\sharp}$. From Step 3, we find    $\|\varphi^{\sharp}+\overline{\phi^0}\|_\infty<1-\delta^{**}$, which implies that $\varphi^{\sharp}\neq \varphi^{**}$. On one hand, since $\varphi^{**}\in \mathring{\mathcal{A}}_{h,\delta^{**}}$, then $\mathcal{F}^n(\varphi^{\sharp})\leq \mathcal{F}^n(\varphi^{**})$. On the other hand, it follows from $\mathring{\mathcal{A}}_{h,\delta^{**}} \subset \mathring{\mathcal{A}}_{h,\widetilde{\delta}}$ that $\mathcal{F}^n(\varphi^{**})\leq \mathcal{F}^n(\varphi^{\sharp})$. Thus, $\mathcal{F}^n(\varphi^{\sharp})= \mathcal{F}^n(\varphi^{**})$ so that $\varphi^{\sharp}$ should also be a minimizer of $\mathcal{F}^n$ over $\mathring{\mathcal{A}}_{h,\widetilde{\delta}}$. Thanks to the uniqueness of minimizer in $\mathring{\mathcal{A}}_{h,\widetilde{\delta}}$, we have $\varphi^{\sharp}= \varphi^{**}$, which leads to a contradiction. Hence, the situation described in Case 2 will never happen.
	
	In summary, the energy functional $\mathcal{J}^n$ admits a unique minimizer $\phi^{*}$ over the set $\mathcal{A}_{h}$ such that $\varphi^*=\phi^*-\overline{\phi^0}$ satisfies
	$$
	\frac{\mathrm{d}\mathcal{F}^n(\varphi^*+s\psi)}{\mathrm{d}s}\Big{|}_{s=0}= 0,\quad \forall\, \psi\in\mathring{\mathcal{C}}_{\mathrm{per}}.
	$$
	This yields that $\phi^{n+1}=\phi^*$ is the unique solution to the discrete system \eqref{numerical phi}--\eqref{numerical mu}. In particular, by the same argument as that for \eqref{priori est}, we can find some $\delta_{n+1}\in (0,1/2)$ such that
	\begin{equation}\label{priori estb}
		-1+\delta_{n+1}\leq\phi_{i,j,k}^{n+1}\leq 1-\delta_{n+1},\quad\forall\,1\leq i,j,k\leq N.
	\end{equation}
	The proof of Theorem \ref{main1} is complete.
\end{proof}
\begin{rem}\label{se-bound}
	In view of \eqref{hatdelta}, the distance $\delta_{n+1}$ between the discrete solution $\phi^{n+1}$ and the pure states $\pm 1$ may not be uniform with respect to the index $n$. Nevertheless, when the spatial dimension is less than or equal to two, from Proposition \ref{prop:sepa}, the convergence analysis and error estimates given in Section \ref{sec:convergence}, we can derive a uniform separation property for the discrete solution $\phi^{n+1}$ (see \eqref{strict separation numerical} below).
\end{rem}

\section{Unconditional Energy Stability}
\label{sec:energy stability}
\setcounter{equation}{0}

In this section, we show that the numerical scheme \eqref{numerical phi}--\eqref{numerical mu} is unconditionally energy stable.
This property follows from the convex-concave decomposition of the discrete energy (recall Corollary \ref{dc-c decomposition}).
\begin{thm}[Unconditional energy stability]\label{e-stable}
	The discrete system \eqref{numerical phi}--\eqref{numerical mu} is unconditionally energy stable. Namely, for any $n\in \mathbb{N}$ and $\Delta t>0$, we have
	\begin{equation}
		E_h(\phi^{n+1}) +\Delta t\,\|\nabla_h\mu^{n+1}\|_2^2 \leq E_h(\phi^n),\label{disdecay}
	\end{equation}
	where the discrete energy $E_h$ is given by \eqref{Eh}.
\end{thm}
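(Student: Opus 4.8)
The plan is the standard convex-splitting energy argument, built on the decomposition $E_h=E_h^c-E_h^e$ of Corollary \ref{dc-c decomposition} together with the observation that the scheme \eqref{numerical phi}--\eqref{numerical mu} treats $E_h^c$ implicitly and $E_h^e$ explicitly. First I would rewrite the discrete chemical potential in variational form. Mimicking the computation of the first variations in the proof of Proposition \ref{c-c decomposition} and using the discrete summation-by-parts formulae of Lemma \ref{lem:sumbyparts} (to move the discrete divergence onto the test function, and to convert $\langle\beta'(\phi)\nabla_h\phi,\nabla_h\psi\rangle_\Omega$-type expressions into $\langle\nabla_h\cdot(A(\beta'(\phi))\nabla_h\phi),\psi\rangle_\Omega$-type ones), one verifies that the right-hand side of \eqref{numerical mu} is exactly $(E_h^c)'(\phi^{n+1})-(E_h^e)'(\phi^n)$ as a grid function; that is,
$$\langle\mu^{n+1},\psi\rangle_\Omega=\langle (E_h^c)'(\phi^{n+1}),\psi\rangle_\Omega-\langle (E_h^e)'(\phi^n),\psi\rangle_\Omega,\qquad\forall\,\psi\in\mathcal{C}_{\mathrm{per}}.$$
In particular, the pair of terms $\epsilon^2\sum_{\zeta}\beta''(\phi^{n+1})a_{\zeta}(|D_{\zeta}\phi^{n+1}|^2)-2\epsilon^2\sum_{\zeta}d_{\zeta}(A_{\zeta}(\beta'(\phi^{n+1}))D_{\zeta}\phi^{n+1})$ is precisely the first variation of the nonquadratic convex term $\epsilon^2\langle\beta'(\phi),\sum_{\zeta}a_{\zeta}(|D_{\zeta}\phi|^2)\rangle_\Omega$ appearing in \eqref{Ehc}, while the $\phi^n$-terms $\epsilon^2(2\lambda+\epsilon^p\eta)\Delta_h\phi^n-\lambda\phi^n\beta'(\phi^n)-(\lambda+\epsilon^p\eta)\beta(\phi^n)$ add up to $-(E_h^e)'(\phi^n)$, using $B'=\beta$ for the contribution of \eqref{Ehe}.

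Next I would invoke convexity. By Theorem \ref{main1} the solution satisfies $\|\phi^{n+1}\|_\infty<1$, and $\|\phi^n\|_\infty<1$ by hypothesis, so the whole segment joining $\phi^n$ to $\phi^{n+1}$ lies in the convex set $\{\psi\in\mathcal{C}_{\mathrm{per}}:\|\psi\|_\infty<1\}$, on which both $E_h^c$ and $E_h^e$ are smooth and convex by Corollary \ref{dc-c decomposition}. For a smooth convex functional $G$ on this set one has the two tangent-plane inequalities $G(a)-G(b)\le\langle G'(a),a-b\rangle_\Omega$ and $G(a)-G(b)\ge\langle G'(b),a-b\rangle_\Omega$. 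Applying the first to $G=E_h^c$ and the second to $G=E_h^e$, both with $a=\phi^{n+1}$ and $b=\phi^n$, and subtracting, I obtain
$$E_h(\phi^{n+1})-E_h(\phi^n)\le\langle (E_h^c)'(\phi^{n+1})-(E_h^e)'(\phi^n),\ \phi^{n+1}-\phi^n\rangle_\Omega=\langle\mu^{n+1},\ \phi^{n+1}-\phi^n\rangle_\Omega.$$
Finally, substituting $\phi^{n+1}-\phi^n=\Delta t\,\Delta_h\mu^{n+1}$ from \eqref{numerical phi} and applying Lemma \ref{lem:sumbyparts} once more yields $\langle\mu^{n+1},\phi^{n+1}-\phi^n\rangle_\Omega=\Delta t\,\langle\mu^{n+1},\Delta_h\mu^{n+1}\rangle_\Omega=-\Delta t\,\|\nabla_h\mu^{n+1}\|_2^2$, which together with the previous inequality is exactly \eqref{disdecay}.

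The only real obstacle is the bookkeeping in the first step: checking term by term that the lengthy expression \eqref{numerical mu} is indeed the variational derivative $(E_h^c)'(\phi^{n+1})-(E_h^e)'(\phi^n)$, with the discrete summation-by-parts identities handled carefully at the face-centered grid points (in particular the interplay between the averaging operators $a_{\zeta}$, $A_{\zeta}$ and the difference operators $d_{\zeta}$, $D_{\zeta}$). Once this identification is in place, the tangent-plane convexity estimates and the closing summation-by-parts computation are routine. Moreover, nothing in the argument requires $\Delta t$ to be small — the positivity bound $\|\phi^{n+1}\|_\infty<1$ supplied by Theorem \ref{main1} is itself unconditional and the convexity inequalities hold verbatim — so the stability estimate \eqref{disdecay} is valid for every $\Delta t>0$, which is what unconditional energy stability means.
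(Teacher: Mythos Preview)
Your argument is correct and follows essentially the same route as the paper: identify $\mu^{n+1}=\delta_\phi E_h^c(\phi^{n+1})-\delta_\phi E_h^e(\phi^n)$, apply the tangent-plane inequalities for the convex pieces $E_h^c$ and $E_h^e$ (valid since $\|\phi^n\|_\infty,\|\phi^{n+1}\|_\infty<1$ by Theorem \ref{main1}), and close with summation by parts on $\langle\mu^{n+1},\Delta t\,\Delta_h\mu^{n+1}\rangle_\Omega$. Your explicit remark that the whole segment joining $\phi^n$ to $\phi^{n+1}$ stays in $\{\|\psi\|_\infty<1\}$ is a nice clarification that the paper leaves implicit.
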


\begin{proof}
	Similar to the proof of Proposition \ref{c-c decomposition}, for the decomposed discrete energy $E_h(\phi)=E_h^c(\phi)-E_h^e(\phi)$, we have
	\begin{align*}
		\delta_{\phi}E_h^c(\phi)
		&= \epsilon^4\Delta_h^2\phi + \beta(\phi)\beta'(\phi) + \epsilon^2\sum_{\zeta=x,y,z}\beta''(\phi)a_{\zeta}\left(|D_{\zeta}\phi|^2\right)\notag	\\
		&\quad - 2\epsilon^2\sum_{\zeta=x,y,z}d_{\zeta}\left(A_{\zeta}\big(\beta'(\phi)\big)D_{\zeta}\phi\right)+ \lambda(\lambda+\epsilon^p\eta)\phi, \notag 	\\
		\delta_{\phi}E_h^e(\phi) &= -\epsilon^2(2\lambda+\epsilon^p\eta)\Delta_h\phi^{n}+\lambda\phi^{n}\beta'(\phi^{n}) +(\lambda+\epsilon^p\eta)\beta(\phi^n),
	\end{align*}
	for any $\phi\in \mathcal{C}_{\mathrm{per}}$ satisfying $\|\phi\|_\infty< 1$.
	Thanks to Theorem \ref{main1} and Corollary \ref{dc-c decomposition}, we can deduce the following inequalities for the discrete solution $\phi^{n+1}$:
	\begin{align*}
		& E_h^c(\phi^{n})-E_h^c(\phi^{n+1}) \geq \left< \delta_{\phi}E_h^c(\phi^{n+1}), \phi^n-\phi^{n+1}\right>_\Omega,\\
		& E_h^e(\phi^{n+1})-E_h^e(\phi^{n}) \geq \left< \delta_{\phi}E_h^e(\phi^{n}), \phi^{n+1}-\phi^{n}\right>_\Omega.
	\end{align*}
	Combining the above results with \eqref{numerical phi}--\eqref{numerical mu}, and using the summation-by-parts formulae (see Lemma \ref{lem:sumbyparts}),  we obtain the energy dissipative property \eqref{disdecay}:
	\begin{equation*}
		\begin{aligned}
			E_h(\phi^{n+1})-E_h(\phi^n)
			&= E_h^c(\phi^{n+1})-E_h^c(\phi^{n})-(E_h^e(\phi^{n+1})-E_h^e(\phi^{n})) \\
			&\leq\left<\delta_{\phi}E_h^c(\phi^{n+1})-\delta_{\phi}E_h^e(\phi^{n}),\phi^{n+1}-\phi^n\right>_{\Omega} \\
			&=\Delta t\left<\mu^{n+1},\Delta_h\mu^{n+1}\right>_{\Omega} \\
			&=-\Delta t\,\|\nabla_h\mu^{n+1}\|_2^2 \leq 0.
		\end{aligned}
	\end{equation*}
	The proof is complete.
\end{proof}

The energy dissipation property enables us to derive an uniform $H_h^2$-estimate for the solution to the discrete system \eqref{numerical phi}--\eqref{numerical mu}.
\begin{cor}[Uniform $H_h^2$-estimate] \label{es-HH2}
	For every positive integer $m$, the discrete solution $\phi^m$ to the system \eqref{numerical phi}--\eqref{numerical mu} satisfies the following estimate
	\begin{equation}
		\|\phi^m\|_{H^2_h} \leq C, \label{destH2}
	\end{equation}
	where the constant $C>0$ only depends on $\lambda$, $\eta$, $\epsilon$, $|\Omega|$ and $\phi^0$.
\end{cor}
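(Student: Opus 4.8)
\medskip
\noindent\textbf{Proof proposal.} The plan is to combine three facts already in hand: the unconditional dissipation \eqref{disdecay} of Theorem~\ref{e-stable}, the positivity bound $\|\phi^m\|_\infty<1$ of Theorem~\ref{main1}, and the explicit convex--concave splitting $E_h=E_h^c-E_h^e$ of Corollary~\ref{dc-c decomposition}. First, iterating \eqref{disdecay} gives $E_h(\phi^m)\le E_h(\phi^0)$ for every $m\ge 1$, and $E_h(\phi^0)$ is a finite quantity depending only on $\phi^0$, $\lambda$, $\eta$, $\epsilon$, $|\Omega|$, since $\|\phi^0\|_\infty<1$ keeps all logarithmic contributions in \eqref{discrete energy} finite. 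Next, inspecting \eqref{Ehc} one sees that, under $\lambda,\eta,\epsilon>0$ and $\beta'(r)=2/(1-r^2)>0$ on $(-1,1)$, every term of $E_h^c(\phi^m)$ is nonnegative; in particular $E_h^c(\phi^m)\ge \tfrac{\epsilon^4}{2}\|\Delta_h\phi^m\|_2^2$ and $E_h^c(\phi^m)\ge \tfrac12\|\beta(\phi^m)\|_2^2$.

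The crux is to absorb the concave part $E_h^e(\phi^m)$ into at most $\tfrac12 E_h^c(\phi^m)$ plus a constant $C=C(\lambda,\eta,\epsilon,|\Omega|)$. Looking at the three terms of \eqref{Ehe}: (i) $\epsilon^p\eta\langle B(\phi^m),1\rangle_\Omega$ is bounded outright because $0\le B(r)\le 2\ln 2$ on $(-1,1)$; (ii) $\lambda\langle\phi^m,\beta(\phi^m)\rangle_\Omega$ is nonnegative since $r\beta(r)\ge0$, and, using $|\phi^m|<1$ pointwise, it is $\le \lambda\langle 1,|\beta(\phi^m)|\rangle_\Omega\le \lambda|\Omega|^{1/2}\|\beta(\phi^m)\|_2$, so Young's inequality absorbs it into $\tfrac14\cdot\tfrac12\|\beta(\phi^m)\|_2^2+C\le \tfrac14 E_h^c(\phi^m)+C$; (iii) the gradient term satisfies, via the summation-by-parts identity of Lemma~\ref{lem:sumbyparts} with $\mathcal D\equiv1$, $\|\nabla_h\phi^m\|_2^2=-\langle\phi^m,\Delta_h\phi^m\rangle_\Omega\le \|\phi^m\|_2\|\Delta_h\phi^m\|_2\le |\Omega|^{1/2}\|\Delta_h\phi^m\|_2$, so $(\tfrac{\epsilon^{2+p}\eta}{2}+\lambda\epsilon^2)\|\nabla_h\phi^m\|_2^2$ is, again by Young, at most $\tfrac14\cdot\tfrac{\epsilon^4}{2}\|\Delta_h\phi^m\|_2^2+C\le \tfrac14 E_h^c(\phi^m)+C$. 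Summing, $E_h^e(\phi^m)\le \tfrac12 E_h^c(\phi^m)+C$.

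Combining the two steps, $E_h^c(\phi^m)=E_h(\phi^m)+E_h^e(\phi^m)\le E_h(\phi^0)+\tfrac12 E_h^c(\phi^m)+C$, hence $E_h^c(\phi^m)\le 2\big(E_h(\phi^0)+C\big)$, a bound uniform in $m$ and in $\Delta t$; in particular $\|\Delta_h\phi^m\|_2\le C$. To finish, $\|\phi^m\|_2^2\le |\Omega|$ directly from $\|\phi^m\|_\infty<1$, and $\|\nabla_h\phi^m\|_2^2\le \|\phi^m\|_2\|\Delta_h\phi^m\|_2\le C$ by the same summation-by-parts bound, so that $\|\phi^m\|_{H^2_h}^2=\|\phi^m\|_2^2+\|\nabla_h\phi^m\|_2^2+\|\Delta_h\phi^m\|_2^2\le C$. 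The only delicate point is the absorption step: one must control the singular quantities $\phi^m\beta(\phi^m)$ and $\|\beta(\phi^m)\|_2$ \emph{without} invoking any separation constant $\delta_m$ (which would destroy uniformity in $m$), and this is precisely why the sign/boundedness properties $r\beta(r)\ge 0$, $0\le B\le 2\ln 2$ are used instead of pointwise bounds on $\beta$ itself. Everything else is routine.
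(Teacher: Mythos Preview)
Your proof is correct and follows essentially the same route as the paper: iterate the energy dissipation to get $E_h(\phi^m)\le E_h(\phi^0)$, then bound the concave contributions (the $B$, $\phi\beta(\phi)$, and $\|\nabla_h\phi\|_2^2$ terms) using $|\phi^m|<1$, the summation-by-parts identity, and Young's inequality, so that a uniform bound on $\|\Delta_h\phi^m\|_2$ survives. The paper carries this out directly on the full expression \eqref{discrete energy}, arriving at $E_h(\phi^m)\ge \tfrac{\epsilon^4}{4}\|\Delta_h\phi^m\|_2^2+\tfrac14\|\beta(\phi^m)\|_2^2-C$, whereas you organize the same estimates around the splitting $E_h^c-E_h^e$ and the absorption $E_h^e\le \tfrac12 E_h^c+C$; the ingredients and the logic are identical.
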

\begin{proof}
	It follows from \eqref{disdecay} that $E_h(\phi^{m})\leq E_h(\phi^{0})$ for any $m\in \mathbb{Z}^+$.
	On the other hand, from the definition of $E_h$, \eqref{be1} and the $L^\infty_h$-bound \eqref{masssep} for $\phi^m$, we can deduce that
	\begin{align}
		E_h(\phi^m)
		&= \frac{\epsilon^4}{2}\|\Delta_h\phi^m\|_2^2 + \frac{1}{2}\|\beta(\phi^m)\|_2^2 + \left(\frac{\lambda^2}{2} +\frac{\lambda\epsilon^p\eta}{2}\right)\|\phi^m\|_2^2 - \left(\frac{\epsilon^{2+p}\eta}{2}+\lambda\epsilon^2\right)\|\nabla_h\phi^m\|_2^2\notag \\
		&\quad-\lambda\left<\phi^m,\beta(\phi^m)\right>_{\Omega} +\epsilon^2\left<\beta'(\phi^m),\sum_{\zeta=x,y,z}a_{\zeta}\left(\left|D_{\zeta}\phi^m\right|^2\right)\right>_{\Omega} -\epsilon^p\eta\left<B(\phi^m),1\right>_{\Omega} \notag \\			
		&\geq\frac{\epsilon^4}{2}\|\Delta_h\phi^m\|_2^2 +\frac{1}{4}\|\beta(\phi^m)\|_2^2 -C(1+\|\nabla_h\phi^m\|_2^2) \notag \\
		&\geq \frac{\epsilon^4}{2}\|\Delta_h\phi^m\|_2^2 +\frac{1}{4}\|\beta(\phi^m)\|_2^2 -C(1+\|\Delta_h\phi^m\|_2\|\phi^m\|_2)\notag \\
		&\geq\frac{\epsilon^4}{4}\|\Delta_h\phi^m\|_2^2 +\frac{1}{4}\|\beta(\phi^m)\|_2^2-C,
		\notag
	\end{align}
	where $C>0$ may depend on $\lambda$, $\eta$, $\epsilon$, $|\Omega|$, but is independent of $m$.
	Thus, as long as the initial discrete energy $E_h(\phi^0)$ is finite, it holds that $\|\Delta_h\phi^m\|_2\leq C$ for any $m\in\mathbb{Z}^+$. This estimate combined with the fact $\|\phi^m\|_\infty<1$ leads to the conclusion \eqref{destH2}. The proof is complete.
\end{proof}

\section{Optimal Rate Convergence Analysis and Error Estimate}	
\label{sec:convergence}
\setcounter{equation}{0}

In this section, we perform the convergence analysis and derive error estimates for the discrete approximation of solutions to the FCH equation (\ref{phi}).

For the continuous problem (\ref{phi}) in the periodic setting with $\Omega=(0,1)^d$, $1\leq d\leq 3$, existence and uniqueness of a global weak solution denoted by $\Phi$ has already been established in Proposition \ref{prop:well}, provided that its initial datum $\Phi_0$ satisfies $\Phi_0\in H^2_{\mathrm{per}}(\Omega)$, $\beta(\Phi_0)\in L^2_{\mathrm{per}}(\Omega)$ and $\int_\Omega \Phi_0 \mathrm{d}x\in (-1,1)$. Besides, in view of Proposition \ref{prop:sepa} and the discussions made in Remark \ref{sepreg}, hereafter we assume that the initial datum $\Phi_0$ is  smooth enough and strictly separated from $\pm 1$ so that the corresponding solution $\Phi$ to the continuous problem \eqref{phi} belongs to the regularity class $\mathcal{R}$ such that
\begin{equation}\label{regularity}
	\Phi\in \mathcal{R}:= H^4(0,T;H^2_{\mathrm{per}}(\Omega))\cap H^2(0,T;H_{\mathrm{per}}^8(\Omega))\cap C([0,T];H_{\mathrm{per}}^{12}(\Omega)),
\end{equation}
on $[0,T]$ for some $T>0$. Moreover, we assume that the following strict separation property holds:
\begin{equation}\label{strict separation true}
	-1+\theta_0	\leq \Phi(x,t)\leq 1-\theta_0,\quad \forall\, (x,t)\in \overline{\Omega}\times [0,T],
\end{equation}
where $\theta_0\in \left(0,1\right)$. The assumption \eqref{regularity} on the smoothness of the exact solution $\Phi$ may not be optimal, but they are sufficient for the subsequent convergence analysis.

Adapting the notations in \cite{chen2019,liu2021positivity}, we introduce the projection operator $\mathcal{P}_N: C_{\mathrm{per}}(\overline{\Omega})\to \mathcal{B}^{K}$, which is the space of trigonometric polynomials of degree less than or equal to $K=[N/2]$. Define
$$
\Phi_N(\cdot,t):=\mathcal{P}_N\Phi(\cdot,t)\in \mathcal{B}^{K}$$
the (spatial) Fourier projection of the exact solution $\Phi$. If $\Phi\in L^\infty(0,T;H_{\mathrm{per}}^l(\Omega))$ for some $l\in \mathbb{Z}^+$, the following estimate is well-known (see \cite{KO79}):
\begin{equation}\label{fourier projection}
	\|\Phi_N-\Phi\|_{L^\infty(0,T;H^{k}(\Omega))}\leq Ch^{l-k}\|\Phi\|_{L^\infty(0,T;H^{l}(\Omega))},\quad \forall\, 0\leq k\leq l.
\end{equation}
In particular, from \eqref{regularity}  and the Sobolev embedding $H_{\mathrm{per}}^2(\Omega)\hookrightarrow C_{\mathrm{per}}(\overline{\Omega})$ in three dimensions, we can take $k=2$, $l=3$ in (\ref{fourier projection}) and $h$ sufficiently small (i.e., $N$ is sufficiently large) so that the Fourier projection $\Phi_N$ still preserves the strict separation property (however, with a relaxed bound):
\begin{equation}\label{strict separation fourier}
	-1+\frac{\theta_0}{2}	\leq \Phi_N(x,t)\leq 1-\frac{\theta_0}{2},\quad \forall\, (x,t)\in \overline{\Omega}\times [0,T].
\end{equation}

Given an arbitrary positive integer $M$, we set the uniform time step as $\Delta t= T/M>0$. For every $m\in \mathbb{N}$ with $m\leq M$, we set
$$
\Phi^m=\Phi(\cdot,t_m)\quad \text{and}\quad \Phi_N^m=\Phi_N(\cdot,t_m),\quad \text{with}\ t_m = m \Delta t.
$$
Since the exact solution $\Phi$ preserves the mass (recall Proposition \ref{prop:well}) and $\Phi_N\in\mathcal{B}^K$, it is straightforward to check that for all $0\leq m\leq M-1$, the property of mass conservation also holds for the Fourier projection $\Phi_N$ at the discrete level of time:
\begin{equation}\label{projection estimate}
	\int_{\Omega}\Phi_N(\cdot,t_{m+1})\,\mathrm{d}x =
	\int_{\Omega}\Phi(\cdot,t_{m+1})\,\mathrm{d}x =
	\int_{\Omega}\Phi(\cdot,t_{m})\,\mathrm{d}x =\int_{\Omega}\Phi_N(\cdot,t_{m})\,\mathrm{d}x.
\end{equation}

Next, we define the canonical grid projection operator $\mathcal{P}_h: C_{\mathrm{per}}(\overline{\Omega}) \to \mathcal{C}_{\mathrm{per}}$. For any $g\in C_{\mathrm{per}}(\overline{\Omega})$, we set $\widetilde{g}=\mathcal{P}_h{g} \in \mathcal{C}_{\mathrm{per}}$ with grid values
$$
\widetilde{g}_{i,j,k}=g\Big(\big(i-\frac{1}{2}\big)h,\big(j-\frac{1}{2}\big)h, \big(k-\frac{1}{2}\big)h\Big),\quad  1\leq i,j,k\leq N.
$$
Then for the fully discrete system \eqref{numerical phi}--\eqref{numerical mu}, we take its initial datum as $\phi^0=\mathcal{P}_h\Phi_N^0=\mathcal{P}_h(\mathcal{P}_N\Phi_0)$, i.e.,
\begin{equation}
	\phi_{i,j,k}^0:=\Phi_N\Big(\big(i-\frac{1}{2}\big)h,\big(j-\frac{1}{2}\big)h,\big(k-\frac{1}{2}\big)h,0\Big),\quad 1\leq i,j,k\leq N.\label{discrete-ini}
\end{equation}

The error grid function that we are going to investigate is given  by
\begin{equation}
	e^m:=\mathcal{P}_h\Phi_N^m-\phi^m,\qquad\forall\, m\in\mathbb{N},\ m\leq M.
	\label{old-err}
\end{equation}
Keeping the definitions of $\mathcal{P}_N$ and $\mathcal{P}_h$ in mind, we infer from \eqref{projection estimate}, \eqref{discrete-ini} and Proposition \ref{prop:dmass} that
\begin{equation}
	\overline{\mathcal{P}_h\Phi_N^m}
	= \frac{1}{|\Omega|}\int_\Omega  \Phi_N^m\,\mathrm{d}x
	= \frac{1}{|\Omega|}\int_\Omega  \Phi_N^0\,\mathrm{d}x
	= \overline{\mathcal{P}_h\Phi_N^0}
	= \overline{\phi^0}=\overline{\phi^m}.
	\label{projection estimate-h}
\end{equation}
As a consequence, we have $$\overline{e^m}=0,\quad \forall\, m\in\mathbb{N},\ m\leq M.
$$

The main result of this section reads as follows

\begin{thm}[Error estimate]\label{thm1}
	Suppose that the exact solution $\Phi$ of the FCH equation (\ref{phi}) belongs to the regularity class $\mathcal{R}$ (see \eqref{regularity}) and satisfies the strict separation property \eqref{strict separation true}. Then, provided that $\Delta t$ and $h$ are sufficiently small, and under the linear refinement requirement $\Delta t\leq C_1 h$ for some fixed $C_1>0$, we have
	\begin{equation}
		\|e^{m}\|_2+\left[\epsilon^4\Delta t\sum_{k=1}^{m}\Big(\|\nabla_h\Delta_h e^{k}\|_2^2 + \|\Delta_h e^{k}\|_2^2\Big) \right]^{\frac12} \leq C(\Delta t + h^2),
		\label{L2-est}
	\end{equation}
	for any $m\in \mathbb{Z}^+$ such that $t_m=m\Delta t\leq T$, where $e^m$ is defined as in \eqref{old-err} and the  constant $C>0$ is independent of $m$, $\Delta t$ and $h$.
\end{thm}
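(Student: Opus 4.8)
The overall plan is to first establish a crude error bound that, via an inverse estimate and the linear refinement $\Delta t\le C_1 h$, is nonetheless strong enough to keep every numerical iterate uniformly away from the pure states $\pm 1$; once this separation is available, the singular function $\beta$ and its derivatives are uniformly smooth along the discrete trajectory and a sharper discrete energy argument delivers \eqref{L2-est}. The first ingredient is a higher-order consistency analysis: because \eqref{numerical phi}--\eqref{numerical mu} is only first order in time and second order in space, the naive error $e^m=\mathcal{P}_h\Phi_N^m-\phi^m$ does not carry enough accuracy to survive an inverse estimate in three dimensions. Following \cite{chen2019,liu2021positivity}, I would construct an augmented profile
$$
\widehat\Phi=\Phi_N+\Delta t\,\Phi^{(1)}_\tau+\Delta t^2\,\Phi^{(2)}_\tau+h^2\,\Phi^{(1)}_h+\cdots,
$$
whose correction fields are fixed by matching successive powers of $\Delta t$ and $h$ in the truncation of the scheme; by the regularity assumption $\Phi\in\mathcal R$ and standard parabolic/elliptic regularity, each correction is smooth and has zero spatial mean, so $\mathcal{P}_h\widehat\Phi^m$ keeps the discrete mean $\overline{\phi^0}$, and since each correction carries a small factor, $\widehat\Phi$ still satisfies a strict separation bound $-1+\tfrac{\theta_0}{4}\le\widehat\Phi\le 1-\tfrac{\theta_0}{4}$ once $\Delta t,h$ are small. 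By construction $\mathcal{P}_h\widehat\Phi$ solves \eqref{numerical phi}--\eqref{numerical mu} with a residual $\tau^{n+1}\in\mathring{\mathcal C}_{\mathrm{per}}$ of sufficiently high order, say $\|\tau^{n+1}\|_2=O(\Delta t^3+h^4)$.

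Set $\widehat e^m:=\mathcal{P}_h\widehat\Phi^m-\phi^m\in\mathring{\mathcal C}_{\mathrm{per}}$. For the lower-order estimate I would subtract the perturbed scheme from \eqref{numerical phi}--\eqref{numerical mu} and test the difference with $(-\Delta_h)^{-1}\widehat e^{n+1}$. The time-difference term gives $\tfrac{1}{2\Delta t}\big(\|\widehat e^{n+1}\|_{-1,h}^2-\|\widehat e^{n}\|_{-1,h}^2+\|\widehat e^{n+1}-\widehat e^{n}\|_{-1,h}^2\big)$. The entire implicit nonlinearity equals $-\langle \delta_\phi E_h^c(\mathcal{P}_h\widehat\Phi^{n+1})-\delta_\phi E_h^c(\phi^{n+1}),\,\mathcal{P}_h\widehat\Phi^{n+1}-\phi^{n+1}\rangle_\Omega$, which is $\le 0$ because $E_h^c$ is strictly convex on the convex set $\{\|\cdot\|_\infty<1\}$ (Corollary \ref{dc-c decomposition}); in particular its sixth-order part yields the dissipation $\epsilon^4\|\Delta_h\widehat e^{n+1}\|_2^2$. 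Crucially, this step only uses $\|\phi^{n+1}\|_\infty<1$ from Theorem \ref{main1}, not any uniform separation. The explicit concave terms live at level $n$ --- where $\phi^n$ is already uniformly separated by the induction hypothesis, so $\beta,\beta'$ there are bounded --- and together with $\tau^{n+1}$ they are absorbed using the uniform $H_h^2$ bound (Corollary \ref{es-HH2}), the discrete interpolation $\|u\|_2\le C\|u\|_{-1,h}^{2/3}\|\Delta_h u\|_2^{1/3}$, discrete Poincar\'e and Young's inequality. A discrete Gronwall inequality then yields
$$
\|\widehat e^m\|_{-1,h}^2+\epsilon^4\,\Delta t\sum_{k=1}^{m}\|\Delta_h\widehat e^k\|_2^2\le C\,(\Delta t^3+h^4)^2 .
$$

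From $\|\widehat e^m\|_2\le C\|\widehat e^m\|_{-1,h}^{2/3}\|\Delta_h\widehat e^m\|_2^{1/3}$ together with the uniform $H_h^2$ bound (for $\phi^m$ and $\mathcal{P}_h\widehat\Phi^m$ alike) one obtains the rough bound $\|\widehat e^m\|_2\le C(\Delta t^3+h^4)^{2/3}$; with $\Delta t\le C_1 h$ the three-dimensional inverse inequality $\|\cdot\|_\infty\le C h^{-3/2}\|\cdot\|_2$ then gives $\|\widehat e^m\|_\infty\le C h^{-3/2}(\Delta t^3+h^4)^{2/3}\le C h^{1/2}\to 0$, so $\|\widehat e^m\|_\infty\le\tfrac{\theta_0}{8}$ once $h$ is small; combined with the separation of $\mathcal{P}_h\widehat\Phi^m$ this forces the uniform separation $-1+\tfrac{\theta_0}{8}\le\phi^m_{i,j,k}\le 1-\tfrac{\theta_0}{8}$ for all $m,i,j,k$, with the distance $\theta_0/8$ independent of $m$, $\Delta t$, $h$. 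Strictly speaking, the lower-order estimate and this conclusion are run together as an induction on $m$ (the uniform separation of all earlier iterates is what makes the lower-order estimate go through, and it is then propagated to step $m+1$). I expect this bootstrap --- balancing the order of the asymptotic expansion against the power of $h$ lost in the inverse inequality, under the constraint $\Delta t\le C_1 h$ --- to be the main obstacle; it is precisely the reason the expansion must be pushed to (second) higher order rather than stopped at the native consistency order.

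With the uniform separation in hand, $\beta$ and all its derivatives are bounded along $\{\phi^m\}$, so the singular nonlinearities become globally Lipschitz, and I would close with a refined energy estimate: test the error equation with $\widehat e^{n+1}$, so that the time-difference term gives $\tfrac{1}{2\Delta t}\big(\|\widehat e^{n+1}\|_2^2-\|\widehat e^{n}\|_2^2+\cdots\big)$ and the sixth-order term produces the dissipation $\epsilon^4\|\nabla_h\Delta_h\widehat e^{n+1}\|_2^2$, which by discrete Poincar\'e also controls $\epsilon^4\|\Delta_h\widehat e^{n+1}\|_2^2$. The algebraic terms ($\beta\beta'$, $\lambda\phi\beta'$, $(\lambda+\epsilon^p\eta)\beta$), the gradient nonlinearities $\beta''(\phi)a_\zeta(|D_\zeta\phi|^2)$ and $d_\zeta(A_\zeta(\beta'(\phi))D_\zeta\phi)$, and the residual $\tau^{n+1}$ are all handled by the Lipschitz bounds, summation-by-parts (Lemma \ref{lem:sumbyparts}) to expose the factor $\nabla_h\Delta_h\widehat e^{n+1}$, the uniform $H_h^2$ bound, discrete Gagliardo--Nirenberg/Sobolev inequalities in three dimensions for the quadratic-gradient contributions, and Young's inequality to absorb everything into the dissipation, leaving only $C(\|\widehat e^{n+1}\|_2^2+\|\widehat e^{n}\|_2^2)+C(\Delta t^3+h^4)^2$. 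A final discrete Gronwall inequality gives $\|\widehat e^m\|_2^2+\epsilon^4\Delta t\sum_{k=1}^m(\|\nabla_h\Delta_h\widehat e^k\|_2^2+\|\Delta_h\widehat e^k\|_2^2)\le C(\Delta t^3+h^4)^2$, and \eqref{L2-est} follows from the triangle inequality, since $\|\mathcal{P}_h(\widehat\Phi^m-\Phi_N^m)\|_2\le C(\Delta t+h^2)$ by construction and $\|\mathcal{P}_h(\Phi_N^m-\Phi^m)\|_2\le C h^2$ by the Fourier projection estimate \eqref{fourier projection}; the $O(\Delta t+h^2)$ rate is then governed by the leading corrections in $\widehat\Phi$.
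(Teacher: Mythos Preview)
Your proposal is correct and follows essentially the same route as the paper: a higher-order asymptotic expansion $\widehat\Phi$, a lower-order $l^\infty(H_h^{-1})\cap l^2(H_h^2)$ estimate obtained by testing with $(-\Delta_h)^{-1}\widehat e^{n+1}$ and exploiting the convexity of $E_h^c$ for the implicit terms, an inverse-inequality bootstrap (run as an induction) under $\Delta t\le C_1h$ to secure uniform separation of $\phi^m$, and then a refined $l^\infty(L_h^2)\cap l^2(H_h^3)$ estimate by testing with $\widehat e^{n+1}$. The only differences are minor technicalities---the paper recovers the rough $L_h^2$ bound via the inverse inequality $\|\widehat e^{n+1}\|_2\le h^{-1}\|\widehat e^{n+1}\|_{-1,h}$ rather than your interpolation with the uniform $H_h^2$ bound, and in the refined step it controls the residual $\|\Delta_h\widehat e^k\|_2^2$ terms directly by the already-established $l^2(H_h^2)$ estimate instead of absorbing them into the dissipation and invoking Gronwall---but both variants work.
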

\begin{rem}
The linear refinement requirement $\Delta t\leq C_1 h$ on the size of time step is a technical assumption due to the usage of inverse inequalities between norms (see \eqref{priori assumptionA} and \eqref{priori assumptionB} below).
\end{rem}

\subsection{Higher-order consistency analysis}

First, applying the temporal discretization, we see that the Fourier projection $\Phi_N$ solves the following semi-implicit equation:
\begin{align}
	\frac{\Phi_N^{n+1}-\Phi_N^n}{\Delta t} &= \Delta\Big[\epsilon^4\Delta^2\Phi_N^{n+1} + \beta(\Phi_N^{n+1})\beta'(\Phi_N^{n+1}) + \epsilon^2 \beta''(\Phi_N^{n+1})|\nabla\Phi_N^{n+1}|^2 \notag \\
	&\quad -2\epsilon^2\nabla\cdot\left(\beta'(\Phi_N^{n+1})\nabla\Phi_N^{n+1}\right) + \lambda(\lambda+\epsilon^p\eta) \Phi_N^{n+1}
	- \lambda\Phi_N^n\beta'(\Phi_N^{n}) \notag \\
	&\quad +\epsilon^2(2\lambda+\epsilon^p\eta)\Delta\Phi_N^n
	-(\lambda+\epsilon^p\eta)\beta(\Phi_N^{n})\Big] +\widetilde{F}_N^{n+1},
	\label{trucation error}
\end{align}
where the truncation error is given by
\begin{align*}
	\widetilde{F}_N^{n+1} &= \left(\frac{\Phi_N^{n+1}-\Phi_N^n}{\Delta t}- \partial_t \Phi\right) + \epsilon^4\Delta^3(\Phi-\Phi_N^{n+1})
	+\Delta\big[ \beta(\Phi)\beta'(\Phi)- \beta(\Phi_N^{n+1})\beta'(\Phi_N^{n+1})\big]\\
	&\quad  + \epsilon^2
	\Delta\big[ \beta''(\Phi)|\nabla \Phi|^2- \beta''(\Phi_N^{n+1})|\nabla\Phi_N^{n+1}|^2\big]
	-2\epsilon^2 \Delta^2 \big[\beta(\Phi)- \beta(\Phi_N^{n+1})\big]\\
	& \quad  + \lambda(\lambda+\epsilon^p\eta)\Delta (\Phi-\Phi_N^{n+1})
	-\lambda \Delta \big[\Phi\beta'(\Phi)- \Phi_N^n\beta'(\Phi_N^{n})\big]\\
	&\quad + \epsilon^2(2\lambda+\epsilon^p\eta)\Delta^2(\Phi-\Phi_N^n)
	- (\lambda+\epsilon^p\eta) \Delta \big[\beta(\Phi)-\beta(\Phi_N^{n})\big].
\end{align*}
Recalling the assumptions \eqref{regularity}--\eqref{strict separation true}, then using Taylor's expansion in time and the error estimate \eqref{fourier projection} (e.g., with $l=11$, $k=8$), we have
$$
\widetilde{F}_N^{n+1}=\Delta tF_{N,1}^{n+1} +O(\Delta t^2) + O(h^{3}),
$$
where the spatial function $F_{N,1}^{n+1}$ only depends on $\Phi_N$, $\Phi$, and it is sufficiently smooth in the sense that its derivatives are bounded. From the mass conservation \eqref{projection estimate} and the periodic boundary condition, we infer that  $\int_\Omega \widetilde{F}_N^{n+1}\,\mathrm{d}x=0$. This fact combined with Taylor's expansion further implies $\int_\Omega F_{N,1}^{n+1}\,\mathrm{d}x=0$.

Performing a further spatial discretization to \eqref{trucation error} and applying a careful consistency analysis via Taylor's expansion, we can actually show that $\mathcal{P}_h\Phi_N^{n+1}$ solves the discrete system \eqref{numerical phi}--\eqref{numerical mu} with a first order accuracy in time and a second order accuracy in space. However, a restrictive refinement condition like $C'h^2\leq\Delta t\leq C''h^2$ for some $C''>C'>0$  is needed to complete the proof. The detailed calculations are left to the interested readers.

In order to prove Theorem \ref{thm1} under a weaker (i.e., linear) refinement condition on the time step size $\Delta t$, we perform a higher-order consistency analysis for the auxiliary profile
\begin{equation}\label{pertubation expansion}
	\widehat{\Phi} = \Phi_N + \Delta t\mathcal{P}_N\Phi_{\Delta t,1}+\Delta t^2\mathcal{P}_N\Phi_{\Delta t,2}+h^2\mathcal{P}_N\Phi_{h,1}.
\end{equation}
Here we note that the centered difference used in the spatial discretization gives local truncation errors with only even order terms. In this manner, we find that a higher order consistency of $O(\Delta t^3+ h^3)$ holds for the numerical scheme \eqref{numerical phi}--\eqref{numerical mu} with $\mathcal{P}_h\widehat{\Phi}^{n+1}$, see \eqref{correction trucation error}--\eqref{tau-n1} below. The approximate expansion \eqref{pertubation expansion} enables us to derive a higher order convergence estimate of the modified numerical error function $\widehat{e}^n$ (see \eqref{new-err}) between the constructed profile $\mathcal{P}_h \widehat{\Phi}^n$ and the numerical solution $\phi^n$ in the discrete $\|\cdot\|_{-1,h}$ norm (see \eqref{convergence analysis}). This combined with the inverse inequality eventually yields a refined $\|\cdot\|_\infty$ bound of the numerical solution $\phi^n$, more precisely, the property of strict separation from pure states $\pm1$ (see \eqref{strict separation numerical}) that is uniform with respect to the time step and the grid size (cf. Remark \ref{se-bound}).

The supplementary field $\Phi_{\Delta t,1}$ in the profile \eqref{pertubation expansion} only depends on the exact solution $\Phi$ and can be constructed via a perturbation expansion argument (see e.g., \cite{liu2021positivity,wang2015energy}). Indeed, we find the temporal correction $\Phi_{\Delta t,1}$ by solving the following linear equation:
\begin{align}
	\partial_t\Phi_{\Delta t,1} &= \Delta\bigg[ \epsilon^4\Delta^2\Phi_{\Delta t,1} + \left(\beta'(\Phi_{N})^2+\beta(\Phi_N)\beta''(\Phi_N)\right)\Phi_{\Delta t,1}	\notag	\\
	&\qquad + \epsilon^2\left(2\beta''(\Phi_N)\nabla\Phi_N\cdot\nabla\Phi_{\Delta t,1}+\beta'''(\Phi_N)|\nabla\Phi_N|^2\Phi_{\Delta t,1}\right) \notag	\\
	&\qquad -2\epsilon^2\nabla\cdot\left(\beta'(\Phi_N)\nabla\Phi_{\Delta t,1}+\beta''(\Phi_N)\Phi_{\Delta t,1}\nabla\Phi_N\right)	\notag		\\
	&\qquad + \lambda(\lambda+\epsilon^p\eta)\Phi_{\Delta t,1} - \lambda\left(\Phi_N\beta''(\Phi_N)+\beta'(\Phi_N)\right)\Phi_{\Delta t,1} \notag	\\
	&\qquad + \epsilon^2(2\lambda+\epsilon^p\eta)\Delta\Phi_{\Delta t,1} - (\lambda+\epsilon^p\eta)\beta'(\Phi_N)\Phi_{\Delta t,1}\bigg]-F_{N,1}, \label{trucation error-dt1}
\end{align}
subject to the zero initial condition $\Phi_{\Delta t,1}(\cdot,0)=0$ and periodic boundary conditions.
Once the Fourier projection  $\Phi_N$ is given, we take the external source term $F_{N,1}$ in \eqref{trucation error-dt1} to be a smooth function satisfying
$$F_{N,1}(\cdot,t_n)=F_{N,1}^n\quad \text{and}\quad \int_\Omega F_{N,1}(\cdot, t)\,\mathrm{d}x=0,\quad \forall\, t\in [0,T].$$
The existence and uniqueness of a solution to the linear sixth order parabolic equation \eqref{trucation error-dt1} on $[0,T]$ can be easily proved, for instance, by using the Faedo-Galerkin method. In particular, the solution $\Phi_{\Delta t,1} $ only depends on $\Phi_N$, and its  derivatives in various orders are bounded (recalling that $\Phi_N$ is sufficiently smooth). Moreover, it satisfies the zero mass property:
$$
\int_\Omega \Phi_{\Delta t,1}(\cdot,t)\,\mathrm{d}x=\int_\Omega \Phi_{\Delta t,1}(\cdot,0)\,\mathrm{d}x=0,\quad \forall\, t\in[0,T].
$$
Next, an application of the semi-implicit discretization  for \eqref{trucation error-dt1}  yields
\begin{align}
	\frac{\Phi_{\Delta t,1}^{n+1}-\Phi_{\Delta t,1}^n}{\Delta t} &= \Delta\bigg[ \epsilon^4\Delta^2\Phi_{\Delta t,1}^{n+1} + \left(\beta'(\Phi_N^{n+1})^2+\beta(\Phi_N^{n+1})\beta''(\Phi_N^{n+1})\right)\Phi_{\Delta t,1}^{n+1}	\notag	\\
	&\qquad + \epsilon^2\left(2\beta''(\Phi_N^{n+1})\nabla\Phi_N^{n+1}\cdot\nabla\Phi_{\Delta t,1}^{n+1}+\beta'''(\Phi_N^{n+1})|\nabla\Phi_N^{n+1}|^2\Phi_{\Delta t,1}^{n+1}\right)	\notag\\
	&\qquad -2\epsilon^2\nabla\cdot\left(\beta'(\Phi_N^{n+1})\nabla\Phi_{\Delta t,1}^{n+1}+\beta''(\Phi_N^{n+1})\Phi_{\Delta t,1}^{n+1}\nabla\Phi_N^{n+1}\right)	\notag		\\
	&\qquad + \lambda(\lambda+\epsilon^p\eta)\Phi_{\Delta t,1}^{n+1} - \lambda\left(\Phi_N^n\beta''(\Phi_N^n)+\beta'(\Phi_N^n)\right)\Phi_{\Delta t,1}^{n} \notag	\\
	&\qquad + \epsilon^2(2\lambda+\epsilon^p\eta)\Delta\Phi_{\Delta t,1}^{n} - (\lambda+\epsilon^p\eta)\beta'(\Phi_N^n)\Phi_{\Delta t,1}^{n}\bigg]-F_{N,1}^{n+1} +O(\Delta t), \label{sympototic expansion}
\end{align}
where we denote by $\Phi_{\Delta t,1}^m=\Phi_{\Delta t,1}(\cdot, t_m)$, $t_m=m\Delta t$, $m\in \mathbb{N}$, $m\leq M$. At the discrete level of time, we note that for every $n\in \mathbb{N}$, it also holds $\int_{\Omega}\Phi_{\Delta t,1}^{n+1}\,\mathrm{d}x =\int_{\Omega}\Phi_{\Delta t,1}^n\,\mathrm{d}x$. This fact further implies
\begin{equation}\label{projection estimate-b}
	\int_{\Omega} \mathcal{P}_N \Phi_{\Delta t,1}^{m}\,\mathrm{d}x =0, \quad \forall\, m\in \mathbb{N}, \ m\leq M.
\end{equation}			
Combining \eqref{trucation error} and \eqref{sympototic expansion}, we can obtain the following second order in temporal space truncation error for $$\widehat{\Phi}_1:=\Phi_N + \Delta t\mathcal{P}_N\Phi_{\Delta t,1}$$ 
such that
\begin{align}\label{hat-Phi-1}
	\frac{\widehat{\Phi}_1^{n+1}-\widehat{\Phi}_1^n}{\Delta t}=\Delta&\Big[\epsilon^4 \Delta^2\widehat{\Phi}_1^{n+1} + \beta(\widehat{\Phi}_1^{n+1})\beta'(\widehat{\Phi}_1^{n+1}) +\epsilon^2\beta'' (\widehat{\Phi}_1^{n+1})|\nabla\widehat{\Phi}_1^{n+1}|^2 \notag \\
	&-2\epsilon^2\nabla\cdot \left(\beta'(\widehat{\Phi}_1^{n+1})\nabla\widehat{\Phi}_1^{n+1}\right) +\lambda(\lambda+\epsilon^p\eta)\widehat{\Phi}^{n+1} -\lambda\widehat{\Phi}_1^n\beta'(\widehat{\Phi}_1^{n}) \notag \\
	&+\epsilon^2(2\lambda+\epsilon^p\eta)\Delta\widehat{\Phi}_1^n -(\lambda+\epsilon^p\eta)\beta(\widehat{\Phi}^{n})\Big] + \Delta t^2F_{N,2}^{n+1} + O(\Delta t^3) + O(h^{3}).
\end{align}

  Following arguments similar to those in \cite{liu2021positivity,li2021convergence,chen2022error}, we can further derive the second order temporal correction $\Phi_{\Delta t,2}$ as well as the spatial correction $\Phi_{h,1}$, respectively. 
  
  To this end, the next order temporal correction function $\Phi_{\Delta t,2}$ is given by the linear equation
\begin{align}
	\partial_t\Phi_{\Delta t,2} &= \Delta\bigg[ \epsilon^4\Delta^2\Phi_{\Delta t,2} + \left(\beta'(\widehat{\Phi}_1)^2+\beta(\widehat{\Phi}_1)\beta''(\widehat{\Phi}_1)\right)\Phi_{\Delta t,2}	\notag	\\
	&\qquad + \epsilon^2\left(2\beta''(\widehat{\Phi}_1)\nabla\widehat{\Phi}_1\cdot\nabla\Phi_{\Delta t,2}+\beta'''(\widehat{\Phi}_1)|\nabla\widehat{\Phi}_1|^2\Phi_{\Delta t,2}\right) \notag	\\
	&\qquad -2\epsilon^2\nabla\cdot\left(\beta'(\widehat{\Phi}_1)\nabla\Phi_{\Delta t,2}+\beta''(\widehat{\Phi}_1)\Phi_{\Delta t,2}\nabla\widehat{\Phi}_1\right)	\notag		\\
	&\qquad + \lambda(\lambda+\epsilon^p\eta)\Phi_{\Delta t,2} - \lambda\left(\widehat{\Phi}_1\beta''(\widehat{\Phi}_1)+\beta'(\widehat{\Phi}_1)\right)\Phi_{\Delta t,2} \notag	\\
	&\qquad + \epsilon^2(2\lambda+\epsilon^p\eta)\Delta\Phi_{\Delta t,2} - (\lambda+\epsilon^p\eta)\beta'(\widehat{\Phi}_1)\Phi_{\Delta t,2}\bigg]-F_{N,2}, \label{trucation error-dt2}
\end{align}
subject to the zero initial condition $\Phi_{\Delta t,2}(\cdot,0)=0$ and periodic boundary conditions. 
Like before, the solution $\Phi_{\Delta t,2}$ depends only on the exact solution $\Phi$ and its derivatives of various orders are bounded. An application of the semi-implicit discretization to \eqref{trucation error-dt2} implies that
	\begin{align}
		\frac{\Phi_{\Delta t,2}^{n+1}-\Phi_{\Delta t,2}^n}{\Delta t} &= \Delta\bigg[ \epsilon^4\Delta^2\Phi_{\Delta t,2}^{n+1} + \left(\beta'(\widehat{\Phi}_1^{n+1})^2+\beta(\widehat{\Phi}_1^{n+1})\beta''(\widehat{\Phi}_1^{n+1})\right)\Phi_{\Delta t,2}^{n+1}	\notag	\\
		&\qquad + \epsilon^2\left(2\beta''(\widehat{\Phi}_1^{n+1})\nabla\widehat{\Phi}_1^{n+1}\cdot\nabla\Phi_{\Delta t,2}^{n+1}+\beta'''(\widehat{\Phi}_1^{n+1})|\nabla\widehat{\Phi}_1^{n+1}|^2\Phi_{\Delta t,2}^{n+1}\right)	\notag\\
		&\qquad -2\epsilon^2\nabla\cdot\left(\beta'(\widehat{\Phi}_1^{n+1})\nabla\Phi_{\Delta t,2}^{n+1}+\beta''(\widehat{\Phi}_1^{n+1})\Phi_{\Delta t,2}^{n+1}\nabla\widehat{\Phi}_1^{n+1}\right)	\notag		\\
		&\qquad + \lambda(\lambda+\epsilon^p\eta)\Phi_{\Delta t,2}^{n+1} - \lambda\left(\widehat{\Phi}_1^n\beta''(\widehat{\Phi}_1^n)+\beta'(\widehat{\Phi}_1^n)\right)\Phi_{\Delta t,2}^{n} \notag	\\
		&\qquad + \epsilon^2(2\lambda+\epsilon^p\eta)\Delta\Phi_{\Delta t,2}^{n} - (\lambda+\epsilon^p\eta)\beta'(\widehat{\Phi}_1^n)\Phi_{\Delta t,2}^{n}\bigg]-F_{N,2}^{n+1} +O(\Delta t). \label{sympototic expansion dt2}
	\end{align}
Then a combination of \eqref{trucation error-dt2} and \eqref{sympototic expansion dt2} yields the third order temporal truncation error for the profile 
$$\widehat{\Phi}_2:=\Phi_N + \Delta t\mathcal{P}_N\Phi_{\Delta t,1} + \Delta t^2\mathcal{P}_N\Phi_{\Delta t,2}$$
such that 
		\begin{align}
			\frac{\widehat{\Phi}_2^{n+1}-\widehat{\Phi}_2^n}{\Delta t}=\Delta&\Big[\epsilon^4 \Delta^2\widehat{\Phi}_2^{n+1} + \beta(\widehat{\Phi}_2^{n+1})\beta'(\widehat{\Phi}_2^{n+1}) +\epsilon^2\beta'' (\widehat{\Phi}_2^{n+1})|\nabla\widehat{\Phi}_2^{n+1}|^2 \notag \\
			&-2\epsilon^2\nabla\cdot \left(\beta'(\widehat{\Phi}_2^{n+1})\nabla\widehat{\Phi}_2^{n+1}\right) +\lambda(\lambda+\epsilon^p\eta)\widehat{\Phi}_2^{n+1} -\lambda\widehat{\Phi}_2^n\beta'(\widehat{\Phi}_2^{n}) \notag \\
			&+\epsilon^2(2\lambda+\epsilon^p\eta)\Delta\widehat{\Phi}_2^n -(\lambda+\epsilon^p\eta)\beta(\widehat{\Phi}_2^{n})\Big] + O(\Delta t^3)+O(h^{3}).\label{correction trucation error-b2}
		\end{align}
  
Finally, we construct the spatial correction term $\Phi_{h,1}$ to upgrade the spatial accuracy order.
The following truncation error analysis for the spatial discretization can be obtained by using a straightforward Taylor expansion for $\widehat{\Phi}_2$:
\begin{align}
	\frac{\widehat{\Phi}_2^{n+1}-\widehat{\Phi}_2^n}{\Delta t}=\Delta_h&\Big[\epsilon^4\Delta_h^2\widehat{\Phi}_2^{n+1} + \beta(\widehat{\Phi}_2^{n+1})\beta'(\widehat{\Phi}_2^{n+1})+\epsilon^2\sum_{\zeta=x,y,z}\beta'' (\widehat{\Phi}_2^{n+1})a_{\zeta} \left(|D_{\zeta}\widehat{\Phi}_2^{n+1}|^2\right) \notag \\
	&-2\epsilon^2\sum_{\zeta=x,y,z}d_{\lambda} \left(A_{\zeta}\big(\beta'(\widehat{\Phi}_2^{n+1})\big) D_{\zeta}\widehat{\Phi}_2^{n+1}\right) +\lambda(\lambda+\epsilon^p\eta)\widehat{\Phi}_2^{n+1} \notag \\
	&-\lambda\widehat{\Phi}_2^n\beta'(\widehat{\Phi}_2^{n}) + \epsilon^2(2\lambda+\epsilon^p\eta)\Delta_h\widehat{\Phi}_2^n -(\lambda+\epsilon^p\eta)\beta(\widehat{\Phi}_2^{n})\Big] \notag\\
&+ h^2H_{N,1}^{n+1}+O(\Delta t^3)+ O(h^3).
\label{trucation error full dis}
\end{align}
Subsequently, the spatial correction function $\Phi_{h,1}$ is given by solving the linear equation
\begin{align}
	\partial_t\Phi_{h,1} &= \Delta\bigg[ \epsilon^4\Delta^2\Phi_{h,1} + \left(\beta'(\widehat{\Phi}_2)^2+\beta(\widehat{\Phi}_2)\beta''(\widehat{\Phi}_2)\right)\Phi_{h,1}	\notag	\\
	&\qquad + \epsilon^2\left(2\beta''(\widehat{\Phi}_2)\nabla\widehat{\Phi}_2\cdot\nabla\Phi_{h,1}+\beta'''(\widehat{\Phi}_2)|\nabla\widehat{\Phi}_2|^2\Phi_{h,1}\right) \notag	\\
	&\qquad -2\epsilon^2\nabla\cdot\left(\beta'(\widehat{\Phi}_2)\nabla\Phi_{h,1}+\beta''(\widehat{\Phi}_2)\Phi_{h,1}\nabla\widehat{\Phi}_2\right)	\notag		\\
	&\qquad + \lambda(\lambda+\epsilon^p\eta)\Phi_{h,1} - \lambda\left(\widehat{\Phi}_2\beta''(\widehat{\Phi}_2)+\beta'(\widehat{\Phi}_2)\right)\Phi_{h,1} \notag	\\
	&\qquad + \epsilon^2(2\lambda+\epsilon^p\eta)\Delta\Phi_{h,1} - (\lambda+\epsilon^p\eta)\beta'(\widehat{\Phi}_2)\Phi_{h,1}\bigg]-H_{N,1}, \label{trucation error-h2}
\end{align}
subject to the zero initial condition $\Phi_{h,1}(\cdot,0)=0$ and periodic boundary conditions. 
Again, the solution $\Phi_{h,1}$ depends only on the (sufficiently) exact solution $\Phi$, with bounded differences of various orders. 

Combining the above arguments, we have thus constructed the required auxiliary profile
 $\widehat{\Phi}$ (recall \eqref{pertubation expansion}).  
  
\begin{rem} Since the temporal/spatial correction functions $\Phi_{\Delta t,1}$, $\Phi_{\Delta t,2}$, $\Phi_{h,1}$ are sufficiently smooth and bounded, applying the projection estimate \eqref{fourier projection} and the separation property \eqref{strict separation fourier} for $\Phi_N$, we can obtain a strict separation property for $\widehat{\Phi}$ such that
	\begin{equation}\label{modified separation property}
		-1+\theta \leq\widehat{\Phi}(x,t)\leq 1-\theta, \quad \forall\, (x,t)\in \overline{\Omega}\times [0,T],
	\end{equation}
	with $\theta=\theta_0/4$, provided that the time step $\Delta t$ and the grid size $h$ are sufficiently small.
Such a uniform separation property will be useful in the subsequent convergence analysis.
\end{rem}

An application of a full discretization to \eqref{trucation error-h2} yields that 
\begin{align}
	\frac{\Phi_{h,1 }^{n+1}-\Phi_{h,1 }^n}{\Delta t} &= \Delta_h\bigg[ \epsilon^4\Delta_h^2\Phi_{h,1 }^{n+1} + \left(\beta'(\widehat{\Phi}_2^{n+1})^2+\beta(\widehat{\Phi}_2^{n+1})\beta''(\widehat{\Phi}_2^{n+1})\right)\Phi_{h,1 }^{n+1}	\notag	\\
	&\qquad + \epsilon^2\left(2\beta''(\widehat{\Phi}_2^{n+1})\nabla_h\widehat{\Phi}_2^{n+1}\cdot\nabla_h\Phi_{h,1 }^{n+1}+\beta'''(\widehat{\Phi}_2^{n+1})|\nabla_h\widehat{\Phi}_2^{n+1}|^2\Phi_{h,1 }^{n+1}\right)	\notag\\
	&\qquad -2\epsilon^2\nabla_h\cdot\left(\beta'(\widehat{\Phi}_2^{n+1})\nabla_h\Phi_{h,1 }^{n+1}+\beta''(\widehat{\Phi}_2^{n+1})\Phi_{h,1 }^{n+1}\nabla_h\widehat{\Phi}_2^{n+1}\right)	\notag		\\
	&\qquad + \lambda(\lambda+\epsilon^p\eta)\Phi_{h,1 }^{n+1} - \lambda\left(\widehat{\Phi}_2^n\beta''(\widehat{\Phi}_2^n)+\beta'(\widehat{\Phi}_2^n)\right)\Phi_{h,1 }^{n} \notag	\\
	&\qquad + \epsilon^2(2\lambda+\epsilon^p\eta)\Delta_h\Phi_{h,1 }^{n} - (\lambda+\epsilon^p\eta)\beta'(\widehat{\Phi}_2^n)\Phi_{h,1 }^{n}\bigg]-H_{N,1}^{n+1} +O(\Delta t+h^2). \label{sympototic expansion h2}
\end{align}  
 Hence, for the profile $\widehat{\Phi}=\Phi_N + \Delta t\mathcal{P}_N\Phi_{\Delta t,1}+\Delta t^2\mathcal{P}_N\Phi_{\Delta t,2}+h^2\mathcal{P}_N\Phi_{h,1}$, by a careful consistency analysis via Taylor's expansion, we can eventually arrive at the following fully discrete scheme for its discrete counterpart $\mathcal{P}_h\widehat{\Phi}^{n+1}$  such that (for the sake of simplicity, below we simply drop the projection operator $\mathcal{P}_h$ in $\mathcal{P}_h\widehat{\Phi}^{n}$ and $\mathcal{P}_h\widehat{\Phi}^{n+1}$)
\begin{align}
	\frac{\widehat{\Phi}^{n+1}-\widehat{\Phi}^n}{\Delta t}=\Delta_h&\Big[\epsilon^4\Delta_h^2\widehat{\Phi}^{n+1} + \beta(\widehat{\Phi}^{n+1})\beta'(\widehat{\Phi}^{n+1})+\epsilon^2\sum_{\zeta=x,y,z}\beta'' (\widehat{\Phi}^{n+1})a_{\zeta} \left(|D_{\zeta}\widehat{\Phi}^{n+1}|^2\right) \notag \\
	&-2\epsilon^2\sum_{\zeta=x,y,z}d_{\lambda} \left(A_{\zeta}\big(\beta'(\widehat{\Phi}^{n+1})\big) D_{\zeta}\widehat{\Phi}^{n+1}\right) +\lambda(\lambda+\epsilon^p\eta)\widehat{\Phi}^{n+1} \notag \\
	&-\lambda\widehat{\Phi}^n\beta'(\widehat{\Phi}^{n}) + \epsilon^2(2\lambda+\epsilon^p\eta)\Delta_h\widehat{\Phi}^n -(\lambda+\epsilon^p\eta)\beta(\widehat{\Phi}^{n})\Big] + \widehat{\tau}^{n+1},
\label{correction trucation error}
\end{align}
with the higher order
truncation error
\begin{equation}
	\label{tau-n1}
	\|\widehat{\tau}^{n+1}\|_2\leq C(\Delta t^3+h^3).
\end{equation}

\begin{rem}\label{rem1}
	We note that the  profile $\widehat{\Phi}=\Phi_N + \Delta t\mathcal{P}_N\Phi_{\Delta t,1}+\Delta t^2\mathcal{P}_N\Phi_{\Delta t,2}+h^2\mathcal{P}_N\Phi_{h,1} \in \mathcal{B}^K$ still satisfies the mass conservation property at the discrete level of both time and space. Indeed, from its definition, we observe that
	$\mathcal{P}_h\widehat{\Phi}^0 = \mathcal{P}_h \Phi_N^0=  \phi^0$. Then from the mass conservation of $\Phi_N^m$, $\mathcal{P}_N\Phi_{\Delta t,1}^m$, $\mathcal{P}_N\Phi_{\Delta t,2}^m$ and $\mathcal{P}_N\Phi_{h,1}^m$ (see \eqref{projection estimate-h}, \eqref{projection estimate-b}), we find that for every $m\in \mathbb{Z}^+$, $m\leq M$, it holds
	\begin{equation}\label{mass correction2}
		\overline{\mathcal{P}_h\widehat{\Phi}^m}
		= \overline{\mathcal{P}_h\Phi_N^m} + \Delta t \overline{\mathcal{P}_h(\mathcal{P}_N\Phi_{\Delta t,1}^m)} + \Delta t^2 \overline{\mathcal{P}_h(\mathcal{P}_N\Phi_{\Delta t,2}^m)} + h^2 \overline{\mathcal{P}_h(\mathcal{P}_N\Phi_{h,1}^m)}=
		\overline{\mathcal{P}_h\Phi_N^0}
		=\overline{\phi^0}=\overline{\phi^m}.
	\end{equation}
	Besides, we infer from \eqref{correction trucation error} and \eqref{mass correction2}  that  the local truncation error $\widehat{\tau}^n$ also fulfills
	\begin{equation}
		\overline{\widehat{\tau}^m}=0,\quad\forall\, m\in \mathbb{Z}^+,\ m\leq M.
		\label{tau-n2}
	\end{equation}
\end{rem}
%
%

%

\subsection{Error estimate in $l^{\infty}(0,T;H^{-1}_h)\cap l^2(0,T;H^2_h)$}

We first derive a preliminary error estimate on the error grid function $e^m$ in the lower order space $l^{\infty}(0,T;H^{-1}_h)\cap l^2(0,T;H^2_h)$. Since $\overline{e^m}=0$ for any $m\in\mathbb{N}$, $m\leq M$ as shown before, the discrete norm $\|\cdot\|_{-1,h}$ is well defined for $e^m$.
\begin{prop}\label{loworder}
	Suppose that the exact solution $\Phi$ of the FCH equation (\ref{phi}) belongs to the regularity class $\mathcal{R}$ (recall \eqref{regularity}) and satisfies the strict separation property \eqref{strict separation true}. Then, provided that $h$ is sufficiently small, and under the linear refinement requirement $\Delta t\leq C_1 h$ for some fixed $C_1>0$, we have
	\begin{equation}
		\|e^{m}\|_{-1,h}+\left[\Delta t\sum_{k=1}^{m}\Big(\epsilon^4\|\Delta_he^{k}\|_2^2+2\lambda(\lambda+\epsilon^p\eta)\|e^k\|_2^2\Big)\right]^{\frac12}\leq C(\Delta t + h^2),
		\label{des-h-1}
	\end{equation}
	for any $m\in\mathbb{Z}^+$ such that $t_m=m\Delta t\leq T$, where the  constant $C>0$ is independent of $m$, $\Delta t$ and $h$.
\end{prop}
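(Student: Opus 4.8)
The plan is to run an energy-type estimate in the discrete $\|\cdot\|_{-1,h}$-norm, not for $e^m$ directly but for the \emph{modified} error $\widehat{e}^{\,n} := \mathcal{P}_h\widehat{\Phi}^n - \phi^n$ built from the higher-order profile $\widehat{\Phi}$ of \eqref{pertubation expansion}. Since $\widehat{\Phi} = \Phi_N + \Delta t\,\mathcal{P}_N\Phi_{\Delta t,1} + \Delta t^2\mathcal{P}_N\Phi_{\Delta t,2} + h^2\mathcal{P}_N\Phi_{h,1}$ and the correction fields are smooth with bounded derivatives, one has $e^m = \widehat{e}^{\,m} + (\mathcal{P}_h\Phi_N^m - \mathcal{P}_h\widehat{\Phi}^m)$ with $\|\mathcal{P}_h\Phi_N^m - \mathcal{P}_h\widehat{\Phi}^m\|_{-1,h} + \|\Delta_h(\mathcal{P}_h\Phi_N^m - \mathcal{P}_h\widehat{\Phi}^m)\|_2 \le C(\Delta t + h^2)$, so it suffices to prove the sharper bound $\|\widehat{e}^{\,m}\|_{-1,h}^2 + \Delta t\sum_{k=1}^m(\epsilon^4\|\Delta_h\widehat{e}^{\,k}\|_2^2 + \lambda(\lambda+\epsilon^p\eta)\|\widehat{e}^{\,k}\|_2^2) \le C(\Delta t^3 + h^3)^2$ and then add the correction term back. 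Subtracting the scheme \eqref{numerical phi}--\eqref{numerical mu} for $\phi^{n+1}$, written in the convex--concave form $\mu^{n+1} = \delta_\phi E_h^c(\phi^{n+1}) - \delta_\phi E_h^e(\phi^n)$ as in Section \ref{sec:energy stability}, from the consistency identity \eqref{correction trucation error} gives
\begin{equation}
\frac{\widehat{e}^{\,n+1} - \widehat{e}^{\,n}}{\Delta t} = \Delta_h\Big[\big(\delta_\phi E_h^c(\widehat{\Phi}^{n+1}) - \delta_\phi E_h^c(\phi^{n+1})\big) - \big(\delta_\phi E_h^e(\widehat{\Phi}^n) - \delta_\phi E_h^e(\phi^n)\big)\Big] + \widehat{\tau}^{n+1}, \notag
\end{equation}
with $\widehat{e}^{\,0} = 0$ (by \eqref{discrete-ini} and Remark \ref{rem1}), $\overline{\widehat{e}^{\,n}} = 0$, and $\|\widehat{\tau}^{n+1}\|_2 \le C(\Delta t^3 + h^3)$ by \eqref{tau-n1}.

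I would prove the sharper bound by induction on $n$, carrying in the induction hypothesis both the bound for $\widehat{e}^{\,k}$ with $k \le n$ and its consequence that $\phi^k$ stays uniformly separated, $\|\phi^k\|_\infty \le 1 - \theta/2$ for $k \le n$. The separation is extracted from the error bound as follows: the unconditional $H^2_h$-bound of Corollary \ref{es-HH2}, applied to $\phi^k$ and to $\widehat{\Phi}^k$, gives $\|\Delta_h\widehat{e}^{\,k}\|_2 \le C$; combining this with the interpolation inequality $\|v\|_2^2 \le \|v\|_{-1,h}\|\Delta_h v\|_2$ for $\overline{v}=0$ and an inverse/embedding inequality — the step where the linear refinement $\Delta t \le C_1 h$ and the smallness of $\Delta t, h$ are used — one obtains $\|\widehat{e}^{\,k}\|_\infty \le C(\Delta t^3 + h^3)^{\sigma} \le \theta/2$ for some $\sigma > 0$, whence $\|\phi^k\|_\infty \le \|\widehat{\Phi}^k\|_\infty + \|\widehat{e}^{\,k}\|_\infty \le 1 - \theta + \theta/2$ by the separation \eqref{modified separation property} of $\widehat{\Phi}$. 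On $[-1+\theta/2,\,1-\theta/2]$ the logarithmic function $\beta$ and all of its derivatives appearing below are smooth and uniformly bounded, and the base case $n=0$ is immediate since $\widehat{e}^{\,0}=0$.

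For the inductive step I pair the error equation with $(-\Delta_h)^{-1}\widehat{e}^{\,n+1}$ and use $\langle a-b,a\rangle_{-1,h} = \frac12(\|a\|_{-1,h}^2 - \|b\|_{-1,h}^2 + \|a-b\|_{-1,h}^2)$. The implicit contribution equals $-\langle \delta_\phi E_h^c(\widehat{\Phi}^{n+1}) - \delta_\phi E_h^c(\phi^{n+1}), \widehat{e}^{\,n+1}\rangle_\Omega$, which — by repeating the convexity estimates of Proposition \ref{c-c decomposition}/Corollary \ref{dc-c decomposition} (monotonicity of $r\mapsto\beta(r)\beta'(r)$ via \eqref{be1}--\eqref{be2} and convexity of the discrete $\beta'$-gradient functional) — is bounded above by $-\epsilon^4\|\Delta_h\widehat{e}^{\,n+1}\|_2^2 - \lambda(\lambda+\epsilon^p\eta)\|\widehat{e}^{\,n+1}\|_2^2$, the remaining terms being nonnegative. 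The explicit contribution $\langle \delta_\phi E_h^e(\widehat{\Phi}^n) - \delta_\phi E_h^e(\phi^n), \widehat{e}^{\,n+1}\rangle_\Omega$ consists of a linear term $\epsilon^2(2\lambda+\epsilon^p\eta)\langle\nabla_h\widehat{e}^{\,n},\nabla_h\widehat{e}^{\,n+1}\rangle_\Omega$ and of the nonlinear terms $\lambda(\widehat{\Phi}^n\beta'(\widehat{\Phi}^n) - \phi^n\beta'(\phi^n))$ and $(\lambda+\epsilon^p\eta)(\beta(\widehat{\Phi}^n) - \beta(\phi^n))$ tested against $\widehat{e}^{\,n+1}$; thanks to the separation just recorded, the nonlinear maps are Lipschitz on the relevant compact set, so these terms are $\le C\|\widehat{e}^{\,n}\|_2\|\widehat{e}^{\,n+1}\|_2$. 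Using Young's inequality, the interpolation bounds $\|\nabla_h v\|_2^2 \le \|v\|_2\|\Delta_h v\|_2$ and $\|v\|_2^2 \le \|v\|_{-1,h}\|\Delta_h v\|_2$, and the fact that $\widehat{e}^{\,0}=0$ (so the index-$n$ terms $\|\Delta_h\widehat{e}^{\,n}\|_2^2$ are, after summation, dominated by the coercive index-$(n+1)$ sum on the left), all of these are absorbed into $\frac12\epsilon^4\|\Delta_h\widehat{e}^{\,n+1}\|_2^2 + \frac12\lambda(\lambda+\epsilon^p\eta)\|\widehat{e}^{\,n+1}\|_2^2$ plus terms $C(\|\widehat{e}^{\,n}\|_{-1,h}^2 + \|\widehat{e}^{\,n+1}\|_{-1,h}^2)$ destined for Gronwall, while $\langle\widehat{\tau}^{n+1},(-\Delta_h)^{-1}\widehat{e}^{\,n+1}\rangle_\Omega \le C\|\widehat{\tau}^{n+1}\|_2^2 + \frac12\|\widehat{e}^{\,n+1}\|_{-1,h}^2$. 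Multiplying by $2\Delta t$, summing over $n = 0,\dots,m-1$, and applying the discrete Gronwall inequality (valid for $\Delta t$ small) yields the sharper bound for $\widehat{e}^{\,m}$, closes the induction, and gives \eqref{des-h-1} after passing back to $e^m$.

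The hardest point is the apparent circularity among the $l^\infty(0,T;H^{-1}_h)\cap l^2(0,T;H^2_h)$ error estimate, the $l^\infty(0,T;L^\infty_h)$ smallness of the error extracted from it, and the uniform strict separation $\|\phi^n\|_\infty \le 1-\theta/2$ that is indispensable even to give meaning to — let alone to bound — the singular concave terms $\lambda\phi^n\beta'(\phi^n)$ and $(\lambda+\epsilon^p\eta)\beta(\phi^n)$ in the error equation. Breaking it requires both working with the modified error $\widehat{e}^{\,n}$ built from the higher-order profile $\widehat{\Phi}$ — whose $O(\Delta t^3+h^3)$ truncation error, as opposed to the $O(\Delta t+h^2)$ one would get from $\Phi_N$ alone, is exactly what lets the induction close under the weak refinement $\Delta t\le C_1 h$ — and the unconditional $H^2_h$-bound of Corollary \ref{es-HH2}, which supplies the compactness needed to turn the negative-norm estimate into an $L^\infty_h$ one.
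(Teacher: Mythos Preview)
Your strategy is exactly the paper's: work with the modified error $\widehat{e}^{\,n}=\mathcal{P}_h\widehat{\Phi}^n-\phi^n$, test the difference of \eqref{correction trucation error} and \eqref{numerical phi}--\eqref{numerical mu} against $(-\Delta_h)^{-1}\widehat{e}^{\,n+1}$, use convexity/monotonicity to drop the implicit contributions, bound the explicit ones by Lipschitz once separation of $\phi^n$ is in hand, reduce via interpolation to a Gronwall inequality in $\|\cdot\|_{-1,h}$, and run an induction that simultaneously propagates the error bound and the separation. The identification of the ``circularity'' and its resolution through the $O(\Delta t^3+h^3)$ truncation of $\widehat{\Phi}$ is also precisely the point.

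There is, however, one quantitative slip in the bootstrap step. From your stated interpolation $\|v\|_2^2\le C\|v\|_{-1,h}\|\Delta_h v\|_2$ together with the $H^2_h$ bound $\|\Delta_h\widehat{e}^{\,k}\|_2\le C$ of Corollary~\ref{es-HH2}, you only get $\|\widehat{e}^{\,k}\|_2\le C\|\widehat{e}^{\,k}\|_{-1,h}^{1/2}\le C(\Delta t^3+h^3)^{1/2}$; feeding this into $\|v\|_\infty\le Ch^{-3/2}\|v\|_2$ under $\Delta t\le C_1h$ gives $\|\widehat{e}^{\,k}\|_\infty\le C$, not $\le C h^\sigma$ with $\sigma>0$, so the claimed smallness $\|\widehat{e}^{\,k}\|_\infty\le\theta/2$ does not follow. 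The paper closes this gap without invoking Corollary~\ref{es-HH2}: it uses the inverse inequality $\|v\|_2\le Ch^{-1}\|v\|_{-1,h}$ directly (equivalently, the sharper interpolation $\|v\|_2\le C\|\Delta_h v\|_2^{1/3}\|v\|_{-1,h}^{2/3}$), which yields $\|\widehat{e}^{\,k}\|_2\le C(\Delta t^2+h^2)$ and then $\|\widehat{e}^{\,k}\|_\infty\le Ch^{1/2}\to 0$. With this correction your argument goes through and coincides with the paper's; the appeal to the unconditional $H^2_h$ bound is then unnecessary for this proposition.
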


\begin{proof}
	With the aid of the auxiliary profile $\widehat{\Phi}$ constructed above, we introduce an alternative numerical error function
	\begin{equation}
		\widehat{e}^m:= \mathcal{P}_h\widehat{\Phi}^m - \phi^m,\qquad \forall\, m\in\mathbb{N},\ m\leq M,
		\label{new-err}
	\end{equation}
where $\phi^m$ is the discrete solution to the system \eqref{numerical phi}--\eqref{numerical mu}.
	From Remark \ref{rem1} we see that $\overline{\widehat{e}^m}=0$. Hence, the discrete norm $\|\cdot\|_{-1,h}$ is also well defined for the new error grid function $\widehat{e}^m$ for all $m\in\mathbb{N}$, $m\leq M$.
	
	To proceed, we make the following \emph{a priori} assumption at the
	previous time step:
	\begin{equation}
		\|\widehat{e}^{n}\|_2 \leq \Delta t^\frac{5}{3}+ h^\frac{5}{3}.
\label{apri-es}
	\end{equation}
	The \emph{a priori} assumption \eqref{apri-es} will later be recovered by the optimal rate convergence analysis at the
	next time step. From \eqref{apri-es}, the linear refinement constraint $\Delta t \leq C_1h$, and  the inverse inequality, we can derive a discrete $\|\cdot\|_{\infty}$ bound for the numerical error function:
	\begin{equation}\label{priori assumptionA}
		 \|\widehat{e}^{n}\|_{\infty} \leq\frac{C\|\widehat{e}^{n}\|_2}{h^{\frac{3}{2}}} \leq
		\widetilde{C}h^\frac{1}{6} \leq\frac{\theta}{2},
	\end{equation}
	provided that $h>0$ is sufficiently small (we note that $\widetilde{C}>0$ is independent of $\Delta t$, $h$ and $n$). Here, the constant $\theta\in (0,1)$ is determined as in  \eqref{modified separation property}, which is independent of $n$. From \eqref{modified separation property} and \eqref{priori assumptionA}, we can  deduce that the numerical solution $\phi^n$ also satisfies the strict separation property at the previous time step:
	\begin{equation}\label{strict separation numerical}
		-1+\frac{\theta}{2}\leq
		\big(\mathcal{P}_h\widehat{\Phi}^n\big)_{i,j,k} -\|\widehat{e}^{n}\|_{\infty} \leq \phi^n_{i,j,k}\leq \big(\mathcal{P}_h\widehat{\Phi}^n\big)_{i,j,k}+\|\widehat{e}^{n}\|_{\infty} \leq 1-\frac{\theta}{2},\quad 1\leq i,j,k\leq N.
	\end{equation}
	
	For any $m\in \mathbb{N}$, $m\leq M$, since $\overline{\widehat{e}^m}=0$, we find that $(-\Delta_h)^{-1}\widehat{e}^m$ is well defined. This allows us to subtract the numerical scheme \eqref{numerical phi}--\eqref{numerical mu} from \eqref{correction trucation error} and then take (discrete) inner product between the resultant and $2(-\Delta_h)^{-1}\widehat{e}^{n+1}$. More precisely, we get
	\begin{align}\label{sec5est1}
		& \frac{1}{\Delta t}\big(\|\widehat{e}^{n+1}\|_{-1,h}^2 -\|\widehat{e}^{n}\|_{-1,h}^2 +\|\widehat{e}^{n+1}-\widehat{e}^{n}\|_{-1,h}^2\big) + 2\epsilon^4\|\Delta_h\widehat{e}^{n+1}\|_2^2\notag\\
&\quad +2\lambda(\lambda+\epsilon^p\eta)\|\widehat{e}^{n+1}\|_2^2 = \sum_{i=1}^5 I_i,
	\end{align}
	where
	\begin{equation}
		\begin{aligned}
			&I_1 = -2\left<\beta(\widehat{\Phi}^{n+1})\beta'(\widehat{\Phi}^{n+1}) -\beta(\phi^{n+1})\beta'(\phi^{n+1}),\,\widehat{e}^{n+1}\right>_{\Omega},\\
			&I_2 = -2\epsilon^2\left<\sum_{\zeta=x,y,z}\left(\beta''(\widehat{\Phi}^{n+1}) a_{\zeta}\big(|D_{\zeta}\widehat{\Phi}^{n+1}|^2\big) -\beta''(\phi^{n+1})a_{\zeta}\big(|D_{\zeta}\phi^{n+1}|^2\big)\right)\right.\\
			&\qquad\qquad\qquad\left. -2\sum_{\zeta=x,y,z}\left[d_{\zeta}\left(A_{\zeta} \big(\beta'(\widehat{\Phi}^{n+1})\big) D_{\zeta}\widehat{\Phi}^{n+1}-A_{\zeta} \big(\beta'(\phi^{n+1})\big)D_{\zeta}\phi^{n+1}\right)\right],\, \widehat{e}^{n+1}\right>_{\Omega},\\
			&I_3 = 2 \left<\lambda\widehat{\Phi}^n \beta'(\widehat{\Phi}^{n}) -\lambda\phi^n\beta'(\phi^{n}) +(\lambda+\epsilon^p\eta)\beta(\widehat{\Phi}^{n}) -(\lambda+\epsilon^p\eta)\beta(\phi^{n}),\,\widehat{e}^{n+1}\right>_{\Omega},\\
			&I_4 = -2\epsilon^2(2\lambda+\epsilon^p\eta) \left<\Delta_h\widehat{e}^n,\,\widehat{e}^{n+1}\right>_{\Omega},\\
			&I_5 = 2\left<\widehat{\tau}^{n+1},\,\widehat{e}^{n+1}\right>_{-1,h}.
		\end{aligned}
		\notag
	\end{equation}
	Due to the monotonicity of $\beta\beta'$ on $(-1,1)$, we easily see that $I_1\leq0$.
	Besides, from the convexity of
	$$
	\left<\beta'(\phi),\sum_{\zeta=x,y,z}a_{\zeta} \left(\left|D_{\zeta}\phi\right|^2\right)\right>_{\Omega},
	$$
	we can also conclude $I_2\leq0$. Concerning $I_3$, we can find  some $\xi\in \mathcal{C}_{\mathrm{per}}$ with its values  $\xi_{i,j,k}$ staying between $\big(\mathcal{P}_h\widehat{\Phi}^n\big)_{i,j,k}$ and $\phi^n_{i,j,k}$, $1\leq i,j,k\leq N$, such that
	\begin{equation}\notag
		I_3 = 2\Big<\big[\lambda\xi\beta''(\xi) +(2\lambda+\epsilon^p\eta)\beta'(\xi)\big] \widehat{e}^n,\, \widehat{e}^{n+1}\Big>_{\Omega}.
	\end{equation}
	Then using \eqref{modified separation property} and (\ref{strict separation numerical}), we obtain
	\begin{equation}
		\begin{aligned}
			I_3\leq C\left|\left<\widehat{e}^{n},\,\widehat{e}^{n+1}\right>_{\Omega}\right|
			\leq 2C \big( \|\widehat{e}^{n}\|_2^2+\|\widehat{e}^{n+1}\|_2^2\big),
		\end{aligned} \notag
	\end{equation}
	where $C>0$ depends on $\lambda$, $\epsilon$, $\eta$ and $\theta$. The remaining two terms $I_4$ and $I_5$ can be estimated by simply using the Cauchy-Schwarz inequality such that
	\begin{align*}
		I_4
		& = -2\epsilon^2(2\lambda+\epsilon^p\eta) \left<\widehat{e}^n,\,\Delta_h\widehat{e}^{n+1}\right>_{\Omega} \leq  \frac{\epsilon^4}{2}\|\Delta_h\widehat{e}^{n+1}\|_2^2
		+ 2(2\lambda+\epsilon^p\eta)^2  \|\widehat{e}^{n}\|_2^2,
	\end{align*}
	and
	\begin{equation}
		I_5\leq \|\widehat{e}^{n+1}\|_{-1,h}^2 + \|\widehat{\tau}^n\|_{-1,h}^2.\notag
	\end{equation}
	Collecting the above estimates, we infer from  (\ref{sec5est1}) that
	\begin{align}
		\frac{1}{\Delta t}&\left(\|\widehat{e}^{n+1}\|_{-1,h}^2 -\|\widehat{e}^{n}\|_{-1,h}^2\right) + \frac{3}{2}\epsilon^4\|\Delta_h\widehat{e}^{n+1}\|_2^2 +2\lambda(\lambda+\epsilon^p\eta)\|\widehat{e}^{n+1}\|_2^2 \notag \\
		&\leq \widehat{C}_1  \big(\|\widehat{e}^{n}\|_2^2 +\|\widehat{e}^{n+1}\|_2^2\big) +\|\widehat{e}^{n+1}\|_{-1,h}^2 +\|\widehat{\tau}^{n+1}\|_{-1,h}^2,
		\label{h-1-es1}
	\end{align}
	where $\widehat{C}_1>0$ depends on $\lambda$, $\epsilon$, $\eta$ and $\theta$.
	On the other hand, using the fact $\overline{\widehat{e}^{n+1}}=0$, the $H^2_h$-estimate in \cite[Proposition 2.2]{FengNMPDE2017} such that
	\begin{align}
		\|\widehat{e}^{n+1}\|_{H^2_h}\leq C \|\Delta_h \widehat{e}^{n+1}\|_2, \label{HH2hes}
	\end{align}
	and the interpolation inequality, we find
	\begin{equation}
		\begin{aligned}
			&\|\widehat{e}^{n+1}\|_2^2\leq C\|\Delta_h\widehat{e}^{n+1}\|_2^{\frac23} \|\widehat{e}^{n+1}\|_{-1,h}^{\frac43}.
		\end{aligned}\notag
	\end{equation}
	A similar estimate also holds for $\widehat{e}^{n}$. Applying the above estimates and Young's inequality, we can eventually write  \eqref{h-1-es1}   as
	\begin{align}
		&	\|\widehat{e}^{n+1}\|_{-1,h}^2- \|\widehat{e}^{n}\|_{-1,h}^2 + \Delta t\Big[\epsilon^4\|\Delta_h\widehat{e}^{n+1}\|_2^2
		+2\lambda(\lambda+\epsilon^p\eta) \|\widehat{e}^{n+1}\|_2^2\Big] \notag \\
		&\quad \leq  \widehat{C}_2 \Delta t \big(  \|\widehat{e}^{n+1}\|_{-1,h}^2 + \|\widehat{e}^{n}\|_{-1}^2\big)
		+\frac{\epsilon^4}{2}\Delta t \|\Delta_h\widehat{e}^{n}\|_2^2 + \Delta t \|\widehat{\tau}^{n+1}\|_{-1,h}^2,\notag
	\end{align}
	where $\widehat{C}_2>0$ depends on $\lambda$, $\epsilon$, $\eta$ and $\theta$.
	Thus, for $0<\Delta t\leq (2\widehat{C}_2)^{-1}$, using the fact $\widehat{e}^0=0$, \eqref{tau-n1} and the discrete Gronwall inequality, we can conclude that
	\begin{equation}\label{convergence analysis}
		\|\widehat{e}^{n+1}\|_{-1,h}^2 + \Delta t \sum_{k=1}^{n+1} \big[\epsilon^4 \|\Delta_h\widehat{e}^{k}\|_2^2 +2\lambda(\lambda+\epsilon^p\eta)\|\widehat{e}^{k}\|_2^2\big]
		\leq \widehat{C}_3(\Delta t^6+h^6),
	\end{equation}
	where $\widehat{C}_3>0$ depends on $\lambda$, $\epsilon$, $\eta$ and $\theta$ and $T$.
	
	It remains to recover the \emph{a priori} assumption \eqref{apri-es}. Obviously, it is valid for $n=0$. Then we apply an induction argument. Assume that \eqref{apri-es} holds at the previous time step corresponding to $n\in \mathbb{N}$. From \eqref{convergence analysis} and
the linear refinement constraint $\Delta t \leq C_1h$, we deduce that
	\begin{equation}
		\|\widehat{e}^{n+1}\|_2\leq h^{-1}\|\widehat{e}^{n+1}\|_{-1,h}\leq C(\Delta t^2+h^2)\leq \Delta t^\frac{5}{3}+ h^\frac{5}{3},
		\label{priori assumptionB}
	\end{equation}
	provided that $\Delta t$ and $h$ are sufficiently small. Hence, \eqref{apri-es} holds for all $n\in \mathbb{N}$.
	
	Now for any $m\in \mathbb{N}$, $m\leq M$, recalling the definition of the two error functions (see \eqref{old-err} and \eqref{new-err}), we have
	\begin{align*}
		\|e^m\|_{-1,h}&\leq \|\widehat{e}^m\|_{-1,h} +\|\mathcal{P}_h\Phi_N^m- \mathcal{P}_h\widehat{\Phi}^m\|_{-1,h}\\
		&\leq \|\widehat{e}^m\|_{-1,h}  + \Delta t \|\mathcal{P}_h(\mathcal{P}_N\Phi_{\Delta t,1}^m)\|_{-1,h}  + \Delta t^2 \|\mathcal{P}_h(\mathcal{P}_N\Phi_{\Delta t,2}^m)\|_{-1,h}  + h^2 \|\mathcal{P}_h(\mathcal{P}_N\Phi_{h,1}^m)\|_{-1,h},
	\end{align*}
	and in a similar manner,
	\begin{align*}
		\|e^m\|_2 &\leq \|\widehat{e}^m\|_2+ \Delta t \|\mathcal{P}_h(\mathcal{P}_N\Phi_{\Delta t,1}^m)\|_2+ \Delta t^2 \|\mathcal{P}_h(\mathcal{P}_N\Phi_{\Delta t,2}^m)\|_2+ h^2 \|\mathcal{P}_h(\mathcal{P}_N\Phi_{h,1}^m)\|_2,\\
		\|\Delta_h e^m\|_2 &\leq \|\Delta_h  \widehat{e}^m\|_2+ \Delta t \|\Delta_h\mathcal{P}_h(\mathcal{P}_N\Phi_{\Delta t,1}^m)\|_2+\Delta t^2\|\Delta_h\mathcal{P}_h(\mathcal{P}_N\Phi_{\Delta t,2}^m)\|_2+h^2\|\Delta_h\mathcal{P}_h(\mathcal{P}_N\Phi_{h,1}^m)\|_2.
	\end{align*}
	Thanks to the above observations, we can conclude the error estimate \eqref{des-h-1} from \eqref{convergence analysis}, \eqref{fourier projection} and the uniform boundedness of $\Phi_{\Delta t,1}$, $\Phi_{\Delta t,2}$, $\Phi_{h,1}$ and its derivatives.
\end{proof}

\subsection{Error estimate in $l^{\infty}(0,T;L_h^2)\cap l^2(0,T;H^3_h)$}

We are now in a position to prove Theorem \ref{thm1}.	
\begin{proof}
	First, from the lower order error estimate \eqref{convergence analysis} for the auxiliary numerical error $\widehat{e}^n$, we have obtained a rough $L_h^2$ error estimate \eqref{apri-es}, which together with the inverse inequality yields the strict separation property of the numerical solution $\phi^m$ for all $m\in \mathbb{N}$, $m\leq M$ (see \eqref{strict separation numerical}).
	
	Next, subtracting the numerical scheme \eqref{numerical phi}--\eqref{numerical mu} from \eqref{correction trucation error} and taking (discrete) inner product between the resultant and  $2\widehat{e}^{n+1}$, we get
	\begin{equation}\label{l2convergence}
		\frac{1}{\Delta t}\left(\|\widehat{e}^{n+1}\|_2^2-\|\widehat{e}^{n}\|_2^2 +\|\widehat{e}^{n+1}-\widehat{e}^{n}\|_2^2\right) + 2\epsilon^4\|\nabla_h\Delta_h\widehat{e}^{n+1}\|_2^2 +2\lambda(\lambda+\epsilon^p\eta)\|\nabla_h\widehat{e}^{n+1}\|_2^2 = \sum_{i=1}^6 J_i,
	\end{equation}
	where
	\begin{equation}
		\begin{aligned}
			&J_1 = 2\left<\left(\beta(\widehat{\Phi}^{n+1}) \beta'(\widehat{\Phi}^{n+1}) -\beta(\phi^{n+1})\beta'(\phi^{n+1})\right),\, \Delta_h\widehat{e}^{n+1}\right>_{\Omega},\\
			&J_2 = 2\epsilon^2\left<\sum_{\zeta=x,y,z} \left[\beta''(\widehat{\Phi}^{n+1}) a_{\zeta}\left(|D_{\zeta}\widehat{\Phi}^{n+1}|^2\right) -\beta''(\phi^{n+1}) a_{\zeta}\left(|D_{\zeta}\phi^{n+1}|^2\right)\right],\, \Delta_h\widehat{e}^{n+1}\right>_{\Omega},\\
			&J_3 = -4\epsilon^2\left<\sum_{\zeta=x,y,z}\left[d_{\zeta} \left(A_{\zeta} \big(\beta'(\widehat{\Phi}^{n+1})\big) D_{\zeta}\widehat{\Phi}^{n+1} -A_{\zeta}\big(\beta'(\phi^{n+1})\big) D_{\zeta}\phi^{n+1}\right)\right],\, \Delta_h\widehat{e}^{n+1}\right>_{\Omega},\\
			&J_4 = - 2 \left<\left(\lambda\widehat{\Phi}^n \beta'(\widehat{\Phi}^{n}) -\lambda\phi^n\beta'(\phi^{n}) +(\lambda+\epsilon^p\eta) \beta(\widehat{\Phi}^{n}) -(\lambda+\epsilon^p\eta)\beta(\phi^{n})\right),\, \Delta_h\widehat{e}^{n+1}\right>_{\Omega},\\
			&J_5 = 2\epsilon^2(2\lambda+\epsilon^p\eta)\left<\Delta_h \widehat{e}^n,\,\Delta_h\widehat{e}^{n+1}\right>_{\Omega},\\
			&J_6 = 2\left<\widehat{\tau}^{n+1},\,\widehat{e}^{n+1}\right>_{\Omega}.
		\end{aligned}
		\notag
	\end{equation}
	For  $J_1$, we can find some $\xi_1\in \mathcal{C}_{\mathrm{per}}$ with its values  $(\xi_1)_{i,j,k}$ staying between $\big(\mathcal{P}_h\widehat{\Phi}^{n+1}\big)_{i,j,k}$ and $\phi^{n+1}_{i,j,k}$, $1\leq i,j,k\leq N$, such that
	\begin{equation}
		\begin{aligned}
			J_1 &=2\left<\left(\beta'(\xi_1)^2 +\beta(\xi_1)\beta''(\xi_1)\right) \widehat{e}^{n+1},\, \Delta_h\widehat{e}^{n+1}\right>_{\Omega}\\
			&\leq C\|\widehat{e}^{n+1}\|_2\|\Delta_h\widehat{e}^{n+1}\|_2\\
			&\leq C\left(\|\Delta_h\widehat{e}^{n+1}\|_2^2+  \|\widehat{e}^{n+1}\|_2^2\right),
		\end{aligned}\notag
	\end{equation}
	where the constant $C>0$ only depends on $\theta$. By a similar argument, we can estimate $J_4$ as
	\begin{equation}
		\begin{aligned}
			J_4 &= -2 \left<\big[\lambda(\beta'(\xi_2) +\xi_3\beta''(\xi_2)) +(\lambda+\epsilon^p\eta)\beta'(\xi_2)\big] \widehat{e}^{n},\, \Delta_h\widehat{e}^{n+1}\right>_{\Omega}\\
			&\leq C \|\widehat{e}^{n}\|_2\|\Delta_h\widehat{e}^{n+1}\|_2\\
			&\leq C\left( \|\Delta_h\widehat{e}^{n+1}\|_2^2 +   \|\widehat{e}^{n}\|_2^2\right),
		\end{aligned}
		\notag
	\end{equation}
	where the constant $C>0$ only depends on $\lambda$, $\eta$, $\epsilon$ and $\theta$.
	
	Next, we handle $J_2$ by making the following decomposition:
	\begin{equation}
		\begin{aligned}
			J_2 & = 2\epsilon^2\left(\left<\sum_{\zeta=x,y,z}\left(\beta''(\widehat{\Phi}^{n+1}) -\beta''(\phi^{n+1})\right) a_{\zeta}\left(|D_{\zeta}\widehat{\Phi}^{n+1}|^2\right),\, \Delta_h\hat{e}^{n+1}\right>_{\Omega}\right.\\
			&\qquad+\left.\left<\sum_{\zeta=x,y,z}\beta''(\phi^{n+1}) a_{\zeta}\left(|D_{\zeta}\widehat{\Phi}^{n+1}|^2 -|D_{\zeta}\phi^{n+1}|^2\right),\, \Delta_h\widehat{e}^{n+1}\right>_{\Omega}\right)\\
			&=: 2\epsilon^2(J_{2a}+J_{2b}).
		\end{aligned}
		\notag
	\end{equation}
	Again, we can find some $\xi_3\in \mathcal{C}_{\mathrm{per}}$ with its values  $(\xi_3)_{i,j,k}$ staying between $\big(\mathcal{P}_h\widehat{\Phi}^{n+1}\big)_{i,j,k}$ and $\phi^{n+1}_{i,j,k}$, $1\leq i,j,k\leq N$, such that
	\begin{equation}
		\begin{aligned}
			J_{2a}&=\left<\sum_{\zeta=x,y,z}\beta'''(\xi_3)\widehat{e}^{n+1} a_{\zeta}\left(|D_{\zeta}\widehat{\Phi}^{n+1}|^2\right),\, \Delta_h\widehat{e}^{n+1}\right>_{\Omega}\\
			&\leq C \left\|\sum_{\zeta=x,y,z}a_{\zeta}\left(|D_{\zeta}\widehat{\Phi}^{n+1}|^2\right)\right\|_{\infty} \|\widehat{e}^{n+1}\|_2\|\Delta_h\widehat{e}^{n+1}\|_2\\
			&\leq C\|\widehat{e}^{n+1}\|_2 \|\Delta_h\widehat{e}^{n+1}\|_2,
		\end{aligned}
		\notag
	\end{equation}
	where $C>0$ depends on $\theta$ and norms of $\Phi$. For the second term, we infer from \eqref{strict separation numerical}, H\"{o}lder's inequality and the uniform $H_h^2$-bound \eqref{destH2} that
	\begin{equation}
		\begin{aligned}
			J_{2b}
			&\leq C \left|\left<\sum_{\zeta=x,y,z}a_{\zeta} \left(|D_{\zeta}(\widehat{\Phi}^{n+1}+\phi^{n+1})| |D_{\zeta}\widehat{e}^{n+1}|\right),\, \Delta_h\widehat{e}^{n+1}\right>_{\Omega}\right|\\
			&\leq C \left\|\sum_{\zeta=x,y,z}a_{\zeta} \left(|D_{\zeta}(\widehat{\Phi}^{n+1}+\phi^{n+1})|\right)\right\|_{L_h^4} \left\|\sum_{\zeta=x,y,z}\left(a_{\zeta}|D_{\zeta}\widehat{e}^{n+1}|\right)\right\|_{L_h^4} \left\|\Delta_h\widehat{e}^{n+1}\right\|_2\\
			&\leq C\left(\|\nabla_h\widehat{\Phi}^{n+1}\|_{L_h^4} +\|\nabla_h\phi^{n+1}\|_{L_h^4}\right) \left\|\nabla_h\widehat{e}^{n+1}\right\|_{L_h^4} \left\|\Delta_h\widehat{e}^{n+1}\right\|_2\\
			&\leq C\left(\|\widehat{\Phi}^{n+1}\|_{H^2_h} +\|\phi^{n+1}\|_{H^2_h} \right) \|\widehat{e}^{n+1}\|_{H^2_h} \|\Delta_h\widehat{e}^{n+1}\|_2\\
			&\leq C\|\widehat{e}^{n+1}\|_{H^2_h} \|\Delta_h\widehat{e}^{n+1}\|_2.
		\end{aligned}\notag
	\end{equation}
	Here, we have used the estimate  $\left\|\sum_{\zeta=x,y,z}a_{\zeta}|D_{\zeta}f|\right\|_{L_h^4} \leq C\left\|\nabla_hf\right\|_{L^4_h}$ for all $f\in\mathcal{C}_{\mathrm{per}}$  and discrete interpolation inequalities. Combining the estimates for
	$J_{2a}$ and $I_{2b}$, we deduce from \eqref{HH2hes} that
	\begin{equation}
		J_2\leq C\|\widehat{e}^{n+1}\|_{H^2_h} \|\Delta_h\widehat{e}^{n+1}\|_2
		\leq C \|\Delta_h\widehat{e}^{n+1}\|_2^2.
		\notag
	\end{equation}
	
	The term $J_3$ can be treated in a similar manner. Indeed, we have
	\begin{equation}
		\begin{aligned}
			J_3 &\leq 4\epsilon^2\left(\left|\left<\sum_{\zeta=x,y,z} d_{\zeta}\left(A_{\zeta}\big(\beta'(\widehat{\Phi}^{n+1}) -\beta'(\phi^{n+1})\big)D_{\zeta}\widehat{\Phi}^{n+1}\right),\, \Delta_h\widehat{e}^{n+1}\right>_{\Omega}\right|\right.\\
			&\qquad+
			\left.\left|\left<\sum_{\zeta=x,y,z}d_{\zeta} \big(A_{\zeta}\beta'(\phi^{n+1})D_{\zeta}\widehat{e}^{n+1}\big),\, \Delta_h\widehat{e}^{n+1}\right>_{\Omega}\right|\right)\\
			&=:4\epsilon^2\left(J_{3a}+J_{3b}\right).
		\end{aligned}
		\notag
	\end{equation}
	Concerning $J_{3a}$, there exists some $\xi_4\in \mathcal{C}_{\mathrm{per}}$ with its values  $(\xi_4)_{i,j,k}$ staying between $\big(\mathcal{P}_h\widehat{\Phi}^{n+1}\big)_{i,j,k}$ and $\phi^{n+1}_{i,j,k}$, $1\leq i,j,k\leq N$, such that
	\begin{equation}
		\begin{aligned}
			J_{3a}&=\left|\left<\sum_{\zeta=x,y,z}d_{\zeta} \left(A_{\zeta}\big(\beta''(\xi_4)\widehat{e}^{n+1}\big) D_{\zeta}\widehat{\Phi}^{n+1}\right),\, \Delta_h\widehat{e}^{n+1}\right>_{\Omega}\right|\\
			&\leq C\sum_{\zeta=x,y,z} \|\beta''(\xi_4)\|_\infty \|D_{\zeta}\widehat{\Phi}^{n+1}\|_{\infty}\|\widehat{e}^{n+1}\|_2 \|\nabla_h\Delta_h\widehat{e}^{n+1}\|_2\\
			&\leq C\left\|\widehat{e}^{n+1}\right\|_2 \left\|\nabla_h\Delta_h\widehat{e}^{n+1}\right\|_2,
		\end{aligned}
		\notag
	\end{equation}
	where we have used Lemma \ref{lem:sumbyparts} and H\"{o}lder's inequality. Similarly, for $J_{3b}$, we have
	$$		
	J_{3b}\leq C\left\|\nabla_h \widehat{e}^{n+1}\right\|_2 \left\|\nabla_h\Delta_h\widehat{e}^{n+1}\right\|_2.
	$$
	As a consequence, it holds
	\begin{align*}
		J_3 &\leq C\left\|\widehat{e}^{n+1}\right\|_{H^1_h} \left\|\nabla_h\Delta_h\widehat{e}^{n+1}\right\|_2\\
		&\leq  \epsilon^4\|\nabla_h \Delta_h \widehat{e}^{n+1}\|_2^2 +C \left\|\widehat{e}^{n+1}\right\|_{H^1_h}^2\\
		&\leq \epsilon^4\|\nabla_h \Delta_h \widehat{e}^{n+1}\|_2^2 + C \left\|\Delta_h \widehat{e}^{n+1}\right\|_2^2.
	\end{align*}
	For the remaining two terms, an application of Cauchy-Schwarz inequality  yields that
	\begin{align*}
		J_5&\leq 2\epsilon^2(2\lambda+\epsilon^p\eta) \|\Delta_h \widehat{e}^n\|_2 \|\Delta_h \widehat{e}^{n+1}\|_2
		\leq \epsilon^2(2\lambda+\epsilon^p\eta)\big(\| \Delta_h \widehat{e}^{n+1}\|_2^2 +  \|\Delta_h \widehat{e}^n\|_2^2\big), \\
		J_6&\leq\|\widehat{e}^{n+1}\|_2^2+\|\widehat{\tau}^{n+1}\|_2^2.
	\end{align*}
	
	Combining the above estimates, we can deduce from \eqref{HH2hes} and \eqref{l2convergence} that
	\begin{align}
		&\frac{1}{\Delta t}\left(\|\widehat{e}^{n+1}\|_2^2-\|\widehat{e}^{n}\|_2^2\right) + \epsilon^4\|\nabla_h\Delta_h\widehat{e}^{n+1}\|_2^2 +2\lambda(\lambda+\epsilon^p\eta) \|\nabla_h\widehat{e}^{n+1}\|_2^2 \notag \\
		&\qquad \leq C\left( \|\Delta_h\widehat{e}^{n+1}\|_2^2 + \|\Delta_h\widehat{e}^{n}\|_2^2\right) +\|\widehat{\tau}^{n+1}\|_2^2,
		\label{diff1}
	\end{align}
	which together with the fact $\widehat{e}^{0}=0$ leads to the following inequality
	\begin{align}
		&\|\widehat{e}^{n+1}\|_2^2 + \Delta t\sum_{k=1}^{n+1}\left[\epsilon^4\big\|\nabla_h\Delta_h \widehat{e}^{k}\big\|_2^2+2\lambda(\lambda+\epsilon^p\eta) \|\nabla_h\widehat{e}^{k}\|_2^2\right]  \notag\\
		&\quad \leq  C\Delta t \sum_{k=1}^{n+1} \|\Delta_h\widehat{e}^{k}\|_2^2
		+ \Delta t \sum_{k=0}^{n} \big\|\widehat{\tau}^{k+1}\big\|_2^2.
		\label{diff2}
	\end{align}
	Thanks to the truncation error \eqref{tau-n1} and the lower order error estimate \eqref{convergence analysis}, we deduce from \eqref{diff2} that
	\begin{equation}\label{conclusion}
		\left\|\widehat{e}^{n+1}\right\|_2^2 + \Delta t\sum_{k=1}^{n+1}\left[\epsilon^4\big\|\nabla_h\Delta_h \widehat{e}^{k}\big\|_2^2+2\lambda(\lambda+\epsilon^p\eta) \|\nabla_h\widehat{e}^{k}\|_2^2\right] \leq C(\Delta t^6+ h^6).
	\end{equation}
	
	Finally, by a comparison between the two error functions $\widehat{e}^{n+1}$ and $e^{n+1}$ (using \eqref{old-err}, \eqref{pertubation expansion} and \eqref{new-err}), from the projection estimate \eqref{fourier projection}, the uniform boundedness of $\Phi_{\Delta t,1}$, $\Phi_{\Delta t,2}$, $\Phi_{h,1}$, the estimates \eqref{convergence analysis}, \eqref{conclusion}, we can achieve the conclusion  \eqref{L2-est}.

This completes the proof of Theorem \ref{thm1}.
\end{proof}

\section{Numerical Experiments} \label{sec:numerical results}
\setcounter{equation}{0}

In this section, we implement a preconditioned steepest descent (PSD) algorithm to solve the fully discrete numerical scheme \eqref{numerical phi}--\eqref{numerical mu}, following the framework proposed in \cite{feng2017}. This algorithm has been applied to the FCH equation \eqref{phi} with a regular potential \eqref{regF} in \cite{feng2018}. Further applications can be found in \cite{zhang2021} and the references cited therein.

Observe that our scheme \eqref{numerical phi}--\eqref{numerical mu} can be recast as a minimization problem of the discrete energy (\ref{discrete energy}). This is equivalent to solve the zero point of the discrete variation of (\ref{discrete energy}):
$$
\mathcal{N}_h(\phi) = f^n,
$$
where
$$
\mathcal{N}_h(\phi) = \frac{\phi}{\Delta t } - \Delta_h\delta_{\phi}E_h^c(\phi)\quad \text{and}\quad
f^n= \frac{\phi^{n}}{\Delta t } + \Delta_h\delta_{\phi}E_h^e(\phi^{n}).
$$
The essential idea of the PSD solver is to use a linearized version of the nonlinear operator as a pre-conditioner. Here, the preconditioner $\mathcal{L}_h:\mathring{\mathcal{C}}_{\mathrm{per}} \rightarrow\mathring{\mathcal{C}}_{\mathrm{per}}$ is defined as
\begin{equation*}
	\mathcal{L}_h: =	\begin{aligned}
		&\frac{1}{\Delta t} - \epsilon^4\Delta_h^3 + \epsilon^2\theta_1\Delta_h^2 - (\lambda^2+\lambda\epsilon^p\eta+\theta_2)\Delta_h ,
	\end{aligned}
\end{equation*}
where $\theta_1$, $\theta_2>0$ are two parameters that can be adjusted. Obviously, $\mathcal{L}_h$ is a positive, symmetric operator. In particular, this metric can be used to find an appropriate search direction for the steepest descent solver. Given the current iterate $\phi^n$, we obtain the next iterate direction by finding a  $d_n\in\mathcal{C}_{\mathrm{per}}$ such that
$$\mathcal{L}_h[d_n] = r_n - \overline{r_n},\qquad \overline{r_n}:=f^n - \mathcal{N}_h(\phi^n),$$
where $r_n$ is the nonlinear residual of the $n^{\mathrm{th}}$ iterate $\phi^n$. Using this linear operator, we then solve the equation by the Fast Fourier Transform (FFT). Consequently, the next iterate is obtained as
$$\phi^{n+1} := \phi^n + \alpha_n d_n,$$
where $\alpha_n\in\mathbb{R}$ is the unique solution to the steepest descent line minimization problem
$$\alpha_n:=\mathop{\rm{argmin}}\limits_{\alpha\in\mathbb{R}}E_h(\phi^n + \alpha d_n)= \mathop{\rm{argzero}}\limits_{\alpha\in\mathbb{R}}\left<\mathcal{N}_h(\phi^n+\alpha d_n)-f^n,d_n\right>_{\Omega}.$$

\subsection{Convergence test}

In this subsection we carry out an accuracy check for the numerical scheme (\ref{numerical phi})--(\ref{numerical mu}). For simplicity, the computational domain is assumed to be the unit square $\Omega = (0,1)^2$, and the phase variable is given as
$$
\Phi(x,y,t) = \frac{1}{\pi}\sin(2\pi x)\cos(2\pi y)\cos(t).
$$
In particular, we see that $1+\Phi$ and $1-\Phi$ stays positive at a pointwise level, so no singularity occurs during the computation. In order to make $\Phi$ become the solution of the PDE system \eqref{phi}, we have to add an artificial, time dependent forcing term, then we can continue to finish the convergence test.

We first give the parameters as $\epsilon = 0.5$, $\eta = 1$, $\lambda = 3$ and $p=2$. We compute the $L_h^2$ numerical errors with grid size $N = 16,\, 32,\, 64,\, 128,\, 256,\, 512$. To verify that our numerical scheme has first order in time and second order in space accuracy, we set the time step size $\Delta t=16h^2$ and $\Delta t=h$ respectively, and plot $\ln |e|$ versus $\ln N$ to demonstrate the temporal convergence order. In the first case $\Delta t=16h^2$, the expected temporal numerical accuracy assumption $e = C(\Delta t + h^2)$ indicates that $\ln|e| = \ln C - 2\ln N$. This is consistent with the red lines shown in Figure \ref{fig1}, with its slope approximately being $-2.0498$. In the second case $\Delta t=h$, the numerical error gradually approach the line with slope $-1$, as depicted with the purple dotted line.


\begin{figure}[!t]
	\centering
	\includegraphics[scale=0.5]{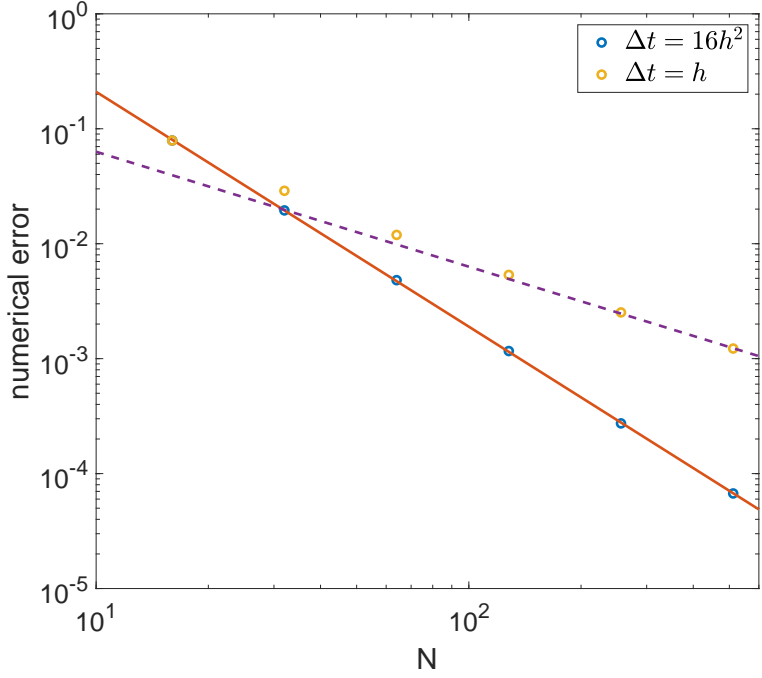}
	\caption{The discrete $L_h^2$ numerical error versus spatial resolution $N$ for $N = 16,\,32,\,64,\,128,\,256,\,512$. The blue dots represent the numerical error in the case $\Delta t = 16h^2$. The data are roughly on curves $CN^{-2}$, as is demonstrated by the red fitted line. Besides, the orange dots represent the numerical error in the case $\Delta t = h$. The data have an asymptote with a slope $-1$ (plotted by the purple dotted line), which verifies the first order in time accuracy.}
	\label{fig1}
\end{figure}

\subsection{Pearling bifurcation and  meandering instability}

\subsubsection{Adaptive time adjust strategy}

Hereafter, we apply an adaptive time strategy to perform our simulation. We set an upper bound and lower bound for both the energy decay rate and the phase change rate. If either of the two factors changes rapidly, that is, exceeding the upper bound, we shorten the time step, and if both factors change too slow, we increase the time step. The maximal time step is given as $2e-3$, and the upper bound and lower bound are set to be $1e-1$ and $1e-3$, respectively.

\subsubsection{The pearling bifurcation and sensitivity of the initial value}
\label{sec:pearl}

The pearling bifurcation is an important physical phenomenon that has been investigated in \cite{Doelman2014,Kraitzman2015,Promislow2015}. Several numerical simulations have been implemented in recent works \cite{zhang2020numerical,Zhang2021highly,Zhang2021analysis}, always with a regular potential. In our simulation, the following initial condition will be used:
\begin{equation}\label{pearlinitial}
	\phi(x,y,0) = 1.8\Big/\cosh\left(\frac{\sqrt{(x-0.5)^2+(y-0.5)^2}-0.42}{\ell\epsilon}\right)-0.9,
\end{equation}
where the parameter $\ell>0$ controls the interface width of the initial datum. Here we set $h=1/256$, $\epsilon=0.03$, $\eta=4$, $p=1$ and $\lambda = \ln(19)/0.9$ so that the minima $\pm\, r_*$ of $F$ are given with $r_*=0.9$. As a consequence, Figure \ref{figpearl035}, Figure \ref{figpearl039} and Figure \ref{figpearl05} show the time snapshots of simulations under different initial thickness of the ring. The pearl bifurcation appears when $\ell=0.35$ and $\ell=0.39$, but fails to exist when $\ell=0.5$. From this numerical experiment we can see that, the morphological structure of interfaces is highly sensitive to the initial datum. A slight change may lead to different intermediate states and long-time equilibria. The energy plot in Figure \ref{pearlenergy} shows a fast exponential decay at the beginning of the simulation, and when the pearl structure appears, the energy changes fast as well. In Figure \ref{totalenergy} we observe that in the above three cases for pearling bifurcation, the FCH energy, the Cahn-Hilliard energy $E_{\mathrm{CH}}$ and the phase-field Willmore (PFW) energy decrease at the same time.

\begin{figure}[!t]
	\centering
	\subfigure[$t=0$]{\includegraphics[scale=0.23,trim={30 0 30 0},clip]{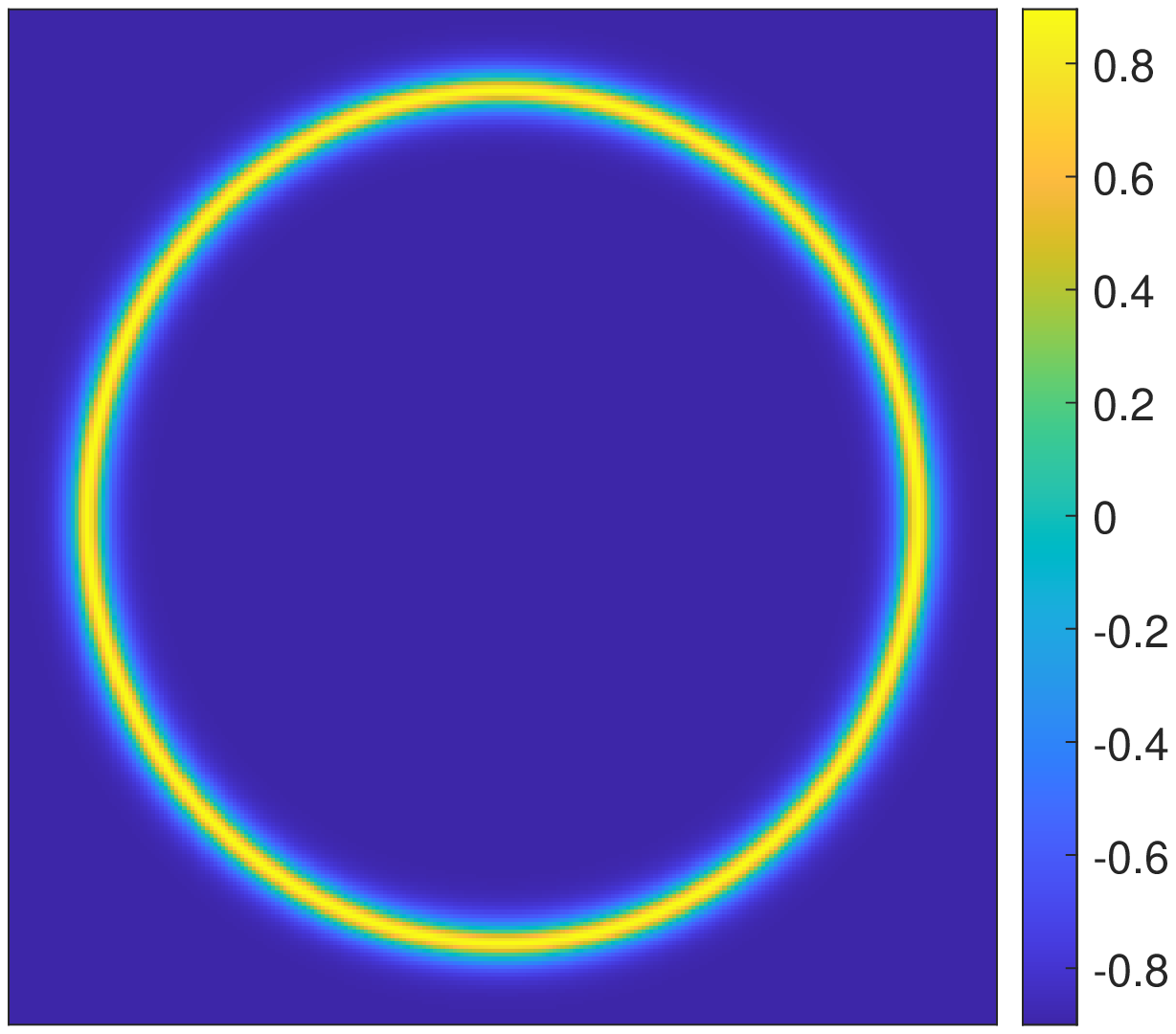}}
	\subfigure[$t=0.3$]{\includegraphics[scale=0.23,trim={30 0 30 0},clip]{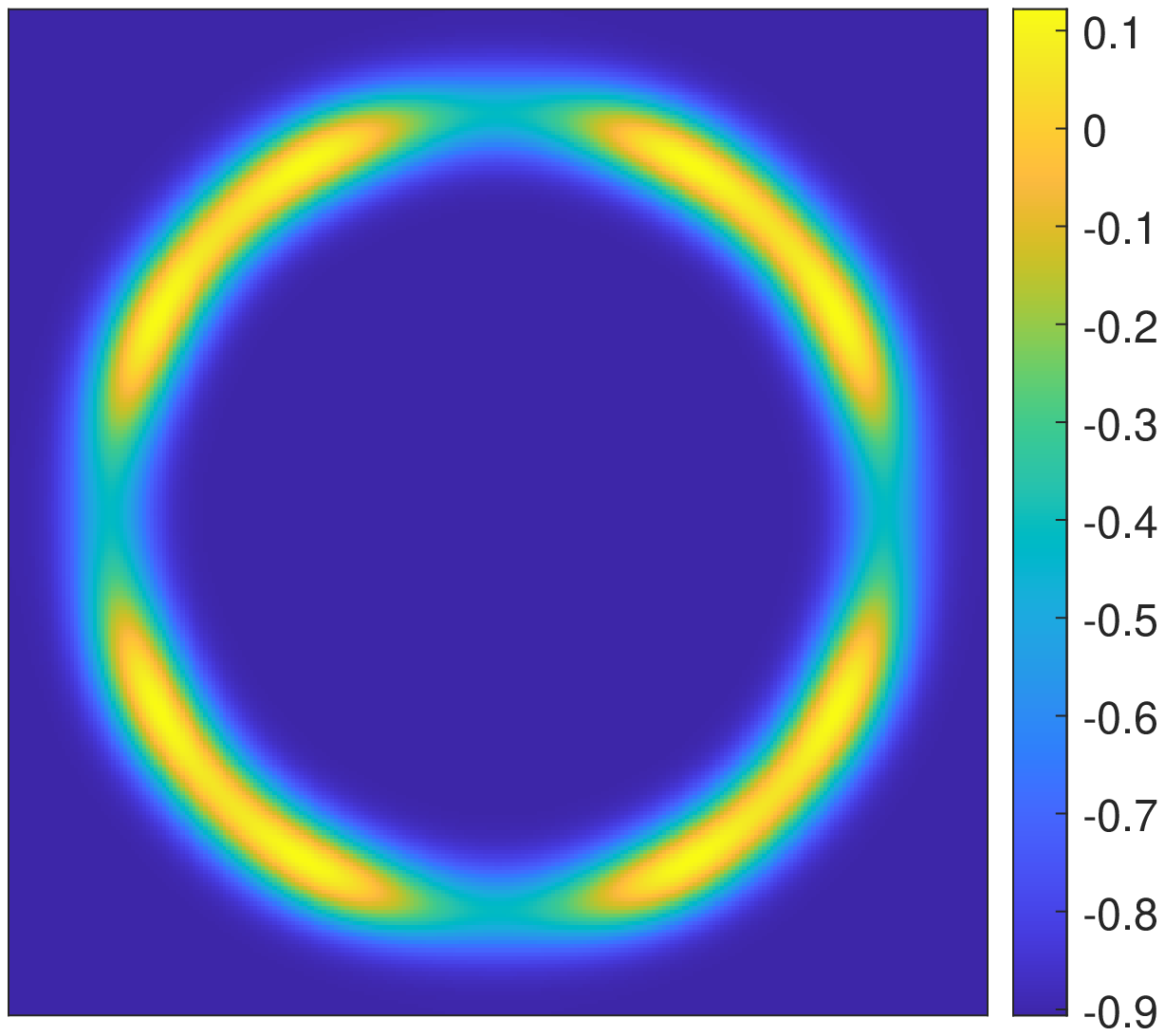}}
	\subfigure[$t=0.5$]{\includegraphics[scale=0.23,trim={30 0 30 0},clip]{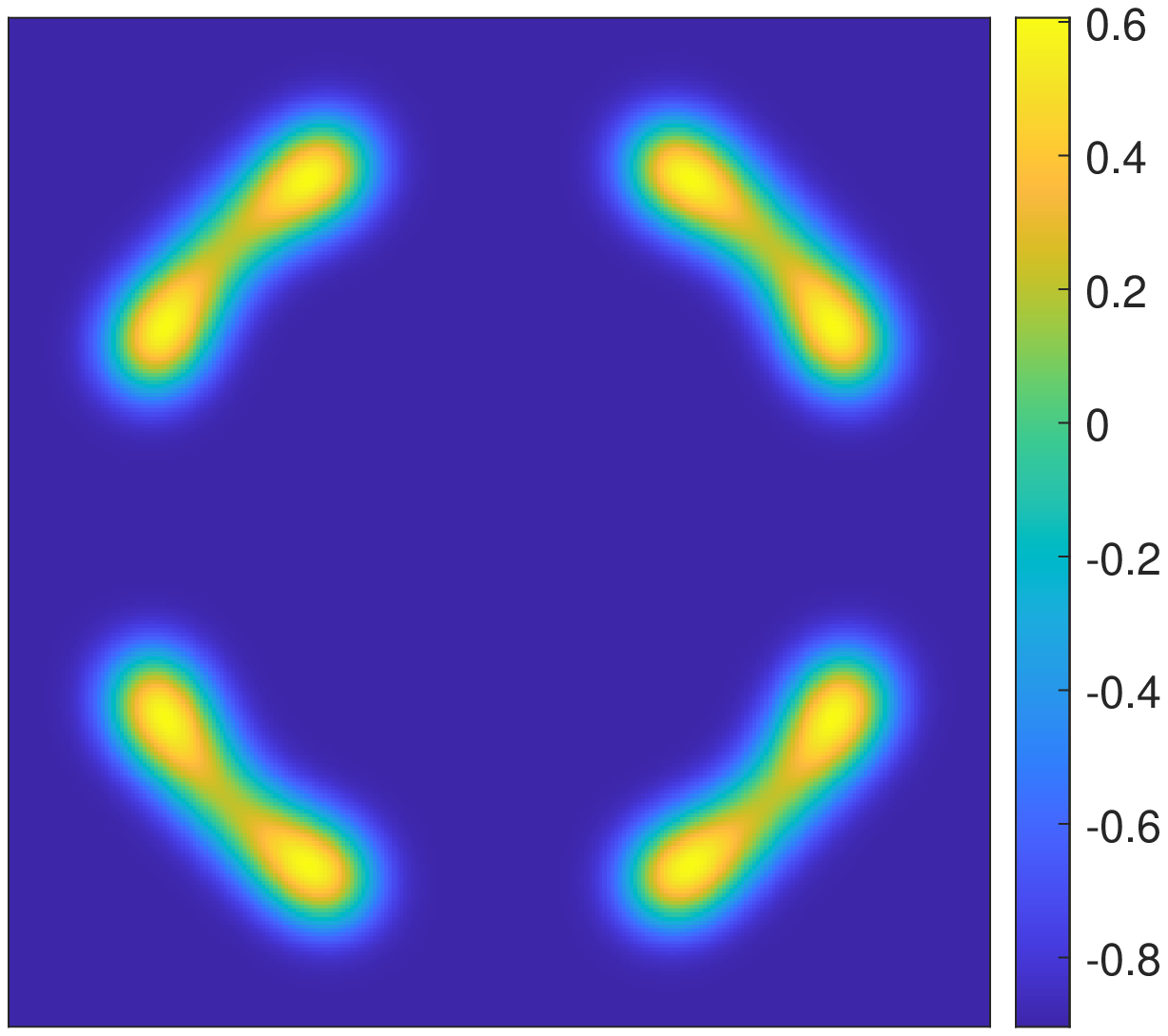}}
	\subfigure[$t=0.8$]{\includegraphics[scale=0.23,trim={30 0 30 0},clip]{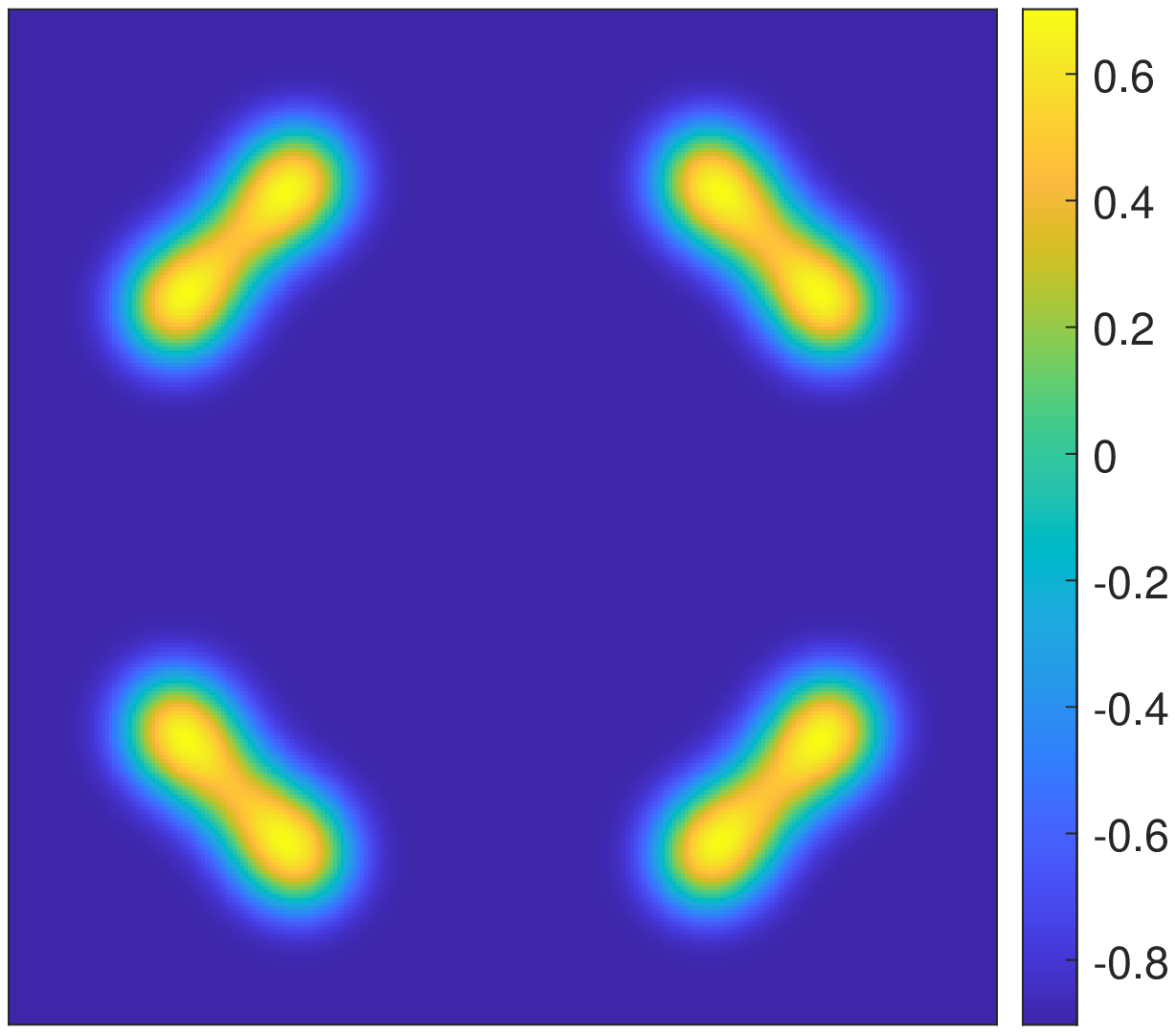}}
	\subfigure[$t=10$]{\includegraphics[scale=0.23,trim={30 0 30 0},clip]{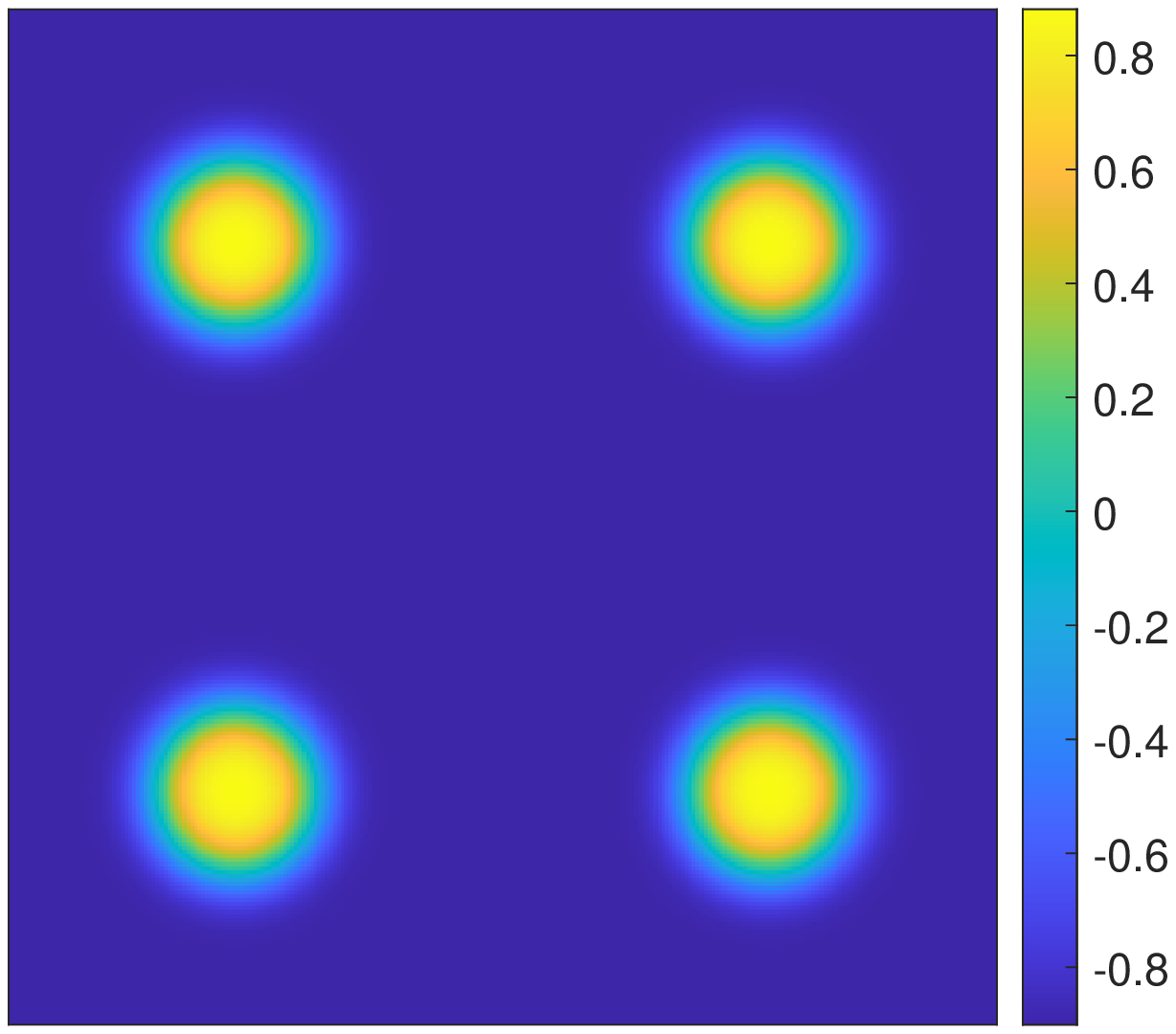}}
	\caption{Snapshots with initial condition \eqref{pearlinitial} when $\ell=0.35$. The ring splits near $t=0.3$ and converges to 4 circles.}
	\label{figpearl035}
\end{figure}

\begin{figure}[!t]
	\begin{center}
		\subfigure[$t=0$]{\includegraphics[scale=0.23,trim={30 0 30 0},clip]{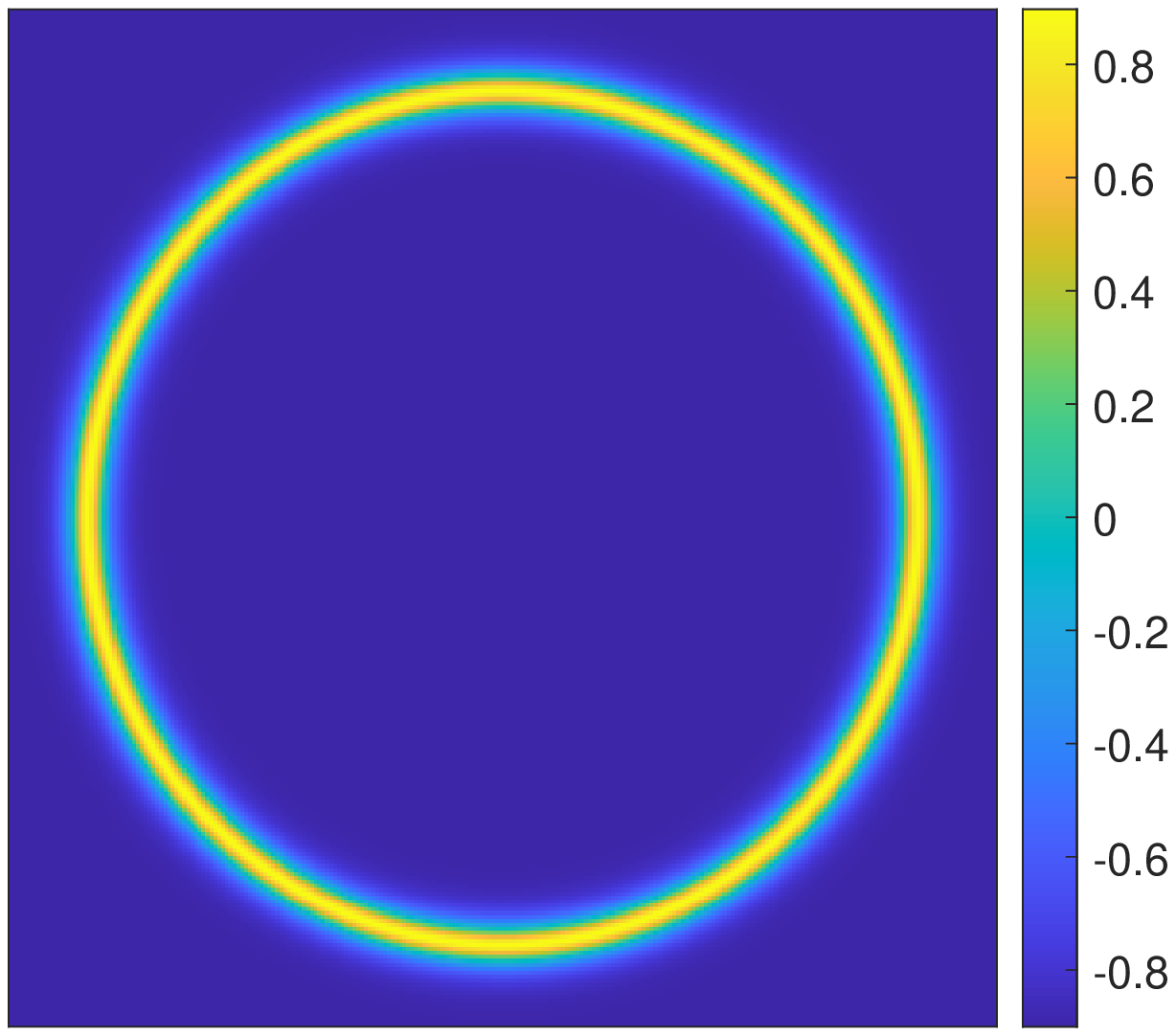}}
		\subfigure[$t=0.3$]{\includegraphics[scale=0.23,trim={30 0 30 0},clip]{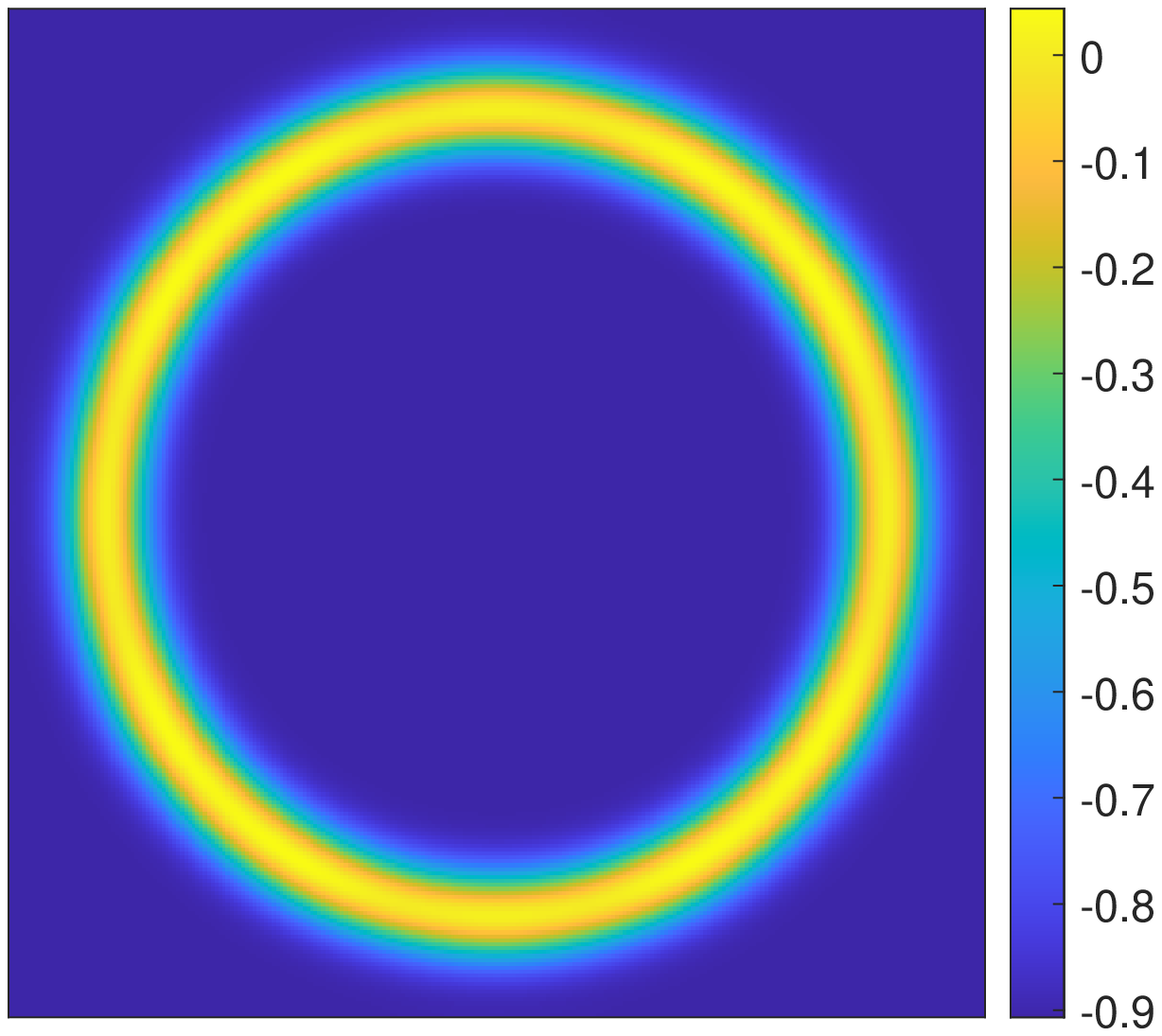}}
		\subfigure[$t=0.5$]{\includegraphics[scale=0.23,trim={30 0 30 0},clip]{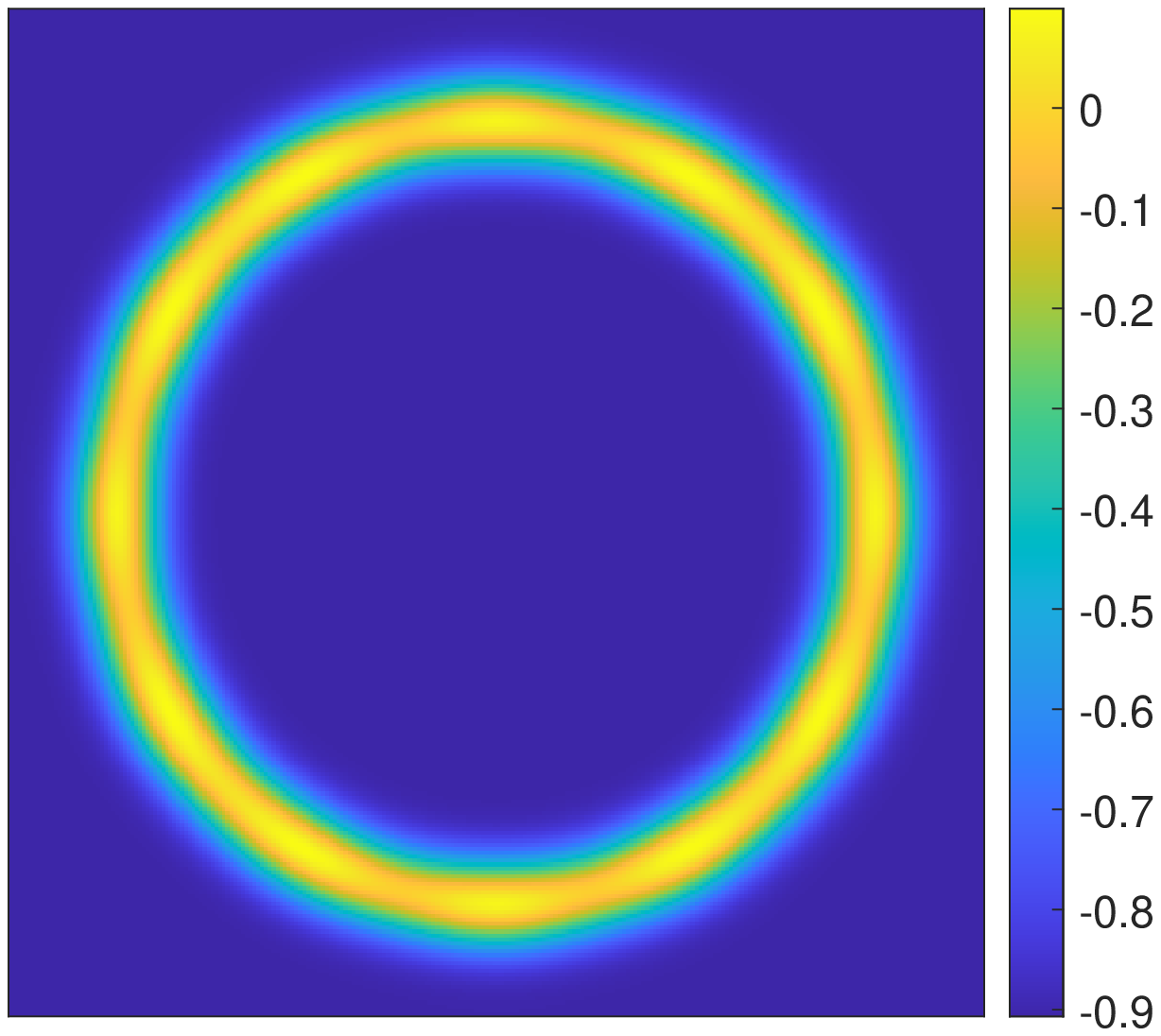}}
		\subfigure[$t=0.8$]{\includegraphics[scale=0.23,trim={30 0 30 0},clip]{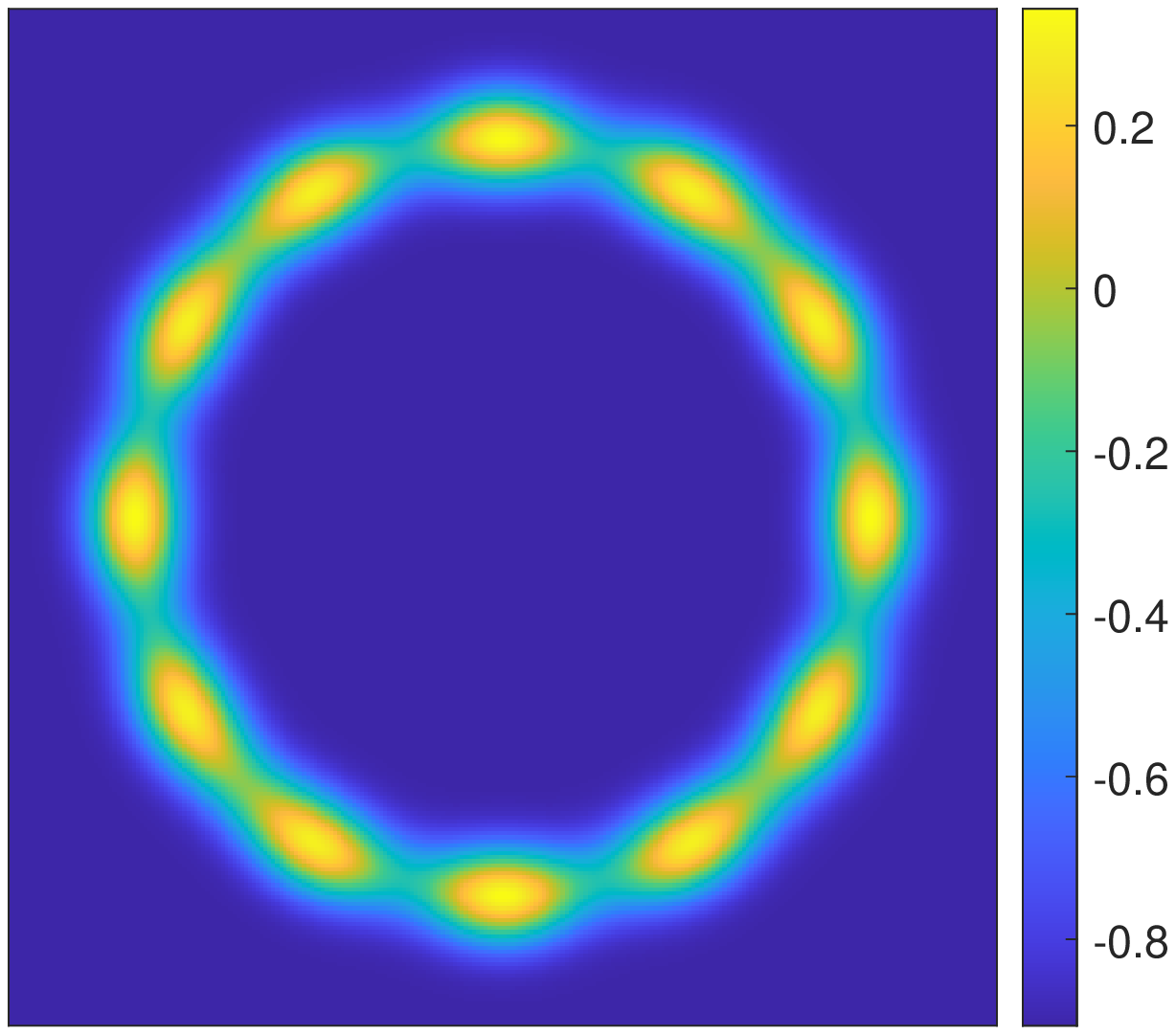}}
		\subfigure[$t=10$]{\includegraphics[scale=0.23,trim={30 0 30 0},clip]{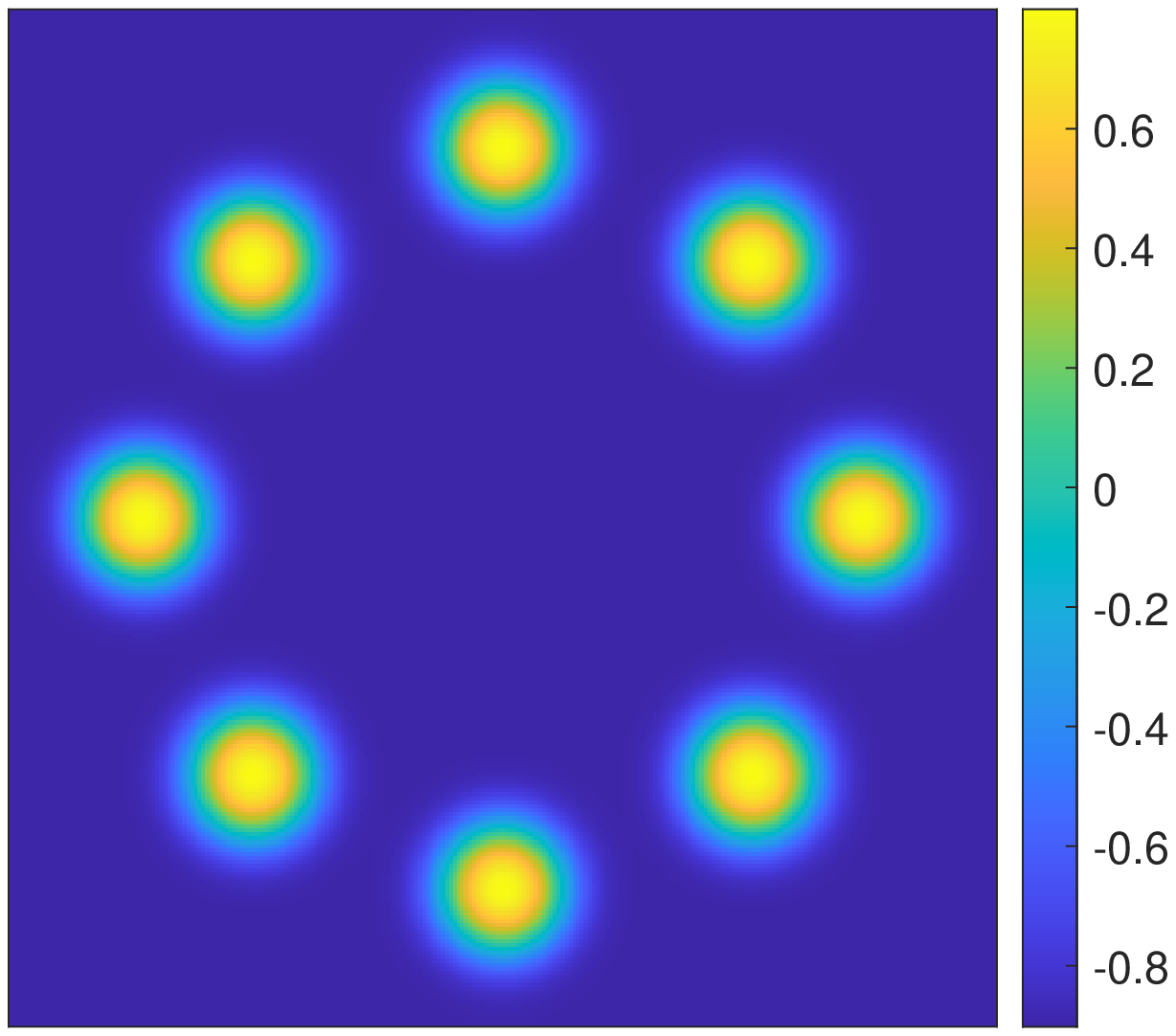}}
	\end{center}
	\caption{Snapshots with initial condition \eqref{pearlinitial} when $\ell=0.39$. The pearl bifurcation appears near $t=0.8$, and converges to 8 circles.}
	\label{figpearl039}
\end{figure}

\begin{figure}[!t]
	\centering
	\subfigure[$t=0$]{\includegraphics[scale=0.23,trim={30 0 30 0},clip]{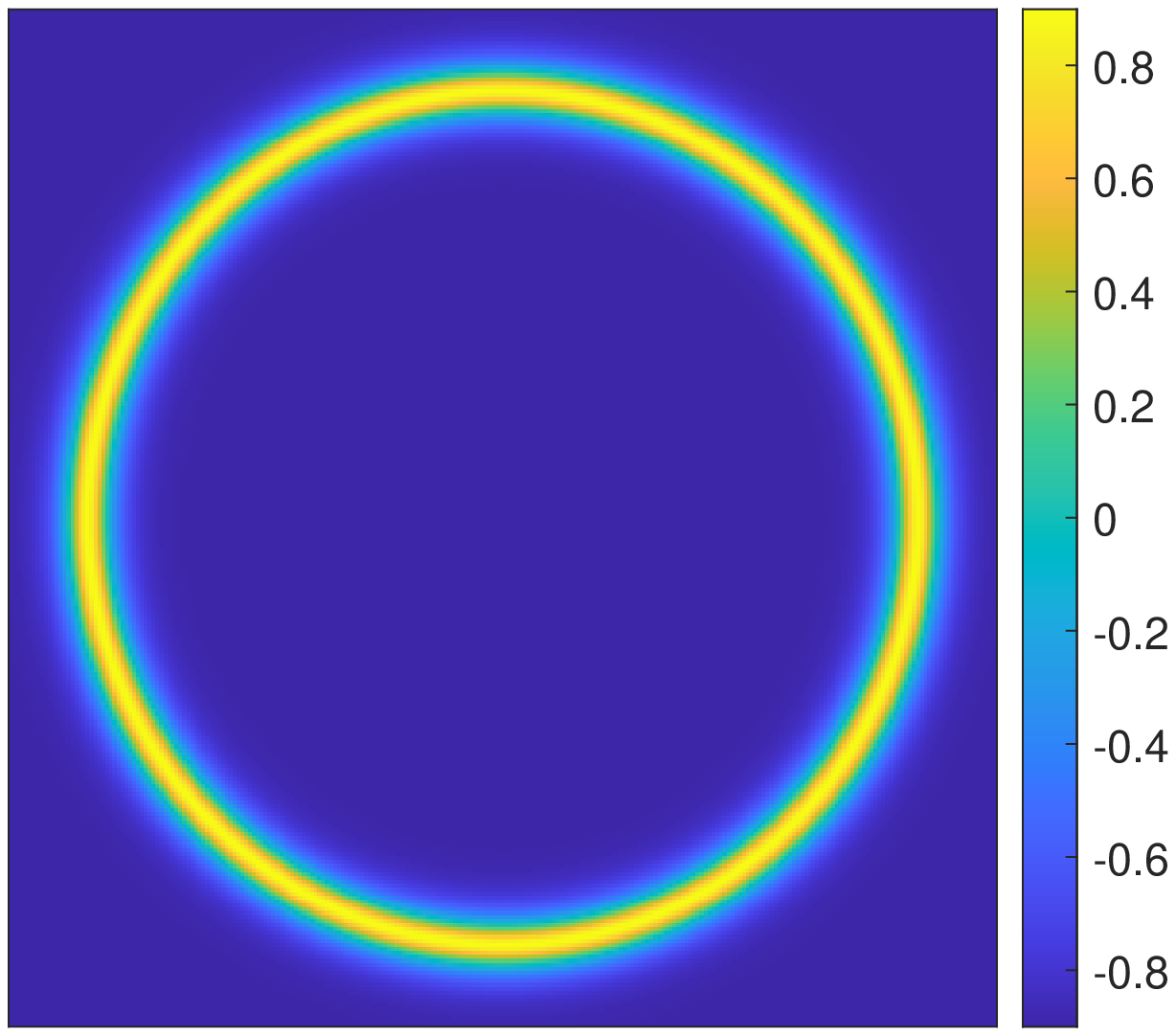}}
	\subfigure[$t=0.3$]{\includegraphics[scale=0.23,trim={30 0 30 0},clip]{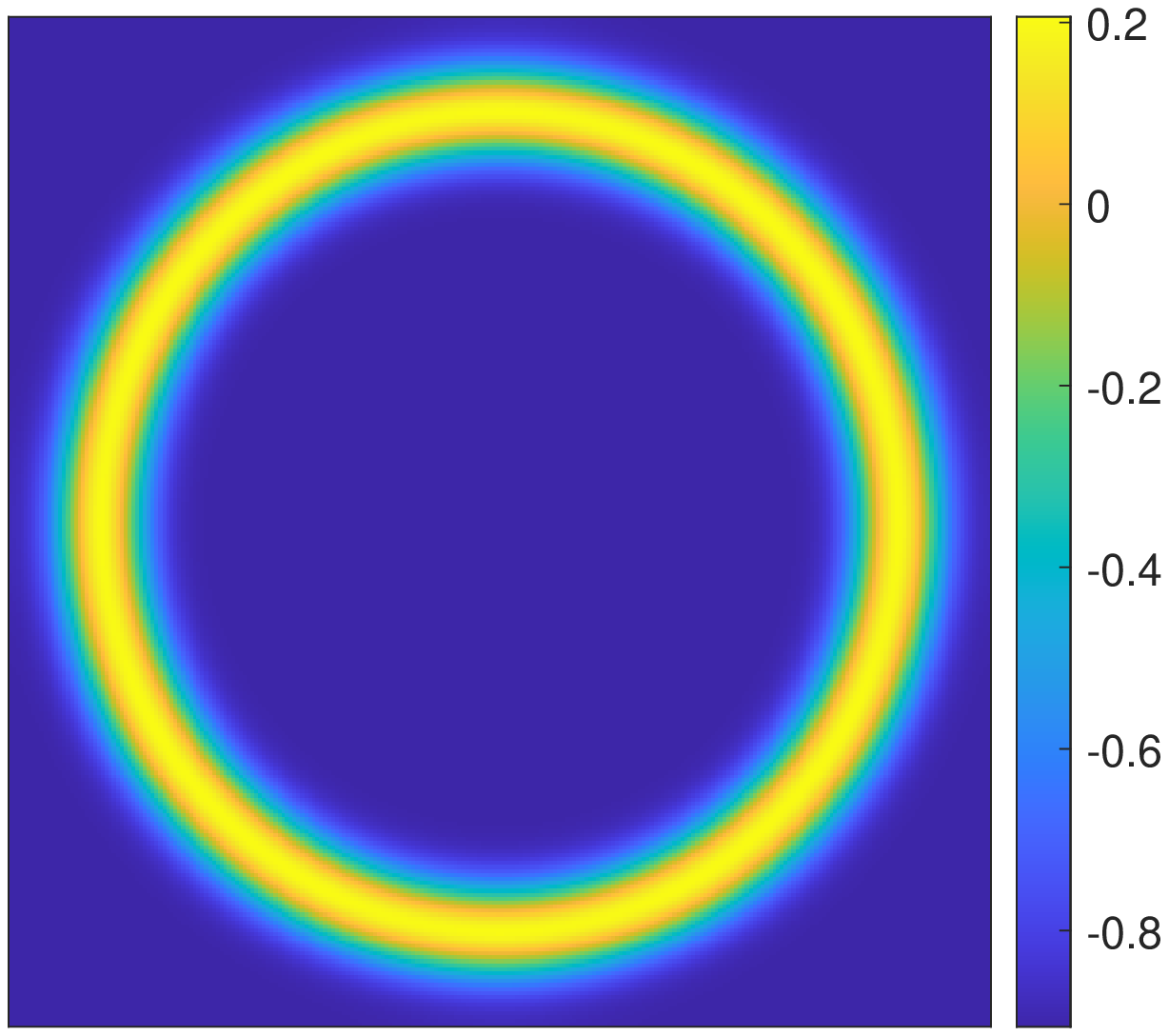}}
	\subfigure[$t=0.5$]{\includegraphics[scale=0.23,trim={30 0 30 0},clip]{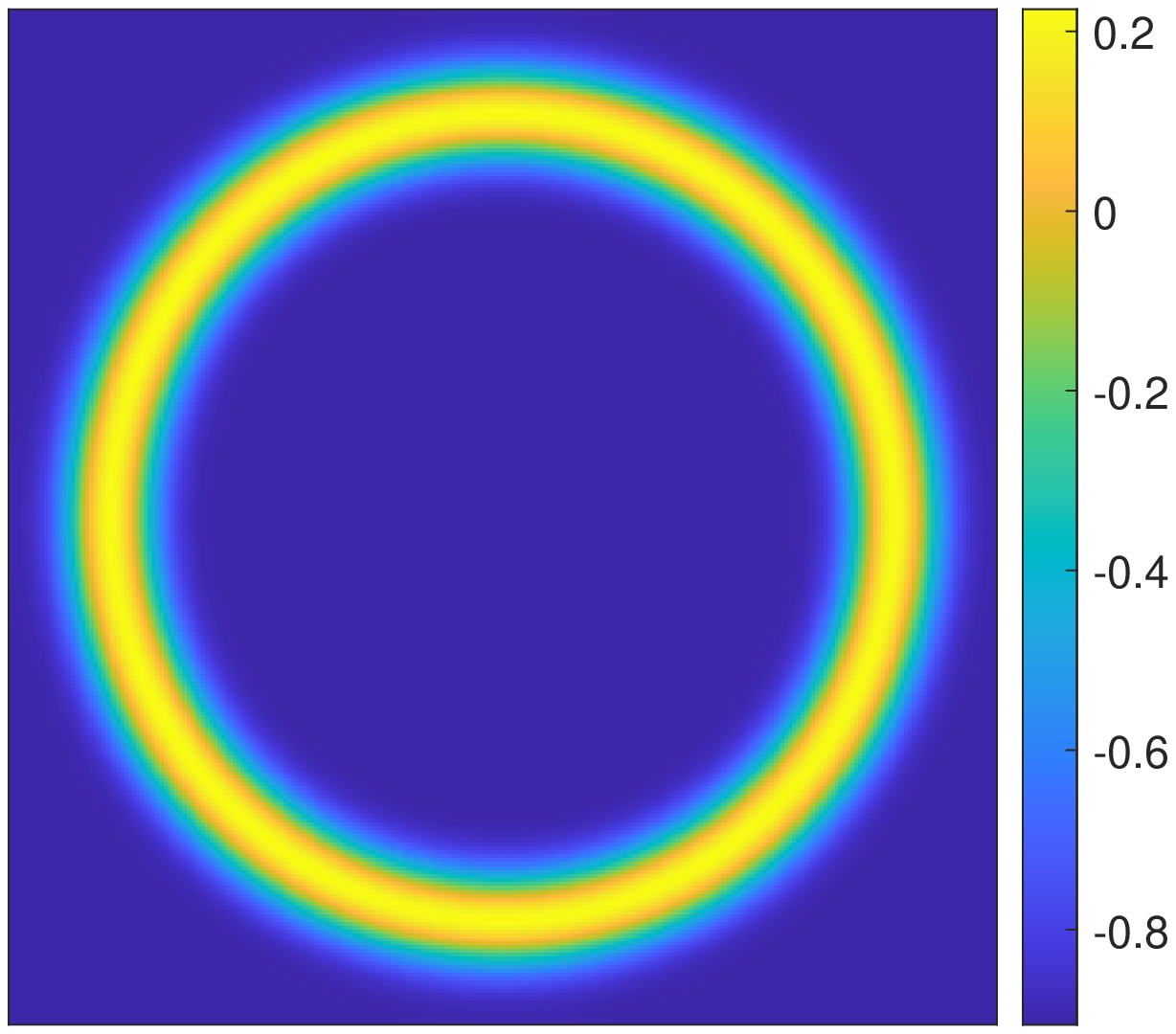}}
	\subfigure[$t=0.8$]{\includegraphics[scale=0.23,trim={30 0 30 0},clip]{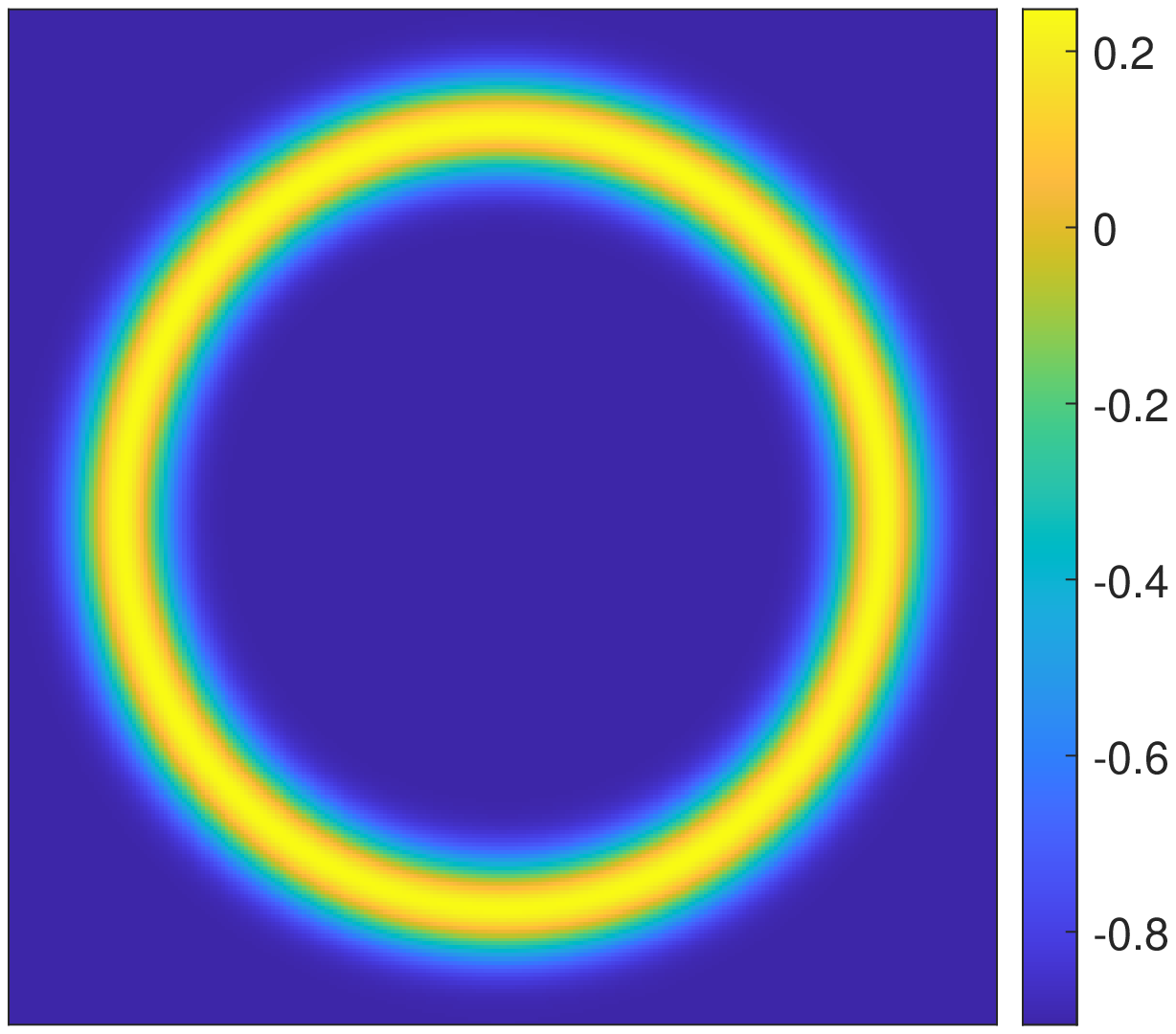}}
	\subfigure[$t=10$]{\includegraphics[scale=0.23,trim={30 0 30 0},clip]{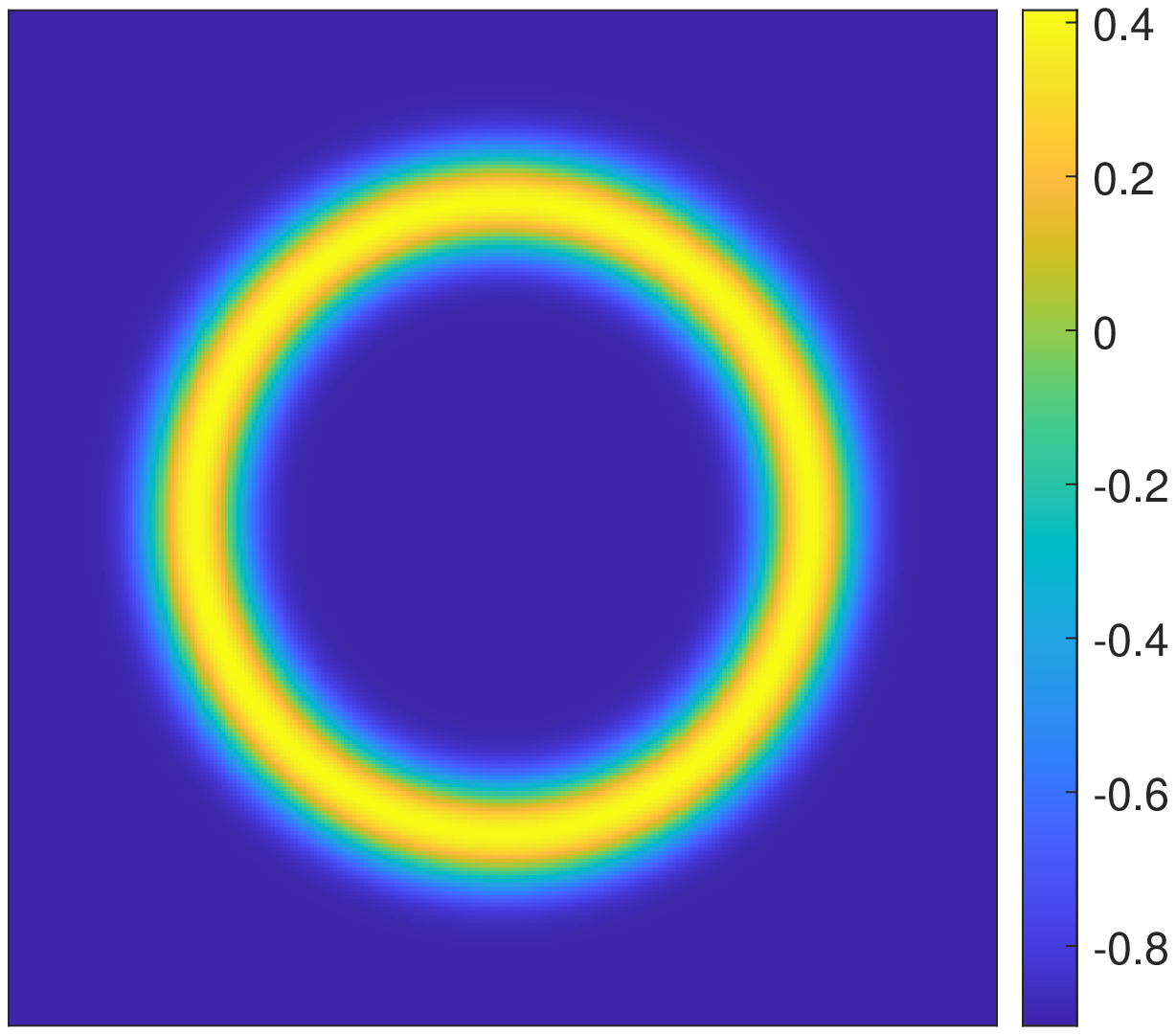}}
	\caption{Snapshots with initial condition \eqref{pearlinitial} when $\ell=0.5$. The ring doesn't split but only shrink and thicken. The pearl structure fails to exist the whole time.}
	\label{figpearl05}
\end{figure}

\begin{figure}[!t]
	\begin{center}
		\begin{overpic}[scale=.45]{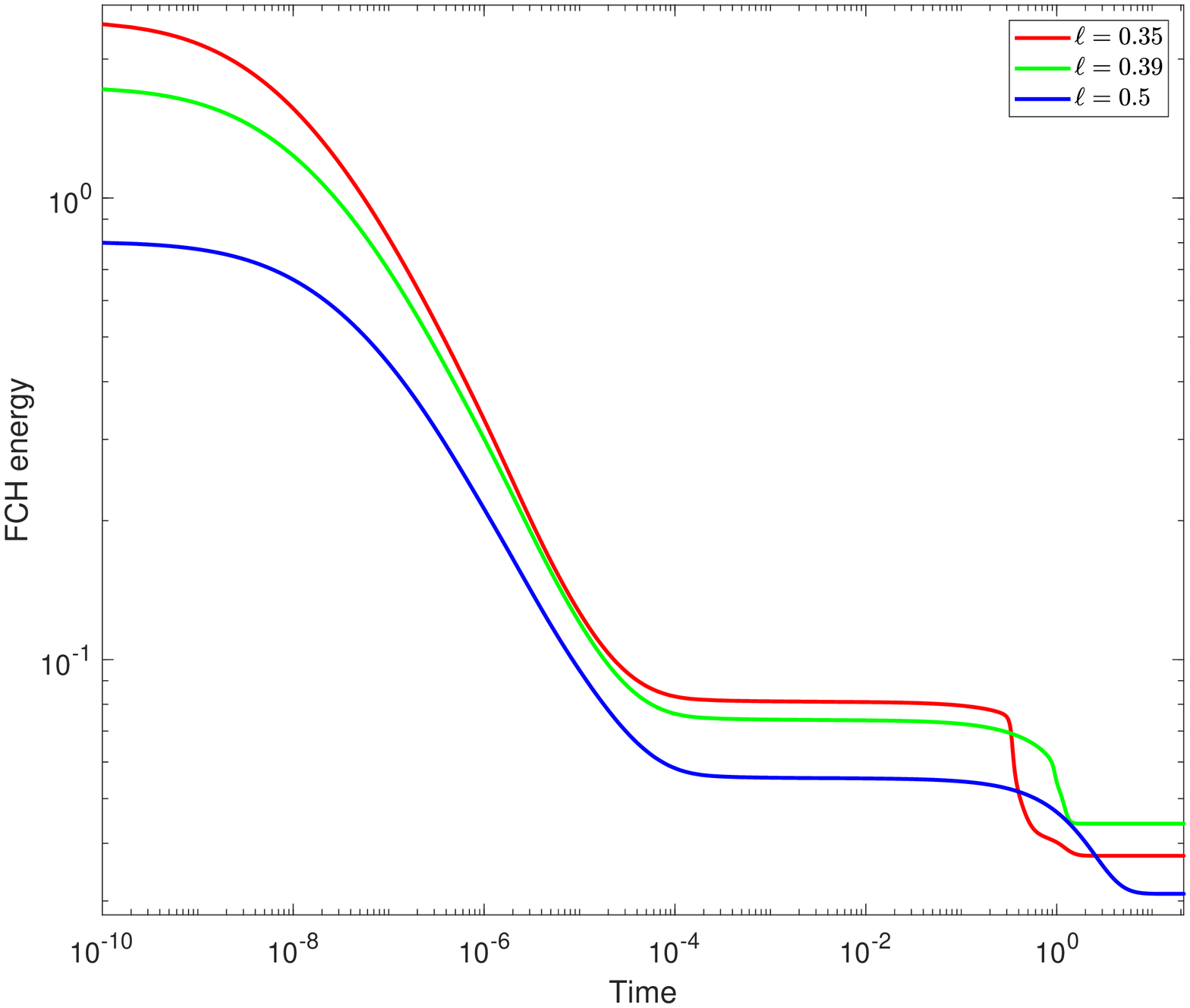}
			\put(48,30){\includegraphics[scale=.2,trim={30 20 30 20},clip]%
				{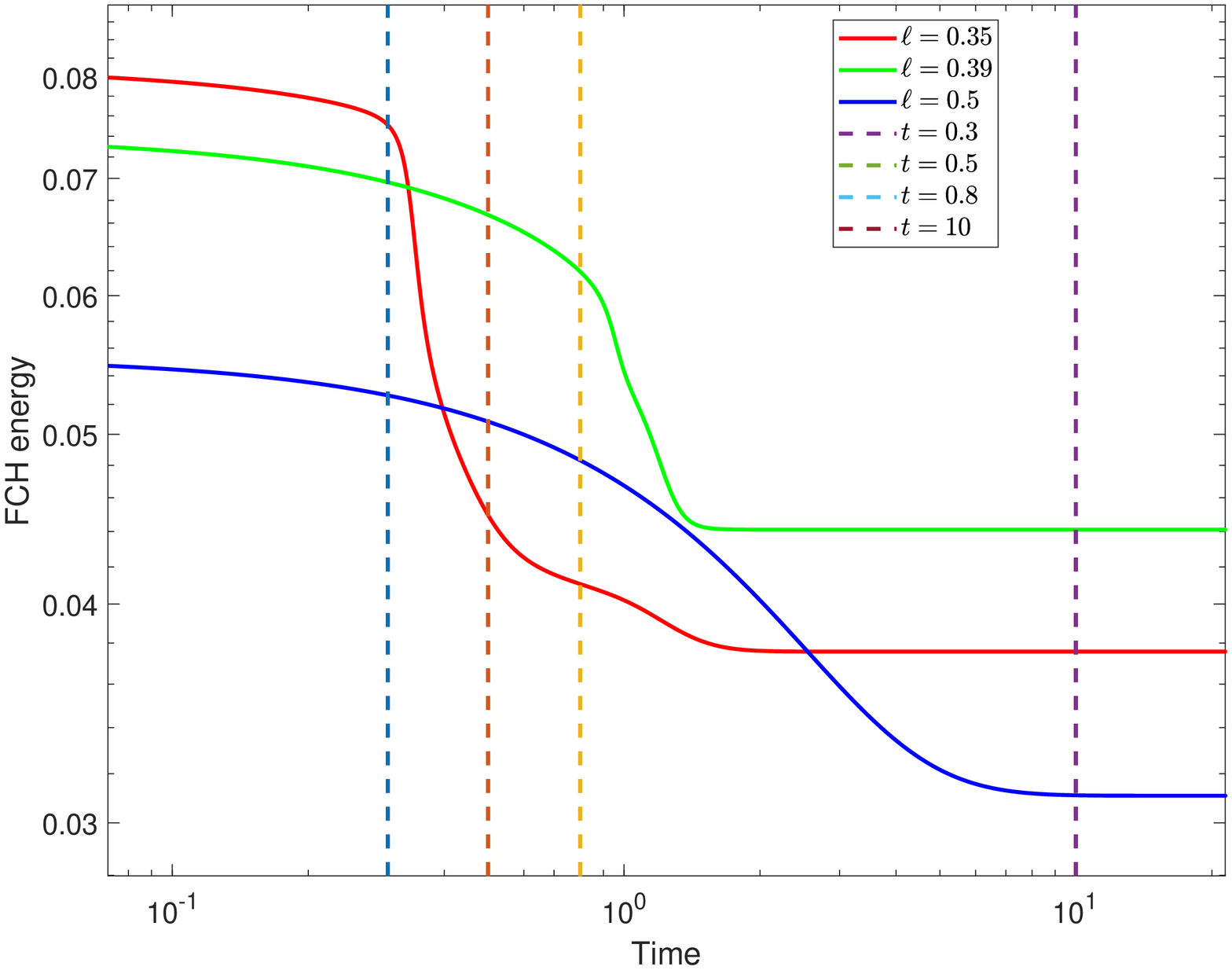}}
		\end{overpic}
	\end{center}
	\caption{Loglog plot of the FCH energy in three different cases. Four time ticks of the snapshots in Figure \ref{figpearl035}-\ref{figpearl05} are marked by the dotted lines. All cases encounter a fast exponential energy decay at the beginning of the simulation. When the pearl bifurcation appears, the energy also changes rapidly. }
	\label{pearlenergy}
\end{figure}

\begin{figure}[!t]
	\begin{flushleft}
		\begin{overpic}[scale=.4]{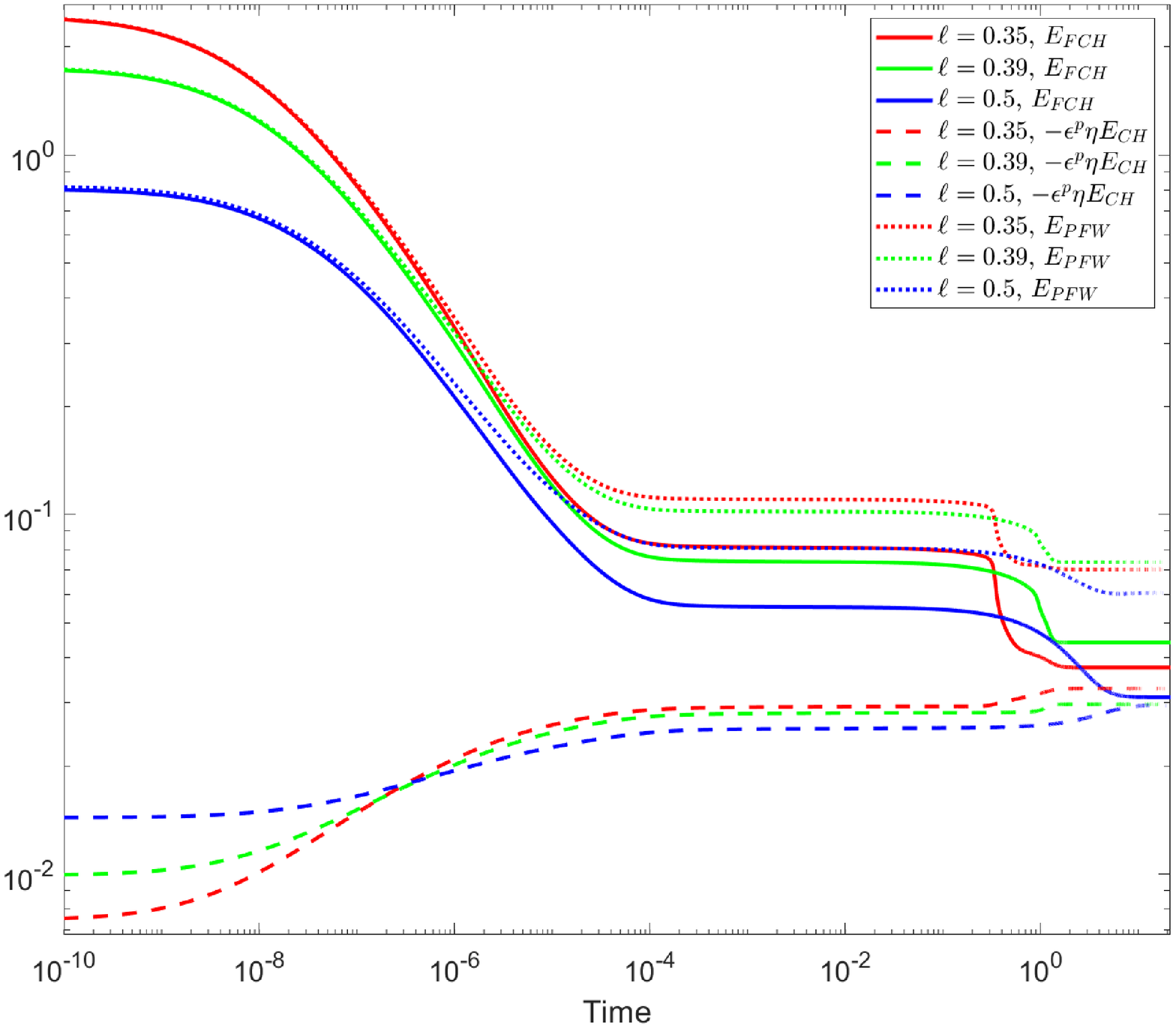}
			\put(91,12){\includegraphics[scale=.3, trim={0 0 210 0}, clip]%
				{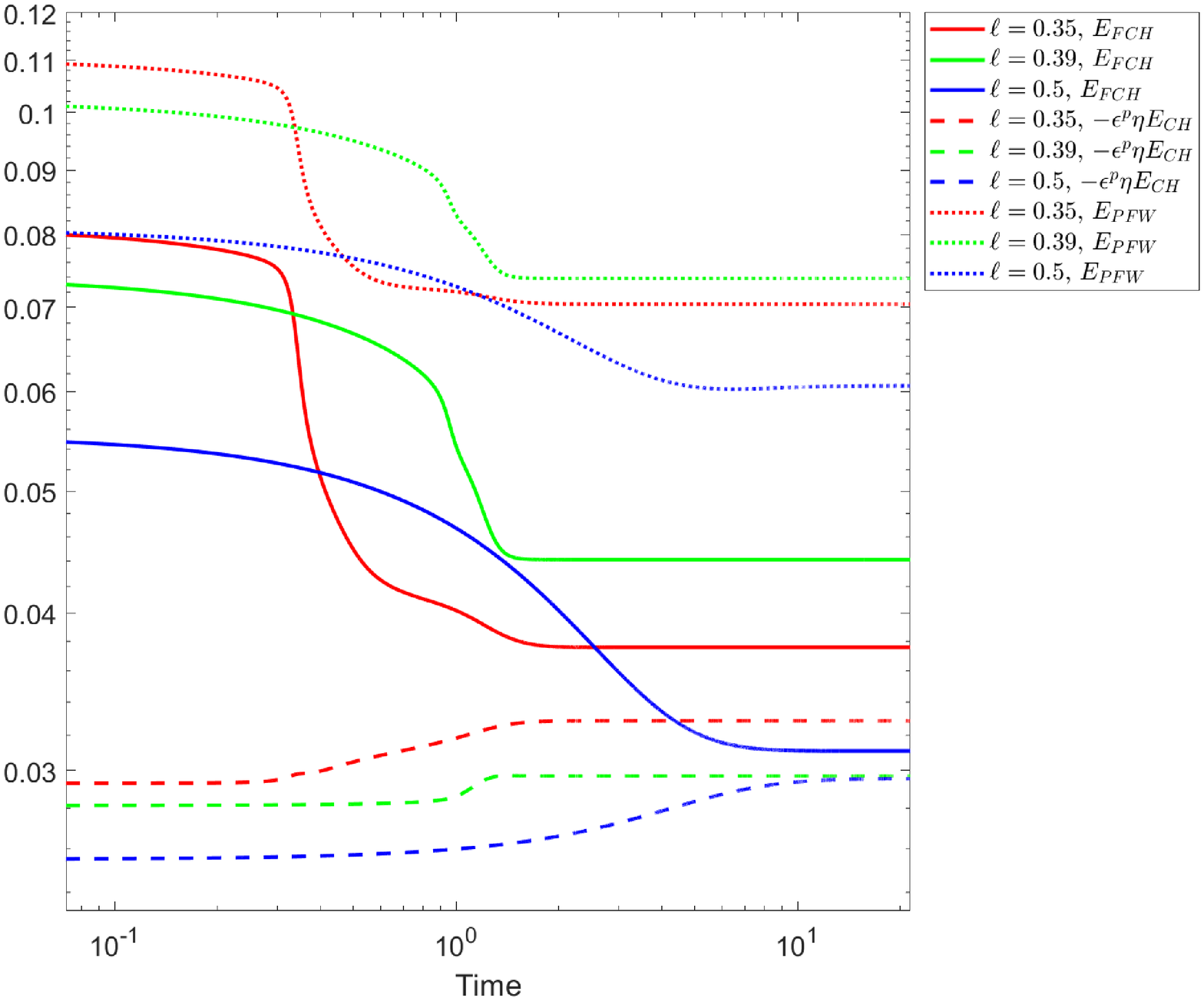}}
		\end{overpic}
	\end{flushleft}
	\caption{Loglog plot of the FCH energy, the minus Cahn-Hilliard energy and the phase-field Willmore (PFW) energy in three different cases. We observe that both the PFW energy and the Cahn-Hilliard energy decrease during time evolution for pearling bifurcation. }
	\label{totalenergy}
\end{figure}

\subsubsection{The meandering instability}

The meandering instability is another interesting phenomenon investigated in the literature, see e.g.,  \cite{Doelman2014}. Some numerical tests can be found in \cite{feng2018,Zhang2021analysis} with a regular potential. To proceed with our simulation, we consider the following initial condition:
\begin{equation}\label{meanderinitial}
	\phi(x,y,0)=\left\{
	\begin{aligned}
		&-0.9,\quad \mbox{if}\ y>0.5\sin\left(\frac{4\pi x}{12}\right) + 6.4,\\
		&-0.9,\quad \mbox{if}\ y<0.5\sin\left(\frac{4\pi x}{15}\right)-5.6,\\
		&0.9,\qquad\  \mbox{otherwise}.
	\end{aligned}\right.
\end{equation}
Besides, we set $\eta=10$, $\epsilon=0.01$, $p=1$, $h=1/256$ and $\lambda= \ln(19)/0.9$. Figure \ref{meander} shows the time snapshots of the meandering instability, and Figure \ref{meanderenergy1} presents the FCH and Cahn-Hilliard energy change of this experiment. The FCH energy still decreases over time, however, the Cahn-Hilliard energy increases after the initial decay. This reflects the balancing between the two parts (PFW vs. Cahn-Hilliard) in the FCH energy.

\begin{figure}[!t]
	\centering
	\subfigure[$t=0$]{\includegraphics[scale=0.25,trim={30 0 30 0},clip]{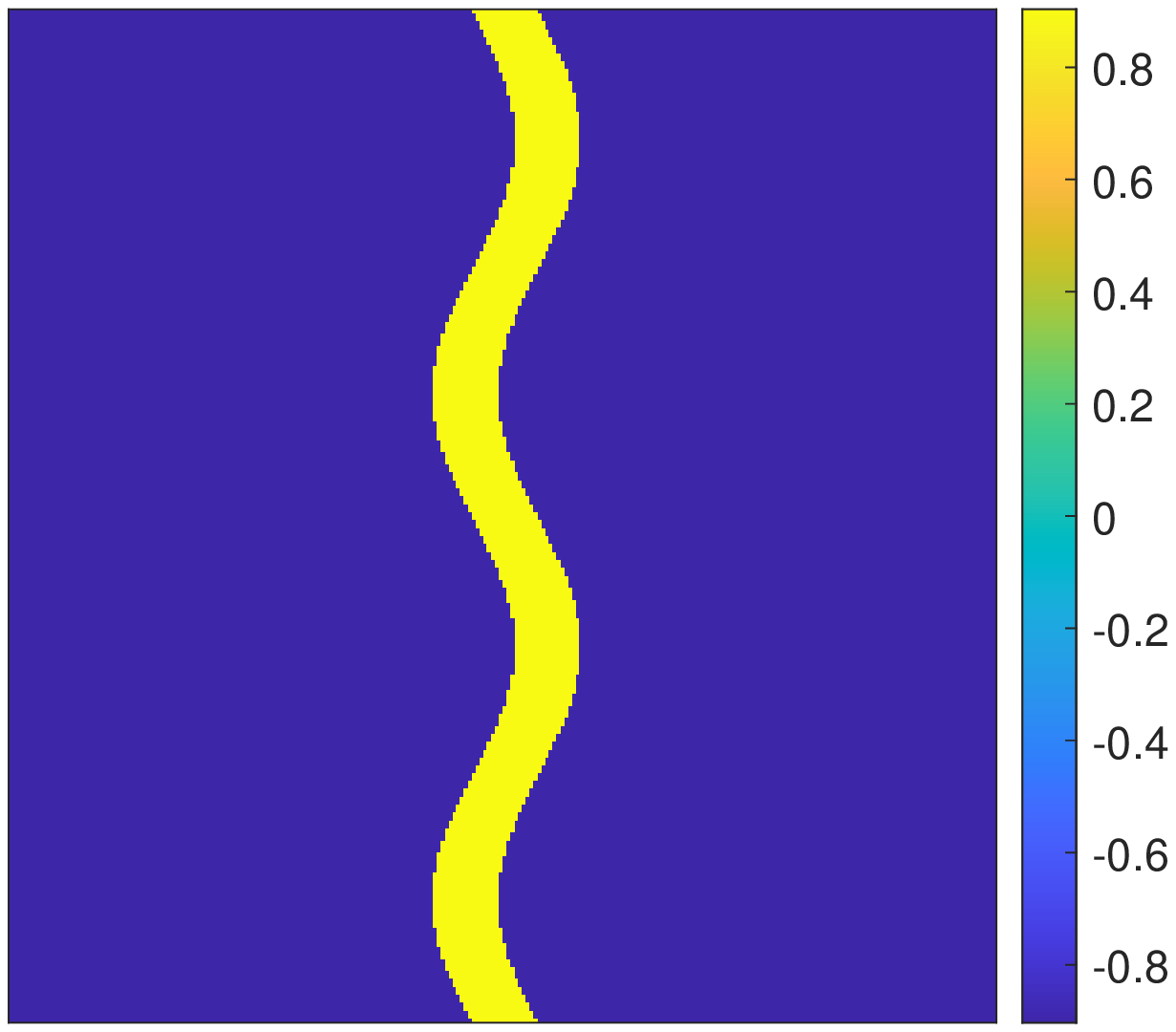}}
	\subfigure[$t=1$]{\includegraphics[scale=0.25,trim={30 0 30 0},clip]{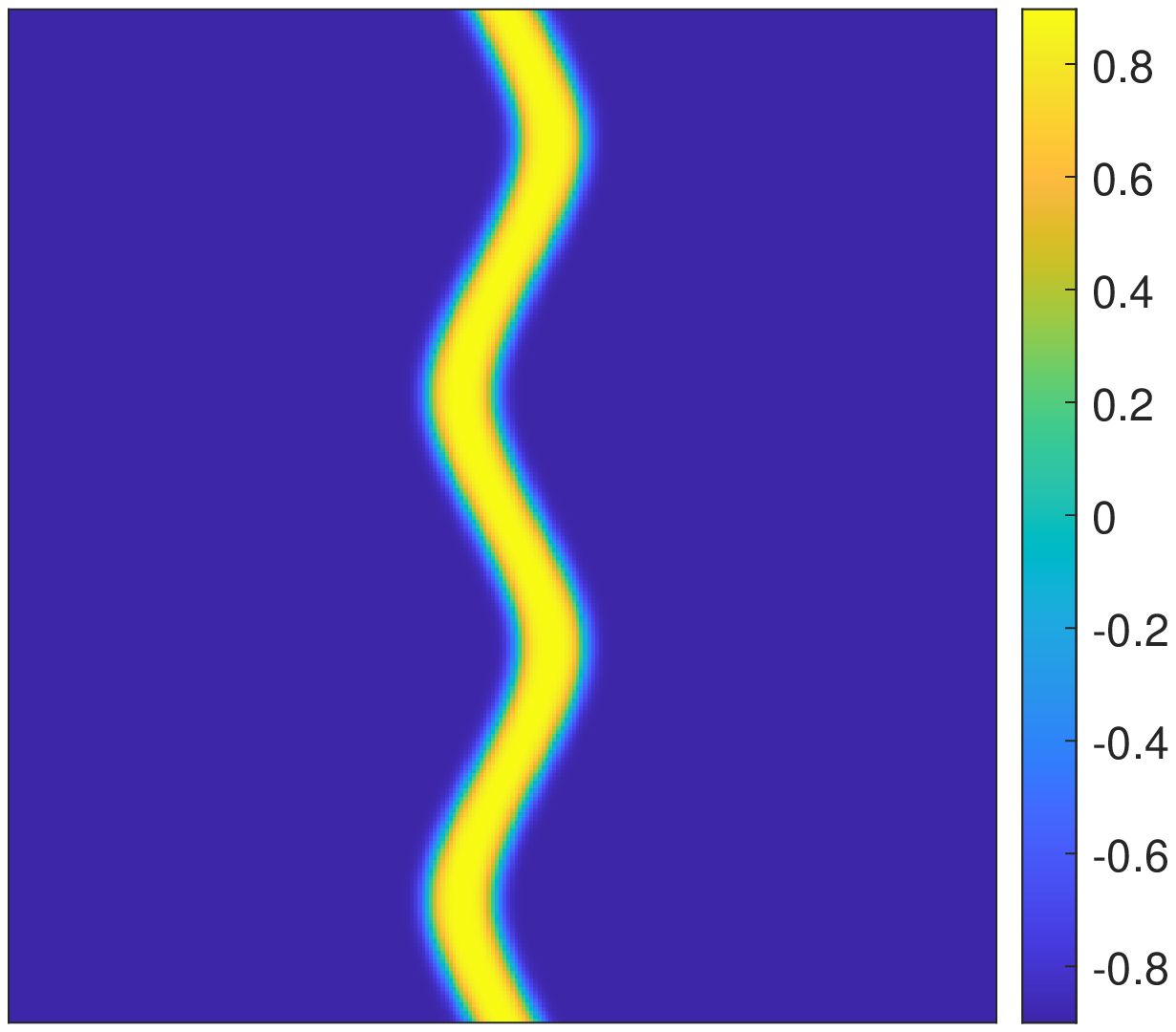}}
	\subfigure[$t=5$]{\includegraphics[scale=0.25,trim={30 0 30 0},clip]{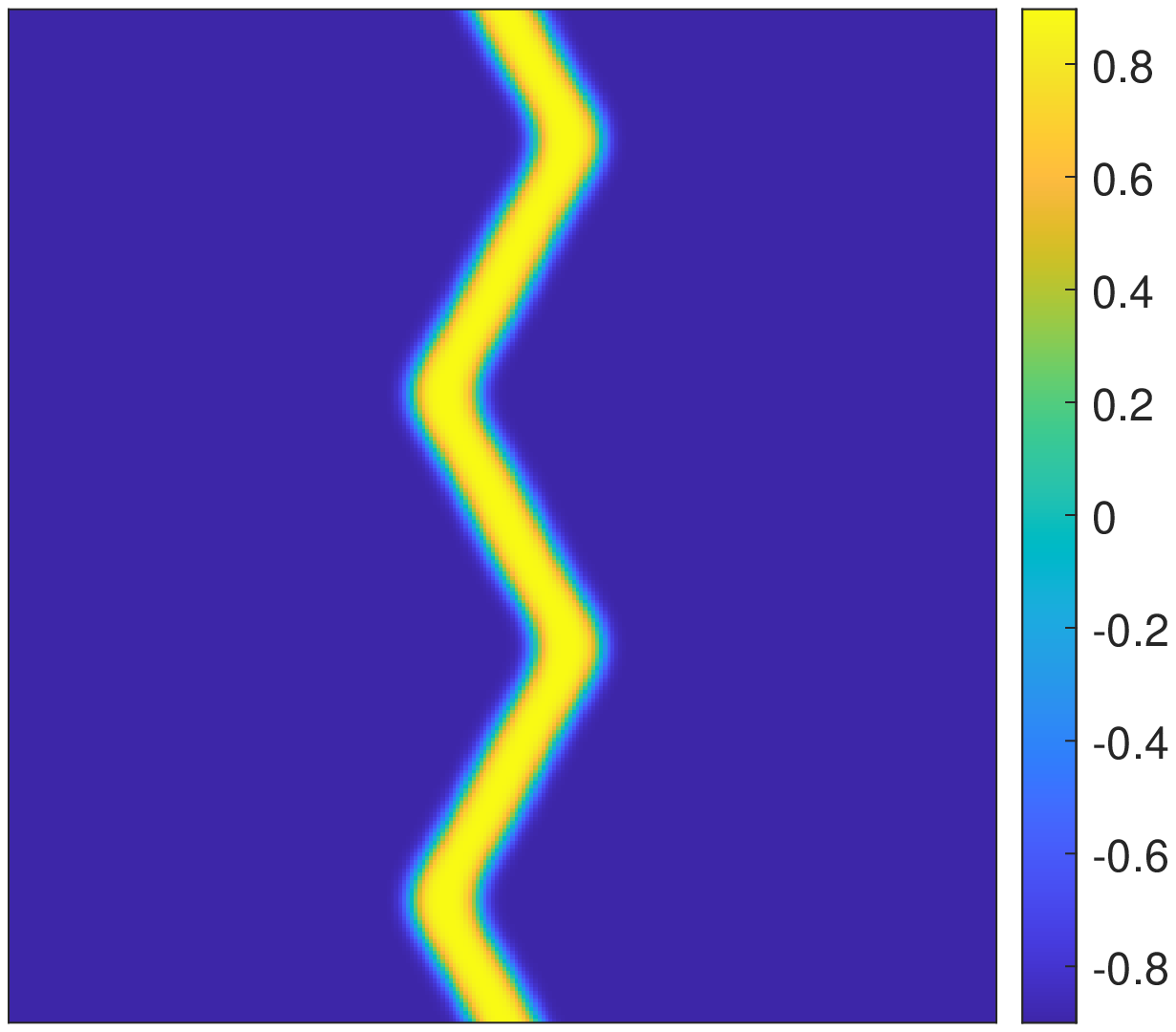}}
	\subfigure[$t=15$]{\includegraphics[scale=0.25,trim={30 0 30 0},clip]{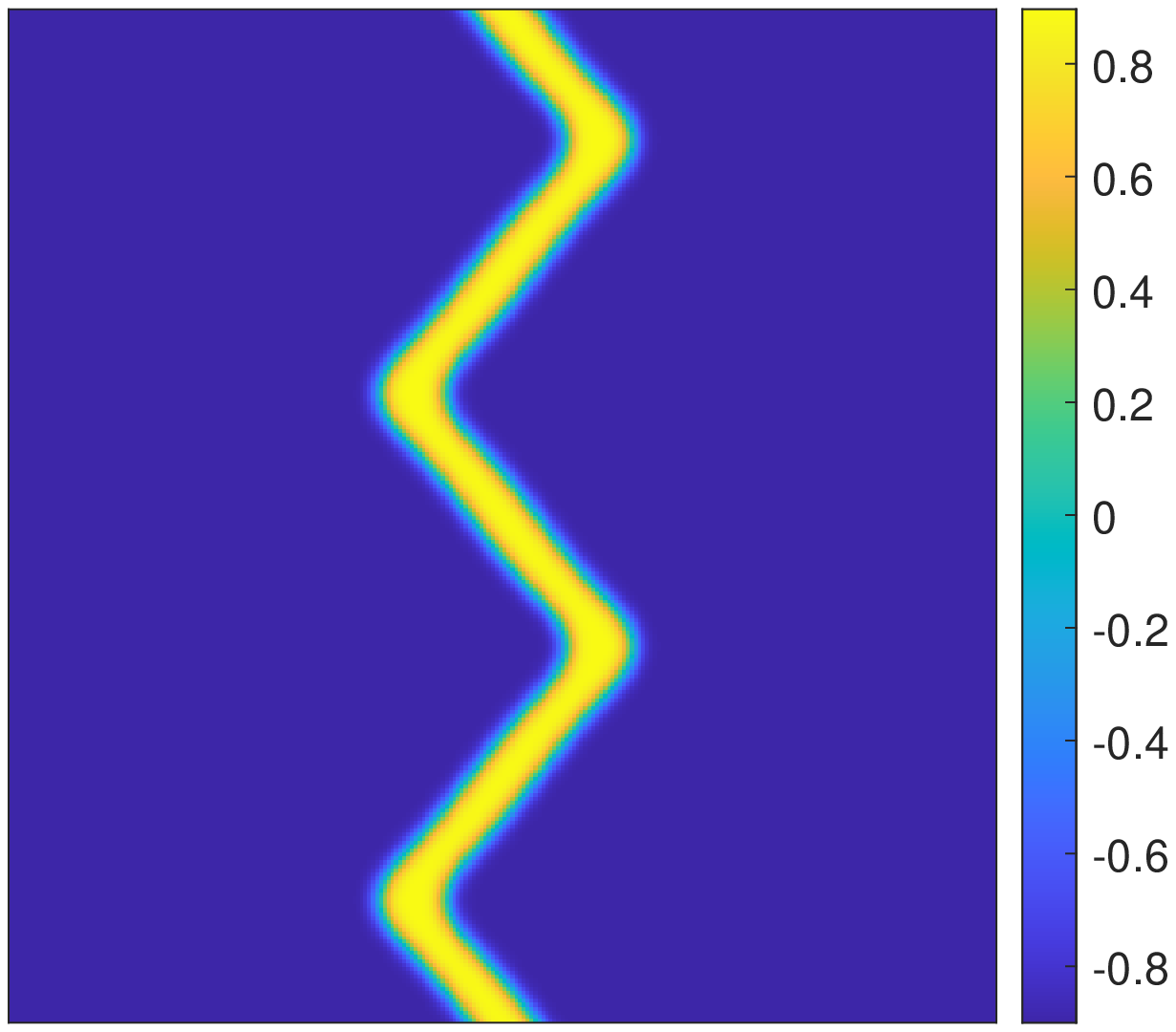}}
	\subfigure[$t=30$]{\includegraphics[scale=0.25,trim={30 0 30 0},clip]{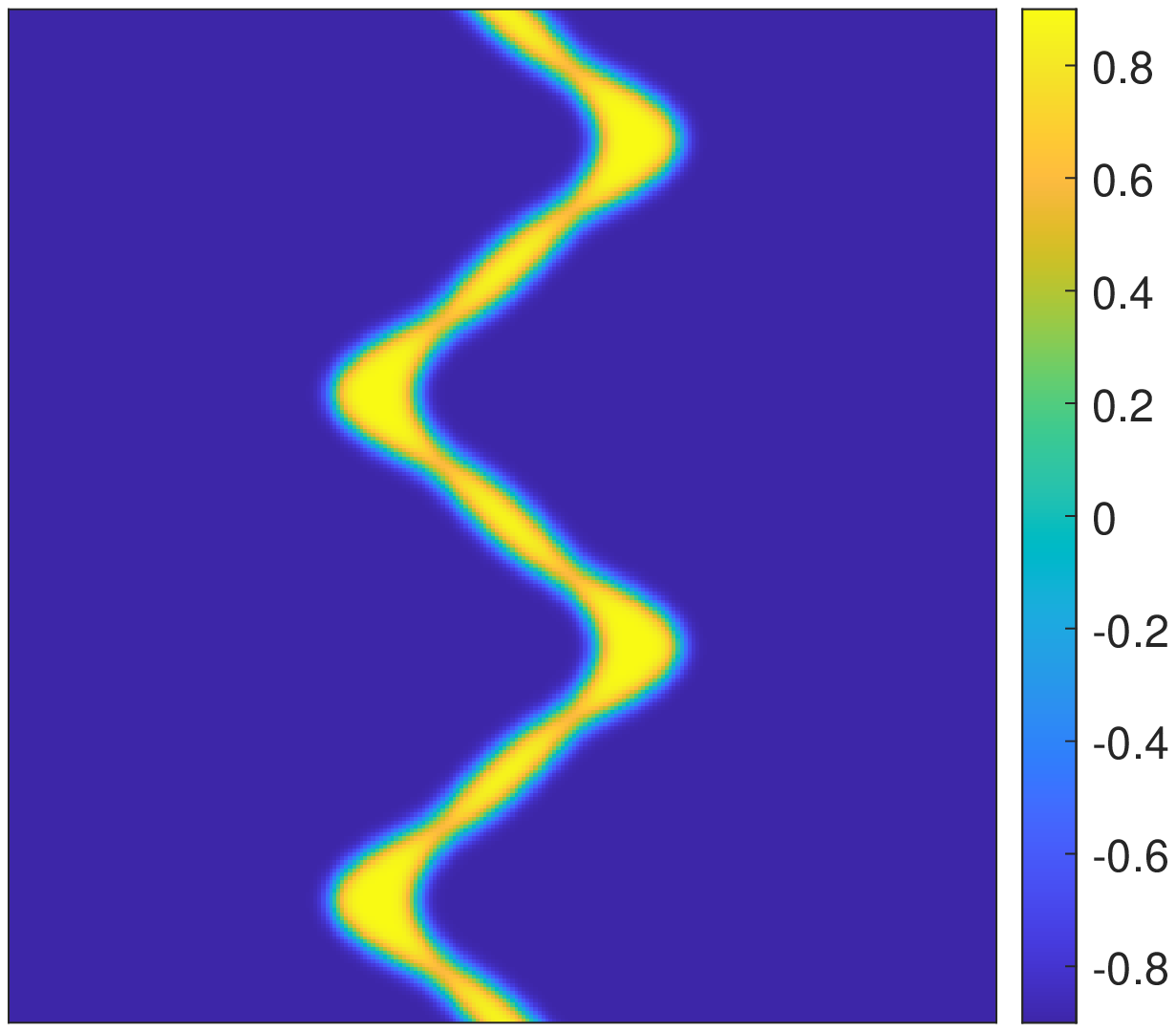}}
	\subfigure[$t=50$]{\includegraphics[scale=0.25,trim={30 0 30 0},clip]{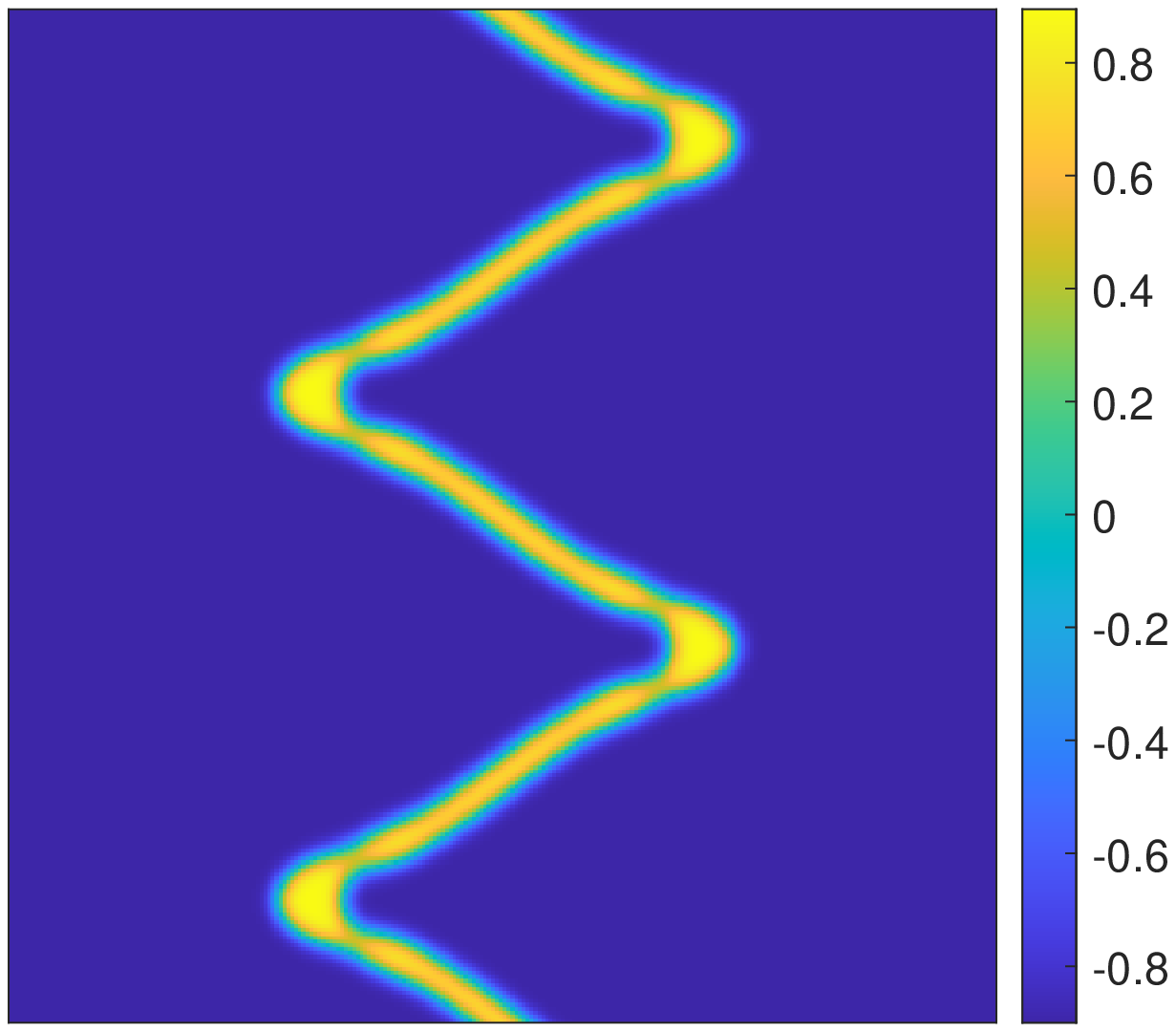}}
	\subfigure[$t=60$]{\includegraphics[scale=0.25,trim={30 0 30 0},clip]{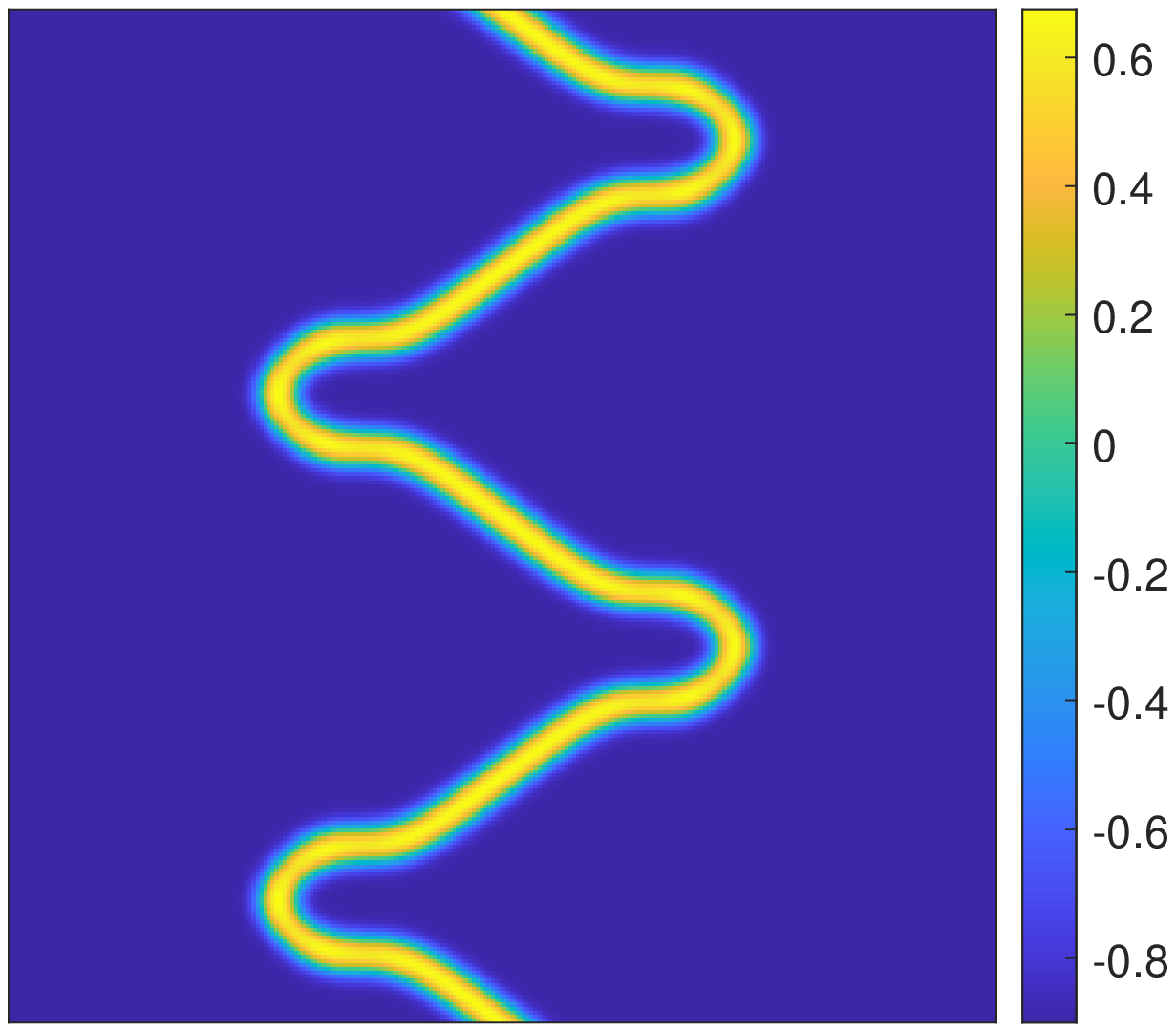}}
	\subfigure[$t=100$]{\includegraphics[scale=0.25,trim={30 0 30 0},clip]{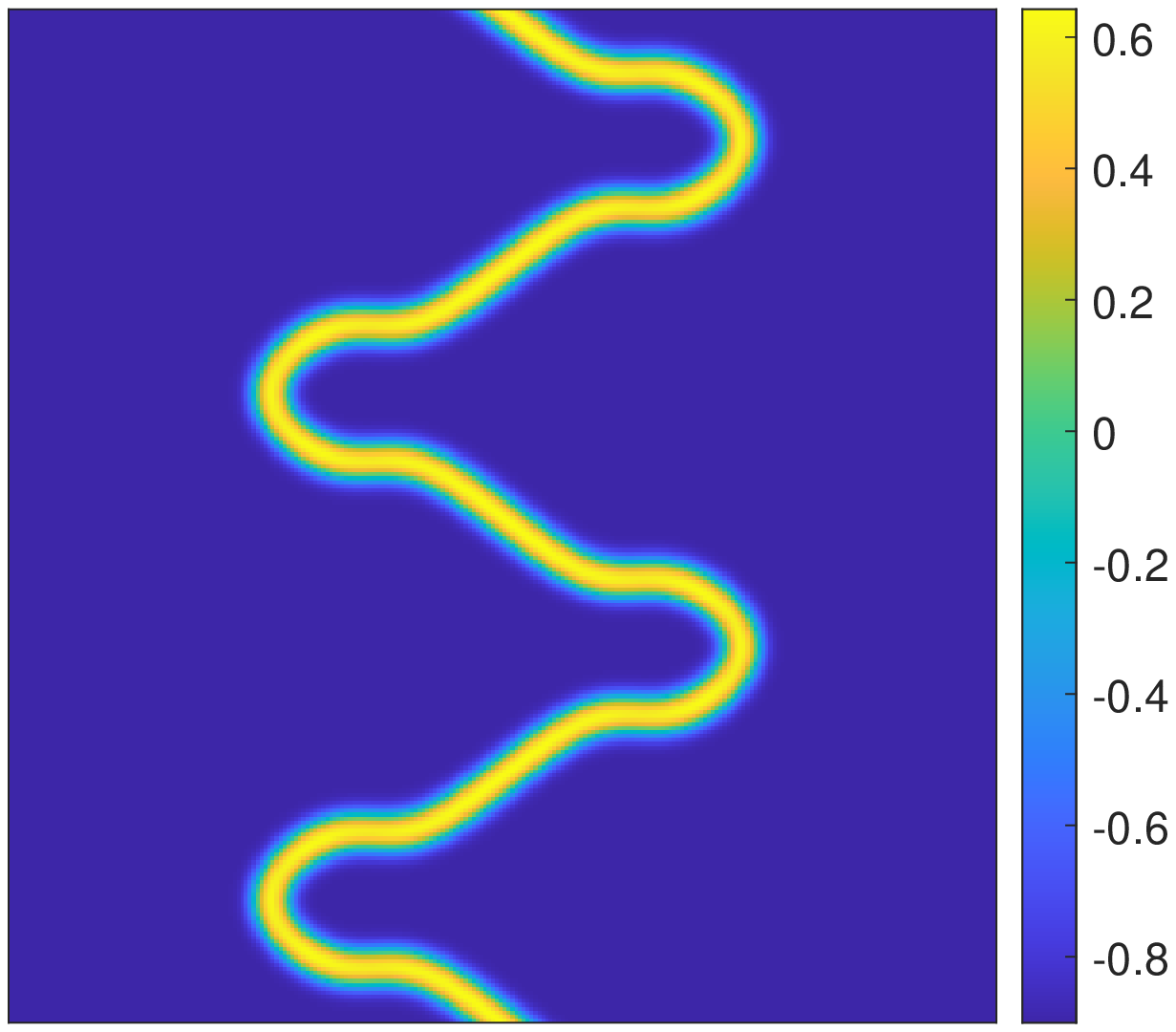}}
	
	\caption{Snapshots of the meandering instability with initial condition \eqref{meanderinitial}. Parameters are set to be as $\eta=10,\ \epsilon=0.01,\ p=1$, $h=1/256$ and $\lambda=\ln(19)/0.9$.}
	\label{meander}
\end{figure}

\begin{figure}[!t]
	\flushleft
	\subfigure		{\begin{overpic}[scale=.33, trim = {30 0 30 0}, clip]{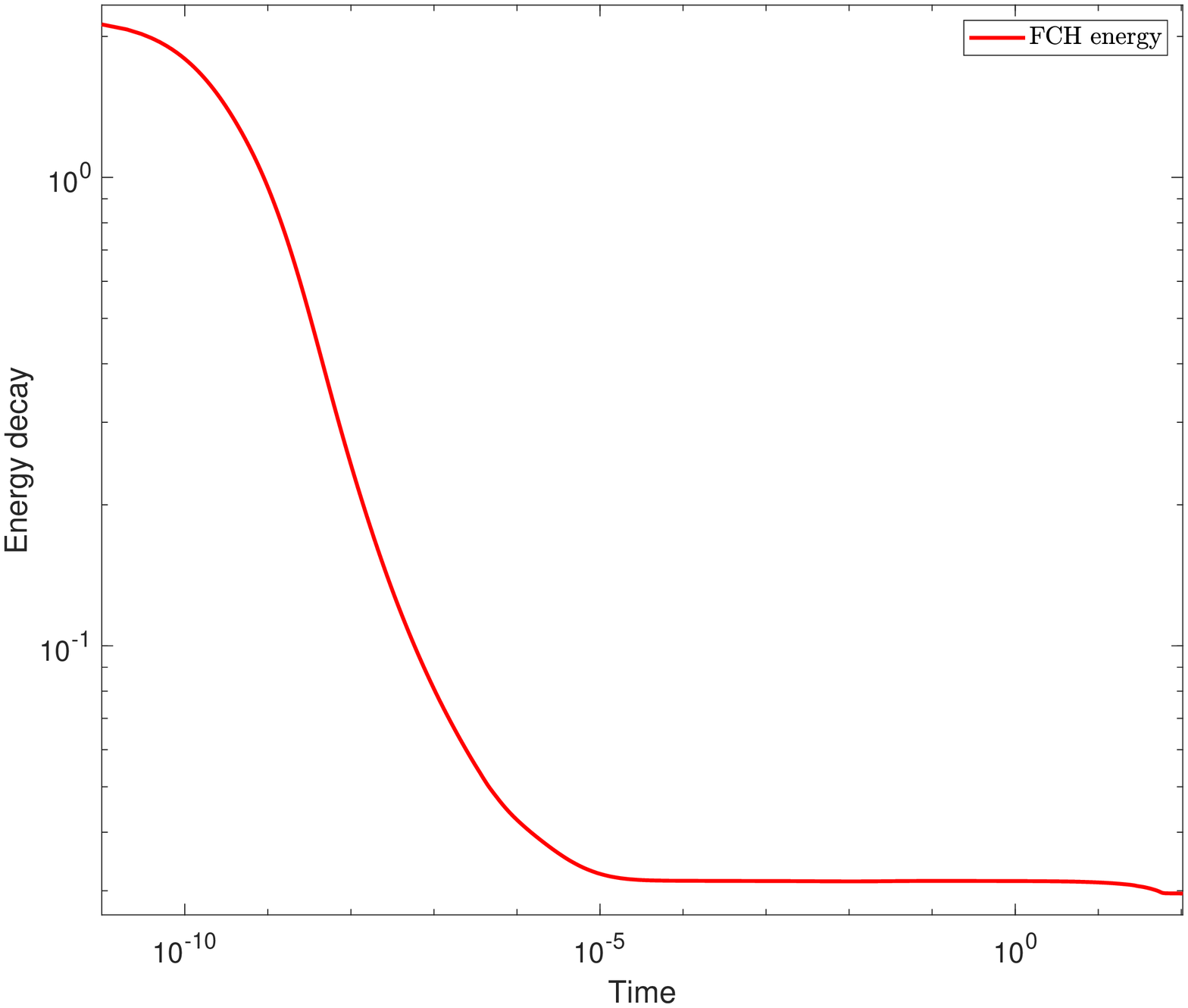}
			\put(37,26){\includegraphics[scale=.17, trim = {30 0 30 0}, clip]%
				{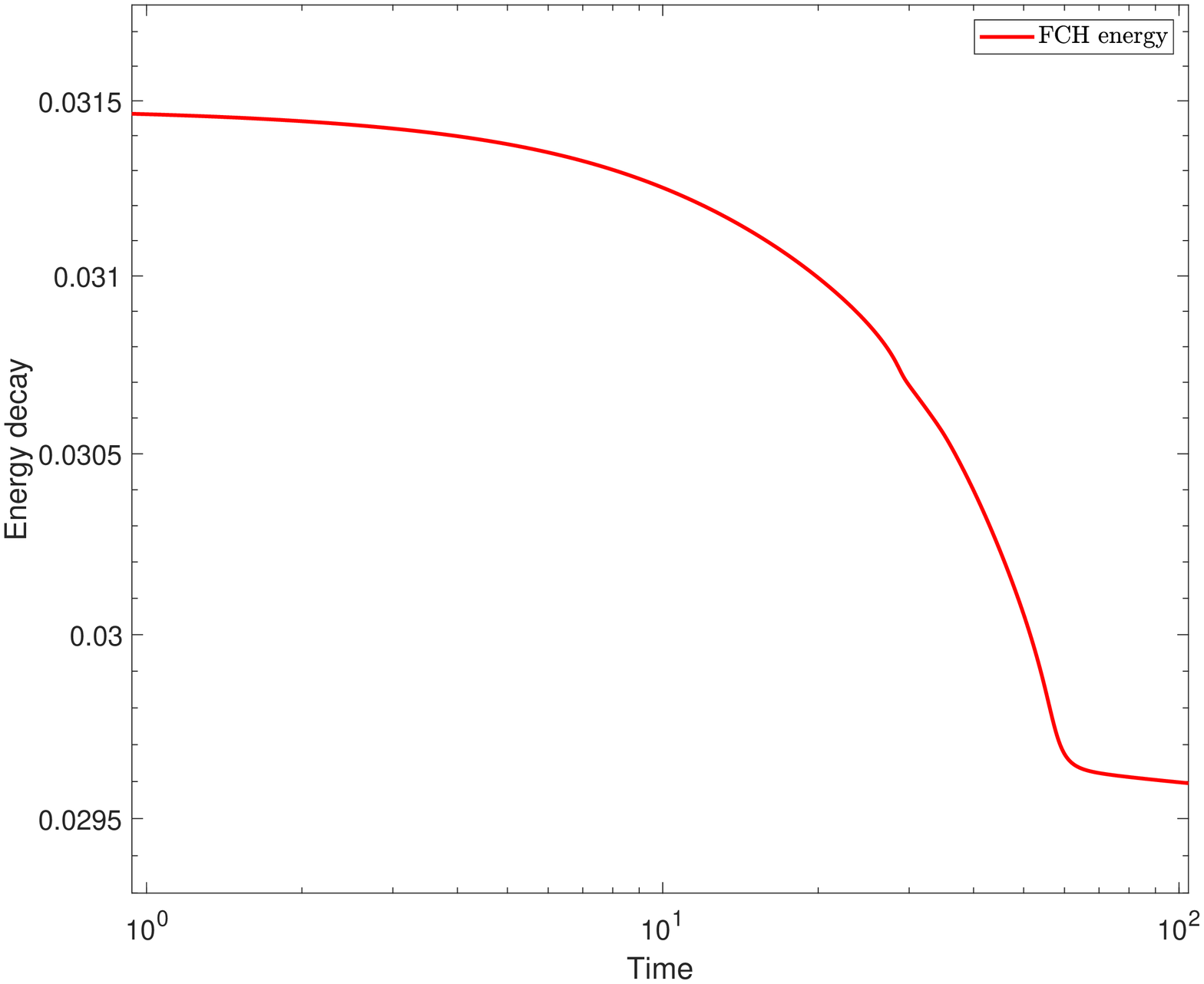}}
	\end{overpic}}
	\subfigure		{\begin{overpic}[scale=.33, trim = {30 0 30 0}, clip]{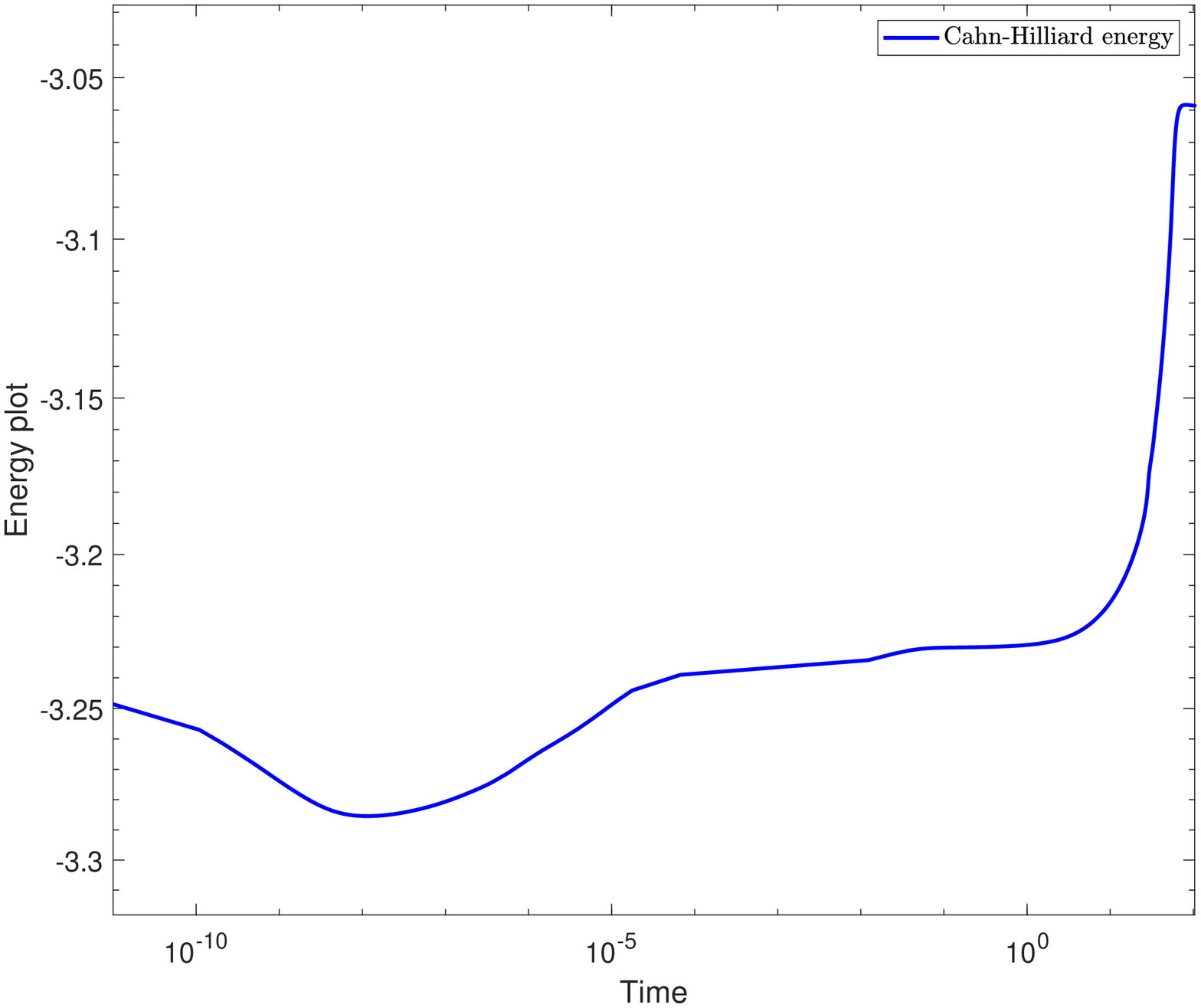}
			\put(16,31){\includegraphics[scale=.17,  trim = {30 0 30 0}, clip]%
				{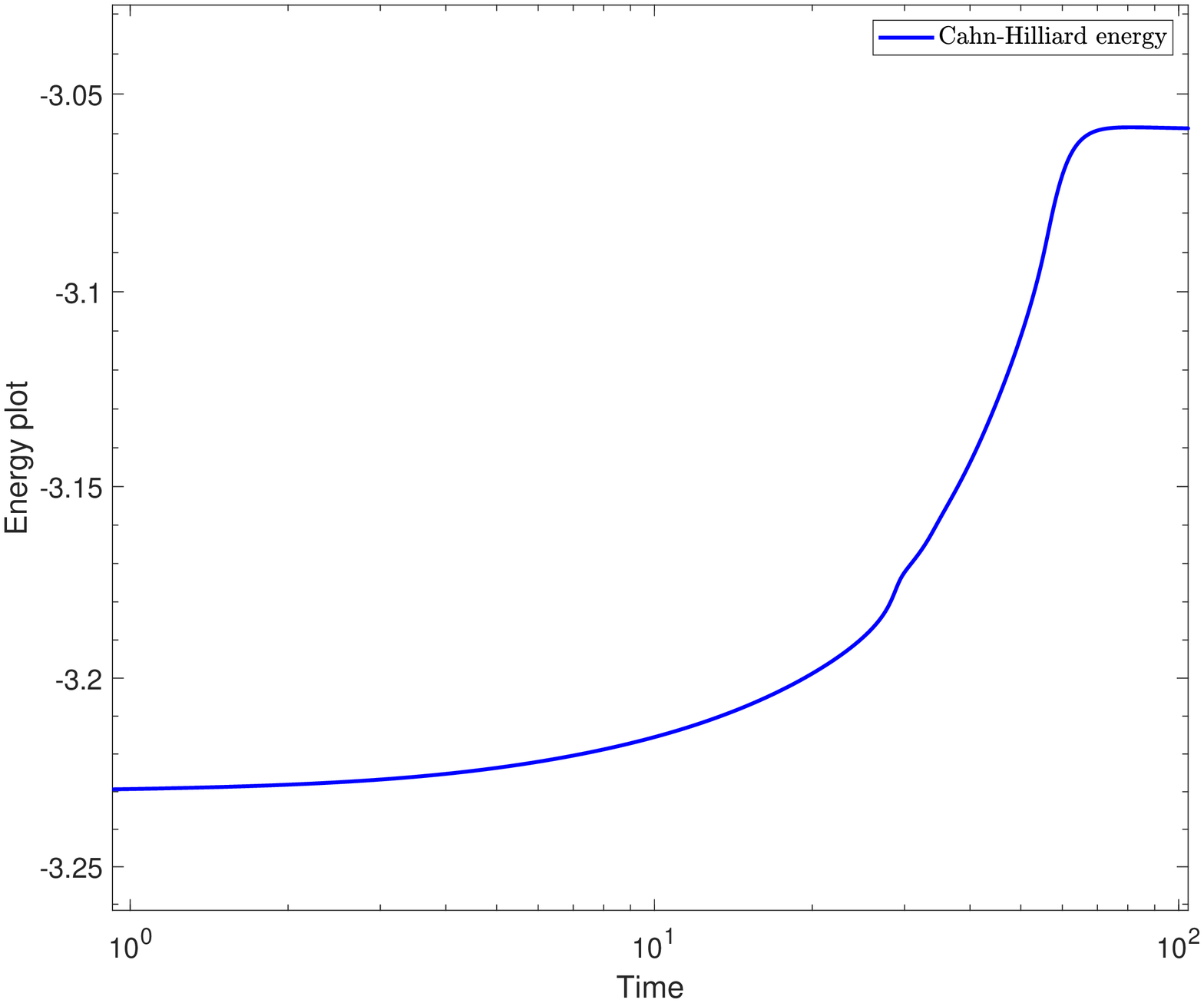}}
	\end{overpic}}
	\caption{Change of the FCH Energy (left) and the Cahn-Hilliard Energy (right) of the meandering instability.}
	\label{meanderenergy1}
\end{figure}

\subsection{Phase separation}

In this subsection we simulate the phase separation process, namely, the spinodal decomposition of a binary mixture with possible amphiphilic structure. We start with the following random initial condition (cf. \cite{feng2018}):
\begin{equation}\label{random initial data}
	\phi(x,y,0) = 0.5 + 0.01(2r-1),
\end{equation}
where $r$ are uniformly distributed random numbers in $[0, 1]$, and give the parameters as $\epsilon=0.008$, $\eta=8$, $p=1$, $h = 1/256$ and $\lambda= \ln(19)/0.9$.

Snapshots of the phase separation process can be found in Figure \ref{phasesepa}. The minimum value and maximum value of the phase variable stays between $-0.8852$ and $0.9035$ along time evolution, which keep a safe distance to the pure states $-1$ and $1$. This is consistent with the strict separation property proved in this paper (for the numerical scheme) and in \cite{SCHIMPERNA2020} (for the partial differential equation). Meanwhile, the average mass has a $10^{-12}$ error after $10^5$ iteration, which comes from possible cancellation errors when rounding off values at machine precision, as is shown in Figure \ref{randomresult}. The energy plot show a similar property to that for the previous simulation on meandering instability, in which the Cahn-Hilliard energy also increases after the initial decay. Figure~\ref{special structure} shows some special structures that we capture in the simulation, and these can be also found in the experiment of \cite{jain2003origins}.

\begin{figure}[!t]
	\centering
	\subfigure[$t=0$]{\includegraphics[scale=0.25,trim={20 0 20 0},clip]{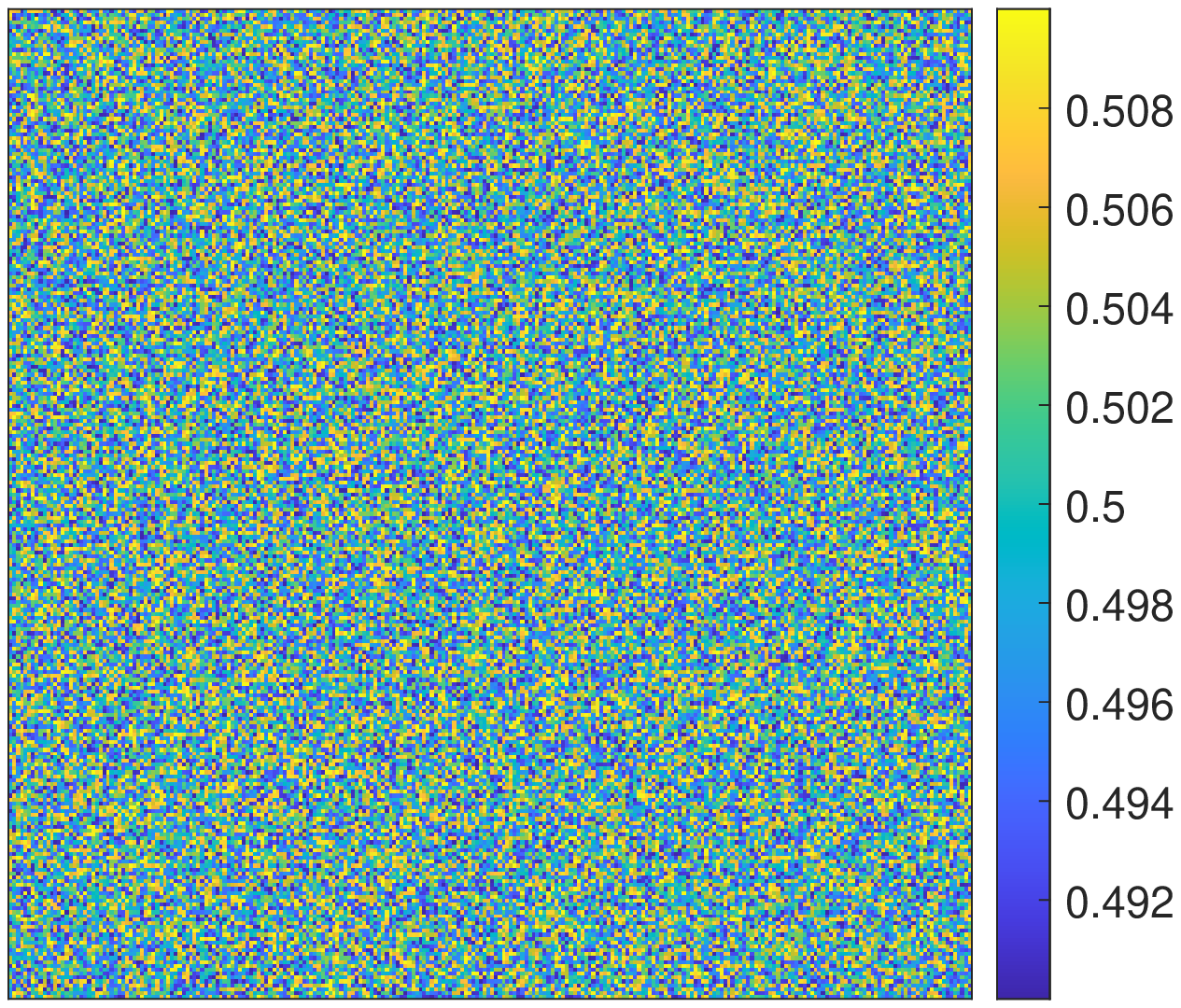}}
	\subfigure[$t=0.01$]{\includegraphics[scale=0.25,trim={20 0 20 0},clip]{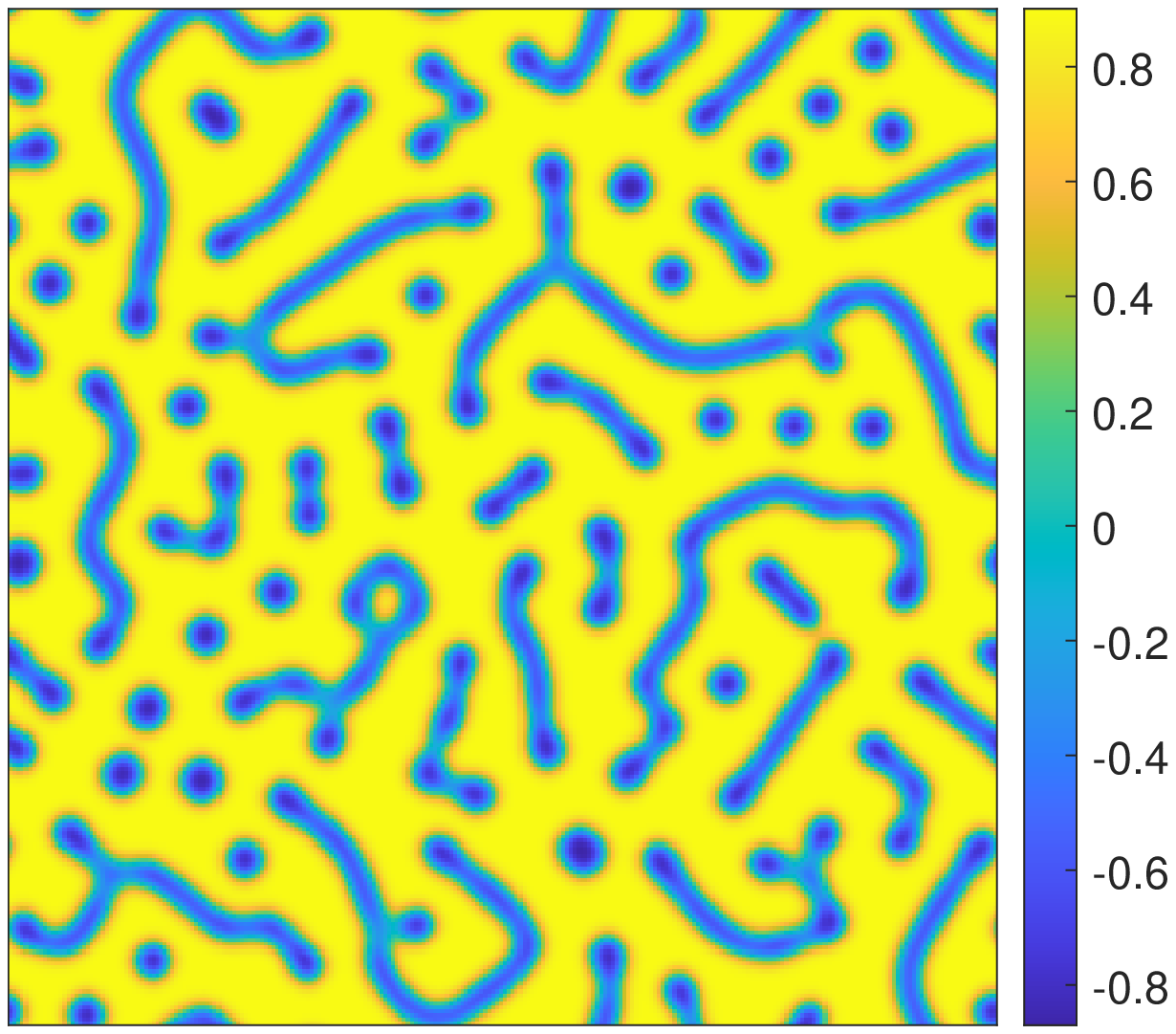}}
	\subfigure[$t=0.1$]{\includegraphics[scale=0.25,trim={20 0 20 0},clip]{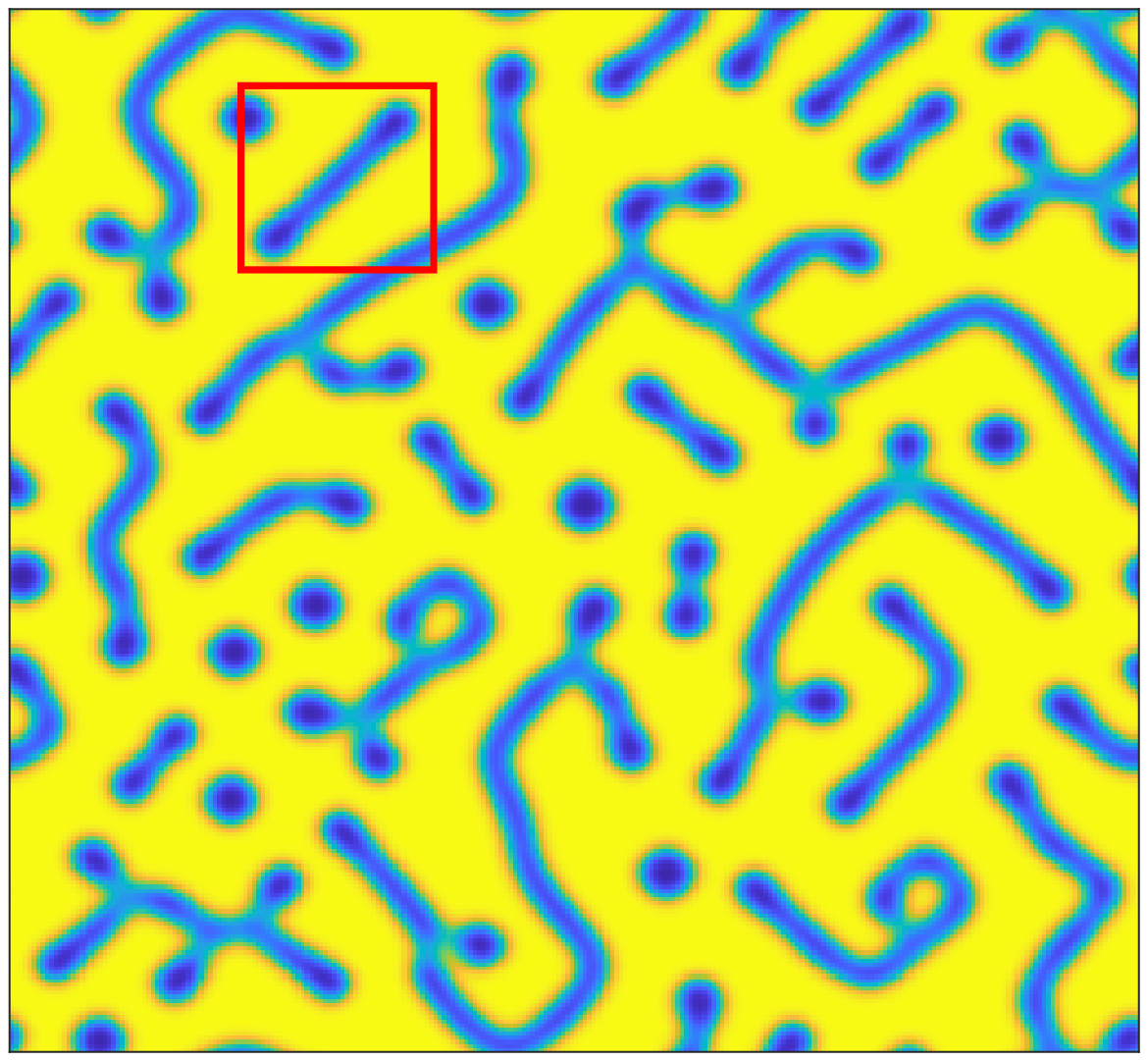}}
	\subfigure[$t=1$]{\includegraphics[scale=0.25,trim={20 0 20 0},clip]{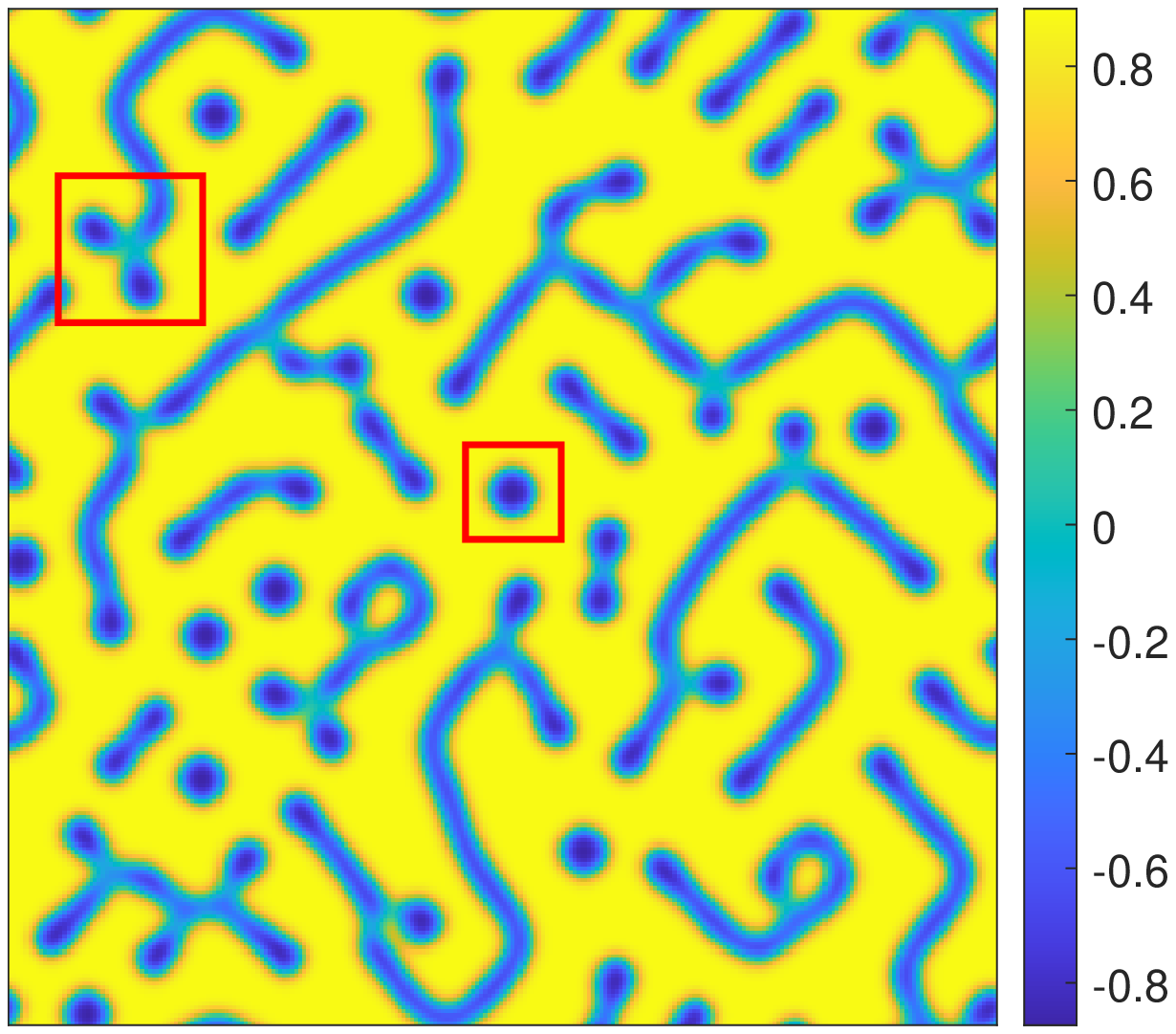}}
	\subfigure[$t=10$]{\includegraphics[scale=0.25,trim={20 0 20 0},clip]{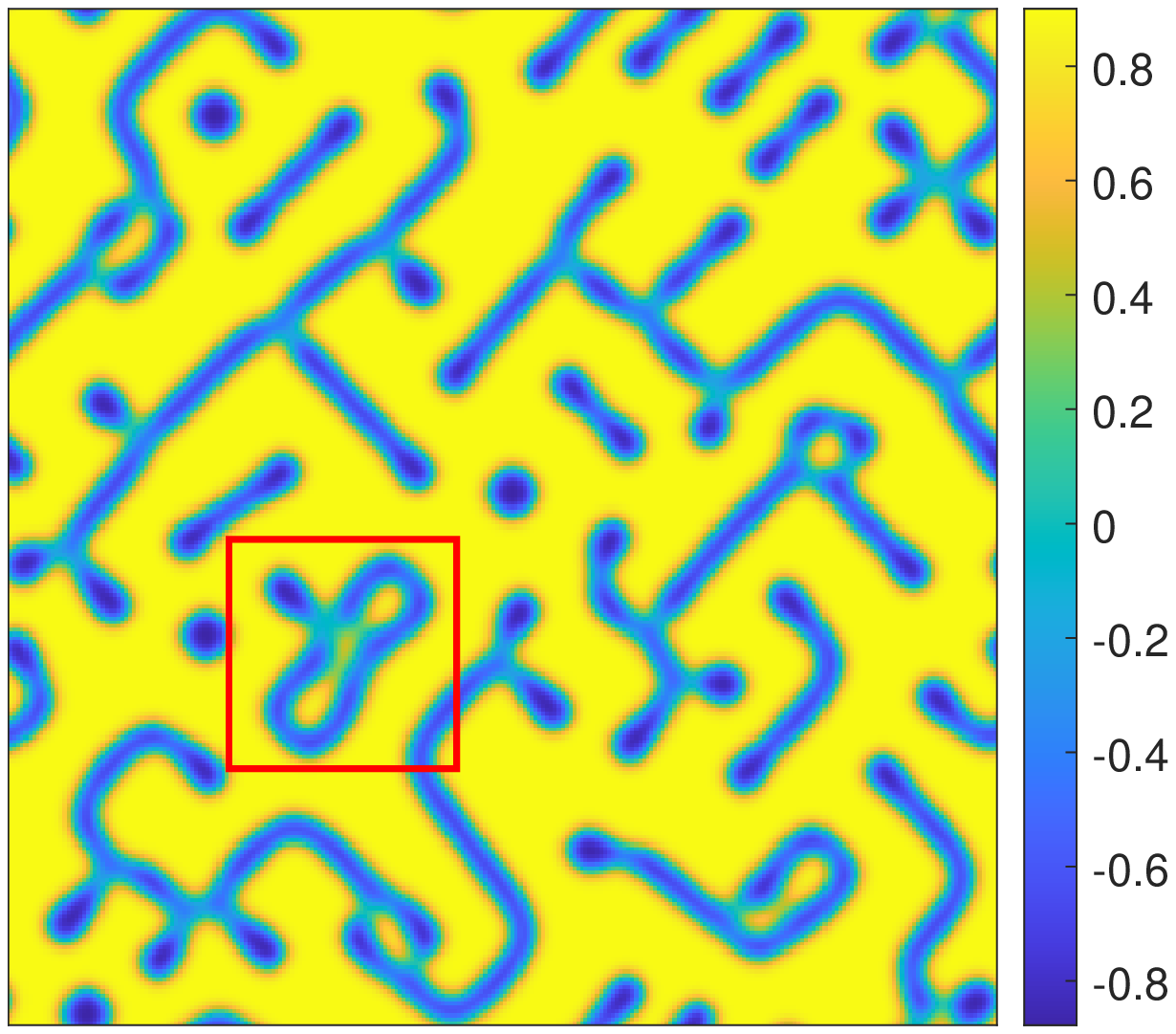}}
	\subfigure[$t=50$]{\includegraphics[scale=0.25,trim={20 0 20 0},clip]{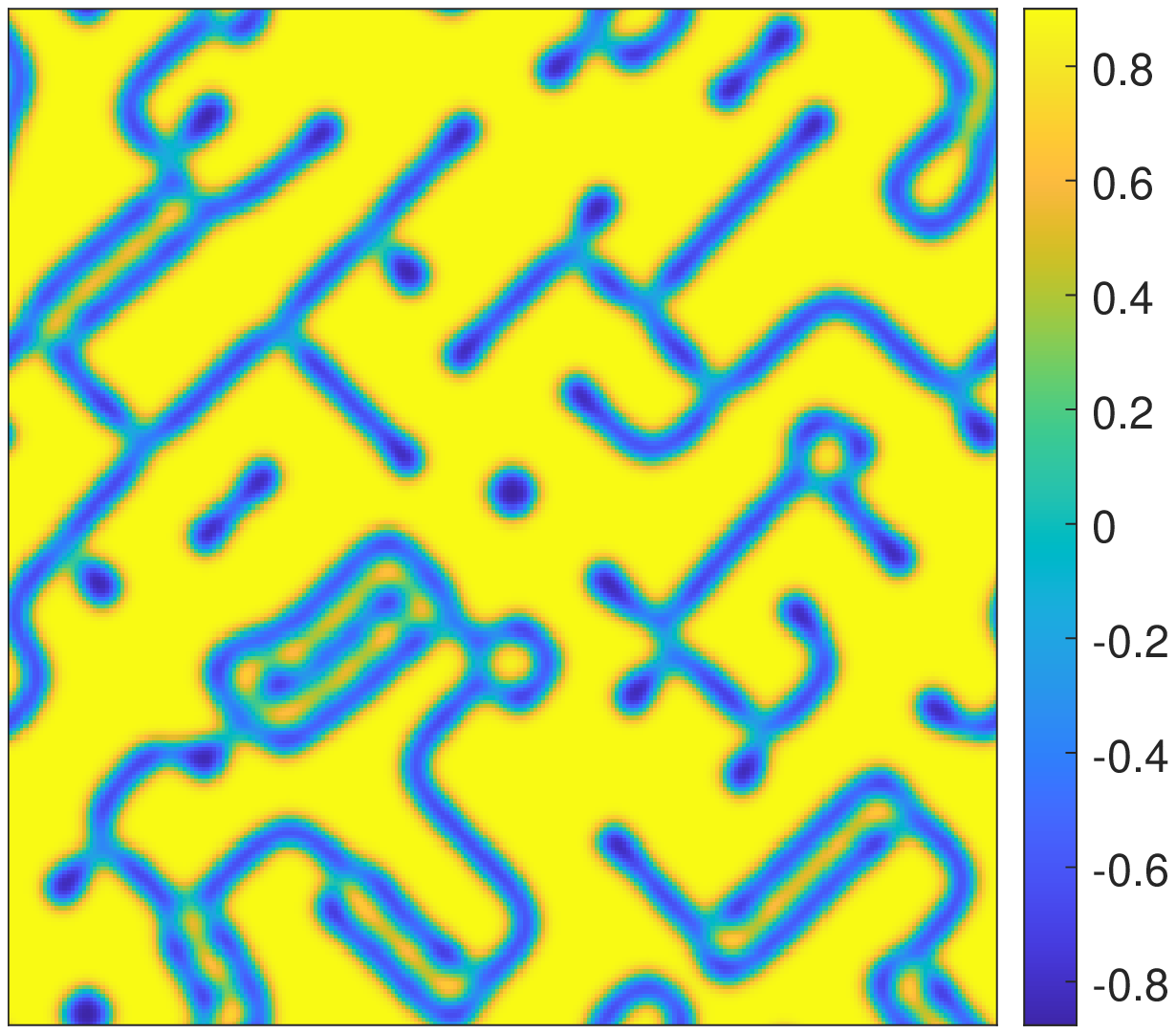}}
	\subfigure[$t=100$]{\includegraphics[scale=0.25,trim={20 0 20 0},,clip]{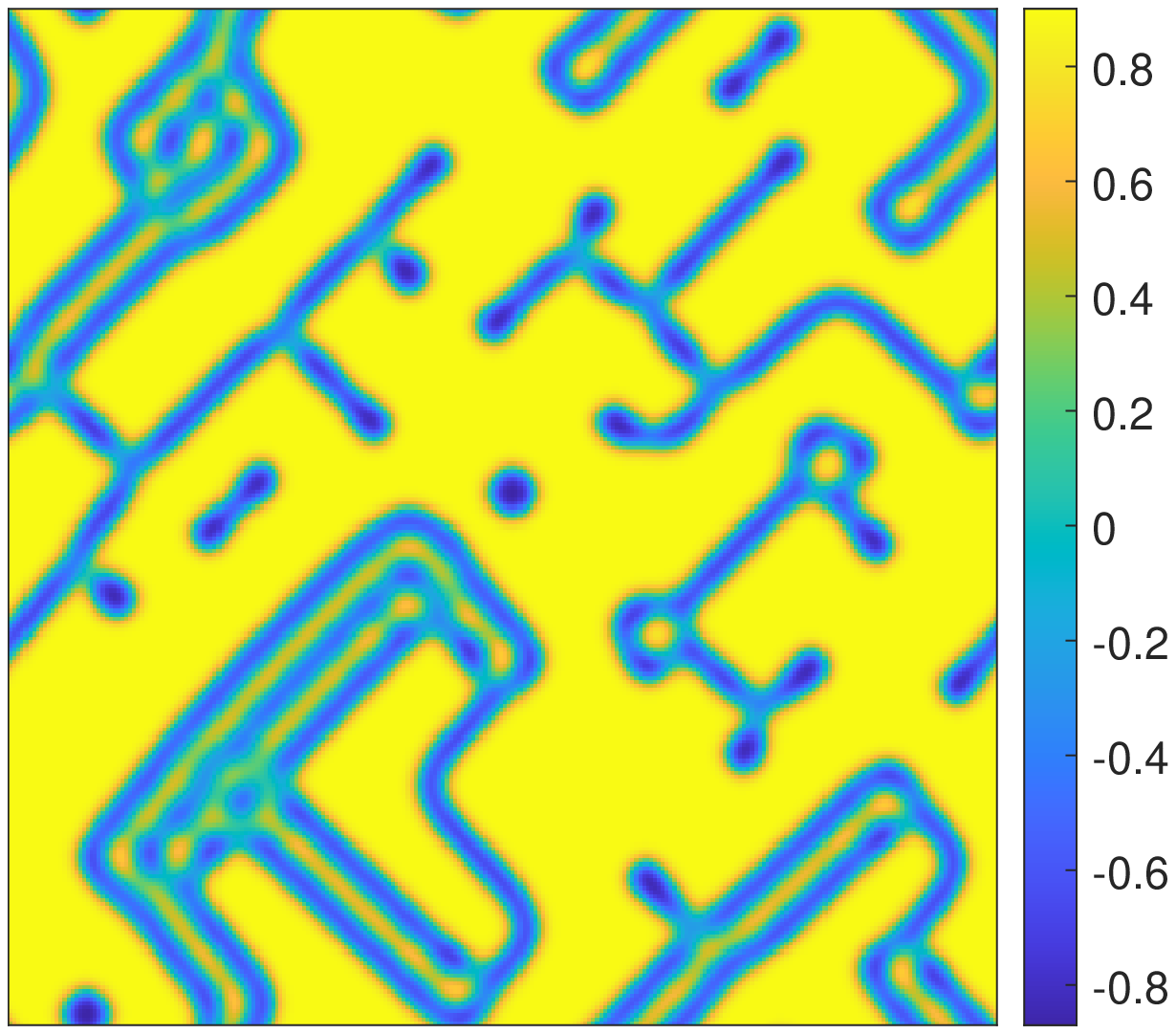}}
	\subfigure[$t=500$]{\includegraphics[scale=0.25,trim={20 0 20 0},clip]{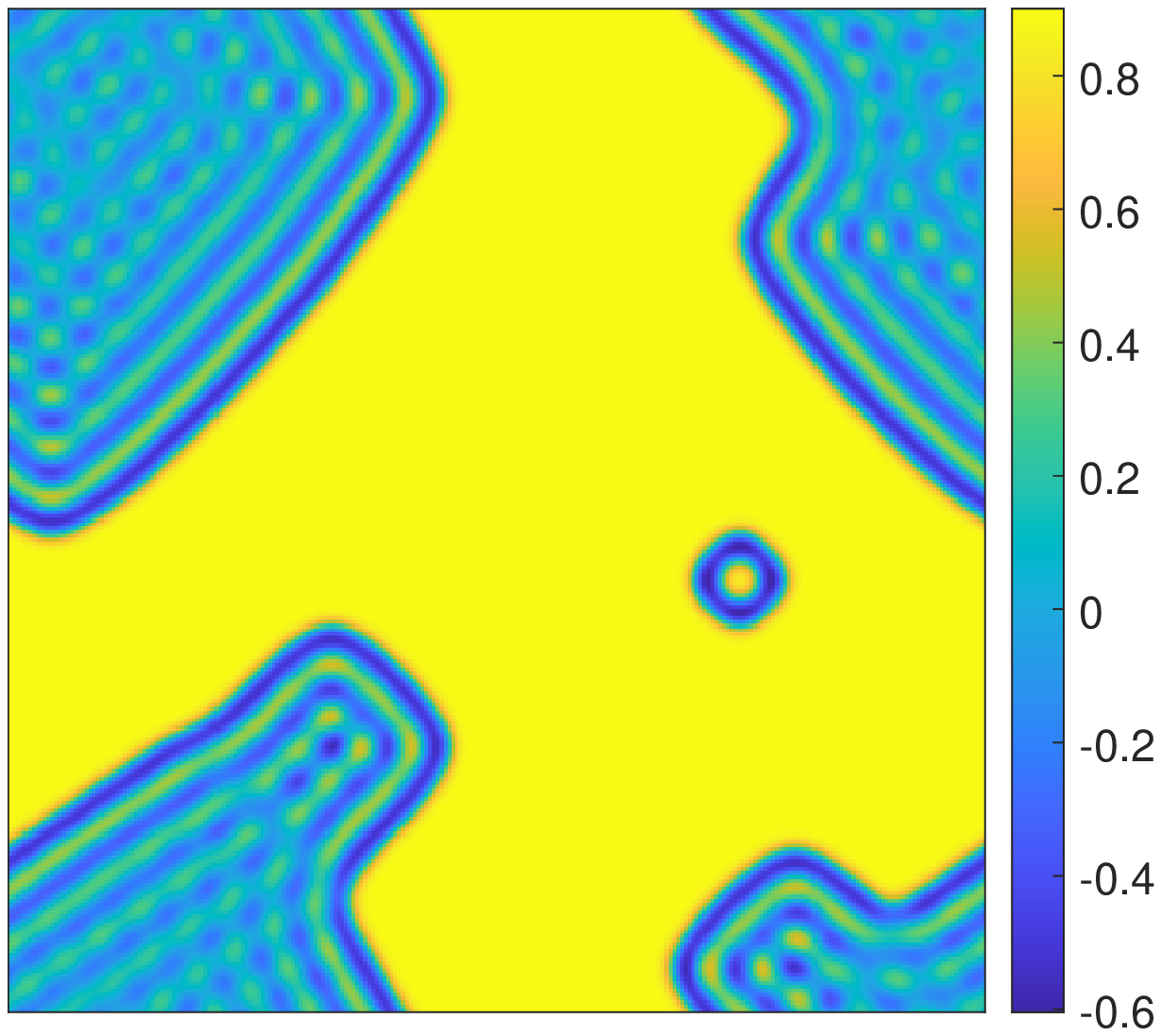}}
	\caption{Time snapshots of phase separation with the random initial datum \eqref{random initial data}.}
	\label{phasesepa}
\end{figure}
\begin{figure}[!t]
	\centering
	\subfigure[Y-juntion]{\includegraphics[scale=0.25,trim={20 0 20 0},clip]{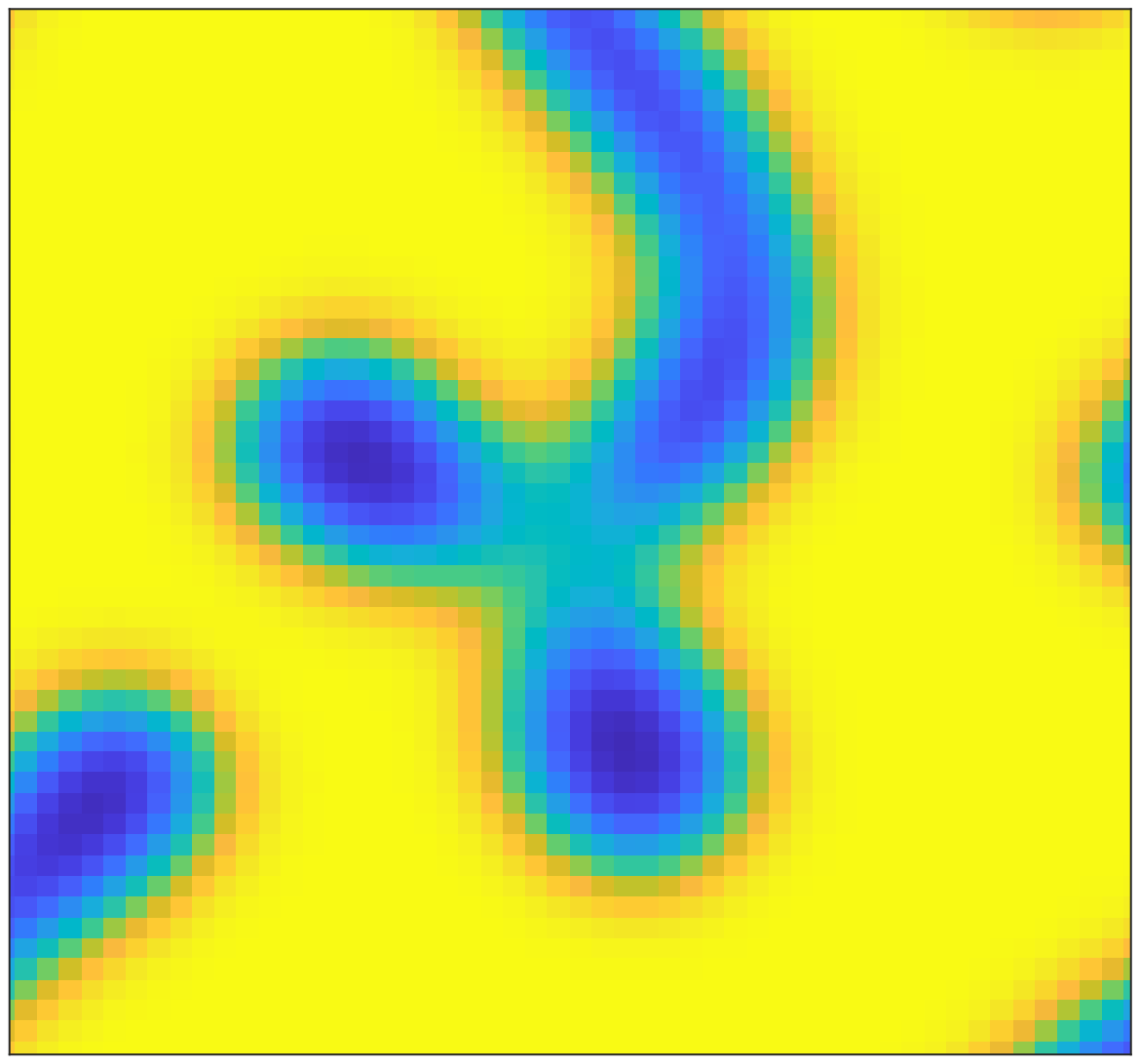}}
	\subfigure[cylinder]{\includegraphics[scale=0.25,trim={20 0 20 0},clip]{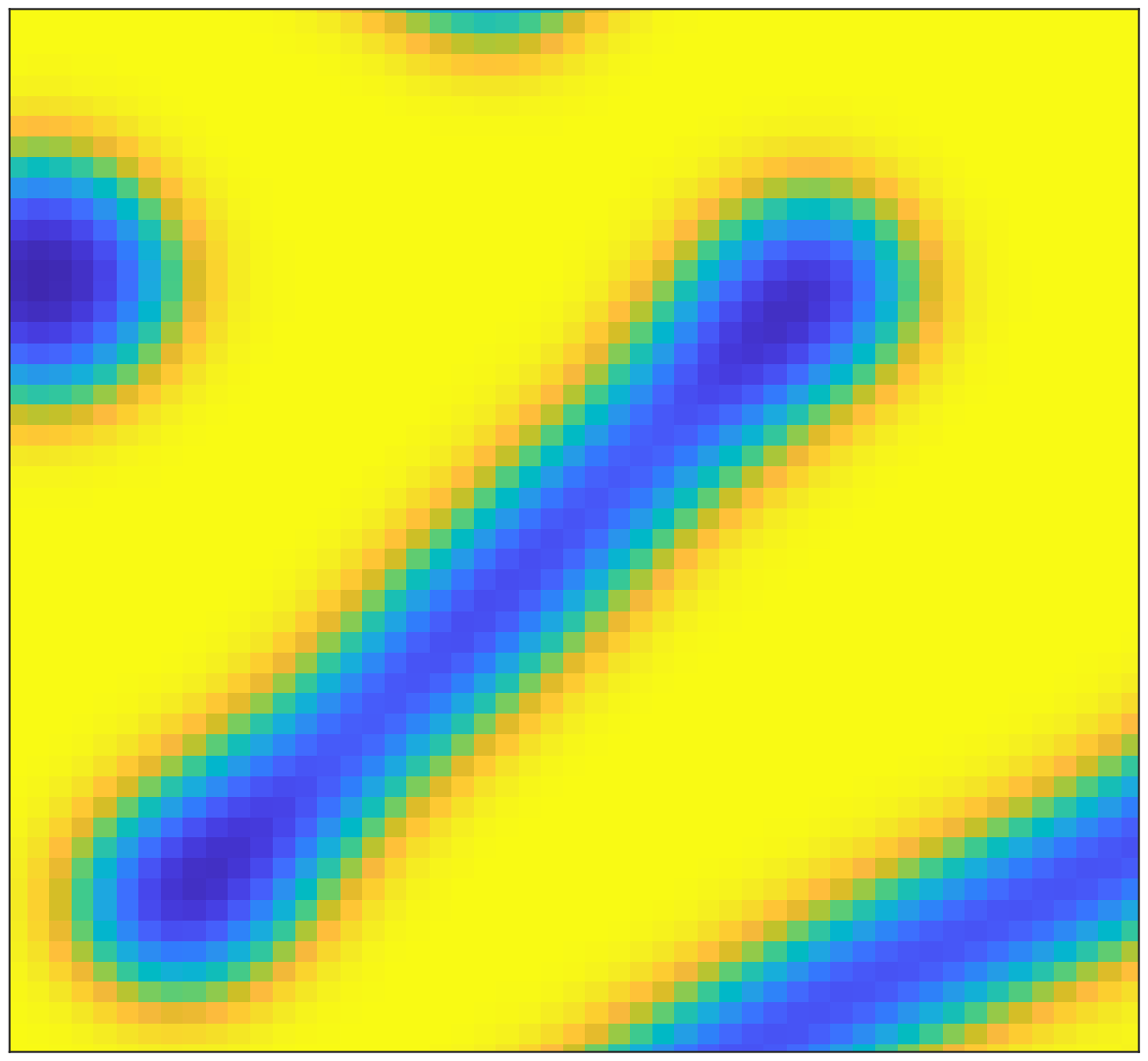}}
	\subfigure[sphere]{\includegraphics[scale=0.25,trim={20 0 20 0},clip]{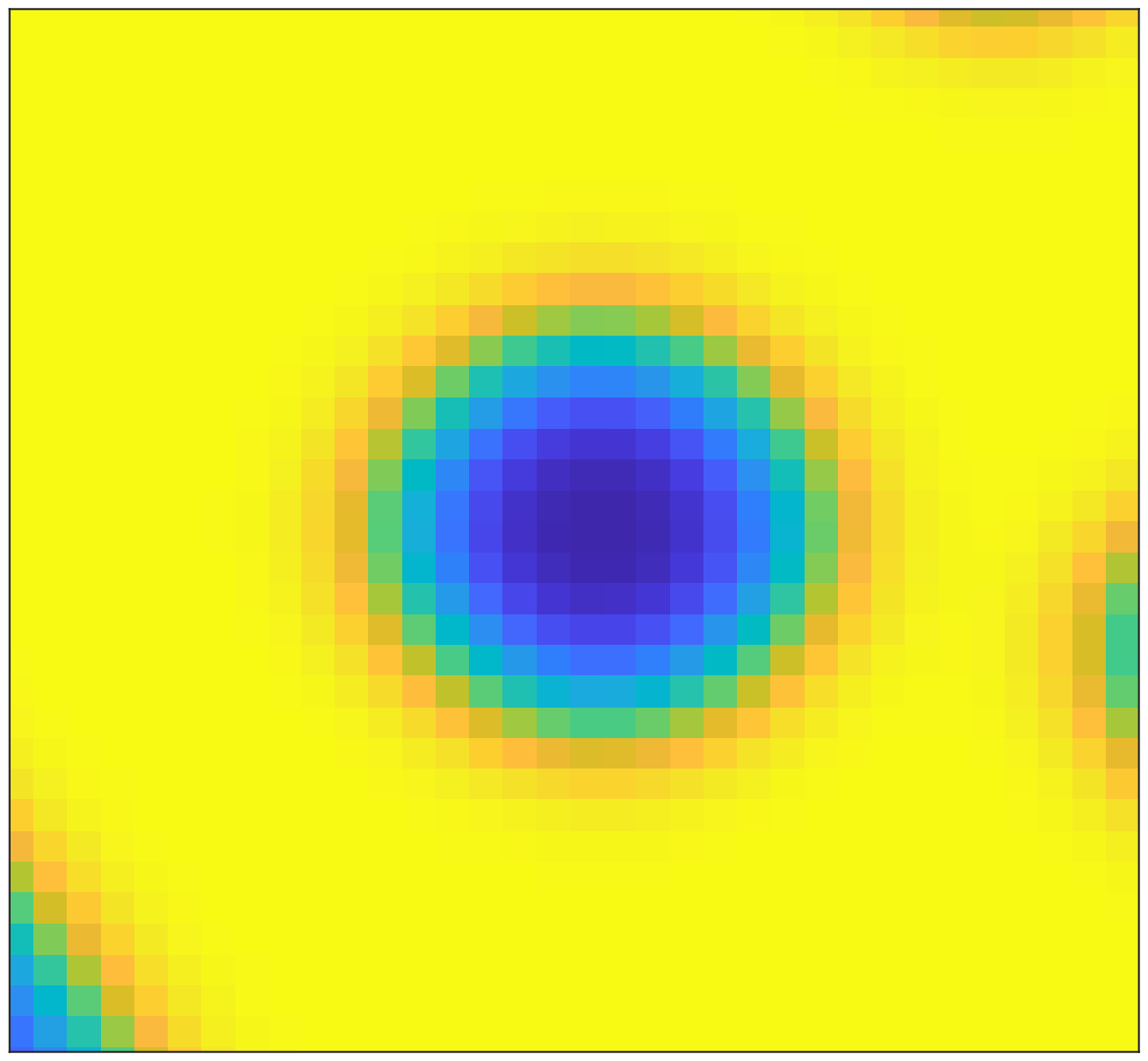}}
	\subfigure[cylindrical loop]{\includegraphics[scale=0.25,trim={20 0 20 0},clip]{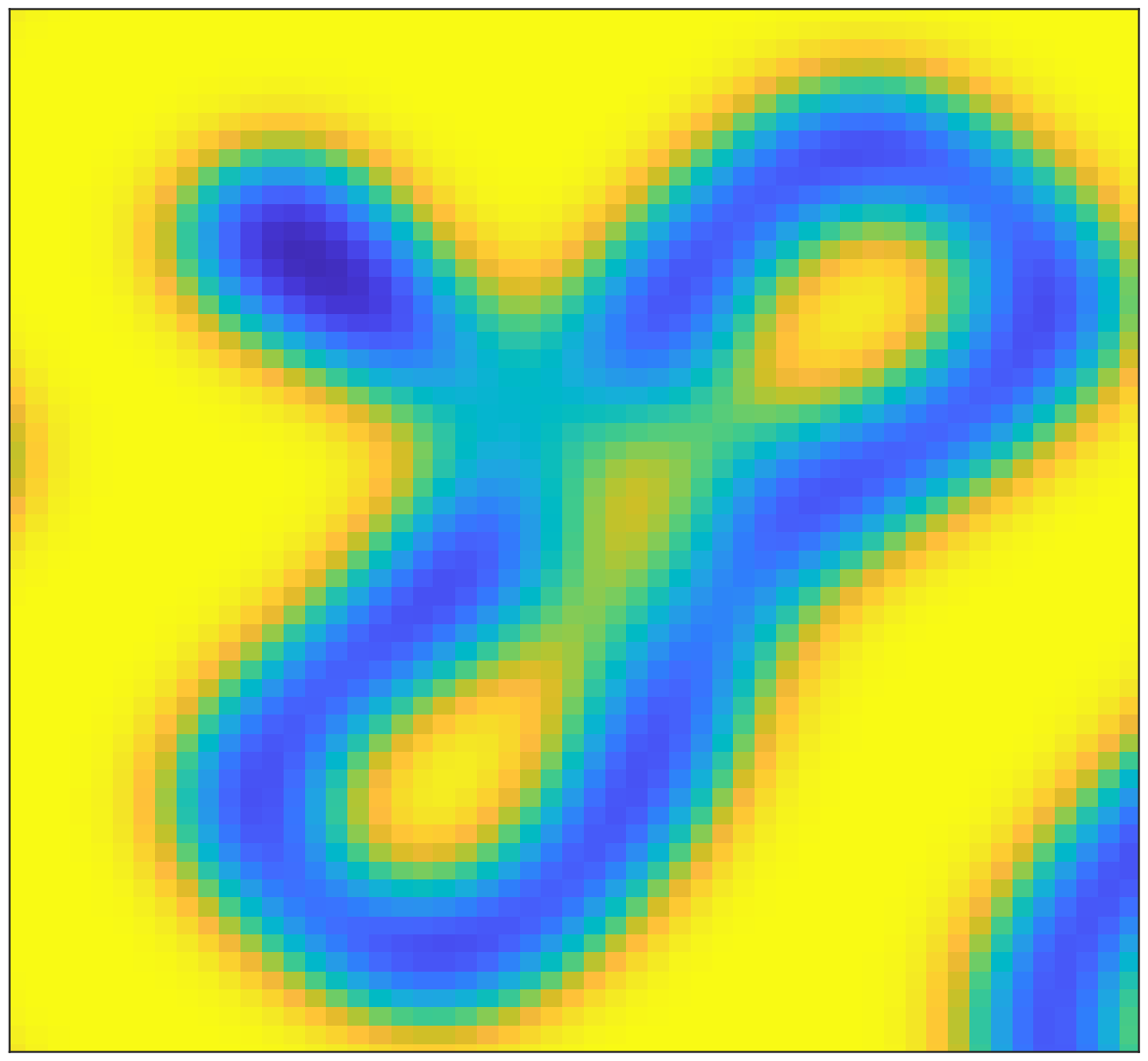}}
	\caption{Special structures like Y-junction, cylinder, spheres and cylindrical loops shown in the phase separation, which is consistent with Figure 1 and Figure 3 in \cite{jain2003origins}.}
	\label{special structure}
\end{figure}
\begin{figure}[!t]
	\centering
	\subfigure{\includegraphics[scale=0.3,trim={0 0 0 0},clip]{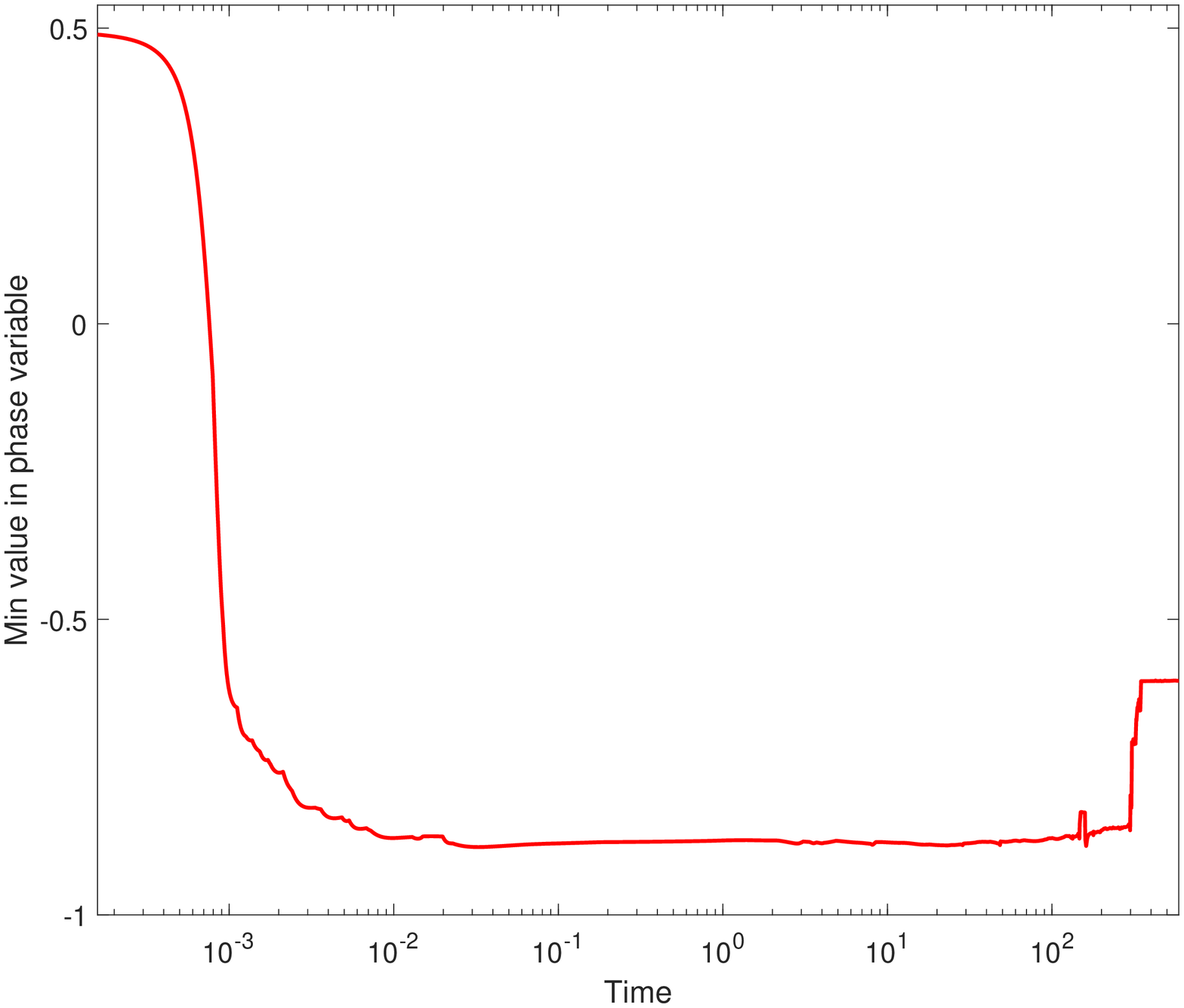}}\label{min}
	\subfigure{\includegraphics[scale=0.3,trim={0 0 0 0},clip]{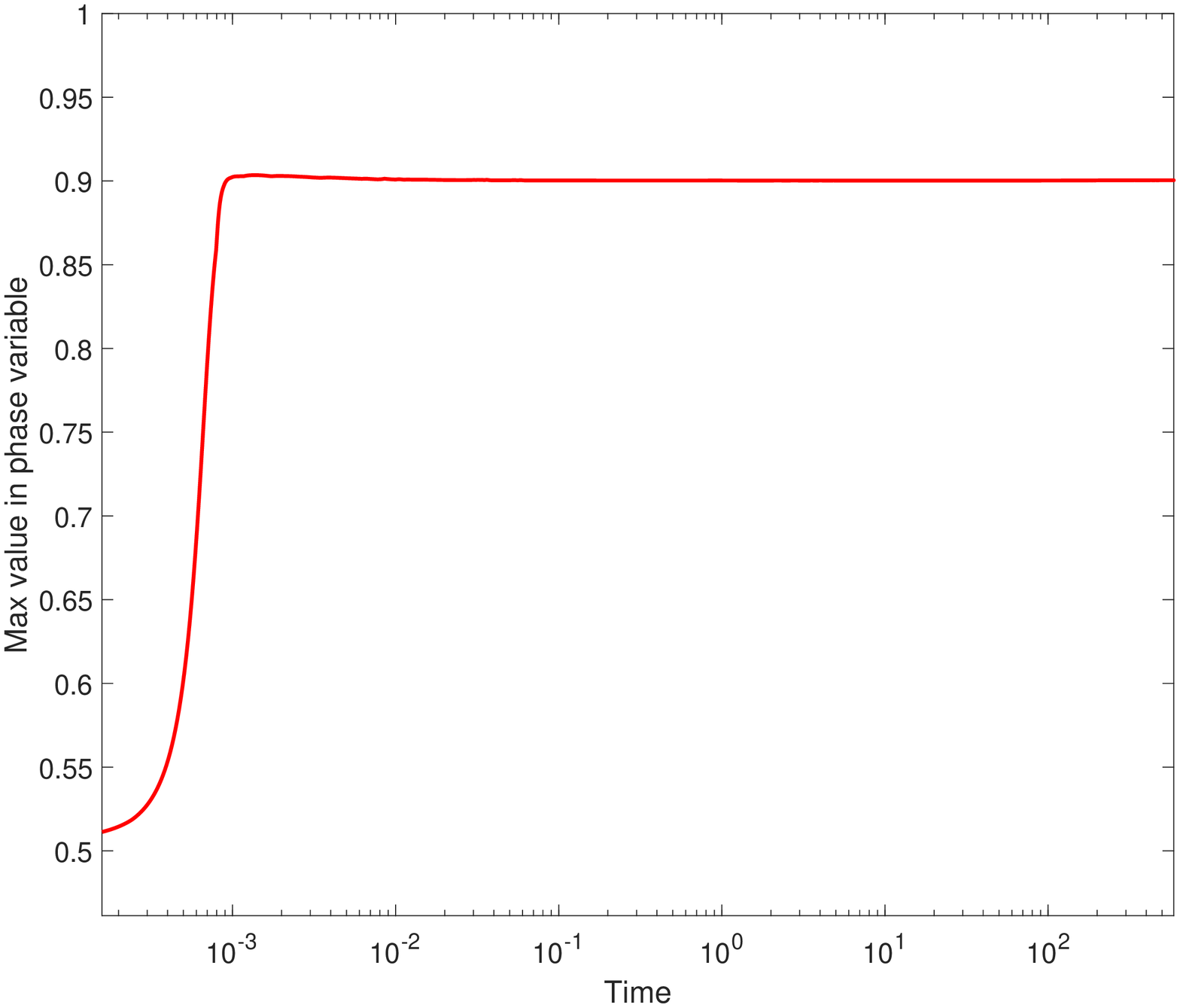}}\label{max}
	\subfigure{\includegraphics[scale=0.29,trim={0 0 0 0},clip]{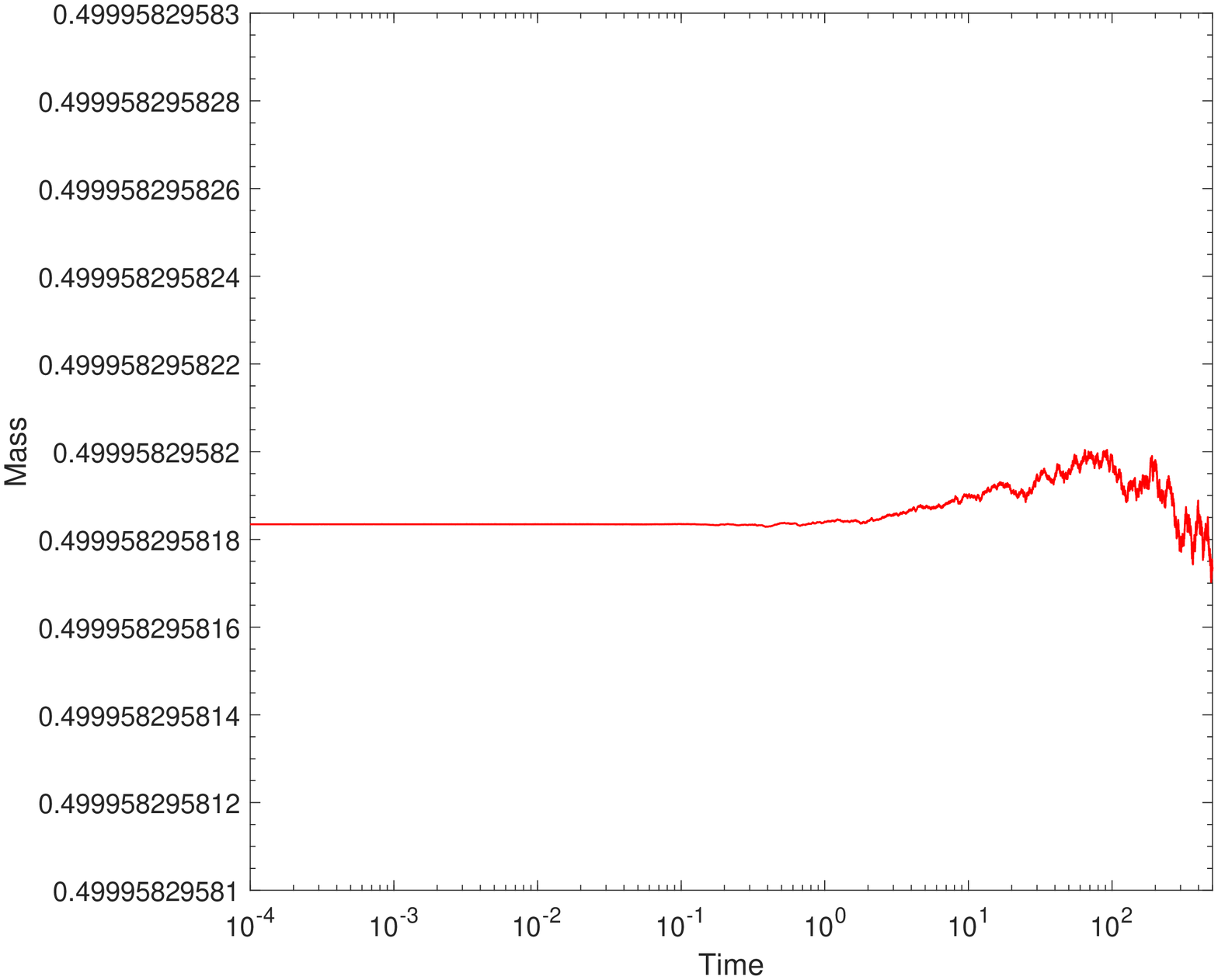}}\label{mass}
	\subfigure{\includegraphics[scale=0.3,trim={0 0 0 0},clip]{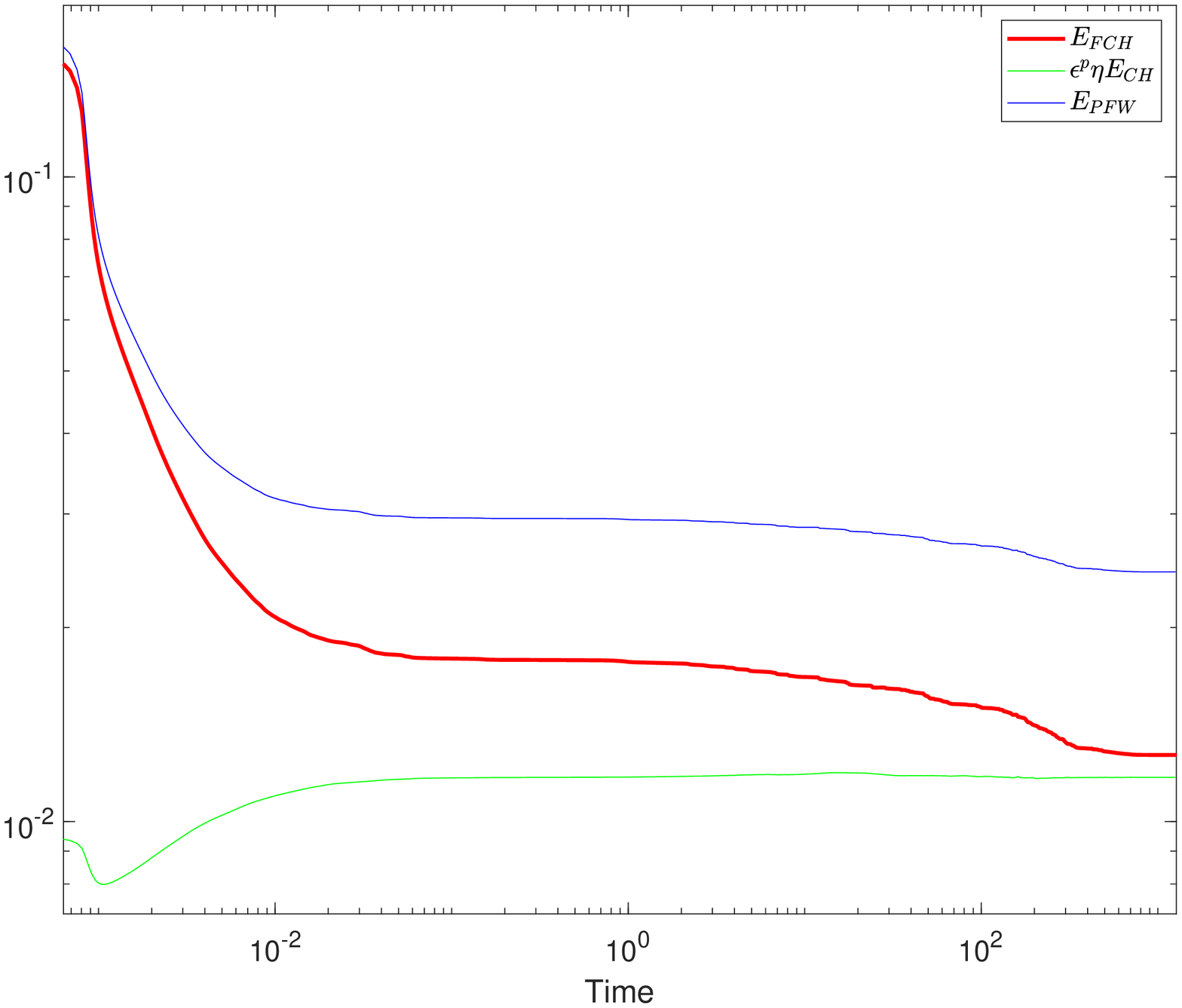}}\label{energy}
	\caption{Minimum value (top left), maximum value (top right), averaged mass (bottom left) and free energy (bottom right) plot in the long time simulation. These four pictures respectively show that our discrete numerical scheme inherits the properties of strict separation from pure states, mass conservation and energy dissipation that have been proved for the continuous problem.}
	\label{randomresult}
\end{figure}


\section{Concluding Remarks}
\label{sec:remarks}	
\setcounter{equation}{0}

In this paper, we present a first order semi-implicit finite difference scheme for the functionalized Cahn-Hilliard equation with a logarithmic Flory-Huggins type potential. The numerical scheme is derived based on the convex splitting technique. The non-convex and non-concave term $-\int_{\Omega}\beta(\phi)\Delta\phi\, \mathrm{d}x$ in the FCH energy is handled by using integration by parts under periodic boundary conditions. \emph{Unique solvability}, \emph{unconditional energy stability} and \emph{mass conservation} property{\tiny {\tiny }} of the numerical scheme are theoretically justified. Moreover, we establish the \emph{positivity-preserving property}, i.e., the phase function $\phi$ stays in $(-1,1)$ at a point-wise sense, with the aid of the singular nature of the logarithmic term $\beta(\phi)$. To the best of our knowledge, our numerical scheme is the first one that combines these four important theoretic properties for the FCH equation with a singular potential. Next, we perform an optimal rate convergence analysis and obtain error estimates in the higher order space $l^{\infty}(0,T;L_h^2)\cap l^2(0,T;H_h^3)$ under a linear refinement requirement $\Delta t\leq C_1 h$. To achieve the goal, we choose to perform a higher order asymptotic expansion up to third order accuracy in time and space, with a careful linearization technique. We first derive an error estimate in the  lower order space $l^{\infty}(0,T;H_h^{-1})\cap l^2(0,T;H^2_h)$ and obtain a rough error estimate in $L_h^2$. Then the $L_h^2$ error estimate combined with the inverse inequality yields a uniform $L_h^{\infty}$ bound on the error function, which implies the strict separation from pure states $\pm 1$ for the numerical solutions. This crucial fact enables us to achieve the refined $L_h^2$ estimate by using the energy method. In the last part of this paper, we present some numerical experiments in the two dimensional case with the PSD solver, which demonstrate the accuracy and robustness of the discrete scheme. Pearling bifurcation, meandering instability and spinodal decomposition are observed via the numerical simulation. The energy stability, mass conservation and the positivity preserving property are also verified numerically.

\section*{Acknowledgements}
W. Chen was partially supported by NSFC 12241101 and NSFC 12071090. H. Wu was partially supported by NSFC 12071084 and the Shanghai Center for Mathematical Sciences at Fudan University. W. Chen and H. Wu are members of the Key Laboratory of Mathematics for Nonlinear Sciences (Fudan University), Ministry of Education of China.




  \bibliographystyle{elsarticle-num}
  \bibliography{refs}


%
%
%
\end{document}